\def\thm@space@setup{
  \thm@preskip=0.4cm 
  \thm@postskip=\thm@preskip 
}
\newtheorem{thm}{Theorem}[section]
\newtheorem{definition}[thm]{Definition}
\newtheorem{lemma}[thm]{Lemma}
\newtheorem{remark}[thm]{Remark}
\newtheorem{prop}[thm]{Proposition}
\newtheorem{corollary}[thm]{Corollary}
\def\S{\mathcal S}
\def\R{\mathbb{R}}
\def\Z{\mathbb{Z}}
\def\eqd{\,{\buildrel d \over =}\,}
\def\P{{\mathbb P}}     
\def\E{{\mathbb E}} 
\newcommand{\cl}{{\text{cl}}}
\newcommand{\calE}{\mathcal{E}}
\newcommand{\calF}{\mathcal{F}}
\newcommand{\TV}{\text{TV}}
\newcommand{\calK}{\mathcal{K}}
\newcommand{\calP}{\mathcal{P}}
\newcommand{\Law}{\mathcal{L}}
\newcommand{\Leb}{\text{Leb}}
\newcommand{\calC}{\mathcal{C}}
\newcommand{\calR}{\mathcal{R}}
\newcommand{\Rm}{\mathbb{R}}
\newcommand{\calB}{\mathcal{B}}
\newcommand{\calG}{\mathcal{G}}
\newcommand{\expE}{\mathbb{E}}
\newcommand{\Ind}{\mathbbm{1}}
\newcommand{\Pm}{\mathbb{P}}
\newcommand{\Zm}{\mathbb{Z}}
\newcommand{\bfS}{\mathbb{S}}
\newcommand{\st}{{\text{st}}}
\newcommand{\ra}{\rightarrow}
\newcommand{\taud}{\tau_{\partial}}
\newcommand{\fix}{\text{fix}}
\newcommand{\commentout}[1]{}
\long\def\metanote#1#2{{\color{#1}\
\ifmmode\hbox\fi{\sffamily\mdseries\upshape [#2]}\ }}
\long\def\WF#1{\metanote{red!70!black}{{\tiny Louis} #1}}
\def\<{{\langle}} 
\def\>{{\rangle}}
\numberwithin{equation}{section}
\newcounter{keepeqno}
\title{Quasi-stationary behavior of the stochastic FKPP equation \\ on the circle
}
\author{Wai-Tong (Louis) Fan\footnote{
Department of Mathematics, Indiana University, Bloomington, IN, USA. (waifan@iu.edu)} 
\footnote{
Department of Organismic and Evolutionary Biology, Harvard University,  MA, USA. (lfan@cmsa.fas.harvard.edu)} 
\and 
Oliver Tough\footnote{
Department of Mathematical Sciences, University of Bath, UK. (okt24@bath.ac.uk)}
}
\date{\today}
\begin{document}

\captionsetup{width=0.85\textwidth}

\maketitle

\abstract{ 
We consider the stochastic Fisher-Kolmogorov-Petrovsky-Piscunov (FKPP) equation on the circle $\mathbb{S}$,
\begin{equation*}
   	\partial_t u(t,x)      \,= \frac{\alpha}{2}\Delta u +\beta\,u(1-u) + \sqrt{\gamma\,u(1-u)}\,\dot{W}, \qquad (t,x)\in(0,\infty)\times \mathbb{S},
\end{equation*}
where $\dot{W}$ is space-time  white noise. While any solution will eventually be absorbed at one of two states, the constant 1 and the  constant 0 on the circle, essentially nothing had been established about the absorption time (also called the fixation time in population genetics), or about the long-time behavior prior to absorption. We establish the existence and uniqueness of the quasi-stationary distribution (QSD) for the solution of the stochastic FKPP. Moreover, we show that the solution conditioned on not being absorbed at time $t$ converges to this unique QSD as  $t\to\infty$, for any initial distribution, and characterize the leading-order asymptotics for the tail distribution of the fixation time. We obtain explicit calculations in the neutral case ($\beta=0$), quantifying the effect of spatial diffusion on fixation time. We explicitly express the fixation rate in terms of the migration rate $\alpha$ for all $\alpha\in (0,\infty)$, finding in particular that the fixation rate is given by $\gamma[1-\frac{\gamma}{12\alpha}+\mathcal{O}(\frac{\gamma^2}{\alpha^2})]$ for fast migration and $\pi^2\alpha[1-\frac{8\alpha}{\gamma}+\mathcal{O}(\frac{\alpha^2}{\gamma^2})]$ for slow migration. Our proof relies on the observation that the absorbed (or killed) stochastic FKPP is dual to a system of $2$-type branching-coalescing Brownian motions killed when one type dies off,  and on leveraging the relationship between these two killed processes. 
}

\section{Introduction}

Let $\mathbb{S}\simeq [0,1)$ be the circle whose circumference is equal to 1.
In this paper, we consider  the stochastic Fisher-Kolmogorov-Petrovsky-Piscunov
(FKPP) equation
\begin{equation}\label{fkpp_X}
   	\partial_t u(t,x)      \,= \frac{\alpha}{2}\Delta u +\beta\,u(1-u) + \sqrt{\gamma\,u(1-u)}\,\dot{W}, \qquad (t,x)\in(0,\infty)\times \mathbb{S},
\end{equation}
where $\dot{W}=\{\dot{W}(t,x)\}_{(t,x)\in
[0,\infty)\times \mathbb{S}}$ is space-time Gaussian white noise, and  $\alpha\in(0,\infty)$, $\beta\in\R$ and $\gamma\in(0,\infty)$ are constants. 
We adopt Walsh's theory \cite{MR876085} to regard the stochastic partial differential equation (SPDE) \eqref{fkpp_X} as shorthand for the integral equation 
\begin{align}\label{E:MildSol_FKPP}
u_t(x)= \int_{\mathbb{S}} p(t,x,y)\,u_0(y)\,m(dy) &+ \int_0^t\int_{\mathbb{S}}p(t-s,x,y)\,\beta u_s(z) (1-u_s(y))\,m(dy)\,ds   \notag\\
&+ \int_{\mathbb{S}\times [0,t]}p(t-s,x,y)\,
\sqrt{\gamma u_s(y) (1-u_s(y))}\,dW(y,s), 
\end{align}
where 
$p(t,x,y)=\frac{1}{\sqrt{2\pi \alpha t}} \sum_{k\in\Z} e^{\frac{-(y-x+ k)^2}{2\alpha t}}$  is the transition density of a Brownian motion on $\mathbb{S}\simeq [0,1)$ with variance $\alpha$, with respect to the 1-dimensional Lebesgue measure $m(dy)$. Roughly speaking,
a stochastic process $u=(u_t)_{t\geq 0}$ is said to be a  \textbf{mild solution} to equation \eqref{fkpp_X} with initial condition $u_0$ if $u$ satisfies \eqref{E:MildSol_FKPP}. Hereafter, we denote $u_t(x)=u(t,x)$ (not a partial derivative). See Section \ref{appendix section:mild solutions} and \cite{MR876085} for details such as the meaning of the stochastic integral in \eqref{E:MildSol_FKPP}. Whenever we refer to the stochastic FKPP \eqref{fkpp_X} in this paper, we refer to its mild solution $u=(u_t)_{t\in\R_{\geq 0}}$, which exists and is unique in law.

The stochastic FKPP  \eqref{fkpp_X} and the analogous equation on the real line $\R$ have attracted intense mathematical study; see for instance \cite{doering2003interacting, hobson2005duality,mueller2009some, MR1271224, mueller2011effect}. It arose as a model in population genetics \cite{shiga1988stepping}, and is a prototypical model for front propagation in reaction-diffusion systems. It has found wide applications in physical chemistry, biophysics and other scientific fields; see the survey \cite{panja2004effects}.
Its importance stems from the fact that it is the universal scaling limit of various microscopic particle models such as the stepping stone model, the biased voter model and interacting stochastic ODE \cite{mueller1995stochastic, durrett2016genealogies, fan2017stochastic}. 

The reaction diffusion equation $\partial_t u(t,x)      \,= \frac{\alpha}{2}\Delta u +\beta\,u(1-u)$, the (deterministic) FKPP equation, was originally derived independently and at the same time by Fisher \cite{fisher1937wave} and Kolmogorov, Petrovskii and Piskunov \cite{Kolmogorov1937} as a model for the spread of an advantageous gene. In this equation, $u(t,x)$ is the population density at time $t$ and location $x$ for the individuals with the favored gene (call them type 1 individuals), and $1-u$ is the remaining population density of the other type (call them type 0). The term $\frac{\alpha}{2}\Delta u$ captures the spatial motion of the individuals; the term $\beta u(1-u)$ captures the average increase (when $\beta>0$) of the density of type 1 individuals due to random interaction between the two types, where $\beta$ is called the selection strength; such random interaction between types, ignored in \cite{fisher1937wave} but captured in the stochastic FKPP, has variance proportional to $u(1-u)$ and gives rise to the term $\sqrt{\gamma\,u(1-u)}\,\dot{W}$ in \eqref{fkpp_X}. The reciprocal of $\gamma$ can be viewed as proportional to the local effective  population size \cite{hallatschek2008gene, durrett2016genealogies}.
In the setting of the biased voter model of  \cite{durrett2016genealogies}, $\gamma^{-1} \propto \frac{M}{\alpha L}$ where $M$ is the total number of individuals within a small interval with length $1/L$ in space, when $M$ and $L$ are both large.

Equation \eqref{fkpp_X} has two absorbing states, namely the  functions \textbf{1} and \textbf{0} on $\mathbb{S}$ that are  constant 1 and 0 respectively.  Absorption here is also referred to as fixation (more precisely, fixation of either type) - it represents the phenomenon in population genetics of one genetic type being fixed, while the other disappears from the population.
We define the fixation time $\tau_{\fix}$ to be the time at which fixation occurs,
\begin{equation}\label{Def:taufix}
    \tau_{\fix}:=\inf\{t\geq 0:\,u_t=\textbf{0}\quad\text{or}\quad u_t= \textbf{1}\},
\end{equation}
with the convention that $\inf\{\emptyset\}=\infty$. while fixation is inevitable, at any given time there is a positive chance that this has not yet happened - indeed fixation may typically take a long time and the probability distribution of the system may stabilize near a  ``quasi-stationary distribution" (QSD) for a long time before fixation.
In what appears to be the first reference to the notion of a QSD in the literature, Wright posited in 1931 \cite[p.111]{Wright1931} that:
\begin{quote}
``As time goes on, divergences in the frequencies of factors may be expected to increase more and more until at last some are either completely fixed or  completely lost from the population. The distribution  curve of gene frequencies should,  however, approach  a definite  form if the genes  which have been  wholly fixed or lost are left out of consideration.''
\end{quote}
This limiting ``definite form'' is what is today referred to as a \textit{quasi-stationary distribution} (actually \textit{quasi-limiting distribution}, to be pedantic). The first of our main results, Theorem \ref{T:main1}, establishes that the stochastic FKPP conditioned on non-fixation at that time converges to  a unique quasi-stationary distribution, as time tends to infinity. 

\medskip
\noindent
{\bf QSD and fixation time for the classical Wright Fisher diffusion. }
The mathematical study of QSDs started with the foundational work of Yaglom on subcritical Galton-Watson processes \cite{Yaglom1947};  see the surveys \cite{van2013quasi,meleard2012quasi} and the book \cite{collet2013quasi} for background on QSDs. 
The quasi-stationary and quasi-limiting behaviours of finite-dimensional diffusions is now well-understood \cite{champagnat2023general}; for completeness we provide a short proof that the Wright-Fisher diffusion converges to its unique quasi-stationary distribution in the appendix (see Proposition \ref{prop:WF diffusion convergence to QSD}). Whereas this proof is based on a standard spectral theory argument, which may be applied broadly to finite-dimensional diffusions, it will be clear that such an argument \textit{fails in the infinite-dimensional setting} of the stochastic FKPP \eqref{fkpp_X}.

In the next few paragraphs, we give a brief account of the classical $1$-dimensional Wright-Fisher diffusion, 
\begin{equation}\label{1dWF}
dX_t=\beta X_t(1-X_t)dt+\sqrt{\gamma\,X_t(1-X_t)}\,dB_t,
\end{equation}
where $B$ is the standard Brownian motion in $\R$, which is the spatially well-mixed version of the stochastic FKPP (Lemma \ref{L:ODE}). We also provide some rigorous results in Section \ref{SS:1-dim}. For example, from Proposition \ref{prop:WF diffusion convergence to QSD}, the principal eigenvalue gives the rate of fixation, whereas the spectral gap gives the rate of convergence to a QSD. 
In \cite{Ewens1963,Ewens1964}, Ewens considered this Wright-Fisher diffusion, assuming that at fixation the process is returned to some arbitrary state. He examined the stationary distribution of this returned process, which he referred to as a ``pseudo-transcient distribution''. The quasi-stationary distribution of the Wright-Fisher diffusion was then examined by Seneta shortly thereafter in \cite{seneta1966quasi}. A brief review of the relationship between the pseudo-transcient distribution and the quasi-stationary distribution can be found in \cite[p.4-5]{van2013quasi}. Applications of QSDs in  genetics are discussed in  \cite{seneta1966quasi} (see the discussion on pages 266-277).

For the neutral Wright-Fisher diffusion  $dX_t=\sqrt{X_t(1-X_t)}dB_t$, it is well-known (see \cite[p.259]{seneta1966quasi}) 
that the principal right eigenfunction is given by $h^{\text{WF}}(x)=6x(1-x)$, whereas the unique QSD is given by the uniform distribution, i.e. $\pi^{\text{WF}}=\text{Unif}((0,1))$. Furthermore, 
the infinitesimal generator of the killed process, denoted by $L^{\text{WF}}$, has a pure discrete spectrum (see Proposition \ref{prop:WF diffusion convergence to QSD}) which is given by \cite[Lemma 3.5]{Tran2013}
\[
\sigma(L^{\text{WF}})=\left\{-{n\choose 2}:n\geq 2\right\}.
\]
Since the principal eigenvalue (namely $-1$) of the generator  is algebraically simple  and the spectral gap is $-2$, it follows from Proposition \ref{prop:WF diffusion convergence to QSD} that 
\begin{equation}\label{AbsTime_1_wellmix}
\Pm_{x}(\tau_{\fix}>t)\sim 6x(1-x)e^{-t}\qquad \text{as }t\to\infty
\end{equation}
for all $x\in (0,1)$,
and that
for all $\delta\in (0,1)$ there exists a constant $C\in(0,\infty)$ and a time $T(x)$ that depends continuously  upon $x\in (0,1)$
 such that 
\begin{align}
\lvert\lvert \Law_{x}(X_t\lvert \tau_{\partial}>t)-\text{Unif}((0,1)) \rvert\rvert_{\TV}\leq \frac{Ce^{-(1+\delta)t}}{x(1-x)},\quad \text{for all}\quad T(x)\leq t< \infty.
\end{align}
Therefore, the conditional law  $\Law_x(X_t\lvert \tau_{\partial}>t)$ converges to the QSD faster than the speed at which $\Pm_x(\tau_{\partial}>t)$ converges to $0$. It follows that the distribution of the number of individuals with a given gene type, on the event of non-fixation, resembles the uniform distribution while there is still a substantial probability of non-fixation. For more on the time to absorption for the Wright-Fisher diffusion,
see \cite[Section 5.4]{ewens2004mathematical} and the references therein.

\begin{remark}\rm
Observe that the spectrum $\sigma(L^{\text{WF}})$ corresponds precisely to the jump rates of Kingman's coalescent, the dual process of the neutral Wright-Fisher diffusion. In light of the duality introduced in Section \ref{S:duality}, we see that this is not a coincidence. In Section \ref{S:duality}, the eigenvalues of the killed stochastic FKPP will be seen to correspond to that of a killed $2$-type branching-coalescing Brownian motion. Applying the same idea to the Wright-Fisher diffusion, one may arrive at the jump rates of Kingman's coalescent. 
\end{remark}

\medskip
\noindent
{\bf QSDs in infinite dimension. }
In contrast to processes in finite dimensions, very little is known about convergence to a quasi-stationary distribution for stochastic PDEs, or infinite-dimensional processes more generally. 

Superprocesses under various conditionings - most commonly conditioning on survival for all time - have been widely studied, for instance \cite{Overbeck1993,Etheridge2003,Champagnat2008}. Convergence to a quasi-stationary distribution for subcritical superprocesses conditioned on survival was recently established in \cite{Liu2021}. In a recent preprint \cite{adams2022quasi}, Adams established convergence to a quasi-stationary distribution for reaction-diffusion equations perturbed by additive cylindrical noise. Although both \cite{adams2022quasi} and the present paper deal with reaction-diffusion type equations, the results are disjoint and the proof strategies are totally different; the proof in \cite{adams2022quasi} proceeds by spectral arguments whereas the present article relies instead on moment duality. This is a necessary consequence of the difference in the noise term. The quasi-stationary behavior of the subcritical contact process on $\Z^d$ has been examined by a number of authors in \cite{Ferrari1996,Sturm2014,Andjel2015,arrejoria2020quasi}. Collectively, they establish convergence to a unique QSD modulo spatial translation for all initial conditions. In particular, it is shown in \cite{arrejoria2020quasi} that uniqueness of QSD holds even though the process does not ``come down from infinity". This is in contrast to the equivalence between the uniqueness of QSDs and  “coming down from infinity” for birth and death processes \cite{van1991quasi, bansaye2016speed}, and some finite dimensional diffusions \cite[Theorem 7.3]{cattiaux2009quasi}; see also \cite{champagnat2016exponential, champagnat2023general} for some general results relating the existence and uniqueness of QSDs to the speed at which the process comes down from infinity.

We also note that another type of Yaglom limit has recently been considered by a number of authors in \cite{Powell2019,Harris2022,Maillard2022} for critical branching processes with absorption. As a result of the criticality, the number of particles alive at time $t$, conditional on the particles not having died out, grows to infinity as $t\ra\infty$. They obtain various scaling limits for the distribution of the particles conditional on survival. These results are analogous to classical results of Yaglom \cite{Yaglom1947} on critical Galton-Watson processes. To the authors' knowledge, the above constitutes the extent of the literature on quasi-limiting behaviour of Markov processes in infinite dimensions. 


\medskip
\noindent
{\bf Contributions. }
We establish the existence and uniqueness of the quasi-stationary distribution (QSD) for the solution of the stochastic FKPP, and show that this QSD is the attractor for all initial distributions.
Moreover, we characterize the leading-order asymptotics for the tail distribution of the fixation time and obtain some explicit calculations in the neutral case  $\beta=0$, yielding insight into the effect of spatial diffusion on fixation time. For example, in \eqref{asym_fixRate} we show that the fixation rate increases to $\gamma$ as $\alpha\uparrow\infty$ and decreases to 0 as  $\alpha \downarrow 0$, asymptotically like $\gamma-\frac{\gamma^2}{12 \alpha}+\mathcal{O}(\frac{\gamma^3}{\alpha^2})$ and $\pi^2 [\alpha-8\frac{\alpha^2}{\gamma}]+\mathcal{O}(\frac{\alpha^3}{\gamma^2})$ respectively. In particular, the  fixation rate of the stochastic FKPP converges  to that  of the well-mixed case as the diffusion constant $\alpha\uparrow\infty$. See  Remark \ref{Rk:insight} for more detail and Figure \ref{fig:lambda} for an illustration.

Our proof relies on the observation that the killed stochastic FKPP is dual to a system of $2$-type branching-coalescing Brownian motions killed when one type dies off.
Proving existence and uniqueness of QSD for either process for all $\beta\in\R_{\ge 0}$ is challenging since both processes are in infinite dimensional setting. 
The key innovation in our proof is that we first
obtain the QSD for the  $2$-type coalescing Brownian motions for the case $\beta=0$ (i.e. without branching, for which the particle system is finite-dimensional) and then level up to the desired results for all $\beta\in\R_{\ge 0}$
by leveraging the relationship between the two killed processes. We expect that this proof strategy can be applied to study the QSDs of other infinite dimensional systems, in particular those stochastic PDEs which possess  moment duality established in \cite{MR1813840}.
Of independent interest, we establish while proving our main results that both the stochastic FKPP $u$ and the killed (or absorbed) process $u^{\rm kill}$ possess the Feller property. 

Our results and approach can readily be generalized to a bounded interval, or more generally to any compact metric space on which  existence of a mild solution is known.

\medskip
\noindent
{\bf Organization. } In Section \ref{S:MainResults}, we will give the rigorous statements of our main results for the stochastic FKPP. We will then describe the duality for the killed stochastic FKPP and the quasi-stationary behavior of the dual process in Section \ref{S:duality}, which shall be crucial to our proofs. This will then offer new insights into the the stochastic FKPP, including a characterization both of its QSD and of the tail distribution of the fixation time. We shall then, in Section \ref{S:Insights}, obtain explicit calculations in the neutral case. We will then turn to our proofs. An overview of the proof of our main theorems shall be given in Section \ref{S:Idoof}, highlighting in particular how we take advantage of the duality between the killed stochastic FKPP and a killed 2-type BCBM. This will then be followed by the proofs of all stated results in Section \ref{S:Proofs}. Finally, we will conclude with the appendix.




\section{Main results for stochastic FKPP}\label{S:MainResults}


To state our main results precisely, we need some notation. Given a separable metric space $E$ and a set $A\subset \R$, we write $\calC(E;A)$ and $\calB(E;A)$ for the spaces of continuous functions and Borel functions respectively from $E$ to $A$. 
We use a subscript ``$b$" to denote $\calC_b(E;A)$ and $\calB_b(E;A)$ for the spaces of bounded continuous functions and bounded Borel functions respectively, both equipped with the uniform norm. For $f\in\calB_b(E;A)$ we denote by  $\|f\|_{\infty}:=\sup_{x\in E}|f(x)|$ the uniform norm. We let $\calP(E)$ be the space of probability measures on $E$, with the topology of weak convergence of measures. For a suitable pair of measure $\mu$ and  function $H$ on a space $E$, we write $\mu(H):=\langle \mu,\,H\rangle_{E}:=\int_{E}H(\cdot) \mu(d\cdot)$. We write ${\rm d}_{\S}(x,y)$ for the geodesic distance between $x,y\in\bfS=\Rm/\Zm$.

\medskip

It is known  that  for each initial condition $u_0\in \mathcal{B}(\mathbb{S};\,[0,1])$, equation \eqref{fkpp_X} has a mild solution
$u=(u_t)_{t\in\R_{\geq 0}}$ that is unique in distribution and satisfies $u\in \mathcal{C}((0,\infty)\times \bfS;[0,1])$ almost surely. 
From now on, $(u_t)_{t\in\R_{\geq 0}}$ denotes such a solution defined on a probability space $(\Omega, \cal{F}, \P)$.
These solutions also solve the martingale problem associated to \eqref{fkpp_X}. Therefore they define a strong Markov process on $ \mathcal{B}(\mathbb{S};[0,1])$, which belongs to $\mathcal{C}(\mathbb{S};[0,1])$ at all strictly positive times, almost surely. 
These facts can be found, for instance, in \cite{MR1271224} and \cite[Remark 1]{hobson2005duality}.  For each $\mu\in \calP(\mathcal{B}(\mathbb{S};\,[0,1]))$ we let $\P_{\mu}$ be the probability measure under which the initial state $u_0$ has distribution $\mu$. For each $f\in \mathcal{B}(\mathbb{S};\,[0,1])$
we let $\P_{f}$ be the probability measure under which the initial state is $u_0=f$.

Recall the \textbf{fixation time} $\tau_{\fix}$ defined in \eqref{Def:taufix}. Note that  $\tau_{\fix}=0$ almost surely under $\P_{u_0}$ if $u_0\in \bar{\textbf{0}} \cup \bar{\textbf{1}}$, where
 $\bar{\textbf{0}}$ (resp. $\bar{\textbf{1}}$)
is the subset of Borel functions on $\mathbb{S}$  that are almost everywhere  constant with value 0 (resp. 1). 
Hence we omit $\bar{\textbf{0}} \cup \bar{\textbf{1}}$ from the possible initial conditions, and consider
$$\calB_{\ast}:=\mathcal{B}(\mathbb{S};[0,1])\setminus (\bar{\textbf{0}} \cup \bar{\textbf{1}})\,=\,\left\{f\in \mathcal{B}(\mathbb{S};[0,1]):\,\int_{\mathbb{S}}f(x)m(dx)\in \{0,\,1\}\right\}.$$  

The \textbf{killed (or absorbed) stochastic FKPP}   is the process $u^{\rm kill}$ defined  by 
\begin{equation}\label{fkpp_X_kill}
u^{\rm kill}_t=
\begin{cases}
u_t, & \quad \text{if }t<\tau_{\fix}\\
\Delta, & \quad \text{if }t\geq \tau_{\fix},
\end{cases},
\end{equation}
where $\Delta$ is a separate isolated cemetery state and any measurable function is extended to be 0 at $\Delta$.
Since we are only interested in the behaviour prior to fixation, we will abuse notation by writing the killed stochastic FKPP as $(u_t)_{0\leq t<\tau_{\fix}}$.  


A Borel probability measure $\pi\in \calP(\calB_{\ast})$ is a \textbf{quasi-stationary distribution (QSD)}  for $(u_t)_{0\leq t<\tau_{\fix}}$ if
\begin{equation}\label{Def:QSD}
\P_{\pi}(u_t \in \,\cdot\;|\,\tau_{\fix}>t)\,=\,\pi(\cdot)\quad \text{for}\quad t\geq 0.
\end{equation}



Where the killing time is unambiguous, we shall sometimes, for brevity, refer to the ``QSD of a process'' rather than the ``QSD of a killed process''. For all strictly positive times  and prior to fixation (i.e. for $t\in(0,\tau_{\rm fix})$), $u_t$ takes values in  
\[
\calC_{\ast}:=\mathcal{C}(\mathbb{S};[0,1])\setminus \{{\textbf{0}},{\textbf{1}}\}\,=\,\left\{f\in \mathcal{C}(\mathbb{S};[0,1]):\,0<f(x)<1 \text{ for some }x\in\mathbb{S}\right\}.
\]
Hence a QSD $\pi$ is supported on $\calC_{\ast}$ if it exists; see Remark \ref{RK:B_ast}. Our main results, Theorems \ref{T:main1} and \ref{thm:fixation time},
hold for any fixed constants $\alpha\in(0,\infty)$, $\beta\in\R$ and $\gamma\in(0,\infty)$.
\begin{thm}\label{T:main1}
The stochastic FKPP equation \eqref{fkpp_X} has a unique quasi-stationary distribution $\pi\in \calP(\calC_{\ast})$. Furthermore, we have the  convergence 
\begin{equation}\label{E:main1}
\P_{\mu}(u_t \in \,\cdot\;|\,\tau_{\fix}>t)\,\to \,\pi(\cdot)\quad\text{in}\quad \calP(\calC_{\ast}) \quad \text{as}\quad t\to\infty,
\end{equation}
for any initial distribution $\mu\in \calP(\calB_{\ast})$.
\end{thm}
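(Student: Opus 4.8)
The plan is to carry everything through the moment duality of Section~\ref{S:duality} between the killed stochastic FKPP $(u_t)_{0\le t<\tau_{\fix}}$ and a killed $2$-type branching--coalescing Brownian motion (BCBM), handling first the non-branching case $\beta=0$, where the dual is a \emph{finite}-dimensional particle system, and then levelling up to general $\beta\in\R$. Write $\eta$ for a configuration of the dual and $\eta_t$ for the killed dual, and let $\{D(\cdot,\eta)\}_\eta$ be the monomial duality functions of Section~\ref{S:duality} (built from point evaluations of $u$ and of $1-u$); a preliminary step is to check that these separate measures on $\calP(\calC_{\ast})$, so that, given tightness, convergence of $\langle\cdot,D(\cdot,\eta)\rangle$ for all $\eta$ is equivalent to weak convergence in $\calP(\calC_{\ast})$. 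Alongside this I would record the Feller property of $(u_t)$ and of $u^{\rm kill}$ (flagged in the introduction) and the standard parabolic-smoothing and H\"older bounds for the SPDE, uniform in the initial datum for $t\ge t_0>0$; combined with a lower bound on $\P_\mu(\tau_{\fix}>t)$ obtained by restarting at a small positive time, these give tightness of $\{\P_\mu(u_t\in\cdot\mid\tau_{\fix}>t):t\ge t_0\}$ in $\calP(\calC_{\ast})$ --- including the point that no mass escapes to $\{\textbf{0},\textbf{1}\}$, since configurations near $\{\textbf{0},\textbf{1}\}$ are overwhelmingly likely to be absorbed quickly and hence negligible under the conditioning. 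It then suffices to identify every subsequential limit with a single $\pi$.

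For the base case $\beta=0$ the dual is a system of two-coloured coalescing Brownian motions on $\bfS$, killed when one colour is lost, with non-increasing particle number; its state space is a countable union of finite-dimensional manifolds. Here I would (i) show the dual comes down from infinity --- coalescence on the compact circle makes the particle number finite instantly, as for Kingman's coalescent --- and (ii) verify standard finite-dimensional quasi-stationarity criteria (a Doeblin-type minorization on bounded sets of configurations, together with control of the process near the absorbing set where a colour dies off) to obtain a unique QSD $\nu$ for the killed dual, a spectral gap, and a bounded principal eigenfunction $\phi>0$ with principal eigenvalue $-\lambda<0$, so that the killed dual semigroup satisfies $\E_\eta[\phi(\eta_t)]=e^{-\lambda t}\phi(\eta)$ and the conditioned law of $\eta_t$ converges to $\nu$ exponentially fast from every finite initial configuration.

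Feeding (ii) through the duality identity $\E_{u_0}[D(u^{\rm kill}_t,\eta)]=\E_\eta[D(u_0,\eta_t)]$ and taking the dual's initial configuration to become dense in $\bfS$, one reads off $\P_{u_0}(\tau_{\fix}>t)\sim c(u_0)e^{-\lambda t}$ with $c(u_0)>0$ for $u_0\in\calB_{\ast}$, and convergence of $\P_{u_0}(u_t\in\cdot\mid\tau_{\fix}>t)$ tested against every $D(\cdot,\eta)$; by the first paragraph this is weak convergence to a measure $\pi$ characterized by $\langle\pi,D(\cdot,\eta)\rangle\propto\phi(\eta)$. Uniqueness of the QSD is then immediate: any QSD $\pi'$ has $\P_{\pi'}(\tau_{\fix}>t)=e^{-\lambda' t}$, and $\eta\mapsto\langle\pi',D(\cdot,\eta)\rangle$ is a bounded positive eigenfunction of the killed dual with eigenvalue $e^{-\lambda' t}$; uniqueness of $\phi$ forces $\lambda'=\lambda$ and $\langle\pi',D(\cdot,\eta)\rangle\propto\phi(\eta)=\langle\pi,D(\cdot,\eta)\rangle$, hence $\pi'=\pi$ since the $D(\cdot,\eta)$ determine measures. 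For an arbitrary $\mu\in\calP(\calB_{\ast})$ one restarts at a time $t_0>0$, where $u_{t_0}\in\calC_{\ast}$ on $\{\tau_{\fix}>t_0\}$, and combines the $u_0$-pointwise convergence just obtained with dominated convergence.

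To level up to $\beta\ne 0$, where the dual branches and is genuinely infinite-dimensional so that the previous two paragraphs do not apply directly, I would leverage the relationship between the killed stochastic FKPP and its dual together with a Cameron--Martin--Girsanov comparison between the $\beta$ and $\beta=0$ equations: the drift $\beta u(1-u)$ lies in the range of the noise, with bounded integrand $\tfrac{\beta}{\sqrt\gamma}\sqrt{u(1-u)}$, so the change of measure is well defined, equivalent on $\{\tau_{\fix}>t\}$, and has density with moments uniform in the initial datum; it transports the conditioned semigroup, the QSD and the principal eigenpair from $\beta=0$, while on the dual side one checks that the $\beta$-BCBM still comes down from infinity, coalescence on the compact circle dominating the bounded branching rate at high density. (Alternatively, the case $\beta<0$ reduces to $\beta>0$ via the substitution $u\mapsto 1-u$, which maps the stochastic FKPP with selection $\beta$ to the one with selection $-\beta$ and preserves $\tau_{\fix}$, so that the QSD for $\beta$ is the pushforward of the QSD for $-\beta$.) I expect the $\beta=0\to\beta$ transfer to be the main obstacle, and where the bulk of the work lies: propagating ``coming down from infinity'' and the minorization/compactness estimates past the branching, and controlling the Radon--Nikodym density --- which is not bounded below --- when passing the QSD characterization between the two models; within $\beta=0$, the analysis near the ``a colour dies off'' boundary is the secondary technical point.
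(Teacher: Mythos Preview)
Your plan for $\beta=0$ is sound and broadly matches the paper: establish a QSD and principal eigenpair for the killed $2$-type coalescing Brownian motion (a finite-dimensional analysis on $\bfS\times\bfS$), push this through the duality $\calE$ to get convergence of the moments $\langle \Law_\mu(u_t\mid\tau_{\fix}>t),\calE^z\rangle$, and combine with tightness and the measure-determining property of $\{\calE^z\}_{z\in\chi}$ (the paper's Lemma~\ref{lem:moments determine measures}) to identify the weak limit. Your uniqueness argument for $\beta=0$ via ``any QSD produces a bounded positive eigenfunction of the dual, hence equals the unique one'' is exactly the mechanism the paper uses.

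The genuine gap is the $\beta\neq 0$ step. Girsanov between $\beta$ and $0$ only gives two-sided bounds on $\frac{d\Pm^\beta_t}{d\Pm^0_t}$ that are \emph{exponential in $t$} (the paper's Lemma~\ref{L:Girsanov}); this is enough for fixed-time inputs such as the lower bound on $\Pm_\mu(\tau_{\fix}>t)$ and hence tightness, but it cannot transport the QSD, the principal eigenvalue, or convergence of the conditioned law, since those are genuinely large-$t$ objects and the density is neither bounded nor bounded below uniformly in $t$. Nor can you fall back on ``standard finite-dimensional criteria'' for the $\beta>0$ dual: with branching, the $2$-type BCBM no longer sits in a fixed finite-dimensional space, and the minorization/Lyapunov verification needs a right eigenfunction $\phi^\beta$ as input to set up the $Q$-process, which you do not yet have.

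The paper resolves this by a zig-zag that goes \emph{through} the FKPP rather than around it. From the $\beta=0$ dual QSD one obtains $h^0$, a right eigenfunction of the killed FKPP at $\beta=0$; Girsanov at fixed time then yields, for every $\beta$, a uniform lower bound $\Pm_\mu^\beta(\tau_{\fix}>t)\ge C_t\,\mu(h^0)$ (so your tightness step is correct, but this is the \emph{only} role Girsanov plays). With tightness and the Feller property in hand, the paper applies the Schauder fixed-point theorem to the map $\mu\mapsto\Law_\mu(u_t\mid\tau_{\fix}>t)$ on sets $\{\mu:\mu(h^0)\ge\epsilon\}$ to produce a QSD $\pi^\beta$ of the FKPP for each $\beta$. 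Duality then converts $\pi^\beta$ into a bounded strictly positive right eigenfunction $\phi^\beta$ of the killed $2$-type BCBM; with $\phi^\beta$ available, one forms the $Q$-process via Doob's $h$-transform and verifies a Lyapunov--Doeblin pair (using that the BCBM comes down from infinity, the paper's Lemma~\ref{L:supx_nt}) to apply Harris' ergodic theorem. This yields uniqueness of the dual QSD $\varphi^\beta$ and the Perron--Frobenius asymptotics $\lambda^{-t}\mu Q_t\to\mu(\phi^\beta)\varphi^\beta$ for all $\beta$. Finally, duality and Lemma~\ref{lem:moments determine measures} transfer this back to uniqueness of $\pi^\beta$ and identification of every subsequential limit of $\Law_\mu(u_t\mid\tau_{\fix}>t)$ with $\pi^\beta$. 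In short: the $\beta=0$ analysis is used only to bootstrap tightness; existence for general $\beta$ is obtained on the FKPP side by a fixed-point argument, and only \emph{then} does one return to the dual with the eigenfunction needed to run ergodic theory there.
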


Our next theorem characterises the leading-order asymptotics of the fixation time as $t\to \infty$. We must firstly introduce some notation and background. 
For $t\in\R_{\geq 0}$, we define
\begin{equation}\label{eq:submarktranssemigroupFKPP}
\begin{split}
&P_t(f,\cdot):=\,\Pm_{f}(u_t\in \cdot,\tau_{\fix}>t),\qquad  f\in \calB_{\ast},\\
&\mu P_t(\cdot):=\,\Pm_{\mu}(u_t\in \cdot,\tau_{\fix}>t),\qquad \mu\in \calP(\calB_{\ast}),\\ 
&P_tF(f):=\,\expE_{f}[F(u_t)\Ind(\tau_{\fix}>t)],\quad F\in \calB_b(\calB_{\ast}).
\end{split}
\end{equation}
Then $\{P_t:\, t\in\R_{\geq 0}\}$ are sub-Markovian transition kernels, with $(P_t)_{t\geq 0}$ being a sub-Markovian transition semigroup; it provides the transition semigroup for the killed stochastic FKPP $(u_t)_{0\leq t<\tau_{\fix}}$. We note that $\mu P_t(F) = \mu (P_t F)$. We also note that $(P_t)_{t\geq 0}$ is only dependent upon the stochastic FKPP prior to fixation.
Clearly, a measure $\pi\in\calP(\calB_{\ast})$ is a quasi-stationary distribution for the stochastic FKPP if and only if it is a \textbf{left eigenmeasure} of $(P_t)_{t\geq 0}$ with a positive eigenvalue, in the sense that there exists $\lambda>0$ such that $\pi P_t=\lambda^t\pi$ for all $t>0$. In this case we refer to $\lambda$ as the eigenvalue of $\pi$ and write it as $\Lambda(\pi)$ - it is the principal eigenvalue of $P_1$.
Under a QSD $\pi$, $\tau_{\rm fix}$ is an exponential variable and 
\begin{equation}\label{lambda_t}
\Pm_{\pi}(\tau_{\fix}>t)=\lambda^t=e^{-\kappa\,t} \quad\text{for }t\in(0,\infty),
\end{equation}
where $\kappa:=-\ln \lambda$ is called the \textbf{fixation rate}.
In particular, $\lambda=\Pm_{\pi}(\tau_{\fix}>1)>0$. These and other general facts about QSD can be found in \cite{meleard2012quasi}.

We similarly define a bounded, non-negative Borel function on $\calB_{\ast}$, $h\in \calB_b(\calB_{\ast};\Rm_{\geq 0})$, to be a \textbf{right eigenfunction} for $(P_t)_{t\geq 0}$ 
with positive eigenvalue $\lambda>0$ if $P_th(f)=\lambda^th(f)$ for all $f\in \calB_{\ast}$, $t\geq 0$. 
We write  $\Lambda(h)$ for the eigenvalue of a right eigenfunction $h$. Therefore, 
\begin{equation}\label{eq:eigentriple stochastic FKPP}
\pi P_t=(\Lambda(\pi))^t\pi,\quad P_th=(\Lambda(h))^th,\quad t\geq 0.
\end{equation}

\begin{remark}[Feller property versus Feller semigroup]\label{Rk:eigenvalue is over time 1}\rm
In  Proposition \ref{prop:Feller_FKPP}, we show that the semigroup $(P_t)_{t\geq 0}$ possesses the Feller property on $\calC_b(\calC_{\ast})$. However, we also note that $(P_t)_{t\geq 0}$ is not strongly continuous on $\mathcal{C}_b(\calC_{\ast})$, hence is not a Feller semigroup. Thus, whereas it is typical to define the eigenvalue associated to a right eigenfunction and to a QSD to be the eigenvalue with respect to the infinitesimal generator, it's not clear here that we have an infinitesimal generator defined on a dense subspace of $\mathcal{C}_b(\calC_{\ast})$. Thus we define $\Lambda(\pi)$ to be the eigenvalue of $\pi$ with respect to $P_1$, whereas it is more typical in the literature to instead use the eigenvalue with respect to the infinitesimal generator.
\end{remark}

Theorem \ref{thm:fixation time} below asserts that there exists a unique right eigenfunction, with eigenvalue being the same as that of the unique QSD. Moreover, the eigenpair determines the asymptotic behavior of the fixation time. 
\begin{thm}[Right eigenpair and fixation time]\label{thm:fixation time}
The following hold for the stochastic FKPP equation \eqref{fkpp_X}:
\begin{itemize}
\item[(i)]There exists  a  right eigenfunction $h\in \mathcal{C}_b(\calB_{\ast};\Rm_{>0})$ for $(P_t)_{t\in \R_{\ge 0}}$, with eigenvalue $\Lambda(h)=\Lambda(\pi)\in (0,1)$. 
Moreover $h$  is the unique (up to constant multiple) right eigenfunction for $(P_t)_{t\in \R_{\ge 0}}$ in $\mathcal{C}_b(\calB_{\ast};\Rm_{>0})$; and the restriction $h_{\lvert_{\calC_{\ast}}}$
is the unique (up to constant multiple) right eigenfunction for $(P_t)_{t\in \R_{\ge 0}}$ in $\calC_b(\calC_{\ast};\Rm_{\geq 0})$.
\item[(ii)]
The right eigenfunction $h$ and the eigenvalue $\lambda:=\Lambda(h)$ give the leading-order asymptotics of the fixation time in the sense that
\begin{equation}\label{AbsTime_1}
    \lambda^{-t}\Pm_{\mu}(\tau_{\fix}>t)\rightarrow \frac{\mu(h)}{\pi(h)}\quad\text{as}\quad t\rightarrow \infty\quad\text{for all}\quad \mu\in\calP(\calB_{\ast}).
\end{equation}
Moreover, we  have the lower bound
\begin{equation}\label{eq:lower bound on fixation probability}
    \Pm_{\mu}(\tau_{\fix}>t)\geq \frac{\mu(h)}{\lvert\lvert h\rvert\rvert_{\infty}}\lambda^t \qquad \text{for all}\quad t\in \R_{\ge 0},\quad \mu\in\calP(\calB_{\ast}),
\end{equation}
and the upper bound
\begin{equation}\label{eq:upper bound on fixation probability}
    \Pm_{\mu}(\tau_{\fix}>t)\leq C\lambda^t \qquad \text{for all}\quad t\in \R_{\ge 0},\quad \mu\in\calP(\calB_{\ast}),
\end{equation}
for some uniform constant $C\in (0,\infty)$ which does not depend upon $\mu\in\calP(\calB_{\ast})$ nor $t\in \R_{\ge 0}$.
\end{itemize}
\end{thm}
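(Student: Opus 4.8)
The plan is to build everything out of the duality with the killed $2$-type branching-coalescing Brownian motion (BCBM) stated in Section \ref{S:duality}, together with the quasi-stationary theory for that dual process. I would begin with part (i). The existence of a QSD $\pi$ with eigenvalue $\Lambda(\pi)\in(0,1)$ is Theorem \ref{T:main1}, so the first task is to produce a right eigenfunction with the \emph{same} eigenvalue. The natural candidate comes from the dual: if $(\eta_t)$ denotes the killed $2$-type BCBM and $\langle f,\eta\rangle$ is the duality pairing, then the function $h(f):=\expE_{\eta_{\mathrm{qsd}}}[\langle f,\eta_t\rangle]$-type expression (evaluated via the dual's own principal eigenfunction/QSD) should give a $P_t$-eigenfunction by interchanging the two semigroups through the duality identity. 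Concretely, I would take the principal right eigenfunction $h^{\mathrm{dual}}$ and QSD of the finite-dimensional $\beta=0$ dual, ``level up'' to all $\beta\ge 0$ exactly as advertised in the Contributions paragraph, and then transport it across the duality to get $h\in\calC_b(\calB_\ast;\Rm_{>0})$ with $P_t h=\lambda^t h$, where $\lambda$ is forced to equal the dual's principal eigenvalue, which in turn must equal $\Lambda(\pi)$ because both semigroups share the same duality pairing (pairing $\pi$ against $h$ and differentiating gives $\Lambda(\pi)=\Lambda(h)$). Strict positivity of $h$ follows from irreducibility/positivity of the killed FKPP before fixation (the process started from any $f\in\calB_\ast$ has positive probability of lying in any neighborhood in $\calC_\ast$ at a positive time and of surviving up to any finite time), plus the Feller property from Proposition \ref{prop:Feller_FKPP}.

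For the uniqueness claims in (i), I would argue as follows. Given any right eigenfunction $g\in\calC_b(\calC_\ast;\Rm_{\ge 0})$ with eigenvalue $\lambda'$, pairing against the QSD gives $\lambda'^t\pi(g)=\pi P_t(g)=\lambda^t\pi(g)$, and since $\pi(g)>0$ (here I need $g\not\equiv 0$ and $\pi$ fully supported enough to see it, which follows from $\pi\in\calP(\calC_\ast)$ together with positivity of $g$ on a nonempty open set and irreducibility), we get $\lambda'=\lambda$. Then $g/h$ is a bounded $\lambda$-harmonic function for the $h$-transformed (now honestly Markov) semigroup $P_t^h F:=\lambda^{-t}h^{-1}P_t(hF)$; an ergodicity/coupling statement for the $h$-transform — which is exactly what underlies the convergence in Theorem \ref{T:main1}, or can be extracted from the dual's ergodicity — forces $g/h$ to be constant. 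The extension from $\calC_\ast$ to $\calB_\ast$ uses that the killed FKPP lands in $\calC_\ast$ at any positive time, so $h$ on $\calB_\ast$ is determined by $h_{|\calC_\ast}$ via $h=\lambda^{-t}P_t h$ for any $t>0$; continuity on $\calB_\ast$ then comes from the Feller/smoothing property of $P_t$.

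Part (ii) is then largely soft. For \eqref{AbsTime_1}, write $\Pm_\mu(\tau_{\mathrm{fix}}>t)=\mu P_t(\mathbf{1})=\lambda^t\,\mu(h)\cdot\langle \mu P_t^h, h^{-1}\rangle/\mu(h)$ after the $h$-transform — more cleanly, $\lambda^{-t}\Pm_\mu(\tau_{\mathrm{fix}}>t)=\mu(h)\,\expE^{h}_{\tilde\mu}[1/h(u_t)]$ where $\tilde\mu\propto h\,\mu$ is the $h$-biased law and $\expE^h$ is the honest Markov process; since $P_t^h$ converges to its stationary measure $\tilde\pi\propto h\,\pi$ (this convergence is essentially Theorem \ref{T:main1} rephrased), the right side converges to $\mu(h)\,\tilde\pi(1/h)=\mu(h)/\pi(h)$ after normalizing $\tilde\pi$. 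I would need to justify passing $1/h$ (which is bounded since $h$ is bounded below on the relevant state space — here care is needed because $h$ could approach $0$ near $\{\mathbf 0,\mathbf 1\}$, so I would instead run the argument with $P_{t-s}$ and use that $u_s$ already sits in a ``good'' compact-ish part of $\calC_\ast$ with overwhelming conditional probability, or appeal directly to the general QSD convergence machinery). The lower bound \eqref{eq:lower bound on fixation probability} is immediate: $\Pm_\mu(\tau_{\mathrm{fix}}>t)=\mu P_t\mathbf 1\ge \|h\|_\infty^{-1}\mu P_t h=\|h\|_\infty^{-1}\lambda^t\mu(h)$. The upper bound \eqref{eq:upper bound on fixation probability} with a \emph{uniform} constant is the one genuinely delicate point: one gets $\Pm_\mu(\tau_{\mathrm{fix}}>t)\le C\lambda^t\mu(h)$ for free only if $h$ is bounded below by a positive constant, which it need not be on all of $\calB_\ast$; so I would instead prove it by first establishing it for $t\ge t_0$ from \eqref{AbsTime_1} and a uniform-in-$\mu$ bound on $\mu(h)/\pi(h)$ (using $\mu(h)\le\|h\|_\infty$ and $\pi(h)>0$ fixed), then noting $\Pm_\mu(\tau_{\mathrm{fix}}>t)\le 1\le \lambda^{-t_0}\lambda^t$ for $t<t_0$, and taking $C$ to be the max. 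The uniformity in \eqref{AbsTime_1} itself (needed for the previous sentence) is the real obstacle and would have to be imported from the strengthened, uniform-over-initial-conditions convergence that the duality argument yields — this is the step I expect to be the crux, and it is presumably handled in the main body via the compactness/absorption estimates for the killed dual BCBM.
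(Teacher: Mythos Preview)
Your overall architecture --- construct $h$ from the dual's QSD via the duality pairing, derive the lower bound from the eigenfunction identity $\mu(h)\lambda^t=\mu P_t h\le \|h\|_\infty\,\Pm_\mu(\tau_{\fix}>t)$ --- is correct and matches the paper. The paper takes $h(f)=\int_\chi \calE^z(f)\,\varphi(dz)$ from the unique QSD $\varphi$ of the killed $2$-type BCBM and obtains $\Lambda(h)=\Lambda(\pi)$ via the chain $\Lambda(\pi)=\Lambda(\phi)=\Lambda(\varphi)=\Lambda(h)$ exactly through duality.

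For \eqref{AbsTime_1} and the uniqueness of $h$, your $h$-transform route can be made to work but is less direct than what the paper does, and your worry about $1/h$ being unbounded near $\{\mathbf 0,\mathbf 1\}$ is a real nuisance on that route. The paper never sets up the $h$-transform for the FKPP; instead, \eqref{AbsTime_1} is obtained inside the proof of Theorem~\ref{T:main1}: tightness of $\{\Law_\mu(u_t\mid\tau_{\fix}>t)\}_{t\ge 1}$ produces subsequential limits, and the identity $\lambda^{-t}(\mu P_t)(\calE^z)\to \pi(\calE^z)\,\mu(h)$ (which comes from the dual's Perron--Frobenius convergence in Proposition~\ref{prop:convergence to QSD for 2-type BCBM} transported through Proposition~\ref{prop:killDual}) pins down both the limit measure and the limit of $\lambda^{-t}\Pm_\mu(\tau_{\fix}>t)$ simultaneously via Lemma~\ref{lem:moments determine measures}. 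Uniqueness of $h$ then falls out in one line: for any other eigenfunction $h'>0$ with eigenvalue $\lambda'$, write $(\lambda'/\lambda)^t h'(u)=\expE_u[h'(u_t)\mid\tau_{\fix}>t]\cdot \lambda^{-t}\Pm_u(\tau_{\fix}>t)$ and let $t\to\infty$ using \eqref{E:main1} and \eqref{AbsTime_1}.

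The genuine gap is in the upper bound \eqref{eq:upper bound on fixation probability}. You correctly locate the obstacle --- uniformity in $\mu$ --- but your plan to ``import uniform-over-initial-conditions convergence'' from the dual does not go through: the paper never establishes, and does not need, uniformity of \eqref{AbsTime_1} in $\mu$, and nothing in the total-variation statement \eqref{eq:Perron-Frobenius for dual} hands it to you. The paper's mechanism is different and non-asymptotic: Lemma~\ref{lem:properties of fixed point equation for stochastic FKPP} yields $\epsilon_\beta>0$, independent of $\mu$ and $t$, with $\expE_\mu[h(u_t)\mid\tau_{\fix}>t]\ge \mu(h)\wedge\epsilon_\beta$ for all $t\ge 0$. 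Plugging this into $\mu(h)\lambda^t=\expE_\mu[h(u_t)\mid\tau_{\fix}>t]\,\Pm_\mu(\tau_{\fix}>t)$ gives $\Pm_\mu(\tau_{\fix}>t)\le \max\bigl(1,\|h\|_\infty/\epsilon_\beta\bigr)\lambda^t$ immediately. This Lyapunov-type inequality is driven by the same ingredients (Girsanov comparison to $\beta=0$, the explicit two-particle form of $\varphi^0$ and hence of $h^0$, and the fact that $\Pm_\mu(\tau_{\fix}^0>t)\to 0$ as $\mu(h^0)\to 0$) that power the Schauder fixed-point existence argument, and is the key idea missing from your proposal.
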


\begin{remark}[Discontinuous initial conditions]\rm \label{RK:B_ast}
We allow the initial condition of the stochastic FKPP to belong to the larger space $\calB_{\ast}$ in Theorems \ref{T:main1} and \ref{thm:fixation time}, which is desirable  because  discontinuous functions (like step functions) have been used as the initial condition in the literature and in simulations. Under the uniform norm, $\calC_{\ast}$ is a separable, closed subset of $\calB_{\ast}$.  
As mentioned,  $\P_{u_0}(\tau_{\fix}=0)=1$  if $u_0\in \bar{\textbf{0}} \cup \bar{\textbf{1}}$. On the contrary, if  $u_0\in \calB_{\ast}$, then (i) 
 $\P_{u_0}(\tau_{\fix}>0,\;u_s\in \calC_{\ast} \,\forall s\in (0,\tau_{\fix}))=1$
 and (ii) for all $t>0$, we have $\P_{u_0}(\tau_{\fix}>t)>0$ and $\P_{u_0}(u_s\in \calC_{\ast} \,\forall s\in (0,t]\,\lvert\, \tau_{\fix}>t)=1$, 
In particular, $\Law_{u_0}(u_t\lvert \tau_{\fix}>t)\in \calP(\calC_{\ast})$ for all $t>0$ and $u_0\in \calB_{\ast}$. This implies that the QSD $\pi$ is supported on $\calC_{\ast}$ if it exists.
 \end{remark}

\begin{remark}[Fixation time]\rm \label{Rk:fixationTime}
By \eqref{lambda_t}, under the QSD $\pi$, $\tau_{\rm fix}$ is an exponential variable with rate $\kappa$. For an arbitrary initial distribution $\mu\in \calP(\calB_{\ast})$, the upper bound \eqref{eq:upper bound on fixation probability} implies that all moments of $\tau_{\fix}$ under $\Pm_{\mu}$ are finite. In particular, $\Pm_{\mu}(\tau_{\fix}<\infty)=1$ for all $\mu\in \calP(\calB_{\ast})$. 
\end{remark}

\begin{remark}[Convergence/non-convergence in total variation]\rm
We do not know if the convergence in weak topology in \eqref{E:main1} can be strengthened to convergence in total variation norm, as in Theorem \ref{thm:conv to QSD dual}. However, note that in the SPDE setting, we should not typically expect to obtain convergence in total variation, roughly speaking because distinct measures in infinite-dimensional spaces are typically mutually singular \cite[Section 4.2]{hairer2009introduction}. Finally, we note that \eqref{E:main1} and \eqref{AbsTime_1} imply that  for all $\mu\in\calP(\calB_{\ast})$, 
\begin{equation}\label{Conv_subMkv_FKPP}
    e^{-\kappa_0t}\mu P_t(\cdot)=e^{-\kappa_0t}\Pm_{\mu}(u_t\in \cdot\;,\;\tau_{\fix}>t)\to \frac{\mu(h)}{\pi(h)}\,\pi(\cdot) \quad \text{in } \mathcal{M}_{+}(\calC_*)\quad\text{as}\quad t\rightarrow \infty,
\end{equation}
where $\mathcal{M}_{+}(\calC_*)$ is the space of finite non-negative measures on $\calC_*$, equipped with the weak topology.
\end{remark}

\section{Duality for killed processes}\label{S:duality}

While Theorems \ref{T:main1} and \ref{thm:fixation time}
hold for any  $\beta\in\R$, the $\beta<0$ case immediately follows from the $\beta>0$ case by considering $v:=1-u$, so we may without loss of generality assume that $\beta\in \R_{\geq 0}$. 

The stochastic Fisher-KPP (when $\beta\in \R_{\geq 0}$) enjoys a  moment duality relationship with a system of \textbf{branching coalescing Brownian motions  (BCBM)}.  In this particle system, each particle performs an independent Brownian motion on the circle $\mathbb{S}$ at rate $\alpha$ up to its lifetime, each particle splits into two at rate $\beta$, and 
every (unordered) pair of particles $\{i,j\}$ (where $i\neq j$) coalesce independently at rate $\frac{\gamma}{2\alpha}$ according to their intersection local time $L^{(i,j)}=(L^{(i,j)}_t)_{t\in\R_{\ge 0}}$. For $t\in\R_{\ge 0}$, $L^{(i,j)}_t$ is defined as the local time at 0 of the process $s\mapsto  {\rm d}_{\mathbb{S}}(X^i_s,X^j_s)$ up to time $t$, where ${\rm d}_{\mathbb{S}}$ is the geodesic distance on $\mathbb{S}=\Rm/\Zm$; see 
\eqref{localtime_formal}-\eqref{aslimit_localtime}. At the coalescence time, one of the two particles dies. 

Let $\mathcal{I}_t$ be the set of indices of particles alive at time $t$, and $\bar{X}_t := \{X^a_t:\;a\in \mathcal{I}_t\}$ be the multi-set of their spatial locations (i.e. counting multiplicities and ignoring order). It holds that
 \begin{equation}\label{WFdual}
	\E_{u_0}[D(u_t;\,\bar{x})] = {\bf E}_{\bar{x}}[D(u_0;\,\bar{X}_t)]
	\end{equation}
	for all $t\in\mathbb{R}_{\geq 0}$ and for all initial conditions $u_0\in \mathcal{B}(\mathbb{S};\,[0,1])$  and $\bar{X}_0=\bar{x}\in \mathbb{S}^n/\sim$, where
	\begin{equation}\label{Def:DualFcn}
	  D(f; \bar{x}):= \prod_{i=1}^n \left(1-f(x_i)\right)
	\end{equation}
whenever $\bar{x}=\{x_1,x_2\cdots,x_n\}\in \mathbb{S}^n/\sim$ is a multi-set of $n$ points on the circle equivalent up to permutation of indices,  $\E_{u_0}$ is the  expectation under which $u$ starts at $u_0$, and ${\bf E}_{\bar{x}}$ is the expectation under which $\bar{X}$ starts at $X^{(i,0)}_0=x_i$ for $1\leq i\leq n$. The duality relation \eqref{WFdual} was first stated in \cite{shiga1988stepping}, and  proved in \cite{MR1813840,durrett2016genealogies}. This relation,
together with the existence \cite{athreya1998probability, MR1813840} 
of the BCBM imply weak uniqueness  of the stochastic FKPP; see \cite[Lemma 1]{MR1813840}.

\medskip

Our proofs of  Theorems \ref{T:main1} and \ref{thm:fixation time} shall rely on the key observation that the quasi-stationary properties of the stochastic FKPP  are in some sense dual to the quasi-stationary properties of a 
$2$-type branching-coalescing Brownian motion (BCBM), killed at a  stopping time $\tau_{\partial}$ which we introduce in Definition \ref{Def:killTime_BCBM} below. This connection, based on the duality functions $\{\calE^z\}_{z\in\chi}$ to be introduced in \eqref{eq:F for duality relationship}, will allow us to characterise the principal eigentriple $(\pi,\lambda,h)$ of the stochastic FKPP in terms of that of this killed $2$-type BCBM. 

We expect that the proof strategy developed in this paper may be applied to study the QSDs of other infinite dimensional systems, in particular those stochastic PDEs which possess the moment duality established in \cite{MR1813840}.

\medskip

\subsection{Killed $2$-type moment dual}\label{SS:killedDual}

We consider a BCBM in which each particle is given one of two colors, green and red, together with an index in the sets $\mathbb{N}\times \{\rm green\}$ and $\mathbb{N}\times \{\rm red\}$ respectively, at the beginning and at birth (due to branching). The color of a particle stays the same throughout its lifetime and is the same as its parent. When a coalescence event occurs for a pair of particles with two different colors, the red particle disappears while the green particle stays alive.

\begin{definition}[$2$-type branching coalescing Brownian motions]\rm \label{Def:2BCBM}
Let $(\calG_t)_{t\geq 0}$ and $(\calR_t)_{t\geq 0}$ be the index sets of green  particles and red particles respectively, which are alive at each time $t\geq 0$. They are c\`adl\`ag processes taking values in the space of finite subsets of $\mathbb{N}\times \{\rm green\}$ and $\mathbb{N}\times \{\rm red\}$ respectively.
For each $t\in\R_{\ge 0}$ and $i\in \calG_t\cup \calR_t$, we denote by $X^i_t$  the location of the particle with index $i$ at time $t$. The process $\{X^i\}$ evolves according to the following Markovian dynamics:
\begin{enumerate}
    \item Between its birth time and the time it is killed, each $X^i_t$ evolves as an independent Brownian motion on $\mathbb{S}$ of rate $\alpha$, so that $dX^i_t=\sqrt{\alpha}dB^i_t$ for some independent Brownian motion $B^i_t$.
    \item Each particle $X^i_t$ gives birth at rate $\beta$, to a child which has the same colour and is born at the same location. Thus, if $i\in \calG_{t-}$ (respectively $i\in \calR_{t-}$) gives birth to a child, we add a new index $j$ to $\calG_t$ (respectively $\calR_t$), and define $X^j_t:=X^i_{t-}$.
    \item For $i,j\in \calG_t\cup \calR_t$ ($i\neq j$) we denote by $L^{(i,j)}_t$  the intersection local time of $X^i$ and $X^j$ accumulated during the interval $[0,t]$. 
        Then every (unordered) pair of particles $\{X^i,\,X^j\}$ coalesce with rate $\frac{\gamma}{2\alpha}$ according to their intersection local time as follows:
\begin{itemize}
\item[(a)] If $i\in \calG_t$, then $X^i$ is killed at rate 
    \[
    \frac{\gamma}{4\alpha} \sum_{\substack{j\in \calG_t\\j\neq i}}dL^{(i,j)}_t. 
    \]
\item[(b)]  If $i\in\calR_t$, then $X^i$ is killed at rate 
    \[
     \frac{\gamma}{4\alpha} \sum_{\substack{j\in \calR_t\\j\neq i}}dL^{(i,j)}_t+\frac{\gamma}{2\alpha} \sum_{\substack{j\in \calG_t\\j\neq i}}dL^{(i,j)}_t. 
    \]
\end{itemize} 
\end{enumerate}
\noindent
We let $G_t:=\{X^i_t:i\in \calG_t\}$ and $R_t:=\{X^i_t:i\in \calR_t\}$ be respectively the sets of locations of the green particles  and the red particles 
at time $t$. We let $Z_t:=(G_t, R_t)$ and call the process $(Z_t)_{t\in \R_{\geq 0}}$  the \textbf{2-type BCBM} in this paper. 
 \end{definition}

\smallskip
 
The Markov process $(Z_t)_{t\in \R_{\geq 0}}$ exists by \cite{athreya1998probability}. Clearly, $(G_t\cup R_t)_{t\in \R_{\ge 0}}\eqd (\bar{X}_t)_{t\in \R_{\ge 0}}$, where $\bar{X}_t$ is the set of locations of all the particles in \eqref{WFdual}. That is, ignoring the color reduces the system to the usual 1-type BCBM. In particular, the total coalescence rate of the 2-type BCBM is the same as that of the usual 1-type BCBM (and is given by \eqref{TotalCoaRate}).

Our new observation here is that the quasi-stationary behavior of the stochastic FKPP is intimately related to that of the 2-type BCBM prior to the stopping time $\tau_{\partial}$ defined in Definition \ref{Def:killTime_BCBM} below.
We declare this $2$-type particle system to be ``killed'' when there are no more red particles, and
consider the QSD of this $2$-type particle system prior to be the ``killing time" $\tau_{\partial}$.
For $t\geq \taud$, there are no red particles while the green particles continue to evolve as a system of branching-coalescing Brownian motions. See \textbf{Figure \ref{Fig:2BCBM}} for an illustration.

\FloatBarrier

\begin{figure}
\centering
\includegraphics[scale=0.5]{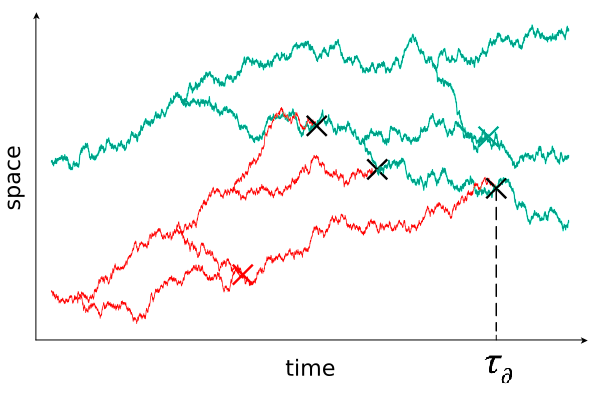}
\caption{A trajectory of the 2-type branching-coalescing Brownain motions (2-type BCBM) starting with one green particle and one red particle, where $\tau_{\partial}$ is the first time when all red particles die off. 
}
\end{figure}\label{Fig:2BCBM}


Indeed, since red particles cannot reappear once they disappear, $\{(G,R):R=\emptyset\}$ is a cemetery set. The $2$-type BCBM with the absorption time $\taud$ therefore defines an absorbed (or killed) Markov process, which we call the \textbf{killed $2$-type BCBM} and denote as $(Z_t)_{0\leq t<\taud}$. We will always assume that the 2-type BCBCM starts with at least one green particle and at least one red particle. Then 
the killed 2-type BCBM prior to killing has state space 
\begin{equation}\label{eq:state space for BCBM}
\chi:=(\cup_{n\geq 1}\bfS^n/\sim)\times(\cup_{m\geq 1}\bfS^m/\sim),
\end{equation}
where $\sim$ is the equivalence relationship on $\bfS^n$ such that
$x=(x_1,\ldots,x_n)\sim x'=(x'_1,\ldots,x'_n)$ if $x$ can be obtained from $x'$ by permuting the coordinates.

\begin{definition}\rm \label{Def:killTime_BCBM}
We consider the first time that all the red particles are killed, namely
\[
\tau_{\partial}:=\inf\{t\geq 0:\, R_t=\emptyset \},
\]
and we call it the \textbf{killing time} of the 2-type BCBM. This then defines the killed $2$-type BCBM $(Z_t)_{0\leq t<\taud}$. A Borel probability measure, $\varphi\in \calP(\chi)$, is called a \textbf{quasi-stationary distribution (QSD)} for the killed $2$-type BCBM $(Z_t)_{0\leq t<\taud}$  if
\begin{equation}
{\bf P}_{\varphi}(Z_t \in \,\cdot\;|\,\taud>t)\,=\,\varphi(\cdot)\quad \text{for}\quad t\in\R_{\geq 0}.
\end{equation}
 \end{definition}

\medskip

Next we will define a family of functions $\{\calE^z\}_{z\in\chi}$ which will serve as the dual functions between the killed processes, and which is large enough to characterise measures on $\calC_{\ast}$.
For $f\in \calB_{\ast}$ and $z=(x,y)=((x_1,\ldots,x_n),(y_1,\ldots,y_m))\in \chi$, we define
\begin{align}
    \calE(f,z):=&\,D(f; x)-D(f; (x,y))
   =\, \Big[\prod_{i=1}^n(1-f(x_i))\Big]\Big[1-\prod_{j=1}^m(1-f(y_j))\Big], \label{eq:F for duality relationship}
\end{align}
where the function $D(f; \bar{x}):= \prod_{i=1}^n \left(1-f(x_i)\right)$ was defined in \eqref{Def:DualFcn}.
We further define 
\[
\calE^{z}(f):=\calE(f,z).
\]

We will apply Lemma \ref{lem:moments determine measures} below  to use  $\{\calE^z\}_{z\in\chi}$ to characterize the QSD of the stochastic FKPP. 
\begin{lemma}\label{lem:moments determine measures} 
Suppose that $\mu_1$ and $\mu_2$ are finite non-negative measures on $\calC_{\ast}$ such that 
$\mu_1(\calE^{z})=\mu_2(\calE^{z})$ for all $z\in \chi$. Then $\mu_1=\mu_2$.
\end{lemma}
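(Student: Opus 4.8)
\textbf{Proof proposal for Lemma \ref{lem:moments determine measures}.}

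The plan is to reduce the statement to a statement about ordinary moment problems on a bounded set, using the product structure of the functions $\calE^z$. First I would observe that for any fixed finite multi-set $\bar x=\{x_1,\dots,x_n\}\subset\bfS$, the map $f\mapsto D(f;\bar x)=\prod_{i=1}^n(1-f(x_i))$ is itself of the form $\calE^z$ only after subtracting a second block; but the \emph{differences} $\calE(f,(x,y)) = D(f;x) - D(f;(x,y))$ let us recover all the $D(f;\cdot)$'s by a telescoping/induction argument. Concretely, fixing the green block $x$ and letting the red block $y$ vary, $\calE(f,(x,y)) = D(f;x)\,[1-D(f;y)]$, so knowing $\mu_1(\calE^{(x,y)})=\mu_2(\calE^{(x,y)})$ for all $y$ is the same as knowing $\mu_i\big(D(\cdot;x)\big) - \mu_i\big(D(\cdot;x)\,D(\cdot;y)\big)$ agree for all $y$. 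Taking $y$ to range over multi-sets and using that $D(f;x)D(f;y)=D(f;(x,y))$, one gets a linear system which, together with the $n=1$ (single green point) base case, should let me conclude by induction on the total number of points that $\mu_1(D(\cdot;\bar x)) = \mu_2(D(\cdot;\bar x))$ for \emph{every} finite multi-set $\bar x$ — provided I can handle the ``constant term'': note $\calE^z$ vanishes on $\textbf 0$ and on $\textbf 1$, which is exactly why we need $\mu_i$ supported on $\calC_\ast$ and why we cannot hope to recover the total masses unless they're pinned down; but in fact taking $\bar x$ a single point $x$ with $f(x)$ ranging shows $\mu_1(1-f(x)) = \mu_2(1-f(x))$ only up to the ambiguity at the absorbing states — I will need to track this carefully and use that on $\calC_\ast$ these contribute measure zero issues are absent because $\textbf 0,\textbf 1\notin\calC_\ast$.

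The second and main step is to upgrade ``$\mu_1$ and $\mu_2$ have the same $D(\cdot;\bar x)$-integrals for all finite multi-sets $\bar x$'' to ``$\mu_1=\mu_2$''. The quantities $\mu_i(D(\cdot;\bar x)) = \int \prod_{k=1}^n (1-f(x_k))\,\mu_i(df)$ are precisely the \emph{mixed moments} of the random field $f$ under $\mu_i$, evaluated at arbitrary finite collections of spatial points; equivalently, they are the (joint) moments of the pushforward of $\mu_i$ under the evaluation maps $f\mapsto (f(x_1),\dots,f(x_n))\in[0,1]^n$. Since these live in the compact cube $[0,1]^n$, the classical multivariate Hausdorff moment problem is determinate: the mixed moments $\E\big[\prod (1-f(x_k))\big]$, as we also let repetitions among the $x_k$ vary, determine all joint moments $\E\big[\prod f(x_k)\big]$ and hence the law of $(f(x_1),\dots,f(x_n))$ uniquely. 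Therefore $\mu_1$ and $\mu_2$ have the same finite-dimensional distributions. A probability (or finite) measure on $\calC(\bfS;[0,1])$ — with its Borel $\sigma$-algebra, which is generated by the evaluation maps since $\calC_\ast$ is a separable metric space — is determined by its finite-dimensional distributions, so $\mu_1=\mu_2$. I would phrase the last sentence via Dynkin's $\pi$--$\lambda$ theorem: cylinder sets form a $\pi$-system generating the Borel $\sigma$-algebra of $\calC_\ast$, and $\mu_1,\mu_2$ agree on them.

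There is a subtlety I should address head-on: I want the \emph{total masses} to match too, since $\mu_1,\mu_2$ are only assumed finite, not probability, measures. From $\calE^z$ alone I only see the $D(\cdot;\bar x)$'s for $\bar x$ nonempty (the construction forces $n\ge 1$ and $m\ge 1$). But $\mu_i(\calC_\ast) = \lim \mu_i(D(\cdot;\{x\}))$? No — that is false in general. Instead I will argue that matching all $D(\cdot;\bar x)$ for $\bar x$ of size $\ge 1$ already forces the finite-dimensional distributions to agree \emph{up to an atom at} $f\equiv 0$, because $D(f;\bar x)$ is insensitive to adding mass at $\textbf 0$... wait, $D(\textbf 0;\bar x)=1\neq 0$, so actually $D$ \emph{does} see $\textbf 0$; it is $\calE^z$ that vanishes there. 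Re-examining: $\calE(f,(x,y))$ vanishes iff $D(f;x)=0$ or $D(f;y)=1$, i.e. iff $f(x_i)=1$ for some $i$ or $f(y_j)=0$ for all $j$. So the ambiguity is a joint one and the clean fix is: first use a \emph{single green point and single red point}, $\calE^{((x),(y))}(f) = (1-f(x))f(y)$, whose $\mu_i$-integrals agree for all $x,y\in\bfS$; letting $x=y$ gives $\mu_i(f(1-f))$ agree, and more generally this pins down $\mu_i\big((1-f(x))f(y)\big)$, then build up products by adding points one at a time, each time multiplying by a factor $(1-f(x_{new}))$ on the green side or by $f(y_{new})$ on the red side — giving all mixed moments of the form $\E[\prod(1-f(x_i))\prod f(y_j)]$ with at least one factor of each type. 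These suffice to determine the joint law of $(f(z_1),\dots,f(z_k))$ on $\{(t_1,\dots,t_k): \text{not all }t_i\in\{0,1\}\}$, and since $\mu_i$ is supported on $\calC_\ast$ — where no $f$ is everywhere $0$ or everywhere $1$ — a short limiting argument (take $k$ large, points dense) recovers the full measure. I expect the bookkeeping around these degenerate configurations to be the one genuinely fiddly part; the moment-determinacy and the $\pi$--$\lambda$ step are standard.
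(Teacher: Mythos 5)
Your proposal reaches the right destination (moment determinacy on cubes plus Dynkin's $\pi$--$\lambda$ theorem) and you correctly sense that the support hypothesis on $\calC_{\ast}$ must do real work, but the first step --- recovering moments from the $\calE^z$'s --- is where it wobbles, and you leave exactly that part ("the bookkeeping around these degenerate configurations") deferred. Your first attempt (induction/linear system in the expansion $\calE(f,(x,y))=D(f;x)-D(f;(x,y))$) is underdetermined, since each new equation introduces a new unknown $\mu_i(D(\cdot;(x,y)))$, and you abandon it. Your replacement --- inclusion--exclusion to obtain the mixed moments $\int\prod_i(1-f(x_i))\prod_j f(y_j)\,\mu(df)$ with at least one factor of each type --- is algebraically sound, but the span of such monomials over a fixed finite point set $\{w_1,\dots,w_k\}$ is only the ideal of polynomials vanishing at the two corners $(0,\dots,0)$ and $(1,\dots,1)$ of $[0,1]^k$; so the finite-dimensional laws are pinned down only up to a possible discrepancy concentrated at those two corners, and one still has to argue this discrepancy vanishes, which is the actual crux and is not worked out.

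The simplification you are missing, and the one place the paper uses the hypothesis that $\mu_1,\mu_2$ live on $\calC_{\ast}$, is to take the red block $y=(y_1,\dots,y_m)$ to be a \emph{dense sequence} in $\bfS$ and let $m\to\infty$. For every $f\in\calC_{\ast}$, $f$ is continuous and not identically $\textbf{0}$, hence $f>0$ on some open set, hence $\prod_{j\le m}(1-f(y_j))\to 0$, i.e.\ $\calE(f,(x,y))\to D(f;x)$ pointwise as $m\to\infty$. Dominated convergence then gives $\mu_1(D(\cdot;\bar x))=\mu_2(D(\cdot;\bar x))$ for every nonempty multi-set $\bar x$ directly, with no inclusion--exclusion. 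From there the mixed moments of $(1-f(w_1),\dots,1-f(w_k))$ all agree, boundedness (Hausdorff/Carleman) gives agreement of the finite-dimensional laws, and $\pi$--$\lambda$ finishes, as in your last step. Your route can in principle be salvaged --- one replaces the inclusion--exclusion by this limit and, if one insists on your mixed-moment formulation, still needs a density argument along a growing nested point set to kill the corner mass, using that $\textbf{0},\textbf{1}\notin\calC_{\ast}$ --- but as sketched the proposal stops short of a proof at precisely the step you flag as fiddly.
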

The proof of this self-contained lemma is given in the Appendix.

\begin{prop}(Duality between killed processes.) \label{prop:killDual}
Let $\alpha,\gamma\in(0,\infty)$ and $\beta\in\R_{\geq 0}$ be fixed constants. Let $u$ be the  stochastic FKPP and $Z$ be the  2-type BCBM corresponding to these constants. It holds that for all $f\in \calB_{\ast}$, $z\in \chi$ and $t\in \R_{\geq 0}$, 
    \begin{equation}\label{eq:moment duality relationship for 2-type results}
        \expE_{f}[\calE^z(u_t)\Ind(\tau_{\fix}>t)]={\bf E}_{z}[\calE^{\,Z_t}(f)\,\Ind(\taud>t)],
    \end{equation}
where  $\calE:\,\calB_{\ast}\times \chi\to [0,1]$ is  the function defined  in \eqref{eq:F for duality relationship}.
    In other words, $(P_t\calE^{z})(f)=\left(Q_t\calE^{\bullet}(f)\right)(z)$, where $\{Q_t\}$ is the 
    sub-Markovian transition semigroup of the killed 2-type BCBM,
    acting on the function $z\mapsto \calE^{z}(f)$, and $\{P_t\}$ is the 
    sub-Markovian transition semigroup of the killed stochastic FKPP.
\end{prop}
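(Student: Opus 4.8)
The plan is to bootstrap from the known unkilled moment duality \eqref{WFdual} by realising the two one-type BCBMs it produces inside a single copy of the two-type BCBM of Definition~\ref{Def:2BCBM}; the killing on both sides then falls out automatically from the structure of $\calE^z$ and of the cemetery $\{R=\emptyset\}$.

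First I would dispose of the indicator on the left-hand side of \eqref{eq:moment duality relationship for 2-type results}. Since $D(\mathbf 0;\cdot)\equiv 1$ and $D(\mathbf 1;\cdot)\equiv 0$, formula \eqref{eq:F for duality relationship} gives $\calE^z(\mathbf 0)=\calE^z(\mathbf 1)=0$ for every $z\in\chi$; as $\mathbf 0$ and $\mathbf 1$ are absorbing for $u$, the variable $\calE^z(u_t)$ vanishes on the event $\{\tau_{\fix}\le t\}$, and hence $\expE_f[\calE^z(u_t)\Ind(\tau_{\fix}>t)]=\expE_f[\calE^z(u_t)]$.

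Next, for $z=(x,y)\in\chi$ I would expand $\calE^z(u_t)=D(u_t;x)-D(u_t;(x,y))$ and apply \eqref{WFdual} to each term, obtaining $\expE_f[D(u_t;x)]={\bf E}_{x}[D(f;\bar X_t)]$ for the one-type BCBM started from the multiset $x$ and $\expE_f[D(u_t;(x,y))]={\bf E}_{x\cup y}[D(f;\bar X_t)]$ for the one-type BCBM started from $x\cup y$. The crucial step is to put both of these on the same probability space: run the two-type BCBM $Z=(G,R)$ with the green particles initially at $x$ and the red particles initially at $y$. Ignoring colours, $(G_t\cup R_t)_{t\ge 0}$ is a one-type BCBM started from $x\cup y$, as recorded after Definition~\ref{Def:2BCBM}. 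And the green subsystem $(G_t)_{t\ge 0}$ is \emph{itself} a one-type BCBM started from $x$: by the rules of Definition~\ref{Def:2BCBM} a green particle branches into a green particle and is killed only on account of other green particles, each unordered green pair $\{i,j\}$ accumulating killing at the combined rate $\tfrac{\gamma}{4\alpha}dL^{(i,j)}_t+\tfrac{\gamma}{4\alpha}dL^{(j,i)}_t=\tfrac{\gamma}{2\alpha}dL^{(i,j)}_t$, so the green particles form an autonomous Markov process with exactly the one-type BCBM dynamics. Hence $\expE_f[\calE^z(u_t)]={\bf E}_z\big[D(f;G_t)-D(f;G_t\cup R_t)\big]$.

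Finally, using $D(f;G_t\cup R_t)=D(f;G_t)\,D(f;R_t)$, the integrand is $D(f;G_t)\big(1-D(f;R_t)\big)$, which for $R_t\neq\emptyset$ is precisely $\calE(f,(G_t,R_t))=\calE^{Z_t}(f)$ and for $R_t=\emptyset$ (empty product $D(f;R_t)=1$) equals $0$; that is, it equals $\calE^{Z_t}(f)\Ind(\taud>t)$. Taking ${\bf E}_z$ gives \eqref{eq:moment duality relationship for 2-type results}, and the semigroup reformulation $(P_t\calE^z)(f)=(Q_t\calE^{\bullet}(f))(z)$ is then immediate from the definitions of $P_t$ and $Q_t$. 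I expect the one real obstacle to be the middle step, specifically justifying that the green marginal of the two-type BCBM is distributed as a one-type BCBM so that the single expectation ${\bf E}_z[\cdot]$ is legitimate; this is a routine but unavoidable accounting of the coalescence rates (the $\tfrac14$ versus $\tfrac12$ factors and the green/red asymmetry in Definition~\ref{Def:2BCBM}), with everything else reducing to the unkilled duality and the vanishing of $\calE^z$ on the boundary.
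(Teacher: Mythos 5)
Your proposal is correct and is essentially the paper's own argument: both apply the unkilled moment duality \eqref{WFdual} twice—once to the colour-blind system $G_t\cup R_t$ started from $x\cup y$ and once to the autonomous green subsystem $G_t$ started from $x$—and then use the algebraic identity $D(f;G_t)-D(f;G_t\cup R_t)=D(f;G_t)\bigl(1-D(f;R_t)\bigr)$ together with the vanishing of $\calE$ at the absorbing/cemetery states to insert the two indicators. The rate bookkeeping you flag as the "one real obstacle" is exactly the observation the paper records after Definition~\ref{Def:2BCBM} (green particles are unaffected by red ones and coalesce at the same pairwise rate $\tfrac{\gamma}{2\alpha}$), so there is no gap.
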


\begin{proof}[Proof of Proposition \ref{prop:killDual}]
We take $u_0\in \mathcal{B}(\mathbb{S};\,[0,1])$ and $z\in \chi$ fixed and arbitrary. The process $(Z_t)_{t\in\R_{\ge 0}}$ is the usual ($1$-type) system of BCBMs starting from $z$, if we ignore the color. Similarly we define $(u_t)_{t\in \Rm_{\ge 0}}$ to be the stochastic FKPP, defined for all time (so defined for $t\geq \tau_{\fix}$). Therefore we have 
\begin{equation}\label{Dual2type_1}
\E_{u_0}[D(u_t,\,z)] 
\stackrel{ (\ref{WFdual})}{=}
{\bf E}_{z}[D(u_0,\,Z_t)].
\end{equation}
On the other hand, the subset $G_t$ also constitutes a ($1$-type) system of BCBMs that is \textit{not affected by the red particles}, hence
\begin{equation}\label{Dual2type_2}
\E_{u_0}[D(u_t,\,g)]
\stackrel{ (\ref{WFdual})}{=}
{\bf E}_{g}[D(u_0;\,G_t)] = {\bf E}_{(g,r)}[D(u_0;\,G_t)].
\end{equation}
Subtracting \eqref{Dual2type_1} from \eqref{Dual2type_2} and recalling \eqref{eq:F for duality relationship}, we obtain that for all $z\in \chi$ we have
\begin{equation}\label{eq:2-type duality without stopping time yet}
    \E_{u_0}[\calE(u_t,\,z)]=  {\bf E}_{z}[\calE(u_0;\,Z_t)].
\end{equation}

If $t\geq \taud$, then $\calE(u_0,Z_t)=0$ for all $u_0$ because $G_t=Z_t$. On the other hand, if $t\geq \tau_{\fix}$, then  $\calE(u_t,z)=0$ for all $z\in\chi$ since $u_t\equiv 0$ or $u_t\equiv 1$.  Therefore, \eqref{eq:moment duality relationship for 2-type results} 
follows from \eqref{eq:2-type duality without stopping time yet}.
\end{proof}

We define the sub-Markovian transition semigroup $(Q_t)_{t\in \R_{\geq 0}}$  
of the killed $2$-type BCBM in the same manner as in the definition for the stochastic FKPP in \eqref{eq:submarktranssemigroupFKPP}, namely 
\begin{equation}
\begin{split}
&Q_t(z,\cdot):=\,{\bf P}_{z}(Z_t\in \cdot,\taud>t),\quad  z\in \chi,\\  
&\mu Q_t(\cdot):=\,{\bf P}_{\mu}(Z_t\in \cdot,\taud>t),\quad \mu\in \calP(\chi),\\ &Q_tf(z):=\,{\bf E}_{z}[f(Z_t)\Ind(\taud>t)],\quad f\in \calB_b(\chi).
\end{split}
\end{equation}
Then as with the stochastic FKPP,  $\varphi\in\calP(\chi)$ is a quasi-stationary distribution for the killed $2$-type BCBM if and only if it is a left eigenmeasure for $(Q_t)_{t\geq 0}$. As with \eqref{eq:eigentriple stochastic FKPP}, we write $\Lambda(\varphi)$ and $\Lambda(\phi)$ respectively for the eigenvalues of a QSD $\varphi$ and a right eigenfunction $\phi$, so that
\begin{equation}
\varphi Q_t=(\Lambda(\varphi))^t\varphi,\quad Q_t\phi=(\Lambda(\phi))^t\phi,\quad t\geq 0.
\end{equation}

Analogously to Theorems \ref{T:main1} and \ref{thm:fixation time} for the stochastic FKPP, we have Theorems \ref{thm:conv to QSD dual} and \ref{thm:right efn of dual} respectively for the dual process.
\begin{thm}\label{thm:conv to QSD dual}
The killed $2$-type BCBM $(Z_t)_{0\leq t<\taud}$ has a unique quasi-stationary distribution $\varphi\in \calP(\chi)$. Furthermore, we have the  convergence
\begin{equation}\label{Converge_BCBM_QSD}
{\bf P}_{\nu}(Z_t \in \,\cdot\;|\,\taud>t)\,\to \,\varphi(\cdot)\quad\text{in total variation}
\end{equation}
as $t\to\infty$, for any initial condition $\nu\in \calP(\chi)$.
\end{thm}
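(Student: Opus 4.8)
The plan is to verify the hypotheses of the general criterion of Champagnat--Villemonais (as in \cite{champagnat2016exponential,champagnat2023general}) for exponential convergence to a unique quasi-stationary distribution, applied to the killed $2$-type BCBM. The key structural fact I would exploit is that, away from the boundary, the number of particles is stochastically non-increasing on coalescence and non-decreasing on branching, but the killing mechanism (extinction of the red type) forces a ``bottleneck'': in order to be killed the red population must be reduced to one particle and then that particle must coalesce with a green one. I would first reduce to the case $\beta\ge 0$ as already noted in the excerpt, and treat the two regimes $\beta=0$ and $\beta>0$ in turn, since for $\beta=0$ the process is genuinely finite-dimensional (the particle number is non-increasing) whereas for $\beta>0$ one must control the growth of particle numbers.

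First I would establish a ``coming down from infinity'' type estimate: for the $1$-type BCBM the total coalescence rate grows superlinearly in the number of particles, so starting from any configuration the process enters a fixed bounded set of configurations (say, at most $N_0$ particles all within the circle) in bounded expected time, uniformly in the starting point; this is where one uses the intersection-local-time coalescence rate $\tfrac{\gamma}{2\alpha}$ and the compactness of $\mathbb{S}$. Combined with this, I would prove a uniform lower bound on the probability that, from any configuration in this bounded set, within one unit of time the red population is reduced to exactly one particle which then coalesces with a green particle, i.e. $\inf_{z}{\bf P}_z(\taud \le 1, Z_{1}\in K) $ compared against $\sup_z {\bf P}_z(\taud > 1)$ — this yields condition (A1) (a Doeblin-type minorization conditional on survival) on a suitable compact-ish set $K\subset\chi$, and the entrance estimate yields condition (A2) (the bounded set is reached with controlled probability relative to non-absorption). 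For $\beta>0$ the extra input is a Foster--Lyapunov function — e.g. a function growing in the number of red particles and in a measure of how spread out / how numerous the particles are — to show the process does not escape to ``infinitely many particles'' faster than it gets killed; the branching-coalescing balance and the fact that two green particles still coalesce after $\taud$ is irrelevant here since we only track the killed process.

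Once (A1)--(A2) hold, the Champagnat--Villemonais theorem delivers simultaneously the existence and uniqueness of the QSD $\varphi\in\calP(\chi)$ and exponential convergence in total variation, $\sup_{\nu}\|{\bf P}_\nu(Z_t\in\cdot\mid\taud>t)-\varphi\|_{\TV}\le Ce^{-ct}$, which is stronger than (and in particular implies) the claimed convergence \eqref{Converge_BCBM_QSD} for every initial $\nu\in\calP(\chi)$; it also yields a bounded positive right eigenfunction, which is what Theorem \ref{thm:right efn of dual} will need. I expect the main obstacle to be the two uniformity statements in the $\beta>0$ case: proving the entrance-from-infinity bound is delicate because ``infinity'' in $\chi$ means unbounded particle number, and one must show the coalescence rate dominates the branching rate strongly enough, uniformly in the (compact) spatial positions; and proving the minorization (A1) requires a quantitative lower bound on a multi-step event (reduce red to one, then red--green coalescence) that is uniform over the bounded set of configurations — here the continuity/Feller property of the BCBM and an explicit construction of a favorable Brownian path bringing the last red particle onto a green one, accumulating enough local time, will be needed.
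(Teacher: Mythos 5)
You are proposing a direct application of Champagnat--Villemonais to the killed $2$-type BCBM; the paper does something structurally different. The paper's proof of this theorem (Proposition \ref{prop:convergence to QSD for 2-type BCBM}) is \emph{not} self-contained to the BCBM: it begins by taking the right eigenfunction $\phi^{\beta}\in\calC_b(\chi;\Rm_{>0})$, whose \emph{existence} is itself an output of the FKPP side of the duality. Concretely, the QSD $\pi^{\beta}$ of the stochastic FKPP is obtained first (via the Schauder fixed-point argument of Proposition \ref{prop:tightness proposition}, bootstrapped by Girsanov from $\beta=0$ where the dual is two-particle CBM, Proposition \ref{prop:existence of QSD for FKPP beta=0}); Proposition \ref{prop:duality relationship for quasi-stationarity}(i) then produces $\phi^{\beta}(z)=\pi^{\beta}(\calE^z)$. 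With $\phi^{\beta}$ in hand, the paper Doob-transforms $Q_t$ into a conservative Markov kernel $\overline{Q}_t=\lambda^{-t}\phi^{-1}Q_t(\phi\,\cdot)$ and applies Harris' ergodic theorem (in the Hairer--Mattingly form) to the $Q$-process, with Lyapunov function $V=N/\phi$ and minorization via a parabolic Harnack estimate on the two-particle stratum $\chi_2$. Convergence of $\overline{Q}$ is then undone by dividing out $\phi$ to get \eqref{Converge_BCBM_QSD}. So the paper's route trades the need to verify a sub-eigenfunction-type lower bound (which your (A1)--(A2)/(F2)-type condition would require) for the availability of the \emph{actual} eigenfunction, delivered by the duality. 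This is not incidental: $\phi^{\beta}$ is needed independently for Theorems \ref{thm:right efn of dual} and \ref{thm:fixation time}, and the $\phi$-renormalization is also what makes the escape-to-infinity control (your second uniformity concern) tractable, since it collapses into the drift inequality $\overline{Q}_1 V\le\tfrac12 V+C$ using only the particle-number estimate of Lemma \ref{L:supx_nt}.

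As to whether your direct Champagnat--Villemonais route is viable: in outline, yes, but two of your ingredients are underspecified in ways that matter. First, the plain Doeblin-type condition (A1) uniformly over \emph{all} of $\chi$ will fail because $\chi$ has unbounded particle number, so you would in fact need the Lyapunov-augmented conditions (the (E)/(F) criteria of \cite{champagnat2023general}), which include the requirement of a function $\psi_2$ satisfying $Q_{n_2}\psi_2\ge\theta_2^{n_2}\psi_2$ with $\theta_2$ strictly larger than your Lyapunov decay rate. Constructing such a $\psi_2$ is precisely the hard part; it is a surrogate for the right eigenfunction, and the paper's whole architecture is designed to manufacture that object via the FKPP. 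You should also reconsider your proposed Lyapunov function: a function ``growing in the number of red particles'' is aimed at the wrong escape direction — the relevant non-compactness of $\chi$ is the total particle count $N$ (red and green alike, since branching occurs for both colors), and it is the quadratic-in-$N$ coalescence versus linear-in-$N$ branching that gives the drift condition. The paper's $V=N/\phi$ and Lemma \ref{L:supx_nt} codify exactly this. In sum: your proposal is a genuinely different and not implausible route, but the existence of the sub-eigenfunction needed for the lower-bound half of the Champagnat--Villemonais conditions is not established, and the paper sidesteps that problem entirely by importing $\phi^{\beta}$ from the dual.
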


Recall that $\mathcal{C}_b(\chi;\Rm_{>0})$ is the space of
bounded, continuous, everywhere strictly positive function on $\chi$.
\begin{thm}\label{thm:right efn of dual}
There exists $\phi\in \mathcal{C}_b(\chi;\Rm_{>0})$ which is a right eigenfunction of the killed $2$-type BCBM $(Z_t)_{0\leq t<\taud}$ with the same positive eigenvalue as the unique QSD, $\Lambda(\phi)=\Lambda(\varphi)\in (0,1)$. Moreover $\phi$ is the unique (up to constant multiple) right eigenfunction of $(Z_t)_{0\leq t<\taud}$ in $\calB_b(\chi;\Rm_{> 0})$. Furthermore the right eigenfunction $\phi$ and eigenvalue $\lambda:=\Lambda(\phi)$ give the leading-order asymptotics of the killing time for any initial condition, in the sense that
\begin{equation}
\lambda^{-t}{\bf P}_{\nu}(\taud>t)\ra \frac{\nu(\phi)}{\varphi(\phi)}
\end{equation}
as $t\to\infty$, for any initial condition $\nu\in \calP(\chi)$.
\end{thm}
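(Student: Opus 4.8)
The plan is to prove Theorem \ref{thm:right efn of dual} by the strategy announced in the introduction: first establish the result for the finite-dimensional case $\beta = 0$ (coalescing Brownian motions with no branching), where the killed $2$-type BCBM is a process on the countable union $\chi = \bigcup_{n,m \geq 1}(\bfS^n/{\sim})\times(\bfS^m/{\sim})$ with each component a compact manifold, and then level up to general $\beta \in \R_{\geq 0}$. For $\beta = 0$, the number of particles is non-increasing, the red count strictly decreases to $0$ by the killing time, and the dynamics on each fixed-cardinality stratum is a nice hypoelliptic diffusion; one expects the killed semigroup $(Q_t)$ to be compact (or at least quasi-compact) on a suitable space, so that a principal right eigenfunction $\phi \in \mathcal{C}_b(\chi;\R_{>0})$ exists with eigenvalue $\lambda = \Lambda(\varphi)$ matching the QSD from Theorem \ref{thm:conv to QSD dual}. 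The natural route here is to verify the Champagnat--Villemonais conditions (as in \cite{champagnat2016exponential,champagnat2023general}): a Lyapunov/Foster-type drift condition controlling the number of particles (coalescence drives cardinality down, so a function like $(|G_t|+|R_t|)$ or a power thereof is a supermartingale-type quantity up to the killed semigroup), together with a local Doeblin/minorization estimate on compact sub-level sets obtained from hypoellipticity and irreducibility of the CBM dynamics. These conditions simultaneously give existence and uniqueness of the QSD, existence and uniqueness (up to scaling) of the right eigenfunction $\phi \in \mathcal{C}_b(\chi;\R_{>0})$, exponential convergence in total variation (which is Theorem \ref{thm:conv to QSD dual}), and the asymptotics $\lambda^{-t}{\bf P}_\nu(\taud > t) \to \nu(\phi)/\varphi(\phi)$.

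The harder part is extending from $\beta = 0$ to $\beta > 0$, where branching makes the number of green particles grow and the state space is genuinely infinite-dimensional in the sense that cardinality is no longer tight under the killed dynamics. Here the key leverage is the duality of Proposition \ref{prop:killDual}: $(P_t \calE^z)(f) = (Q_t \calE^{\bullet}(f))(z)$. The plan is to transfer information between the two killed semigroups. Concretely, I would (i) use the $\beta = 0$ analysis of the \emph{red} subsystem — which is unaffected by branching of greens in the sense that red particles are only removed, never created — to get an \emph{a priori} upper bound ${\bf P}_z(\taud > t) \leq C(z)\lambda^t$ on the killing-time tail, controlling the eigenvalue from above; (ii) build the right eigenfunction $\phi$ as a limit $\phi = \lim_{t\to\infty}\lambda^{-t} Q_t \mathbf{1}$ (or via a fixed-point/compactness argument on the cone of positive functions), using the duality to identify $\lambda$ with $\Lambda(\pi) = \Lambda(h)$ from Theorems \ref{T:main1} and \ref{thm:fixation time} and to relate $\phi$ to integrals of $h$ against the dual functions $\calE^z$; and (iii) establish uniqueness of $\phi$ in $\calB_b(\chi;\R_{>0})$ by a coupling or by showing any such eigenfunction must, via duality and Lemma \ref{lem:moments determine measures}, correspond to the unique QSD $\pi$ of the FKPP side. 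The asymptotic statement $\lambda^{-t}{\bf P}_\nu(\taud > t) \to \nu(\phi)/\varphi(\phi)$ then follows from the combination of convergence to the QSD in Theorem \ref{thm:conv to QSD dual}, domination of the semigroup by $\lambda^t$ times a rank-one projection onto $\phi$, and a dominated-convergence argument using the upper bound from step (i).

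I expect the main obstacle to be controlling the branching in step (i)–(ii): one must rule out that the growth of green particles makes $\lambda^{-t}Q_t$ fail to converge, and one must produce a uniform-in-$\nu$ constant $C$ in the tail bound ${\bf P}_\nu(\taud > t)\leq C\lambda^t$ despite the green count being unbounded. The resolution I anticipate is that the killing mechanism only ``sees'' the red particles and the green--red intersection local time, so the relevant quantity for the killing time is essentially the red subsystem plus the number of greens \emph{in the vicinity of reds}; bounding the latter requires a moment estimate on the BCBM showing that coalescence of greens into reds happens fast enough relative to branching (this is exactly where $\alpha,\gamma > 0$ and the local-time coalescence rate enter), and this is precisely the kind of estimate one can extract by going back through the moment duality to the FKPP side where $u_t \in [0,1]$ gives automatic uniform bounds. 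A secondary technical point is verifying the Feller property and continuity of $\phi$ on $\chi$ — continuity across the "boundaries" between strata of different cardinality is vacuous since $\chi$ is a disjoint union, but continuity within each stratum and joint measurability/boundedness of the limiting eigenfunction needs the Feller property of the (killed) $2$-type BCBM, which I would establish alongside the analogous property for $u^{\rm kill}$ as promised in the contributions paragraph.
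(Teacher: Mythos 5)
Your high-level plan (handle $\beta=0$ first, then leverage duality for general $\beta$, with a Lyapunov/minorization argument somewhere) is in the same spirit as the paper's, but the sequencing you propose has a concrete gap that the paper's route is designed specifically to avoid.

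The main error is in your step (i): the claim that ``red particles are only removed, never created'' is false for $\beta>0$ --- in Definition~\ref{Def:2BCBM}, \emph{every} particle (red or green) branches at rate $\beta$, so reds are created by red branching. Moreover the red subsystem is not autonomous: reds are killed by green--red coalescence, so its evolution depends on the full green population, which itself branches. Consequently there is no clean monotone comparison between the $\beta>0$ and $\beta=0$ red populations, and the a priori tail bound ${\bf P}_z(\taud>t)\leq C\lambda^t$ you want to feed into steps (ii)--(iii) is not available this way. In fact that bound is precisely \eqref{eq:upper bound on fixation probability} from Theorem~\ref{thm:fixation time}, which the paper obtains as a \emph{consequence} of the eigenfunction construction, not as an input --- so your plan has an implicit circularity here.

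The paper's resolution to exactly the obstacle you flag (``branching can make the green count unbounded'') is to cross to the FKPP side much earlier than you do. The sequence is: (a) for $\beta=0$ construct the 2-type CBM QSD on $\bfS\times\bfS$ only (Schauder plus elliptic regularity, Proposition~\ref{prop:existence of QSD for FKPP beta=0}); (b) push it through duality to get an FKPP right eigenfunction $h^0$; (c) use $h^0$ together with Girsanov (Lemma~\ref{L:Girsanov}) as a Lyapunov-type function for the \emph{FKPP} dynamics, which gives tightness and hence, via Schauder again, an FKPP QSD $\pi^\beta$ for all $\beta$ (Proposition~\ref{prop:tightness proposition}); (d) push $\pi^\beta$ through duality to get $\phi^\beta(z)=\pi^\beta(\calE^z)\in\calC_b(\chi;\Rm_{>0})$ for all $\beta$ \emph{before} any BCBM-side ergodic argument; (e) Doob-transform $Q_t$ by $\phi^\beta$ and apply Hairer--Mattingly's Harris theorem to the resulting Markov (not sub-Markov) chain, with Lyapunov function $V=N/\phi^{\beta}$ and the key drift estimate supplied by Lemma~\ref{L:supx_nt} (coalescence is quadratic, branching is linear, so ${\bf E}_{\bar x}[N_t]\leq\epsilon n$ for large $n$). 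Theorem~\ref{thm:right efn of dual} is then a short corollary. Note that the Lyapunov function is $N/\phi^\beta$, not $N$ alone as you suggest --- the division by $\phi^\beta$ is essential because $\phi^\beta$ decays as $N\to\infty$, and without it the drift inequality for the $h$-transformed chain would fail. So the construction of $\phi^\beta$ has to come \emph{first}, via the FKPP, rather than via a limit $\lim_t\lambda^{-t}Q_t\mathbf 1$ whose convergence you would still have to prove. Your observation that ``going back through the moment duality to the FKPP side where $u_t\in[0,1]$ gives automatic uniform bounds'' is exactly the right instinct --- but it should be used to build $\phi^\beta$ in the first place (steps (b)--(d) above), not merely to patch the particle-count estimate.

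Two smaller points. First, your proposed uniqueness argument (``via duality and Lemma~\ref{lem:moments determine measures}, any such $\phi$ must correspond to the unique QSD $\pi$'') does not quite parse: the duality map goes QSD~$\to$~eigenfunction, not the reverse, and Lemma~\ref{lem:moments determine measures} identifies measures, not functions. The paper instead deduces uniqueness of $\phi^\beta$ in $\calB_b(\chi;\Rm_{>0})$ from the total-variation convergence \eqref{Converge_BCBM_QSD}, mirroring the argument in the proof of Theorem~\ref{thm:fixation time}. Second, the Champagnat--Villemonais framework you cite is indeed closely related, but the paper opts for Harris's theorem applied to the $\phi$-Doob transform, which is technically cleaner once $\phi^\beta$ is already in hand.
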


The QSDs and the right eigenfunctions of the two processes (the  stochastic FKPP and the 2-type BCBM) are related as follows. Let $\calE:\,\calB_{\ast}\times \chi\to [0,1]$ be defined in \eqref{eq:F for duality relationship}.
Define the functions
$\Phi:\,\chi\to \R$ and $H:\,\calB_{\ast}\to\R$ by
\label{enum:formula for right efn of dual results}
    \begin{equation}\label{eq:formula for right efn of FKPP in terms of QSD of dual results}
\Phi(z):=\Phi_{\pi}(z):=\int_{\calC_{\ast}}\calE^z(f)\,\pi(df),\qquad z\in\chi
\end{equation}
and
    \begin{equation}\label{eq:formula for right efn of dual in terms of QSD of FKPP results}
   H(f):= H_{\varphi}(f):= \int_{\chi}\calE^z(f)\,\varphi(dz) ,\qquad f\in \calB_{\ast}.
\end{equation}
By Tonelli's theorem,
\begin{equation}\label{eq:duality theorem integral of right efns the same}
\pi(H)=\varphi(\Phi)=\int_{\calC_{\ast}}\int_{\chi}\calE^z(f)\varphi(dz)\pi(df).
\end{equation}

\begin{thm}(QSD and eigenfunction of the dual) \label{theo:duality relationship for quasi-stationarity results}
Let $u=(u_t)_{0\leq t<\tau_{\fix}}$ be the killed stochastic FKPP and  $(Z_t)_{0_{\leq t<\taud}}$ the killed 2-type BCBM corresponding to a given set of constants $\alpha\in(0,\infty)$, $\beta\in\R_{\geq 0}$ and $\gamma\in\R_{\geq 0}$. 
Let $\pi$ and $\varphi$ be the QSDs of $u$ and $Z$ respectively, mentioned in Theorems \ref{T:main1} and \ref{thm:conv to QSD dual}. The following holds:
\begin{enumerate}   
    \item\label{enum:duality theorem right efn BCBM} 
    The function $\Phi$ defined in \eqref{eq:formula for right efn of FKPP in terms of QSD of dual results} belongs to $\calC_b(\chi;\Rm_{>0})$ and is the unique non-negative right eigenfunction (up to constant multiple) of the the killed $2$-type BCBM.
\item The function $H$ defined in \eqref{eq:formula for right efn of dual in terms of QSD of FKPP results} belongs to $\calC_b(\calB_{\ast};\Rm_{>0})$ and is the unique non-negative right eigenfunction (up to constant multiple) of the killed stochastic FKPP.
\item
These QSDs and right eigenfunctions all share the same eigenvalue; that is, 
\begin{equation}\label{eq:duality theorem eigenvalues all the same}
\Lambda(\pi)=\Lambda(H)=\Lambda(\varphi)=\Lambda(\Phi)\in (0,1).
\end{equation}
\end{enumerate}
\end{thm}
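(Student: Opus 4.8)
The plan is to leverage Proposition \ref{prop:killDual} together with the already-established Theorems \ref{T:main1}, \ref{thm:fixation time}, \ref{thm:conv to QSD dual} and \ref{thm:right efn of dual}, so that all of the work is reduced to verifying that $\Phi$ and $H$ are genuine right eigenfunctions with the expected eigenvalues, and then invoking the uniqueness statements. First I would show that $\Phi$ is a right eigenfunction of $(Q_t)_{t\geq 0}$: fixing $z\in\chi$ and applying the Markov property of the killed 2-type BCBM together with Proposition \ref{prop:killDual} in the form $(Q_t\calE^{\bullet}(f))(z) = (P_t\calE^z)(f)$, one computes, using Fubini (justified since $0\leq \calE\leq 1$ and $\pi\in\calP(\calC_\ast)$),
\begin{align*}
(Q_t\Phi)(z) &= {\bf E}_z\Big[\Ind(\taud>t)\int_{\calC_\ast}\calE^{Z_t}(f)\,\pi(df)\Big] = \int_{\calC_\ast}{\bf E}_z[\calE^{Z_t}(f)\Ind(\taud>t)]\,\pi(df)\\
&= \int_{\calC_\ast}\expE_f[\calE^z(u_t)\Ind(\tau_{\fix}>t)]\,\pi(df) = \int_{\calC_\ast}(P_t\calE^z)(f)\,\pi(df).
\end{align*}
Now I use that $\pi$ is a QSD, i.e. a left eigenmeasure of $(P_t)$ with eigenvalue $\Lambda(\pi)$, so $\int_{\calC_\ast}(P_t\calE^z)(f)\,\pi(df) = \pi(P_t\calE^z) = (\pi P_t)(\calE^z) = \Lambda(\pi)^t\,\pi(\calE^z) = \Lambda(\pi)^t\,\Phi(z)$. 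Hence $Q_t\Phi = \Lambda(\pi)^t\Phi$, so $\Phi$ is a right eigenfunction of the killed 2-type BCBM with eigenvalue $\Lambda(\pi)$. The symmetric computation, now using the Markov property of the killed stochastic FKPP and that $\varphi$ is a left eigenmeasure of $(Q_t)$, gives $P_tH = \Lambda(\varphi)^tH$, so $H$ is a right eigenfunction of the killed stochastic FKPP with eigenvalue $\Lambda(\varphi)$.

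Next I must check the regularity and positivity claims so that the uniqueness statements apply. Boundedness of both functions is immediate from $0\leq \calE\leq 1$. For continuity of $\Phi$ on $\chi$: for fixed $f\in\calC_\ast$ the map $z\mapsto \calE^z(f)$ is continuous on $\chi$ (it is a finite product of the continuous functions $1-f(x_i)$, $1-f(y_j)$), and these are uniformly bounded by $1$, so dominated convergence gives continuity of $\Phi$; continuity of $H$ on $\calB_\ast$ in the uniform norm follows the same way, since $f\mapsto \calE^z(f)$ is continuous in $\|\cdot\|_\infty$ for each fixed $z$ (a product of finitely many factors, each $1$-Lipschitz in $f$ after evaluation). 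For strict positivity of $\Phi$: given $z=((x_1,\dots,x_n),(y_1,\dots,y_m))\in\chi$, I need $\pi(\{f:\calE^z(f)>0\})>0$; since $\calE^z(f)>0$ exactly when $f(x_i)<1$ for all $i$ and $f(y_j)<1$ for some $j$, and $\pi$ is supported on $\calC_\ast$, it suffices to argue that $\pi$ cannot be concentrated on the (closed, nowhere-dense-looking) set of $f$ that equal $1$ at some prescribed point or that are identically $1$ on $\{y_1,\dots,y_m\}$ — here I would use that $\pi$ is a QSD so $\pi = \P_\pi(u_t\in\cdot\mid \tau_{\fix}>t)$, together with the regularizing/irreducibility properties of the stochastic FKPP (solutions are strictly between $0$ and $1$ on a dense set of space-time points prior to fixation, cf. Remark \ref{RK:B_ast}) to conclude $\pi(\calE^z>0)>0$; the same reasoning with the roles of the two processes swapped gives $H>0$ on $\calB_\ast$, using that $\varphi$ is supported on $\chi$ and that from any $z\in\chi$ with at least one red particle the killed 2-type BCBM has positive probability of keeping a red particle alive while moving it near any target point. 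I expect this strict-positivity verification to be the main obstacle: it is the one place where I cannot argue purely formally from the eigen-relations and must instead extract a genuine irreducibility/support statement for each killed process. It may be cleanest to phrase it as: $\Phi(z)=\lim_{t}\Lambda(\pi)^{-t}{\bf P}_z(\taud>t)\cdot(\text{const})$ is strictly positive because $\varphi(\Phi)=\pi(H)>0$ and $\Phi$ satisfies $Q_t\Phi=\Lambda^t\Phi$ with $\Phi\geq 0$ — any zero of $\Phi$ would, by the eigen-relation and non-negativity, force $\Phi\equiv 0$ on the set of states reachable from that zero, contradicting $\varphi(\Phi)>0$ provided the killed process is irreducible on $\chi$ (which follows from branching and Brownian motion connecting any configuration to any other while keeping a red particle, before the first coalescence that would empty the red set).

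Finally I assemble the pieces. By part (i) of Theorem \ref{thm:right efn of dual}, the killed 2-type BCBM has a unique (up to scalar) right eigenfunction in $\calC_b(\chi;\R_{>0})$, and its eigenvalue equals $\Lambda(\varphi)$; since $\Phi\in\calC_b(\chi;\R_{>0})$ is such an eigenfunction, $\Phi$ is that unique eigenfunction and $\Lambda(\pi)=\Lambda(\Phi)=\Lambda(\varphi)$. Symmetrically, by Theorem \ref{thm:fixation time}(i), $H\in\calC_b(\calB_\ast;\R_{>0})$ must be the unique right eigenfunction of the killed stochastic FKPP, so $\Lambda(\varphi)=\Lambda(H)=\Lambda(\pi)$, and the common value lies in $(0,1)$ by the cited theorems. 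This proves all three assertions of Theorem \ref{theo:duality relationship for quasi-stationarity results}, with \eqref{eq:duality theorem eigenvalues all the same} being exactly the chain $\Lambda(\pi)=\Lambda(H)=\Lambda(\varphi)=\Lambda(\Phi)\in(0,1)$; equation \eqref{eq:duality theorem integral of right efns the same} was already noted and is consistent with $\pi(H)=\varphi(\Phi)>0$, which is what we used for positivity.
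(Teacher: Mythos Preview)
Your approach is essentially the same as the paper's. The paper's proof of this theorem is a two-line citation of Proposition~\ref{prop:duality relationship for quasi-stationarity} (for the eigenfunction claims in Parts 1--2) and equation~\eqref{E:EqualEigenvalues} in Proposition~\ref{prop:convergence to QSD for 2-type BCBM} (for the eigenvalue equality), and Proposition~\ref{prop:duality relationship for quasi-stationarity} is proved by exactly the Fubini/duality computation you wrote out; uniqueness is then inherited from Theorems~\ref{thm:fixation time} and~\ref{thm:right efn of dual}, just as you do.

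Two small remarks. First, in your direct positivity argument you write that $\calE^z(f)>0$ when ``$f(y_j)<1$ for some $j$''; it should be $f(y_j)>0$ for some $j$, since the second factor is $1-\prod_j(1-f(y_j))$. Second, the paper's positivity argument (inside the proof of Proposition~\ref{prop:duality relationship for quasi-stationarity}) is precisely your second suggestion made concrete: pick $u\in\text{spt}(\pi)$ and a single $z'\in\bfS\times\bfS$ with $\calE(u,z')>0$, deduce $\Phi>0$ on a neighborhood $V$ of $z'$, and then use accessibility of $V$ from every $z\in\chi$ together with $\Phi(z)=\Lambda(\pi)^{-t}Q_t\Phi(z)$ to spread positivity everywhere. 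This avoids the mild circularity of appealing to $\varphi(\Phi)>0$ before positivity is known.
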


Unless otherwise stated, we always normalize the right eigenfunctions by taking $\phi=\Phi$ and $h=H$ defined in \eqref{eq:formula for right efn of FKPP in terms of QSD of dual results} and  \eqref{eq:formula for right efn of dual in terms of QSD of FKPP results} respectively.
We summarize Theorem \ref{theo:duality relationship for quasi-stationarity results} and some related notations for the two processes in Table \ref{T:dual}.
\FloatBarrier
\begin{table}[ht]
\centering
\small
{\tabulinesep=1.8mm
\begin{tabu}{ |c|c|c| }
\hline
& stochastic FKPP & 2-type BCBM \\
\hline
the process & $u=(u_t)_{t\in\R_{\geq 0}}$ & $Z_t=(G_t, R_t),\;t\in\R_{\geq 0}$\\
\hline
killing time & $\tau_{\rm fix}=\inf\{t\in\R_{\geq 0}:u_t=\textbf{0}\;\text{or}\; \textbf{1}\}$ & $\tau_{\partial}=\inf\{t\in\R_{\geq 0}:\, R_t=\emptyset\}$ \\ 
 \hline
state space prior killing & 
$\calB_{\ast}:=\mathcal{B}(\mathbb{S};[0,1])\setminus (\bar{\textbf{0}} \cup \bar{\textbf{1}})$
&  $\chi=(\cup_{n\geq 1}\bfS^n/\sim)\times(\cup_{m\geq 1}\bfS^m/\sim)$ \\
 \hline
sub-Markovian kernel & $P_t(f, \cdot)$ for $f\in \mathcal{B}_{\ast},\,t\in\R_{\geq 0}$ & $Q_t(z, \cdot)$ for $z\in\chi,\,t\in\R_{\geq 0}$\\
  \hline
 QSD & $\pi\in \mathcal{P}(\mathcal{C}_{\ast})$ & $\varphi\in \mathcal{P}(\chi)$ \\ 
\hline
right eigenfunction & $h(f)= \int_{\chi}\calE(f,z)\,\varphi(dz)=\varphi(\calE(f,\cdot))$ & $\phi(z)=\int_{\calC_{\ast}}\calE(f,z)\,\pi(df)=\pi(\calE^z)$  \\ 
\hline
eigenvalue & $\Lambda(\pi)=\Lambda(h)$ & $\Lambda(\varphi)=\Lambda(\phi)$ \\ 
\hline
\end{tabu}
}
\caption{Notation for the stochastic FKPP \eqref{fkpp_X} and the 2-type branching-coalescing Brownian motion (Definition \ref{Def:2BCBM}). We allow the initial condition of the stochastic FKPP to belong to the space $\calB_{\ast}$ which is larger than $\calC_{\ast}:=\mathcal{C}(\mathbb{S};[0,1])\setminus \{{\textbf{0}},{\textbf{1}}\}$. The function $\calE:\,\calB_{*}\times \chi\to[0,1]$ is defined in \eqref{eq:F for duality relationship}. We have $\pi(h)=\varphi(\phi)$ by \eqref{eq:duality theorem integral of right efns the same}.
}\label{T:dual}
\end{table}
\FloatBarrier

The proofs of Theorems \ref{T:main1}, \ref{thm:fixation time},  \ref{thm:conv to QSD dual}, \ref{thm:right efn of dual} and \ref{theo:duality relationship for quasi-stationarity results} will be given in Sections
\ref{S:Idoof}-\ref{S:Proofs}. Before this, in Section \ref{S:Insights}, we present some explicit calculations for the case $\beta=0$ that may offer some insights for the quasi-stationary behavior of the stochastic FKPP. 
None of our proofs in Sections
\ref{S:Idoof}-\ref{S:Proofs} (nor in the Appendix)
depend on any calculation in Section \ref{S:Insights}.

\section{Explicit calculations  when $\beta=0$}\label{S:Insights}

This section aims to offer explicit insights of our general results, and can be skipped if the reader is interested only in the proofs of Theorems \ref{T:main1}, \ref{thm:fixation time},  \ref{thm:conv to QSD dual}, \ref{thm:right efn of dual} and \ref{theo:duality relationship for quasi-stationarity results} at this point.

Recall from \eqref{eq:formula for right efn of FKPP in terms of QSD of dual results} that
$\phi(z)=\int_{\calC_{\ast}}\calE^z(f)\,\pi(df)$ is the unique  (up to a constant multiple)  right engenfunction of the killed 2-type BCBM. Hence, in principle, all moments of the QSD $\pi$ of the stochastic FKPP can be computed from the right eigenfunction $\phi$ of the 2-type BCBM, and these moments uniquely determine $\pi$, by Lemma \ref{lem:moments determine measures}. As in the well-mixed case (Section \ref{SS:1-dim}), explicit calculations for the QSD are only possible when $\beta=0$. 

We note, however, that a general approximation method for killed Markov processes based on interacting particle systems has been established by Villemonais \cite{Villemonais2011}, having been originally introduced in the case of Brownian dynamics by Burdzy, Hołyst and March \cite{Burdzy2000}. This may be employed to numerically sample the QSD of the $2$-type killed BCBM with $\beta>0$, providing numerics for the fixation time of the stochasic FKPP by the duality established in Section \ref{S:duality}.

For the rest of  this section, we assume that $\beta=0$, while $\alpha,\gamma\in \Rm_{>0}$ are fixed and arbitrary. Since $\beta=0$, there is no branching and the $2$-type branching-coalescing Brownian motion is simply a $2$-type coalescing Brownian motion (CBM).

\subsection{QSD and eigenpair for the $2$-type CBM}

Recall from  Theorem \ref{thm:conv to QSD dual} that
the $2$-type CBM $Z=(G,R)$ with $\beta=0$  has a unique QSD $\varphi^0$. As we shall see in Proposition \ref{prop:existence of QSD for FKPP beta=0}, $\varphi^0$ is supported on $\bfS\times\bfS$ and that has a  density function with respect to Lebesgue measure. We further recall from Theorems \ref{thm:right efn of dual} and \ref{theo:duality relationship for quasi-stationarity results} that the killed $2$-type CBM has a unique (up to constant multiple) right eigenfunction $\phi^0\in \mathcal{C}_b(\chi;\Rm_{>0})$ which has the same eigenvalue as the QSD, $\Lambda(\phi^0)=\Lambda(\varphi^0)$. 

In Theorem \ref{thm:explicit expressions when neutral}, we explicitly write down the QSD $\varphi^0$ and the right eigenfunction $\phi^0$, and their common eigenvalue.  Equation \eqref{eq:expression for right e-fn of CBM} gives the restriction of  $\phi^0$ to $\bfS\times \bfS$, which is more explicit than the representation \eqref{E:phiz} of $\phi^0$ on its domain $\chi$.

\begin{thm}[QSD and eigenpair for the 2-type CBM]\label{thm:explicit expressions when neutral}
We suppose that $\beta=0$. Let $\theta_{\ast}\in \big(0,\frac{\pi}{2}\big)$ be the unique solution to
\begin{equation}\label{eq:equation for theta direct calculation}
    \frac{\gamma}{\alpha}=4\theta \tan(\theta).
\end{equation}
Then the unique QSD $\varphi^0$ of the killed $2$-type CBM $Z$ is an element of $\mathcal{P}(\bfS\times\bfS)$ and is explicitly given by $\varphi^0(dx,dy)=\rho^0(x,y)\,dxdy$, whereby
\begin{equation}\label{eq:QSD density for 2-particle CBM}
\rho^0(x,y)=\frac{\theta_{\ast}}{\sin(\theta_{\ast})}\cos\Big(2\theta_{\ast}\Big(\frac{1}{2}-d_{\bfS}(x,y)\Big)\Big).
\end{equation}
Let $\phi^0\in \mathcal{C}_b(\chi;\Rm_{>0})$ be the right eigenfunction of the $2$-type CBM given by \eqref{eq:formula for right efn of FKPP in terms of QSD of dual results}.
The restriction of  $\phi^0$ to $\bfS\times \bfS$ is equal to
\begin{equation}\label{eq:expression for right e-fn of CBM}
\phi^0_{\lvert_{\bfS\times \bfS}}((x,y))=M_*\cos\Big(2\theta_{\ast}\Big(\frac{1}{2}-d_{\bfS}(x,y)\Big)\Big),
\end{equation}
where $M_*\in (0,\infty)$  is a constant. Furthermore,
\begin{equation}\label{E:phiz}
\phi^{0}(z)=M_{\ast}\cos(\theta_{\ast})\,{\bf E}_{z}\left[e^{4\alpha \theta_{\ast}^2\tau^{Z}}\right] \quad \text{for}\quad z\in \chi,
\end{equation}
where $\tau^Z$ is the first time that $Z$ consists of one green particle and one red particle, both at the same position.
The common eigenvalue $\lambda$ of $\varphi^0$ and of $\phi^0$ is given by
\begin{equation}\label{eq:principal eigenvalue no selection}
  \lambda= \Lambda(\phi^0)=\Lambda(\varphi^0)=e^{-4\alpha \theta_{\ast}^2}.
\end{equation}
\end{thm}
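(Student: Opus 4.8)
\textbf{Proof proposal for Theorem \ref{thm:explicit expressions when neutral}.}

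The plan is to reduce everything to the two-particle system (one green, one red, both started in $\bfS$) and then to a one-dimensional diffusion, namely the geodesic distance process $D_t := d_{\bfS}(X^{\rm green}_t, X^{\rm red}_t)$. Since $\beta = 0$, the killed $2$-type CBM started from $(\{x\},\{y\})$ stays a two-particle system until $\tau_\partial$, which is exactly the first coalescence time of the green and red particle; by rotational and reflection invariance, the relevant information is carried by $D_t \in [0,1/2]$. One checks that $D_t$ is a Brownian motion of rate $2\alpha$ on $[0,1/2]$ reflected at $1/2$ and killed (via the intersection local time at $0$, at rate $\frac{\gamma}{2\alpha}$) at $0$. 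So the generator of the killed distance process is $\mathcal{A} = 2\alpha \cdot \frac12 \frac{d^2}{dr^2} = \alpha \frac{d^2}{dr^2}$ on $(0,1/2)$, with a Neumann condition $g'(1/2)=0$ and a Robin-type boundary condition at $0$ encoding the local-time killing. The heuristic for the $0$ boundary condition: killing at rate $\frac{\gamma}{2\alpha}dL^0_t$ where $L^0$ is the local time of $D$ at $0$ translates, for the sub-Markovian semigroup, into $\alpha g'(0^+) = \frac{\gamma}{4}\,g(0)$ (the constant matching the rates in Definition \ref{Def:2BCBM}, where a green-red pair coalesces at total rate $\frac{\gamma}{2\alpha}dL$, with local time normalized against a rate-$2\alpha$ Brownian motion).

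Next I would solve the eigenvalue problem $\mathcal A g = -\mu g$ with these boundary conditions. The solutions are $g(r) = \cos(\omega(\tfrac12 - r))$ with $\mu = \alpha\omega^2$ (this choice already satisfies $g'(1/2)=0$), and the Robin condition at $0$ becomes $\alpha \omega \sin(\omega/2) = \frac{\gamma}{4}\cos(\omega/2)$, i.e. $\frac{\gamma}{\alpha} = 4(\omega/2)\tan(\omega/2)$. Setting $\theta = \omega/2$ recovers exactly \eqref{eq:equation for theta direct calculation}: $\frac{\gamma}{\alpha} = 4\theta\tan(\theta)$, and the principal (smallest-$\mu$) solution is $\theta_\ast \in (0,\pi/2)$, unique because $\theta\mapsto 4\theta\tan\theta$ is strictly increasing from $0$ to $+\infty$ on $(0,\pi/2)$. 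The principal eigenvalue of the generator is then $-4\alpha\theta_\ast^2$, giving $\lambda = e^{-4\alpha\theta_\ast^2}$ for the time-$1$ semigroup, which is \eqref{eq:principal eigenvalue no selection}. The principal right eigenfunction of the killed distance process is $g^0(r) = \cos(2\theta_\ast(\tfrac12 - r))$, and the principal left eigenmeasure (the QSD of the distance process) has density proportional to the same function, $r\mapsto \cos(2\theta_\ast(\tfrac12 - r))$, since the generator is (formally) self-adjoint with respect to Lebesgue measure on $(0,1/2)$ — here one must be slightly careful, as the Robin condition breaks exact self-adjointness, but the left eigenfunction equation $\mathcal A^\ast \rho = -\mu\rho$ with the dual boundary conditions again yields $\rho^0(r) \propto \cos(2\theta_\ast(\tfrac12-r))$. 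Normalizing and lifting back to $\bfS\times\bfS$ (accounting for the Jacobian of the map $(x,y)\mapsto d_{\bfS}(x,y)$, which contributes a factor of $2$ that is absorbed into the normalization), I would obtain $\rho^0(x,y) = \frac{\theta_\ast}{\sin\theta_\ast}\cos(2\theta_\ast(\tfrac12 - d_\bfS(x,y)))$, matching \eqref{eq:QSD density for 2-particle CBM} after checking $\int_{\bfS\times\bfS}\rho^0 = 1$.

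For the right eigenfunction $\phi^0$ on all of $\chi$: by Theorem \ref{theo:duality relationship for quasi-stationarity results}\eqref{enum:duality theorem right efn BCBM}, $\phi^0(z) = \int_{\calC_\ast}\calE^z(f)\,\pi(df) = \pi(\calE^z)$ is the unique (up to scaling) non-negative right eigenfunction. To get the clean formula \eqref{E:phiz}, I would argue directly from the eigenfunction property: $Q_t\phi^0 = \lambda^t\phi^0$ means $e^{4\alpha\theta_\ast^2 t}\phi^0(Z_{t\wedge\tau_\partial})$ is a martingale. Define $\tau^Z$ as in the statement — the first time $Z$ reduces to exactly one green and one red particle at a common location. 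The key observation is that $\phi^0$ restricted to the ``two-particle-at-the-same-site'' configurations is the constant $M_\ast\cos\theta_\ast$ (plug $d_\bfS = 0$ into \eqref{eq:expression for right e-fn of CBM}); meanwhile $\tau^Z < \tau_\partial$ always (you must reach a single green-red pair before the red line dies out, because coalescences happen one at a time and killing the last red particle requires it to meet a green particle, at which instant — just before — the configuration is one green and one red at the same site). Optional-stopping the martingale at $\tau^Z$ (using that $\phi^0$ is bounded, so uniform integrability holds) gives $\phi^0(z) = {\bf E}_z[e^{4\alpha\theta_\ast^2\tau^Z}\phi^0(Z_{\tau^Z})] = M_\ast\cos\theta_\ast\,{\bf E}_z[e^{4\alpha\theta_\ast^2\tau^Z}]$, which is \eqref{E:phiz}. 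For $z\in\bfS\times\bfS$ this must reduce to \eqref{eq:expression for right e-fn of CBM}, which serves as a consistency check (and can independently be verified by solving the distance-process eigenfunction ODE as above). The finiteness ${\bf E}_z[e^{4\alpha\theta_\ast^2\tau^Z}] < \infty$, needed for the optional stopping and for $\phi^0$ to be bounded, follows because $\phi^0 \in \mathcal C_b(\chi;\Rm_{>0})$ is already known from Theorem \ref{theo:duality relationship for quasi-stationarity results}.

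\textbf{Main obstacle.} The routine parts are the ODE computations; the delicate point is rigorously identifying the boundary condition at $0$ for the killed distance process — i.e. correctly translating ``coalescence at rate $\frac{\gamma}{2\alpha}$ against the intersection local time'' into the analytic Robin condition $\alpha g'(0^+) = \frac{\gamma}{4}g(0)$ with the right constant, including the factor reconciling the normalization of $L^{(i,j)}$ (local time of $d_\bfS(X^i,X^j)$, a rate-$2\alpha$ diffusion) with the standard local time. Getting this constant right is precisely what makes \eqref{eq:equation for theta direct calculation} come out as $\frac{\gamma}{\alpha}=4\theta\tan\theta$ rather than with a spurious factor. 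The cleanest way to pin it down is probably to verify the candidate eigenfunction/eigenvalue pair directly against the known characterization $\phi^0 = \pi(\calE^\bullet)$ together with the duality identity \eqref{eq:moment duality relationship for 2-type results} applied to the explicit one- and two-particle dual functions, rather than relying on a heuristic local-time computation — this sidesteps the need to develop boundary theory for the reflected/killed diffusion and instead leverages the already-established machinery of Section \ref{S:duality}.
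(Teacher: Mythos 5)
Your strategy is essentially the paper's: reduce to a one-dimensional diffusion (the paper works with the difference $G_t-R_t$, a rate-$2\alpha$ Brownian motion on $\bfS$; you fold to the distance $D_t=d_{\bfS}(G_t,R_t)$ on $[0,1/2]$, which is equivalent via the symmetry $f(u)=f(1-u)$), solve the ODE eigenvalue problem with a Robin condition at $0$ and Neumann at $1/2$, and extend from $\bfS\times\bfS$ to $\chi$ by optional stopping of the eigenfunction martingale $\lambda^{-t}\phi^0(Z_t)\Ind(\tau_\partial>t)$. The step you correctly flag as delicate --- identifying the Robin constant --- the paper resolves rigorously via the martingale problem for $G-R$ killed at rate $\frac{\gamma}{2\alpha}dL^0_t$, using test functions with $g'(0_+)=\frac{\gamma}{2\alpha}g(0)$ and integrating by parts against the QSD density; it does \emph{not} use the duality-based verification you propose as the cleaner alternative.

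Your stated Robin constant is off by a factor of $2$: from It\^o--Tanaka applied to $g(D_t)e^{-\frac{\gamma}{2\alpha}L^0_t}$ with $\langle D\rangle_t=2\alpha t$, the $dL^0$ term vanishes iff $g'(0^+)=\frac{\gamma}{2\alpha}g(0)$, i.e.\ $\alpha g'(0^+)=\frac{\gamma}{2}g(0)$, not $\frac{\gamma}{4}g(0)$. With $g(r)=\cos\bigl(\omega(\tfrac12-r)\bigr)$ this gives $\alpha\omega\sin(\omega/2)=\frac{\gamma}{2}\cos(\omega/2)$, hence $\frac{\gamma}{\alpha}=4\theta\tan\theta$ with $\theta=\omega/2$, in agreement with the paper's transmission condition $f'(0_+)-f'(1_-)=\frac{\gamma}{\alpha}f(0)$. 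From your stated $\frac{\gamma}{4}$-condition one would instead obtain $\frac{\gamma}{\alpha}=8\theta\tan\theta$; the line ``$\alpha\omega\sin(\omega/2)=\frac{\gamma}{4}\cos(\omega/2)$, i.e.\ $\frac{\gamma}{\alpha}=4(\omega/2)\tan(\omega/2)$'' silently drops a factor of $2$, so you land on the correct equation only by two cancelling slips. Two smaller caveats: the paper first establishes, via elliptic regularity and the martingale problem, that $\varphi^0$ has a smooth density on $\bfS\times\bfS\setminus\Gamma$ solving $\frac{\alpha}{2}\Delta\rho^0=-\kappa_0\rho^0$ --- a prerequisite for the ODE computation that your plan takes for granted; and your justification that $\tau^Z<\tau_\partial$ always (``just before the last red dies, the configuration is one green and one red at the same site'') is only valid when the configuration carries a single green particle --- if the last red coalesces with one of several surviving greens, $\tau^Z$ is never reached --- so some care is needed about the scope of \eqref{E:phiz}.
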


\medskip

The constant $M_*$ in \eqref{eq:expression for right e-fn of CBM} will  be determined  in \eqref{eq:Mast constant} below in terms of the usual $1$-type coalescing Brownian motion (CBM) $\bar{X}$  mentioned in \ref{WFdual}. We define $\tau_1$ to be the 
first time when $\bar{X}$ consists of exactly two particles both at the same position. That is,
\begin{equation}\label{Def:tau1}
\tau_{1}:=\inf\{t\in \R_{\ge 0}:\,|\mathcal{I}_t|=2,\;X^{i}_t=X^{j}_t \text{ whenever }i,j\in \mathcal{I}_t\}.
\end{equation}

Given a closed set $F\subseteq \bfS$, we define
$\Sigma_F$
to be the set of infinite sequences in $\mathbb{S}$ 
whose closure and limit set (i.e. the set of accumulation points) are both given by $F$. That is,
\begin{equation}
\Sigma_F:=\{\underline{x}:=(x_1,x_2,\ldots)\in F^{\mathbb{N}}:\;\text{every $x'\in F$ is an accumulation point of $\underline{x}$}\}.
\end{equation}
For instance, 
$\Sigma_{\{0,\,\frac{1}{2}\}}=\{(0,0,\ldots)\}\cup \{(\frac{1}{2},\frac{1}{2},\ldots)\}$,
while $\Sigma_{\bfS}$ consists of all dense sequences in $\mathbb{S}$. 

\begin{lemma}\label{L:Laplace_tau1}
Suppose  $\beta=0$. 
Let $F$ be a closed and non-empty subset of $\mathbb{S}$.
Then the limit
$${\bf E}_{\bar{x}}\left[e^{4\alpha\theta_{\ast}^2\tau_1}\right]:=\lim_{n\to\infty}{\bf E}_{(x_i)_{i=1}^n}\left[e^{4\alpha\theta_{\ast}^2\tau_1}\right] $$ 
exists in $(1,\infty)$.
This number is the same for  all  $\underline{x}=(x_1,x_2,\ldots)\in \Sigma_F$, hence we can
 denote it by ${\bf E}_{F}\left[e^{4\alpha\theta_{\ast}^2\tau_1}\right]$.  Furthermore,
the strict inequality 
\begin{equation}\label{ineq:EF strictly increasing in F}
{\bf E}_{F}\left[e^{4\alpha\theta_{\ast}^2\tau_1}\right]<{\bf E}_{F'}\left[e^{4\alpha\theta_{\ast}^2\tau_1}\right]   
\end{equation}
holds for all non-empty, closed subsets $F$ and $F'$ such that $\emptyset\neq F\subsetneq F'\subseteq \mathbb{S}$.
\end{lemma}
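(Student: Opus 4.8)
The idea is to reduce everything to a single-particle-count statement and then use monotonicity built into the coalescing dynamics. Fix $\beta=0$, so $\bar{X}$ is a pure coalescing Brownian motion whose cardinality $|\mathcal{I}_t|$ is non-increasing in $t$. First I would show that for a fixed finite configuration the quantity $g(\bar{x}):={\bf E}_{\bar{x}}[e^{4\alpha\theta_{\ast}^2\tau_1}]$ is finite; this is exactly the type of exponential-moment bound guaranteed by the fixation-time asymptotics. Concretely, under $\beta=0$ the time $\tau_1$ is dominated by the full coalescence time of $\bar X$ down to a single particle, which by Theorem \ref{thm:conv to QSD dual} / Theorem \ref{thm:right efn of dual} (applied to the ordinary CBM, or by the explicit spectrum) has exponential tails with rate strictly larger than $4\alpha\theta_{\ast}^2$; here one uses that $4\alpha\theta_{\ast}^2$ is the principal eigenvalue while the ``down to one particle'' time is governed by strictly faster decay. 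So $g$ is a well-defined bounded function on each $\bfS^n/\!\sim$, and in fact $g(\bar x)\le \sup_n\sup_{\bar x\in\bfS^n}g(\bar x)<\infty$ by a uniform-in-$n$ coalescence-rate estimate (the total coalescence rate increases with $n$, so adding particles cannot delay $\tau_1$ in the appropriate stochastic sense — see the monotonicity step below).

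\textbf{Monotonicity.} The key structural input is a coupling: given two starting multisets $\bar x\subseteq\bar x'$ (as multisets, i.e. $\bar x'$ is $\bar x$ with extra points added), one can run the two coalescing systems on the same probability space so that the particle set of the $\bar x$-system is always contained in that of the $\bar x'$-system. Under such a coupling, whenever the $\bar x'$-system reaches the state ``exactly two particles at the same location'' this forces... actually the cleaner statement is the reverse: more particles can only make $\tau_1$ \emph{smaller}. I would set up the coupling so that extra particles in the $\bar x'$-system, once they collide with a particle of the $\bar x$-system, are simply removed; this gives a coupling in which $\tau_1(\bar x')\le\tau_1(\bar x)$ pointwise, hence $g(\bar x')\ge g(\bar x)$. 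This monotonicity in the multiset order is what lets the limit $g(F):=\lim_n g((x_i)_{i=1}^n)$ exist for $\underline x\in\Sigma_F$: the sequence $g((x_1,\dots,x_n))$ is nondecreasing in $n$ and bounded above, so it converges; and the limit depends only on the closure/limit-set $F$ because two sequences in $\Sigma_F$ have arbitrarily fine common refinements (any finite subconfiguration of one is, up to $\epsilon$ in each coordinate, dominated by a finite subconfiguration of the other, using continuity of $\bar x\mapsto g(\bar x)$ on $\bfS^n$ — this continuity follows from the Feller property of the BCBM and the exponential-moment control). The limit lies in $(1,\infty)$ since $\tau_1>0$ a.s. (so the exponential moment is $>1$) and is finite by the uniform bound.

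\textbf{Strict inequality.} For $\emptyset\ne F\subsetneq F'$, pick a point $p\in F'\setminus F$ and an open ball $B$ around $p$ with $B\cap F=\emptyset$ (possible since $F$ is closed). Any $\underline x'\in\Sigma_{F'}$ has infinitely many coordinates in $B$; any $\underline x\in\Sigma_F$ has none in a slightly smaller ball $B_0\subset\subset B$ and only finitely many... rather, has its whole closure avoiding $B_0$. Using the monotone coupling I compare $\underline x$ with a configuration $\underline y\in\Sigma_{F'}$ that contains a copy of (a cofinite tail of) $\underline x$ together with one extra particle at $p$. With positive probability this extra particle near $p$ coalesces with a particle brought near $p$ in finite time strictly before the $\bar x$-system's $\tau_1$ would occur (and, conditionally on a suitable positive-probability event describing the trajectory near $p$ over a unit time interval, $\tau_1(\underline y)$ is strictly smaller by at least a fixed positive amount on that event), which makes $g(\underline y)>g(\underline x)$ strictly; then monotonicity gives $g(F')\ge g(\underline y)>g(\underline x)$, and taking $\underline x$ to range over $\Sigma_F$ with $g(\underline x)\uparrow g(F)$ we still get strict inequality because the positive-probability ``speed-up near $p$'' argument gives a gap uniform over the comparison. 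I expect \emph{this strict-inequality step to be the main obstacle}: the existence and value of the limit are soft (monotone bounded sequence plus a refinement argument), but pinning down a \emph{uniform} strict improvement $g(F')-g(F)\ge c>0$ requires care, since one is comparing limits of two families of exponential moments. The cleanest route is probably to work with the right eigenfunction $\phi^0$ and its representation \eqref{E:phiz}: $\phi^0(z)=M_{\ast}\cos(\theta_\ast){\bf E}_z[e^{4\alpha\theta_\ast^2\tau^Z}]$, translate the $\tau_1$-statement into a statement about $\tau^Z$ for the $2$-type system with one green and (many) red particles, and exploit that $\phi^0$ is a strictly positive eigenfunction (hence strictly monotone under adding particles by an argument using that adding a red particle at a new location strictly decreases the survival probability ${\bf P}_z(\taud>t)$, via irreducibility of the killed dynamics). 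This converts the analytic strict inequality into a probabilistic irreducibility statement, which is more tractable.
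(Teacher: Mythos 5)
Your approach — establish finiteness by tail bounds, then use a monotone coupling for the limit and strict inequality — is genuinely different from the paper's, and it has gaps at exactly the places you would want it to be tight. The most serious is the finiteness claim: you argue that $\tau_1$ is dominated by the full coalescence time of $\bar X$ down to a single particle, and that this latter time ``has exponential tails with rate strictly larger than $4\alpha\theta_{\ast}^2$.'' This is false. The last coalescence event (from two particles down to one) is governed by the Robin eigenvalue problem whose principal eigenvalue is precisely $-4\alpha\theta_{\ast}^2$ (see \eqref{eq:equation for theta direct calculation} and \eqref{eq:principal eigenvalue no selection}), so the full coalescence time has tail rate \emph{equal to} $4\alpha\theta_{\ast}^2$, and its exponential moment at that rate is infinite. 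Your bound $\tau_1 \leq (\text{full coalescence time})$ is therefore uselessly coarse; the critical exponential moment of $\tau_1$ is finite only because $\tau_1$ as defined in \eqref{Def:tau1} is the first \emph{meeting} time once two particles remain — a hitting time, occurring strictly before the final coalescence and decaying faster. The paper does not argue via tails at all: it establishes $\phi^0(z^{(n)})=M_{\ast}\cos(\theta_{\ast})\,{\bf E}_{z^{(n)}}[e^{4\alpha\theta_{\ast}^2\tau_1^{(n)}}]$ (as in \eqref{E:phizn}, via Lemma \ref{lem:early stopping reduces spectral radius} and optional stopping) and then extracts finiteness and $\Sigma_F$-independence of the limit in one stroke from $\lim_n\phi^0(z^{(n)})=\tfrac{1}{2}-\Pm_{u\sim\pi^0}(u\equiv 0\text{ on }F)\in[0,\tfrac{1}{2}]$, a quantity that depends only on $F$.

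Your monotonicity step also has a sign error: $\tau_1$ is the first time exactly two particles coincide, and adding particles delays this, so in the lower-rank-survives coupling $\tau_1(\bar x')\geq\tau_1(\bar x)$, not $\leq$ as you write; you reach the correct conclusion $g(\bar x')\geq g(\bar x)$ only via a second, compensating error when passing to the exponential moment. For the strict inequality, which you correctly flag as the hard step, your proposed ``local speed-up'' estimate with a uniform gap is not how the paper proceeds, and making such a gap uniform over the two limiting families would indeed be delicate. The paper instead constructs $\Law(\bar X_t)\leq_{\st}\Law(\bar Y_t)$ via Lemma \ref{lem:stochastic dominance preserved in limits} and the ranked entrance-law coupling of Remark \ref{Rk:EntranceLaw}, observes a dichotomy (strict stochastic domination for all $t>0$, or equality in law for all $t>0$), and rules out equality by moment duality: equality would force $\Pm_{u_0}\left(w\in {\rm supt}(u_t)\subseteq F^c\right)=0$ for every $u_0$ and every $t>0$, contradicting stochastic continuity of the support of the stochastic FKPP. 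This converts the would-be quantitative estimate into a soft contradiction and avoids the uniformity problem entirely. Your closing suggestion — work with $\phi^0$ and exploit its strict positivity and irreducibility — is closer in spirit to the actual proof, but the specific step (strict decrease of the survival probability under adding a red particle) is not worked out and is not what the paper uses.
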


In particular, the limit ${\bf E}_{\bar{x}}\left[e^{4\alpha\theta_{\ast}^2\tau_1}\right]\in (1,\infty)$  
does not depend upon the choice of  $\underline{x}\in \Sigma_{\bfS}$, and is denoted by ${\bf E}_{\mathbb{S}}\left[e^{4\alpha\theta_{\ast}^2\tau_1}\right]$.

\begin{thm}\label{T:M*}
When  $\phi^0$ is given by \eqref{eq:formula for right efn of FKPP in terms of QSD of dual results}, the constant $M_*$ in \eqref{eq:expression for right e-fn of CBM} is equal to
\begin{equation}\label{eq:Mast constant}
    M_{\ast}:=\frac{1}{2\cos(\theta_{\ast})\,{\bf E}_{\mathbb{S}}\left[e^{4\alpha\theta_{\ast}^2\tau_1}\right]} \in (0,\infty).
\end{equation}
\end{thm}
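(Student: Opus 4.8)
**Proof proposal for Theorem \ref{T:M*}.**

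The plan is to pin down $M_*$ by evaluating the right eigenfunction $\phi^0$ at a judiciously chosen sequence of states $z\in\chi$ and comparing the two available representations of $\phi^0$: the ``explicit'' formula on $\bfS\times\bfS$ in \eqref{eq:expression for right e-fn of CBM} and the probabilistic representation \eqref{E:phiz} valid on all of $\chi$. The key idea is to push a state toward the ``diagonal'' configuration where one green and one red particle coincide, since at such a state $\tau^Z=0$ and \eqref{E:phiz} collapses to $M_*\cos(\theta_*)$. Concretely, let $z_n=((x),(x_1,\dots,x_n))$ be the state with a single green particle at a fixed point $x$ and $n$ red particles at points $x_1,x_2,\dots$, where $\underline x=(x_1,x_2,\dots)\in\Sigma_{\bfS}$ is a dense sequence; we will take $n\to\infty$.

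First I would compute $\phi^0(z_n)$ from the defining formula \eqref{eq:formula for right efn of FKPP in terms of QSD of dual results}, $\phi^0(z_n)=\int_{\calC_*}\calE^{z_n}(f)\,\pi(df)$. Since $\calE^{z_n}(f)=(1-f(x))\big[1-\prod_{j=1}^n(1-f(x_j))\big]$ and $f\in\calC_*$ is continuous with $0\le f\le 1$ and $\int_{\bfS}f\,m>0$ (as $f\notin\bar{\textbf 0}$), the density of $\underline x$ in $\bfS$ forces $\prod_{j=1}^n(1-f(x_j))\to 0$ as $n\to\infty$ for every such $f$ that is not identically $0$; dominated convergence then gives $\phi^0(z_n)\to\int_{\calC_*}(1-f(x))\,\pi(df)$. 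Next, observe that the integrand $1-f(x)$ is, up to the same reasoning applied with the ``limiting'' green-only configuration, exactly $\lim_{m\to\infty}\calE^{w_m}(f)$ where $w_m=((x),(y_1,\dots,y_m))$... but more directly: $\int_{\calC_*}(1-f(x))\,\pi(df)$ is the value one obtains by letting the red cloud fill $\bfS$, which by \eqref{E:phiz} and Lemma \ref{L:Laplace_tau1} must equal $M_*\cos(\theta_*)\,{\bf E}_{\mathbb S}[e^{4\alpha\theta_*^2\tau_1}]$ — here one uses that when the red particles are dense in $\bfS$, the killed $2$-type CBM started from $z_n$ behaves, in the limit, like the $1$-type CBM and $\tau^Z$ becomes $\tau_1$ (the first time two particles of $Z$, of opposite colors, coincide), matching the definition of $\tau_1$ for $\bar X$. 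Taking $n\to\infty$ in the representation \eqref{E:phiz}, using the Laplace-transform continuity from Lemma \ref{L:Laplace_tau1}, yields $\lim_n\phi^0(z_n)=M_*\cos(\theta_*)\,{\bf E}_{\mathbb S}[e^{4\alpha\theta_*^2\tau_1}]$.

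On the other hand, I would evaluate $\lim_n\phi^0(z_n)$ a second way by choosing the \emph{red} cloud dense and the green configuration also approaching density, or — cleaner — by a symmetry/normalization computation: integrate the identity \eqref{eq:expression for right e-fn of CBM} against the QSD $\varphi^0$ restricted to $\bfS\times\bfS$, or evaluate $\phi^0$ at the simplest state $z_0=((x),(x))$ where green and red coincide. At $z_0$ one has $\tau^Z=0$, so \eqref{E:phiz} gives $\phi^0(z_0)=M_*\cos(\theta_*)$; simultaneously $\calE^{z_0}(f)=(1-f(x))f(x)$, so $\phi^0(z_0)=\int_{\calC_*}f(x)(1-f(x))\,\pi(df)$. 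Comparing the two limits, $M_*\cos(\theta_*)\,{\bf E}_{\mathbb S}[e^{4\alpha\theta_*^2\tau_1}]=\int_{\calC_*}(1-f(x))\pi(df)$ and $M_*\cos(\theta_*)=\int_{\calC_*}f(x)(1-f(x))\pi(df)$; the missing ingredient is the value $\int_{\calC_*}(1-f(x))\pi(df)$, which I would obtain from the explicit QSD density: since $\varphi^0$ on $\bfS\times\bfS$ has density $\rho^0$ and $\pi$ is the uniform-type marginal in the neutral case, $\int_{\calC_*}(1-f(x))\pi(df)$ and $\int_{\calC_*}f(x)(1-f(x))\pi(df)$ are computable moments; their ratio is $1/2\cdot{\bf E}_{\mathbb S}[e^{4\alpha\theta_*^2\tau_1}]^{-1}$... equivalently, normalizing so that the first moment identity $\int(1-f(x))\pi(df)=\tfrac12$ holds (which follows from the neutral-case symmetry $f\leftrightarrow 1-f$ under $\pi$, giving $\int f\,d\pi=\tfrac12$), one reads off $M_*\cos(\theta_*)\,{\bf E}_{\mathbb S}[e^{4\alpha\theta_*^2\tau_1}]=\tfrac12$, i.e. \eqref{eq:Mast constant}.

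The main obstacle I anticipate is the limiting/continuity argument: justifying that $\phi^0(z_n)$, computed via \eqref{E:phiz}, converges to $M_*\cos(\theta_*)\,{\bf E}_{\mathbb S}[e^{4\alpha\theta_*^2\tau_1}]$ as the red cloud becomes dense. This requires (a) that the $2$-type CBM started from a dense red configuration with a single green particle has $\tau^Z$ converging in distribution to $\tau_1$ of the $1$-type CBM — which is plausible since adding infinitely many red particles densely is the same as the $1$-type system, and the event defining $\tau^Z$ (a green and a red coinciding) becomes the event defining $\tau_1$ — and (b) uniform integrability of $e^{4\alpha\theta_*^2\tau^Z}$ along the sequence, so that Laplace transforms pass to the limit; both of these are exactly the content packaged into Lemma \ref{L:Laplace_tau1}, so the real work is verifying that the hypotheses of that lemma apply to the green-red coincidence time and not just to $\tau_1$. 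The neutral-case symmetry $\int f\,d\pi=\tfrac12$, used to fix the normalization, should follow from the fact that $\pi$ is a function of $1-f$ in law when $\beta=0$ (the same symmetry that makes $\pi^{\mathrm{WF}}$ uniform in the well-mixed case), and I would record this as a short separate observation.
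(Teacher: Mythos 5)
Your proposal is correct and follows essentially the same route as the paper: evaluate the eigenfunction $\phi^0$ along states $z^{(n)}$ with a single green particle and an increasingly dense red cloud, obtain the limit $\tfrac12$ from the definition \eqref{eq:formula for right efn of FKPP in terms of QSD of dual results} via dominated convergence and the neutral symmetry $\expE_{u\sim\pi^0}[u(x)]=\tfrac12$, and match this against the optional-stopping representation \eqref{E:phiz}. One small point that dissolves the obstacle you flag at the end: you do not need to separately establish convergence in distribution of $\tau^Z$ to $\tau_1$ or uniform integrability of $e^{4\alpha\theta_*^2\tau^Z}$, because the identity $\phi^0(z^{(n)})=M_*\cos(\theta_*)\,{\bf E}_{z^{(n)}}[e^{4\alpha\theta_*^2\tau^{(n)}}]$ holds exactly for every finite $n$, so the existence and value of $\lim_n {\bf E}_{z^{(n)}}[e^{4\alpha\theta_*^2\tau^{(n)}}]$ follow automatically from the already-established convergence $\phi^0(z^{(n)})\to\tfrac12$; the harder content of Lemma \ref{L:Laplace_tau1} (well-definedness for general closed $F$ and strict monotonicity) is not needed here.
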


\subsection{QSD and eigenpair for neutral FKPP}

We now apply Theorem \ref{thm:explicit expressions when neutral} to the neutral stochastic FKPP. Immediately from Theorem \ref{theo:duality relationship for quasi-stationarity results}, the eigenvalue of $\pi^0$ and $h^0$ is equal to
\begin{equation}
\lambda:=\Lambda(\pi^0)=\Lambda(h^0)=e^{-4\alpha \theta_{\ast}^2}.
\end{equation}
It follows that the fixation rate, defined as $\kappa=-\ln \lambda$, is given by
\begin{equation}
\kappa=4\alpha\theta_{\ast}^2.
\end{equation}

By \eqref{AbsTime_1} in Theorem \ref{thm:fixation time}, the leading order asymptotics of the fixation time is given, for each $u\in\calB_{\ast}$, by
\begin{align}\label{AbsTime_1_neutral}
\Pm_{u}(\tau_{\fix}>t)
\stackrel{(\ref{AbsTime_1})}{\sim} &\,
e^{-4\alpha \theta_{\ast}^2\,t}\,\frac{h^0(u)}{\pi^0(h^0)} \qquad \text{as } t\to\infty,
\end{align}
where $h^0(u)$ and $\pi^0(h^0)$ are given respectively by
\begin{equation}    
h^0(u)
\stackrel{(\ref{eq:formula for right efn of dual in terms of QSD of FKPP results})}{=}
\int_{\chi}\calE^z(u)\,\varphi^0(dz)
\stackrel{(\ref{eq:QSD density for 2-particle CBM})}{=}
\frac{\theta_{\ast}}{\sin(\theta_{\ast})}\int_{\bfS\times \bfS}(1-u(x))u(y)\cos\Big(2\theta_{\ast}\Big(\frac{1}{2}-d_{\bfS}(x,y)\Big)\Big)\,dx\,dy 
\end{equation}
for $u\in\calB_{\ast}$, and 
\begin{align}    
\pi^0(h^0)
\stackrel{(\ref{eq:duality theorem integral of right efns the same})}{=}
\varphi^0(\phi^0)
&\stackrel{(\ref{eq:expression for right e-fn of CBM})}{=}
\,\int_{\bfS\times\bfS}
M_*\cos\Big(2\theta_{\ast}\Big(\frac{1}{2}-d_{\bfS}(x,y)\Big)\Big)\,\varphi^0(dx,dy)\\
&\stackrel{(\ref{eq:QSD density for 2-particle CBM})}{=}
\,\frac{M_*\theta_{\ast}}{\sin(\theta_{\ast})}\,\int_{\bfS\times\bfS}
\cos^2\Big(2\theta_{\ast}\Big(\frac{1}{2}-
d_{\bfS}(x,y)\Big)\Big)\,dxdy=\,\frac{M_*\theta_{\ast}}{\sin(\theta_{\ast})}\,\frac{1}{2}\,\Big(1+\frac{\sin(2\theta_{\ast})}{2\theta_{\ast}}\Big).
\end{align}

\begin{figure}
    \centering
    \includegraphics[scale=0.4]{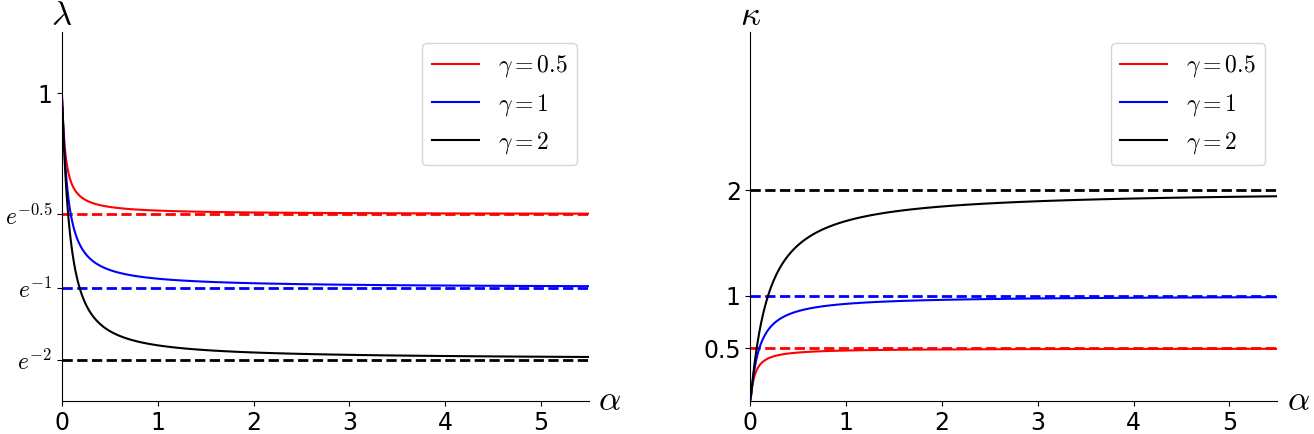}
    \caption{(Left) the eigenvalue $\lambda=e^{-4\alpha \theta_{\ast}^2}$ in \eqref{eq:principal eigenvalue no selection} against $\alpha$ for fixed values of $\gamma$. The eigenvalue $\lambda$ tends to $e^{-\gamma}$ as $\alpha\uparrow\infty$.  (Right) the fixation rate $\kappa=-\ln \lambda=4\alpha \theta_{\ast}^2$ against $\alpha$ for fixed values of $\gamma$. 
    It increases to $\gamma$ (the fixation rate  of the well-mixed case) as $\alpha\uparrow\infty$ and it decreases to 0 as $\alpha \downarrow 0$.}
    \label{fig:lambda}
\end{figure}

\FloatBarrier
In Figure \ref{fig:lambda} (right), we plot the value of the fixation rate $\kappa= -\ln \lambda$ as a function of the diffusion constant $\alpha$. 
By \eqref{eq:equation for theta direct calculation}, we obtain the  asymptotic expansions 
\begin{equation}\label{asym_fixRate}
\kappa= 
\begin{dcases}
\gamma\,\Big[1- \frac{\gamma}{12 \alpha}  +  \frac{\gamma^2}{180 \alpha^2} - \frac{\gamma^3}{3780 \alpha^3}
+ \mathcal{O}_{\frac{\gamma}{\alpha}\ra 0}\Big(\frac{\gamma^4}{\alpha^4} \Big) 
\Big]\qquad &\text{as} \quad\frac{\gamma}{\alpha}\to 0 \qquad \text{(fast diffusion)}\\
\pi^2 \alpha \,\Big[1 -8 \frac{\alpha}{\gamma}+48\frac{\alpha^2}{\gamma^2} +  \mathcal{O}_{\frac{\alpha}{\gamma}\ra 0}\Big(\frac{\alpha^3}{\gamma^3} \Big)\Big] 
\qquad &\text{as} \quad\frac{\alpha}{\gamma} \to 0 \qquad \text{(slow diffusion)}
\end{dcases}
\end{equation}

We observe that when $\gamma\ll \alpha$ (fast diffusion), the fixation rate is determined to leading order only by $\gamma$, whereas when $\gamma\gg \alpha$ (slow diffusion), the fixation rate is determined to leading order only by $\alpha$. 

\begin{remark}[Faster spatial movement speeds up fixation]\label{Rk:insight}\rm
From Figure \ref{fig:lambda} and \eqref{asym_fixRate}, we see that
faster spatial movement speeds up fixation, but there is an upper bound.
As the diffusion coefficient $\alpha\uparrow \infty$, the  fixation rate $\kappa$ of the stochastic FKPP increases to $\gamma$  which agrees with the  fixation rate in the well-mixed case \eqref{AbsTime_1_wellmix}, and
for a large but finite diffusion constant $\alpha$, the fixation rate is smaller by about $\frac{\gamma^2}{12\alpha}$ and so it takes a longer time  to get to fixation compared with the well-mixed case in the sense of \eqref{AbsTime_1_neutral}. On other hand,
as
$\alpha\downarrow 0$, the  fixation rate $\kappa$ decreases to 0 like  $\pi^2\alpha$. The constants $\frac{1}{12}$ and $\pi^2$ may change if the circle is changed to other spaces  on which the stochastic FKPP admits a solution \cite{fan2017stochastic}; we do not know. How fixation rate changes in terms of the geometry of the underlying space is of considerable interest in population genetics.
\end{remark}

\medskip

We now use Theorem \ref{thm:explicit expressions when neutral} to obtain some explicit calculations for the QSD $\pi^0$. We firstly note that by symmetry, $\expE_{u\sim \pi^0}[u(x)]=\frac{1}{2}$ for all $x\in \bfS$. We have from \eqref{eq:formula for right efn of FKPP in terms of QSD of dual results} and \eqref{eq:expression for right e-fn of CBM} that
\[
\expE_{u\sim \pi^0}[(1-u(x))u(y)]=M_{\ast}\cos\Big(2\theta_{\ast}\Big(\frac{1}{2}-d_{\bfS}(x,y)\Big)\Big),\quad x,y\in\bfS.
\]
We may therefore immediately calculate the following:
\begin{align}
 \expE_{u\sim \pi^0}[u(x)u(y)]=&\frac{1}{2}-M_{\ast}\cos\Big(2\theta_{\ast}\Big(\frac{1}{2}-d_{\bfS}(x,y)\Big)\Big),\quad x,y\in\bfS,\\
 \text{Cov}_{u\sim \pi^0}(u(x),u(y))=&\frac{1}{4}-M_{\ast}\cos\Big(2\theta_{\ast}\Big(\frac{1}{2}-d_{\bfS}(x,y)\Big)\Big),\quad x,y\in\bfS,\\
\text{Var}\Big(\int_{\bfS}u(x)dx\Big)=&\frac{1}{4}-\frac{M_{\ast}\sin(\theta_{\ast})}{\theta_{\ast}}.
\end{align}

\medskip

\noindent 
{\bf Local fixation. }
Our next result implies that for any non-empty disjoint closed sets $F_1,\,F_2\subset \mathbb{S}$, 
local fixation can occur on $F_1$ but not on $F_2$ under the QSD  of the neutral stochastic FKPP. This represents the event that there is no genetic diversity on a region $F_1$, while there is genetic variation on $F_2$.  

\begin{definition}
Given $f\in \mathcal{C}(\mathbb{S};[0,1])$ and a non-empty closed set $F\subseteq \bfS$, we say that \textbf{$f$ is fixed on $F$}
if $f\equiv 0$ on $F$ or $f\equiv 1$ on $F$.     
\end{definition}

By the definition of the QSD, $\Pm_{u\sim\pi^0}(u\text{ is not fixed on $\mathbb{S}$})=1$.
As we shall see in section \ref{S:InsightsProofs}, the strict inequalities (between 0 and 1) in Theorem \ref{thm:neutral QSD doesn't fixate on closed set} follow from \eqref{ineq:EF strictly increasing in F}.
\begin{thm}[Local fixation]\label{thm:neutral QSD doesn't fixate on closed set}
Suppose that $\beta=0$. 
For any non-empty closed set $F\subsetneq \bfS$, the probability that the neutral 
stochastic FKPP not being fixed on $F$, under the quasi-stationary distribution $\pi^0$, is given by
\begin{equation}\label{eq:prob neutral QSD not fixed on F}
    \Pm_{u\sim \pi^0}(u\text{ is not fixed on $F$})=\frac{{\bf E}_{F}\left[e^{4\alpha\theta_{\ast}^2\tau_1}\right]}{{\bf E}_{\mathbb{S}}\left[e^{4\alpha\theta_{\ast}^2\tau_1}\right]} \in (0,1).
\end{equation}
Moreover, for all non-empty closed subsets $F$ and $F'$ of $\bfS$ such that $F\subsetneq F'$, it holds that
\begin{equation}\label{eq:probabliity of fixation on set but not on larger set strictly positive}
\Pm(\text{$u$ is fixed on $F$ and not fixed on  $F'$})\in(0,1).
\end{equation}
\end{thm}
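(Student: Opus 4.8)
The plan is to reduce both assertions to the duality identity \eqref{eq:formula for right efn of dual in terms of QSD of FKPP results} together with the monotonicity \eqref{ineq:EF strictly increasing in F} from Lemma \ref{L:Laplace_tau1}. For the first formula, I would start by expressing the ``not fixed on $F$'' event in terms of test functions of the form $\calE^z$. Concretely, take a sequence $\underline{x}=(x_1,x_2,\ldots)\in\Sigma_F$ and, for each $n$, consider the point $z_n=((x_1,\ldots,x_n),(x_1,\ldots,x_n))\in\chi$ consisting of $n$ green particles and $n$ red particles placed at $x_1,\ldots,x_n$. Then $\calE^{z_n}(f)=\big[\prod_{i=1}^n(1-f(x_i))\big]\big[1-\prod_{j=1}^n(1-f(x_j))\big]$, and by dominated convergence $\int_{\calC_\ast}\calE^{z_n}(f)\,\pi^0(df)\to \Pm_{u\sim\pi^0}(u\text{ is not fixed on }F)$, using that for continuous $f$, density of $\{x_i\}$ in $F$ forces $\prod(1-f(x_i))\to 0$ unless $f\equiv 0$ on $F$ and $\prod f(x_i)\to 0$ unless $f\equiv 1$ on $F$, so the bracketed product of limits equals the indicator of $\{f\equiv 0$ on $F$ or $f\equiv 1$ on $F\}^c$ up to a $\pi^0$-null set (one must check $\pi^0$ assigns no mass to the ``both products have intermediate limit points along subsequences'' pathology, which holds because the $x_i$ are a fixed dense sequence). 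On the other hand, $\int_{\calC_\ast}\calE^{z_n}\,\pi^0(df)=\phi^0(z_n)$ by \eqref{eq:formula for right efn of FKPP in terms of QSD of dual results}, and by \eqref{E:phiz} this equals $M_\ast\cos(\theta_\ast)\,{\bf E}_{z_n}[e^{4\alpha\theta_\ast^2\tau^Z}]$. The last step is to identify $\lim_n{\bf E}_{z_n}[e^{4\alpha\theta_\ast^2\tau^Z}]$ with ${\bf E}_F[e^{4\alpha\theta_\ast^2\tau_1}]$: the $2$-type process from $z_n$ reaches the state ``one green, one red, same position'' at the same time the underlying $1$-type CBM reaches the state ``exactly two particles at a common point'', because ignoring colour turns $Z$ into the $1$-type CBM $\bar X$ and the colour bookkeeping never changes the coalescence times; hence $\tau^Z\overset{d}{=}\tau_1$ under the matching initial configuration, and the limit along $\Sigma_F$ exists and equals ${\bf E}_F[e^{4\alpha\theta_\ast^2\tau_1}]$ by Lemma \ref{L:Laplace_tau1}. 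Combining and using $M_\ast\cos(\theta_\ast)=\tfrac{1}{2{\bf E}_{\mathbb S}[e^{4\alpha\theta_\ast^2\tau_1}]}$ from \eqref{eq:Mast constant} (applied with $F=\mathbb S$, which also fixes the normalisation since $\Pm_{u\sim\pi^0}(u\text{ not fixed on }\mathbb S)=1$), yields \eqref{eq:prob neutral QSD not fixed on F}. The strict two-sided bound then follows from \eqref{ineq:EF strictly increasing in F} with $F\subsetneq\mathbb S$.

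For the second assertion \eqref{eq:probabliity of fixation on set but not on larger set strictly positive}, I would write
\[
\Pm(u\text{ fixed on }F,\ u\text{ not fixed on }F')=\Pm(u\text{ not fixed on }F')-\Pm(u\text{ not fixed on }F),
\]
valid because $\{u\text{ not fixed on }F'\}\subseteq\{u\text{ not fixed on }F\}$ whenever $F\subseteq F'$ (a function fixed on the smaller set need not be fixed on the larger; contrapositively, not fixed on $F$ is implied by not fixed on $F'$). By \eqref{eq:prob neutral QSD not fixed on F} this difference equals $\big({\bf E}_{F'}[e^{4\alpha\theta_\ast^2\tau_1}]-{\bf E}_{F}[e^{4\alpha\theta_\ast^2\tau_1}]\big)\big/{\bf E}_{\mathbb S}[e^{4\alpha\theta_\ast^2\tau_1}]$, which is strictly positive by \eqref{ineq:EF strictly increasing in F} and strictly less than $1$ since both terms are strictly positive and bounded by ${\bf E}_{\mathbb S}[e^{4\alpha\theta_\ast^2\tau_1}]$ (with the upper one strict as $F'\subsetneq\mathbb S$). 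This gives \eqref{eq:probabliity of fixation on set but not on larger set strictly positive}.

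The step I expect to be the main obstacle is justifying the limit $\int\calE^{z_n}\,d\pi^0\to\Pm_{u\sim\pi^0}(u\text{ not fixed on }F)$ rigorously, i.e. the measure-theoretic identification of the pointwise limit of $\calE^{z_n}(f)$ with the indicator of the non-fixation event for $\pi^0$-a.e.\ continuous $f$. One has to rule out the borderline case where, along the fixed dense sequence $(x_i)$, $f$ is bounded away from $0$ and from $1$ on a sub-sequence but not on the whole sequence — but since $f$ is continuous and $(x_i)$ is dense in $F$, $\inf_F f$ and $\sup_F f$ are realized as limits along $(x_i)$, so $\prod_{i\le n}(1-f(x_i))\to 0$ iff $\sup_F f>0$, uniformly in the choice of $\underline x\in\Sigma_F$, making the limit of $\calE^{z_n}(f)$ genuinely equal to $\Ind(f\not\equiv 0\text{ on }F)\cdot\Ind(f\not\equiv 1\text{ on }F)$ for every $f\in\calC_\ast$, with no null-set caveat needed. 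The remaining care is uniform integrability, which is immediate since $0\le\calE^{z_n}\le 1$, so dominated convergence applies directly. Everything else is bookkeeping with the already-established formulae \eqref{E:phiz}, \eqref{eq:Mast constant}, and Lemma \ref{L:Laplace_tau1}.
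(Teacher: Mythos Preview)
Your proposal has a genuine gap at the very first step. With your choice $z_n=((x_1,\ldots,x_n),(x_1,\ldots,x_n))$, the duality function is
\[
\calE^{z_n}(f)=P_n\,(1-P_n),\qquad P_n:=\prod_{i=1}^n(1-f(x_i)),
\]
since the green and red products are \emph{identical}; there is no $\prod f(x_i)$ term as you claim. For every $f\in\calC_{\ast}$ one has $P_n\to \Ind(f\equiv 0\text{ on }F)$ (density of $(x_i)$ in $F$ and $\sup_F f>0$ force $\sum_i f(x_i)=\infty$), hence $P_n(1-P_n)\to 0$ for \emph{every} $f\in\calC_{\ast}$. Thus $\int\calE^{z_n}\,d\pi^0\to 0$, not $\Pm_{u\sim\pi^0}(u\text{ not fixed on }F)$. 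The indicator of non-fixation cannot be realised as a pointwise limit of functions of the form $\calE^{z}$: the green factor $\prod_i(1-f(x_i))$ only detects the event $\{f\equiv 0\text{ on the green set}\}$, never $\{f\equiv 1\}$.

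Your identification $\tau^Z\overset{d}{=}\tau_1$ is also false for this $z_n$. With $n$ green and $n$ red particles, the two surviving particles in the underlying $1$-type CBM may both be green (greens are never killed by reds), in which case the reds have already died out and $\tau^Z=\infty$ while $\tau_1<\infty$.

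The paper avoids both issues by taking a \emph{single} green particle, $z^{(n)}=(x_1,(x_2,\ldots,x_n))$. Then $\calE^{z^{(n)}}(f)=(1-f(x_1))\big[1-\prod_{j\ge 2}(1-f(x_j))\big]\to (1-f(x_1))\,\Ind(f\not\equiv 0\text{ on }F)$, and combining with $\expE_{\pi^0}[1-u(x_1)]=\tfrac12$ (and $x_1\in F$) gives $\Pm_{\pi^0}(u\equiv 0\text{ on }F)=\tfrac12-\lim_n\phi^0(z^{(n)})$. The symmetry $u\mapsto 1-u$ under $\pi^0$ then converts this into $\Pm_{\pi^0}(u\text{ not fixed on }F)$. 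With only one green particle, the last two particles of the $1$-type CBM are automatically one green and one red, so $\tau^Z$ and $\tau_1$ genuinely coincide in law.

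For the second assertion, your displayed identity is correct but the justification reverses the inclusion: for $F\subseteq F'$ one has $\{u\text{ not fixed on }F\}\subseteq\{u\text{ not fixed on }F'\}$ (non-constant on a subset implies non-constant on the superset), not the other direction. With the correct inclusion, $\{u\text{ fixed on }F\}\cap\{u\text{ not fixed on }F'\}=\{u\text{ not fixed on }F'\}\setminus\{u\text{ not fixed on }F\}$, and the rest of your computation goes through (note that $F'=\bfS$ is permitted, so the upper bound should be argued via $\mathbf{E}_F[e^{4\alpha\theta_\ast^2\tau_1}]>0$ rather than $F'\subsetneq\bfS$).
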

Observe that
\eqref{eq:prob neutral QSD not fixed on F} is equivalent to
\begin{equation}\label{eq:prob neutral QSD zero on F}
\Pm_{u\sim \pi^0}(u\equiv 0\text{ on $F$})=\frac{1}{2} \left(1 -\frac{{\bf E}_{F}\left[e^{4\alpha\theta_{\ast}^2\tau_1}\right]}{{\bf E}_{\bfS}\left[e^{4\alpha\theta_{\ast}^2\tau_1}\right]} \right),
\end{equation}
because $\Pm_{u\sim \pi^0}(u\equiv 0\text{ on $F$})=\Pm_{u\sim \pi^0}(u\equiv 1\text{ on $F$})$ by symmetry.

The strict inequalities in Theorem \ref{thm:neutral QSD doesn't fixate on closed set} offer some information about the support of the QSD $\pi^0$. For example, for any given non-empty closed set $F$, $\pi^0$ puts positive mass on paths that are fixed on $F$. In particular, 
$$\Pm_{u\sim \pi^0}(0<u(x)<1\text{ for all }x\in\mathbb{S}) \in[0,1).$$
Moreover, since $m\{x\in \mathbb{S}:\,u(x)=0\}=\lim_{n\to\infty}\int_{\mathbb{S}}(1-u(x))^n\,m(dx)$ for each $u\in\calC_*$, we have
\begin{align*}
\E_{u\sim \pi^0}\left[m\{x\in \mathbb{S}:\,u(x)=0\}\right]=&\,
\int_{\mathbb{S}}\P_{u\sim \pi^0}(u(x)=0)\,m(dx)=\P_{u\sim \pi^0}(u(0)=0)\\
=&\,\frac{1}{2} \left(1 -\frac{{\bf E}_{\{0\}}\left[e^{4\alpha\theta_{\ast}^2\tau_1}\right]}{{\bf E}_{\bfS}\left[e^{4\alpha\theta_{\ast}^2\tau_1}\right]} \right) \in \left(0,\,1/2 \right)
\end{align*}
where the last equality follows from \eqref{eq:prob neutral QSD zero on F} with $F=\{0\}$.

Our results and methods lay the foundation for the study of detailed properties of the QSD for the stochastic FKPP.
In particular, the technique in our proof of Theorem \ref{thm:neutral QSD doesn't fixate on closed set} enables us to obtain further properties of the QSD $\pi^0$. For example,
for any closed disjoint subsets $F_g, \,F_r \subset \bfS$,
\begin{equation}\label{Conv_product3}
\P_{u\sim \pi^0}\left(\{u\equiv 0\text{ on $F_g$}\}\setminus \{u\equiv 0\text{ on $F_r$}\}\right) 
=
M_{\ast}\cos(\theta_{\ast})\,{\bf E}_{(\underline{x},\underline{y})}[e^{4\alpha \theta_{\ast}^2\tau^{Z}}],
\end{equation}
where  $\tau^Z$ is the first time that the 2-type CBM $Z$ consists of one green and one red particle, both at the same position. The elements  
 $\underline{x}=(x_1,\ldots)\in \Sigma_{F_g}$ and
$\underline{y}=(y_1,\ldots)\in \Sigma_{F_r}$ are arbitrarily fixed and will not affect the value of ${\bf E}_{(\underline{x},\underline{y})}[e^{4\alpha \theta_{\ast}^2\tau^{Z}}]:=\lim_{n\to\infty}{\bf E}_{z^{(n)}}[e^{4\alpha \theta_{\ast}^2\tau^{Z}}]$, where $z^{(n)}:=((x_1\ldots,x_n),\,(y_1\ldots,y_n))$.

\medskip

\noindent 
{\bf A martingale. }
The duality that the coalescing Brownian motion enjoys with the neutral stochastic FKPP is a spatial version of the duality that Kingman's coalescent enjoys with Wright-Fisher diffusion; 
the coalescing Brownian motion can be viewed as a spatial version of Kingman's coalescent. Writing $N_t$ for the number of blocks (or lineages) that Kingman's coalescent has at time $t$, it is well-known and easy to check that
\begin{equation}\label{eq:Kingman martingale}
e^{t}\Big(\frac{1}{2}-\frac{1}{N_t+1}\Big)\quad\text{is a martingale.}
\end{equation}
We  recover the corresponding martingale for coalescing Brownian motion in the following.
\begin{corollary}\label{cor:martingale corollary}
Suppose that $\bar{X}_t:=(X^1_t,\ldots,X^{N_t}_t)$ is a system of coalescing Brownian motion (described before \eqref{WFdual} with $\beta=0$). Then
\begin{equation}\label{eq:CBM martingale}
    e^{4\alpha \theta_{\ast}^2t}\Big(\frac{1}{2}-\expE_{u\sim \pi^0}\Big[\prod_{i=1}^{N_t}(1-u(X_t^i))\Big]\Big)\quad\text{is a martingale.}
\end{equation}
\end{corollary}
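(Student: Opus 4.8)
\textbf{Proof proposal for Corollary \ref{cor:martingale corollary}.}

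The plan is to identify the quantity inside the parentheses in \eqref{eq:CBM martingale} with the right eigenfunction $\phi^0$ of the killed $2$-type CBM evaluated along a suitable trajectory, and then use that $\phi^0$ is a right eigenfunction with eigenvalue $\lambda = e^{-4\alpha\theta_\ast^2}$ to deduce the martingale property. First I would recall from \eqref{eq:formula for right efn of FKPP in terms of QSD of dual results} that $\phi^0(z) = \int_{\calC_\ast}\calE^z(f)\,\pi^0(df) = \expE_{u\sim\pi^0}[\calE^z(u)]$, and from \eqref{eq:F for duality relationship} that for $z = (x,y)$ with $x = (x_1,\dots,x_n)$ and $y = (y_1,\dots,y_m)$ we have $\calE^z(u) = \big[\prod_i(1-u(x_i))\big]\big[1-\prod_j(1-u(y_j))\big]$. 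The key observation is that if the green component is an $n$-particle CBM configuration $\bar X_t = (X^1_t,\dots,X^{N_t}_t)$ and the red component is a \emph{single} red particle far away (or, more precisely, taking the red component to be a dense sequence $\underline y \in \Sigma_{\mathbb{S}}$ in the limiting sense of Lemma \ref{L:Laplace_tau1}), the bracket $[1-\prod_j(1-u(y_j))]$ tends to $1$ for $\pi^0$-a.e.\ $u$ (since $\pi^0$ is supported on $\calC_\ast$, so $u$ is not identically $0$, and a dense set of sample points forces $\prod_j(1-u(y_j)) = 0$). Hence $\expE_{u\sim\pi^0}\big[\prod_{i=1}^{N_t}(1-u(X^i_t))\big]$ coincides with the limit of $\phi^0$ evaluated at the configuration (green $= \bar X_t$, red $= (y_1,\dots,y_m)$) as $m\to\infty$ with $\underline y\in\Sigma_{\mathbb{S}}$ — call this limiting value $\phi^0(\bar X_t \,;\, \mathbb{S})$, which by \eqref{E:phiz} and Lemma \ref{L:Laplace_tau1} equals $M_\ast\cos(\theta_\ast)\,{\bf E}_{\bar X_t, \underline y}[e^{4\alpha\theta_\ast^2\tau^Z}]$.

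Next I would invoke the right-eigenfunction identity $Q_t\phi^0 = \lambda^t\phi^0$ for the killed $2$-type CBM, i.e.\ ${\bf E}_z[\phi^0(Z_t)\Ind(\taud>t)] = \lambda^t\phi^0(z)$, which by the Markov property upgrades to the statement that $\lambda^{-t}\phi^0(Z_t)\Ind(\taud>t)$ is a martingale under ${\bf P}_z$. Now I take $Z$ to start with green component equal to a single particle and red component a dense sequence $\underline y\in\Sigma_{\mathbb{S}}$. Because the red dynamics never affect the green dynamics (each red particle's position only influences its own killing rate via intersection local time with green and other red particles), and because on the event $\taud > t$ the green component is exactly an ordinary CBM, the green marginal of $Z_t$ has the law of $\bar X_t$ started from the appropriate configuration. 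Passing to the limit $m\to\infty$ in the red coordinate (justified exactly as in the proof of Lemma \ref{L:Laplace_tau1}, where the limit ${\bf E}_F[e^{4\alpha\theta_\ast^2\tau_1}]$ is shown to exist and be finite, together with bounded convergence since $\phi^0\in\calC_b(\chi;\R_{>0})$ is bounded), the martingale $\lambda^{-t}\phi^0(Z_t)\Ind(\taud>t)$ converges to $e^{4\alpha\theta_\ast^2 t}\,\phi^0(\bar X_t\,;\,\mathbb{S}) = e^{4\alpha\theta_\ast^2 t}\,\expE_{u\sim\pi^0}\big[\prod_{i=1}^{N_t}(1-u(X^i_t))\big]$. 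Since a bounded pointwise limit of martingales that is itself adapted and integrable — here one wants an $L^1$ or bounded-convergence argument over finite time horizons — remains a martingale, this gives that $e^{4\alpha\theta_\ast^2 t}\,\expE_{u\sim\pi^0}\big[\prod_{i=1}^{N_t}(1-u(X^i_t))\big]$ is a martingale. Finally, subtracting the constant $\tfrac12$ (which I would motivate by noting $\expE_{u\sim\pi^0}[u(x)] = \tfrac12$ by symmetry, so the ``$0$-lineage'' boundary value of $\phi^0$ after the constant shift vanishes, matching the structure of \eqref{eq:Kingman martingale}) and using that $e^{4\alpha\theta_\ast^2 t}\cdot\tfrac12$ is \emph{not} a martingale means one must instead shift \emph{before} multiplying: the correct statement, as written, is that $e^{4\alpha\theta_\ast^2 t}(\tfrac12 - \expE_{u\sim\pi^0}[\prod_i(1-u(X^i_t))])$ is a martingale, which follows from the above provided one checks that $e^{4\alpha\theta_\ast^2 t}\cdot\tfrac12$ arises as the limit of $\lambda^{-t}\phi^0$ evaluated at a configuration whose green component is empty — i.e.\ one extends $\phi^0$ by its ``one green, dense red'' boundary value and notes the $0$-green configuration contributes the constant $\tfrac12$ after renormalization. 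I would therefore present this cleanly by defining $\psi(\bar x) := \tfrac12 - \expE_{u\sim\pi^0}[\prod_i(1-u(x_i))]$ on CBM configurations, showing $\psi(\bar x) = \tfrac12 - \phi^0(\bar x\,;\,\mathbb{S})$, and showing directly from the eigenfunction equation for $\phi^0$ (with the red coordinate a dense sequence) that $e^{4\alpha\theta_\ast^2 t}\psi(\bar X_t)$ is a martingale.

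The main obstacle I expect is the limiting argument in the red coordinate: making rigorous the passage $m\to\infty$ with $\underline y\in\Sigma_{\mathbb{S}}$, both in identifying $\expE_{u\sim\pi^0}[\prod_i(1-u(X^i_t))]$ with $\lim_m \phi^0(\text{green }\bar X_t,\text{ red }y^{(m)})$ (this needs that $\pi^0$ gives full mass to $u$ for which $\inf_{x}u(x) = 0$ fails to force $\prod_j(1-u(y_j))>0$ along a dense sequence — i.e.\ one needs $\sup_x u(x) = 1$ $\pi^0$-a.s., or more carefully that $\{u : u < 1 \text{ on a dense set}\}$ is $\pi^0$-null, which should follow since such $u$ would have $\int u\,dx < 1$ yet... — actually one only needs $u\in\calC_\ast$, whence $u(x_0)<1$ for some $x_0$ and by density and continuity $\prod_j(1-u(y_j))$ can be made $\le (1-u(x_0)/2)^k \to 0$), and in justifying the martingale property survives the limit (dominated convergence on finite horizons using boundedness of $\phi^0$, plus the fact that $\taud$ with a dense red configuration satisfies $\taud = \infty$ almost surely so the indicator is harmless). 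A secondary technical point is verifying that the green marginal of the $2$-type CBM really is an autonomous CBM with the stated generator — this is asserted in the text around \eqref{Dual2type_2} ("the subset $G_t$ also constitutes a $1$-type system of BCBMs that is not affected by the red particles"), so I would simply cite it. Everything else — the eigenfunction equation, the representation \eqref{E:phiz}, the existence of the relevant limits — is available from Theorems \ref{thm:explicit expressions when neutral}, \ref{T:M*} and Lemma \ref{L:Laplace_tau1}.
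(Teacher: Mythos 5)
Your proposal has a genuine gap centered on the dense-red limiting argument, together with an inconsistency about which particles are green. At one point you take the green component to be the full configuration $\bar X_t$ and the red component to be a dense sequence, giving the pointwise identification $\expE_{u\sim\pi^0}[\prod_{i=1}^{N_t}(1-u(X^i_t))]=\lim_m\phi^0(\text{green}=\bar X_t,\text{red}=y^{(m)})$; at another point you take the green component to be a \emph{single} particle with the dense sequence as red. These two setups are incompatible, and they produce different constants: with green $=\bar X_t$ and red dense one gets $\phi^0\to\expE_{u\sim\pi^0}[\prod(1-u(X^i_t))]$, while with green a single particle at $x_1$ and red $=(x_2,\ldots,x_n)$ one gets, directly from \eqref{eq:F for duality relationship} and \eqref{eq:formula for right efn of FKPP in terms of QSD of dual results}, $\phi^0(z)=\expE_{u\sim\pi^0}[1-u(x_1)]-\expE_{u\sim\pi^0}[\prod_{i=1}^n(1-u(x_i))]=\tfrac12-\expE_{u\sim\pi^0}[\prod_{i=1}^n(1-u(x_i))]$. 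You notice the mismatch (``$e^{4\alpha\theta_{\ast}^2t}\cdot\tfrac12$ is not a martingale''), but the patch you propose — extending $\phi^0$ to a ``zero-green'' configuration and ``shifting before multiplying'' — is not coherent: $\chi$ requires at least one green and one red, and the $\tfrac12$ does not arise as a leftover constant from a dense-red limit.

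The deeper obstruction to the dense-red route is that the identity $\expE_{u\sim\pi^0}[\prod_i(1-u(X^i_t))]=\phi^0(G_t\,;\,\bfS)$ is a \emph{pointwise} identity requiring the red particles to be dense \emph{at the current time}. But $\lambda^{-t}\phi^0(Z_t)\Ind(\taud>t)$ is a martingale adapted to the filtration of the full process $Z_t=(G_t,R_t)$, and at any $t>0$ the red component $R_t$ has coalesced to a finite, non-dense set (coming down from infinity). Thus $\phi^0(Z_t)$ depends on the actual positions of the surviving red particles, not merely on $G_t$, and does not equal $\phi^0(G_t\,;\,\bfS)$ for $t>0$. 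Your claim that the martingale ``converges to $e^{4\alpha\theta_{\ast}^2t}\phi^0(\bar X_t\,;\,\bfS)$'' therefore fails, and no dominated-convergence argument repairs it.

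The paper sidesteps both difficulties by colouring exactly \emph{one} particle green (say $X^1$) and all remaining particles red. Then $\phi^0(z)=\tfrac12-\expE_{u\sim\pi^0}[\prod_{i=1}^n(1-u(x_i))]$ holds \emph{exactly}, with no limiting argument, for every $z=(x_1,(x_2,\ldots,x_n))$. Since $\beta=0$ there is no branching and a lone green particle is never killed (only greens kill greens), so the one-green stratum of $\chi$ is closed under the dynamics. Extending $\phi^0$ by zero on red-empty states — which matches $\tfrac12-\tfrac12=0$ when $n=1$ — and invoking Lemma \ref{lem:early stopping reduces spectral radius}, one reads off that $e^{4\alpha\theta_{\ast}^2t}\overline{\phi^0}(Z_t)$ is a martingale for the unkilled process $Z$; removing the colour identifies this with the $1$-type CBM $\bar X_t$, giving the claimed process. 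If you replace the dense-red limit with the single-green observation, the argument closes cleanly.
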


Indeed, it is well-known that the standard Wright-Fisher diffusion on $[0,1]$ prior to fixation (when it hits $0$ or $1$) has the QSD $\pi^{\text{WF}}=\text{Unif}((0,1))$ \cite{seneta1966quasi}. The analogue of the term 
in brackets in \eqref{eq:CBM martingale} is therefore
\[
\frac{1}{2}-\expE_{u\sim \text{unif}((0,1))}\Big[(1-u)^{N_t}\Big]=\frac{1}{2}-\int_0^1(1-u)^{N_t}du=\frac{1}{2}-\frac{1}{N_t+1}.
\]
Therefore,  \eqref{eq:CBM martingale} is the spatial version of \eqref{eq:Kingman martingale}.

\section{Overview of the proofs of our main results}\label{S:Idoof}

In this section, we provide an overview of our proofs. Our proofs will be decomposed into Propositions 
\ref{prop:duality relationship for quasi-stationarity}-
\ref{prop:uniqueness QSD for FKPP}, which we shall establish later, in turn, in Section \ref{SS:ProofIdoof}. Assuming these propositions, we give the proofs of Theorems \ref{T:main1}, \ref{thm:fixation time},  \ref{thm:conv to QSD dual}, \ref{thm:right efn of dual} and \ref{theo:duality relationship for quasi-stationarity results} 
at the end of this section. The key ideas and structure of our proofs are laid out in Fig. \ref{fig:Idoof}.

Our first proposition establishes a general link between the killed stochastic FKPP and the killed $2$-type BCBM, as in Theorem \ref{theo:duality relationship for quasi-stationarity results}, \textit{without} knowledge about uniqueness of QSDs or that of the right eigenfunctions.
\begin{prop}[QSD gives right eigenfunction of the dual]\label{prop:duality relationship for quasi-stationarity}
Let $u=(u_t)_{0\leq t<\tau_{\fix}}$ be the killed stochastic FKPP and  $(Z_t)_{0_{\leq t<\taud}}$ the killed 2-type BCBM corresponding to a given set of constants $\alpha\in(0,\infty)$, $\beta\in\R_{\geq 0}$ and $\gamma\in\R_{\geq 0}$. 
\begin{enumerate}
    \item[(i)] Suppose that $\pi\in \calP(\calC_*)$ is a quasi-stationary distribution for the  stochastic FKPP and $\Lambda(\pi)>0$ is the corresponding left eigenvalue. Then $\phi(z):=\int_{\calC_{\ast}}\calE^z(f)\,\pi(df)$ defined by \eqref{eq:formula for right efn of FKPP in terms of QSD of dual results} 
     belongs to $\calC_b(\chi;\Rm_{>0})$ and is a  right eigenfunction  of the  the killed $2$-type BCBM. Furthermore, $\Lambda(\phi)=\Lambda(\pi)$.
\item[(ii)] Suppose that $\varphi\in \calP(\chi)$ is a QSD of the $2$-type BCBM  and $\Lambda(\varphi)>0$ is the corresponding left eigenvalue. Then $h(f):=\int_{\chi}\calE^z(f)\,\varphi(dz)$ defined by \eqref{eq:formula for right efn of dual in terms of QSD of FKPP results} belongs to $\calC_b(\calB_{\ast};\Rm_{>0})$ and is a  right eigenfunction of the killed stochastic FKPP. Furthermore, $\Lambda(h)=\Lambda(\varphi)$.
\end{enumerate}
\end{prop}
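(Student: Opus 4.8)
The plan is to exploit the duality identity \eqref{eq:moment duality relationship for 2-type results} from Proposition \ref{prop:killDual}, which in operator form reads $(P_t\calE^z)(f)=\bigl(Q_t\calE^\bullet(f)\bigr)(z)$, together with the eigenrelations for $\pi$ and $\varphi$. I will prove part (i); part (ii) is entirely symmetric with the roles of $P_t,\calE^z,\pi$ and $Q_t,\calE^\bullet,\varphi$ interchanged. First I would fix $z\in\chi$, $t\geq 0$ and compute $Q_t\phi(z)$ directly from the definition $\phi(z)=\int_{\calC_\ast}\calE^z(f)\,\pi(df)=\pi(\calE^z)$:
\begin{align*}
(Q_t\phi)(z)&={\bf E}_z[\phi(Z_t)\Ind(\taud>t)]={\bf E}_z\Bigl[\Ind(\taud>t)\int_{\calC_\ast}\calE^{Z_t}(f)\,\pi(df)\Bigr]\\
&=\int_{\calC_\ast}{\bf E}_z\bigl[\calE^{Z_t}(f)\Ind(\taud>t)\bigr]\,\pi(df)=\int_{\calC_\ast}\expE_f[\calE^z(u_t)\Ind(\tau_{\fix}>t)]\,\pi(df)=\int_{\calC_\ast}(P_t\calE^z)(f)\,\pi(df),
\end{align*}
where the third equality is Tonelli/Fubini (the integrand is bounded by $1$) and the fourth is Proposition \ref{prop:killDual}. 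Now $\int_{\calC_\ast}(P_t\calE^z)(f)\,\pi(df)=(\pi P_t)(\calE^z)=\Lambda(\pi)^t\,\pi(\calE^z)=\Lambda(\pi)^t\phi(z)$ by the QSD/left-eigenmeasure property \eqref{eq:eigentriple stochastic FKPP}. This gives $Q_t\phi=\Lambda(\pi)^t\phi$ for all $t\geq 0$, so $\phi$ is a right eigenfunction of the killed $2$-type BCBM with eigenvalue $\Lambda(\phi)=\Lambda(\pi)$, provided we also check $\phi\in\calC_b(\chi;\Rm_{>0})$.

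The remaining work is to establish the regularity claim $\phi\in\calC_b(\chi;\Rm_{>0})$. Boundedness is immediate since $0\leq\calE^z\leq 1$ gives $0\leq\phi\leq 1$. For strict positivity I would argue that for any $z=(x,y)\in\chi$ there is a set of $f\in\calC_\ast$ of positive $\pi$-measure on which $\calE^z(f)=\bigl[\prod_i(1-f(x_i))\bigr]\bigl[1-\prod_j(1-f(y_j))\bigr]>0$; concretely, $\calE^z(f)>0$ whenever $0<f<1$ everywhere, and $\pi(\{f:0<f<1 \text{ on }\mathbb{S}\})>0$ would suffice — but since this is exactly the kind of support statement we may not yet have, a cleaner route is: by \eqref{eq:moment duality relationship for 2-type results} applied at a suitable time and using that the killed stochastic FKPP started from $\pi$ survives with positive probability ($\Lambda(\pi)>0$), $\phi(z)=\Lambda(\pi)^{-t}(Q_t\phi)(z)$, and one shows $(Q_t\phi)(z)>0$ by a reachability/irreducibility argument for the $2$-type BCBM (from any $z$ there is positive probability that $\taud>t$ and $Z_t$ lands where $\phi$, already known nonnegative and not identically zero, is positive). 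For continuity, I would use the Feller property of the killed $2$-type BCBM semigroup $(Q_t)$: if $\phi$ is bounded and we can first show $\phi$ continuous at one step, then $\phi=\Lambda(\pi)^{-t}Q_t\phi$ with $Q_t$ mapping bounded (measurable) functions to continuous ones — this is the smoothing/Feller argument. Alternatively, continuity of $z\mapsto\calE^z(f)$ in $z$ (uniformly in $f$, since $\calE$ is a finite product of the continuous maps $x_i\mapsto 1-f(x_i)$ and these are equicontinuous... no, they are not equicontinuous over all $f\in\calC_\ast$), so the Feller route is safer; I expect the paper establishes the Feller property of $(Q_t)$ separately, and I would invoke it.

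The main obstacle I anticipate is precisely the regularity of $\phi$ — specifically strict positivity and continuity on all of $\chi$, including the ``boundary'' behavior as the number of particles grows or particles collide, since $\chi=\bigl(\cup_{n\geq1}\bfS^n/\!\sim\bigr)\times\bigl(\cup_{m\geq1}\bfS^m/\!\sim\bigr)$ is a countable disjoint union of compact pieces and continuity on each piece plus global boundedness is what is really needed. The eigenfunction identity itself is a short Fubini-plus-duality computation; the analytic content is entirely in showing $\phi$ is a genuinely nice (bounded, continuous, strictly positive) function, which is where the Feller property of the killed dual semigroup and an irreducibility argument for the $2$-type BCBM will be needed. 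I would structure the proof so that the eigenrelation is derived first and cleanly, then devote the bulk of the argument to $\phi\in\calC_b(\chi;\Rm_{>0})$, and finally note that part (ii) follows by the same computation with the Feller property of the killed stochastic FKPP (which the paper states it establishes) replacing that of the dual.
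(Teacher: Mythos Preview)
Your eigenfunction computation via Fubini and Proposition~\ref{prop:killDual} matches the paper exactly, as does your accessibility/irreducibility route to strict positivity. The one place you take a detour is continuity of $\phi$. You dismiss the direct approach on the grounds that the maps $x_i\mapsto 1-f(x_i)$ are not equicontinuous over $f\in\calC_\ast$, and fall back on a (strong) Feller property of $(Q_t)$. But equicontinuity is not needed: since $\pi$ is supported on $\calC_\ast$, for each \emph{fixed} $f\in\calC_\ast$ the map $z\mapsto\calE^z(f)$ is continuous on $\chi$ (it is a finite product and difference of point evaluations of the continuous function $f$), and the bounded convergence theorem (integrand bounded by $1$) then gives $\phi(z_n)\to\phi(z)$ whenever $z_n\to z$. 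This is precisely the paper's argument. Your fallback to a strong Feller property of $(Q_t)$ is not justified: the paper does not establish this on $\chi$ (strong Feller is shown only for the two-particle CBM restricted to $\bfS\times\bfS$ in the $\beta=0$ case, and only the ordinary Feller property of $(P_t)$ is proven). The same remark applies to part~(ii): continuity of $h$ on $\calB_\ast$ follows from bounded convergence and the continuity of $f\mapsto\calE^z(f)$ (a polynomial in finitely many point evaluations, hence continuous in the uniform norm), not from the Feller property of $(P_t)$. Finally, for the strict positivity of $h$ in part~(ii) the paper's argument is slightly different from part~(i): given $u\in\calC_\ast$ one first picks an open $V\subseteq\bfS\times\bfS$ on which $\calE^{\cdot}(u)>0$, then uses accessibility to conclude $\varphi(V)>0$, hence $h(u)>0$ directly from the integral definition --- no eigenrelation is needed.
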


\medskip

The starting point of our proof of Theorem \ref{T:main1} will be the key observation that, when $\beta=0$, the dual process has a QSD that is supported on the finite dimensional space $\S\times \S$, enabling us to obtain various tightness results. Moreover this QSD is amenable to exact analysis, enabling the precise calculations of Section \ref{S:Insights} (note that these precise calculations are not needed for the proofs of our main results). In the rest of the proof, it shall be necessary to vary $\beta\in \R_{\geq 0}$. Where necessary to avoid ambiguity, we indicate this by a superscript $\beta$. 

\begin{prop}[Existence of QSD for CBM when $\beta=0$]\label{prop:existence of QSD for FKPP beta=0}
The $2$-type CBM with $\beta=0$ (i.e. without branching) has a QSD, denoted by $\varphi^0$,
which is supported on $\bfS\times\bfS$. The restriction $\varphi^0_{\lvert_{\bfS\times \bfS\setminus \Gamma}}$ off the diagonal $\Gamma:=\{(x,y)\in \bfS\times \bfS:x=y\}$  has a  density with respect to Lebesgue measure which is an element of $\calC(\bfS\times \bfS\setminus \Gamma;\Rm_{>0})$.
\end{prop}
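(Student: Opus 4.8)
The plan is to establish existence of a QSD for the killed $2$-type CBM (the $\beta=0$ case) by a compactness argument on the finite-dimensional subspace $\bfS\times\bfS$, and then to derive the regularity of its density off the diagonal from parabolic smoothing. The crucial structural observation is that, when $\beta=0$, there is no branching, so starting from one green and one red particle the process $Z_t=(G_t,R_t)$ remains a pair of (coalescing) Brownian motions on $\bfS$ for all $t<\taud$; in fact $\taud$ is precisely the first coalescence time of the green with the red particle, and until then $(G_t,R_t)$ is just a pair of independent Brownian motions of rate $\alpha$ run until an intersection-local-time killing clock fires. Thus the killed process restricted to initial conditions in $\bfS\times\bfS$ is a \emph{finite-dimensional} killed diffusion on the compact state space $\bfS\times\bfS$ (with killing that is ``soft'', driven by local time on the diagonal), and we reduce to showing such a process has a QSD.

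First I would make precise that it suffices to find a QSD supported on $\bfS\times\bfS$: although the general state space $\chi$ includes configurations with more than two particles, any QSD we construct on $\bfS\times\bfS$ is automatically a QSD for the killed process on $\chi$ in the sense of Definition \ref{Def:killTime_BCBM}, since $\{(G,R):|G|=|R|=1\}$ is invariant for the $\beta=0$ dynamics prior to $\taud$. Next, for the existence of the QSD of the two-particle killed diffusion on the compact manifold $M:=\bfS\times\bfS$, I would invoke the standard route: the sub-Markovian semigroup $Q_t$ acting on $\calC(M)$ (or $L^2(M,\Leb)$) is, for $t>0$, given by a kernel $q_t(z,z')$ that is bounded and (by hypoellipticity/parabolic regularity away from the singular killing on the diagonal, and by the heat-kernel bounds for Brownian motion on $\bfS$) compact and irreducible on $M\setminus\Gamma$. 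Compactness gives a spectral gap and a Perron--Frobenius-type principal eigenvalue $\lambda=e^{-\kappa}\in(0,1)$ with a strictly positive principal eigenfunction $\eta\in\calC(M)$ and a strictly positive principal left eigenmeasure; normalizing the latter to a probability measure yields $\varphi^0\in\calP(\bfS\times\bfS)$ with $\varphi^0 Q_t=\lambda^t\varphi^0$, i.e. a QSD. Equivalently, and perhaps more robustly given the local-time killing, one can run the Krein--Rutman / compactness argument at the level of the resolvent, or simply verify the Champagnat--Villemonais condition (A) of \cite{champagnat2016exponential} for the killed two-particle diffusion on the compact space $M$ — uniform boundedness of mass ratios plus a Doeblin-type minorization on compact subsets of $M\setminus\Gamma$ — which simultaneously gives existence, uniqueness and exponential convergence; I would likely use the version that best matches the later need in Theorem \ref{thm:conv to QSD dual}.

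For the density statement, I would argue as follows. Since $\varphi^0 Q_t=\lambda^t\varphi^0$ and, for each $t>0$, $Q_t$ maps finite measures into measures with a density against $\Leb$ on $M\setminus\Gamma$ (because away from the diagonal the killing mechanism does not act instantaneously and the transition kernel of two independent rate-$\alpha$ Brownian motions on $\bfS$, weighted by the survival probability, has a smooth, strictly positive density there — this is just the product heat kernel times a bounded survival factor), it follows that $\varphi^0$ itself has a density $\rho^0$ on $\bfS\times\bfS\setminus\Gamma$. Strict positivity of $\rho^0$ on $M\setminus\Gamma$ follows from irreducibility (the process can reach, with positive density, any off-diagonal configuration before killing), and continuity follows from writing $\rho^0=\lambda^{-t}Q_t^\ast\rho^0$ and appealing to the smoothing of the (sub-)heat semigroup; symmetry in $x\leftrightarrow y$ and dependence only on $d_\bfS(x,y)$ are inherited from the symmetries of the dynamics, which foreshadows the explicit formula \eqref{eq:QSD density for 2-particle CBM}. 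I would not attempt to control $\varphi^0$ on $\Gamma$ itself here, since the statement only asserts regularity of the restriction off the diagonal.

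The main obstacle I anticipate is the singular nature of the killing: it is driven by the intersection local time of the two Brownian motions on the diagonal $\Gamma$, rather than by a bounded killing potential, so the killed generator is not a standard Schrödinger operator and the compactness/irreducibility of $Q_t$ needs a careful justification — either via an explicit representation (two independent Brownian motions on $\bfS$ killed at rate proportional to local time at $0$ of their difference, whose difference is a single Brownian motion on $\bfS$ killed at a multiple of its local time at $0$, a very tractable object) or via heat-kernel comparison showing $q_t(z,z')$ is bounded above by the free heat kernel and bounded below on compacts of $M\setminus\Gamma$. Getting these two-sided bounds cleanly, and thereby the Doeblin minorization needed to feed into \cite{champagnat2016exponential}, is where the real work lies; everything else is soft functional analysis on a compact space.
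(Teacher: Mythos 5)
Your reduction to $\bfS\times\bfS$ and the shape of the conclusion are both correct, but your existence argument takes a genuinely different — and technically heavier — route than the paper's. The paper (Lemma \ref{lem:exist of 2-part qsd}) proves existence by applying the Schauder fixed-point theorem to the self-map $\mu\mapsto\Law_\mu(Z_t\lvert\taud>t)$ of $\calP(\bfS\times\bfS)$: since $\bfS\times\bfS$ is compact, $\calP(\bfS\times\bfS)$ is automatically compact and convex, so all that is needed is \emph{continuity} of this map, which the paper gets from a strong Feller property of $Q_t$ established by comparison with the free (unkilled) product heat semigroup. Fixed points for dyadic $t=2^{-k}$ form a decreasing chain of non-empty compact sets whose intersection is a QSD. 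This route is soft and uses no spectral structure of $Q_t$. You instead propose compactness of $Q_t$ plus Krein--Rutman, or the Champagnat--Villemonais condition (A), both of which would deliver uniqueness and convergence at the same time, but at the cost of verifying much harder hypotheses — compactness of the killed kernel or a Doeblin minorization for a diffusion killed by local time on the diagonal — and you correctly flag this as the sticking point; the paper's Schauder route avoids it entirely and defers uniqueness to Proposition \ref{prop:convergence to QSD for 2-type BCBM}, where it is established for all $\beta\geq0$ via a Doob $h$-transform and Harris' ergodic theorem. For the density, your smoothing-of-$Q_t$ argument is in the same spirit but different in detail from the paper's: the paper uses the martingale problem with test functions in $\calC_c^\infty(\bfS\times\bfS\setminus\Gamma)$ to show the QSD density is a non-negative weak solution of $\tfrac{\alpha}{2}\Delta\rho=-\kappa\rho$ off the diagonal, then applies elliptic regularity and Harnack's inequality; both approaches are sound.
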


It then follows from Proposition \ref{prop:duality relationship for quasi-stationarity}(ii)
that the stochastic FKPP  with $\beta=0$ has a right eigenfunction given by \eqref{eq:formula for right efn of dual in terms of QSD of FKPP results}, which we denote by $h^0$ and which has eigenvalue $\lambda_0:=\Lambda(h^0)=\Lambda(\varphi^0)>0$. By Girsanov's transform (Lemma \ref{L:Girsanov}), there exists a constant $C^{\beta}_t\in(0,\infty)$ such that 
\begin{align}\label{E:LowerBound_tau_generalmu}
\P_{\mu}\big( 
\tau^{\beta}_{\fix}>t\big)   \geq C^{\beta}_t\,\P_{\mu}\left( 
\tau^{0}_{\fix}>t\right)= \frac{C^{\beta}_t\,\lambda_0^t\,\mu(h^0)}{\E_{\mu}[h^0(u^0_t)\,|\,\tau^0_{\fix}>t]}\geq 
 \frac{C^{\beta}_t\,\lambda_0^t\,\mu(h^0)}{\|h^0\|_{\infty}} 
\end{align}
for all $\mu\in\calP(\calC_*)$, where the equality follows since  $h^0$ is a right eigenfunction for $(P_t)_{t\geq 0}$ when $\beta=0$.

We will use  \eqref{E:LowerBound_tau_generalmu} to establish the Feller property (Proposition \ref{prop:Feller_FKPP}) of the killed stochastic FKPP for all $\beta\in\R_{\ge 0}$ and to prove the following proposition.
\begin{prop}[Existence of QSD for FKPP for all $\beta\geq 0$]\label{prop:tightness proposition}
There exists a QSD for the stochastic FKPP, for all $\beta\in \R_{\geq 0}$. Moreover $\{\Law_{\mu}(u_t\lvert \tau_{\fix}>t)\}_{t\geq 1}$ is tight in $\calP(\calC_*)$ for all $\beta\in \R_{\geq 0}$ and $\mu\in\calP(\calB_{\ast})$.
\end{prop}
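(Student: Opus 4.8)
The plan is to bootstrap from the $\beta=0$ case, where the dual lives in a finite-dimensional space and everything is explicit, up to general $\beta\ge 0$ using the Girsanov comparison \eqref{E:LowerBound_tau_generalmu} together with the Feller property. Concretely, fix $\beta\in\R_{\ge 0}$ and $\mu\in\calP(\calB_\ast)$. First I would record the consequence of \eqref{E:LowerBound_tau_generalmu} that the killing probability does not decay too fast: there is a constant $c_t=C^\beta_t\lambda_0^t\mu(h^0)/\|h^0\|_\infty>0$ with $\P_\mu(\tau^\beta_{\fix}>t)\ge c_t>0$ for every $t$, so that the conditioned laws $\Law_\mu(u_t\mid\tau_{\fix}>t)$ are well-defined for all $t$. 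The heart of the matter is then tightness of $\{\Law_\mu(u_t\mid\tau_{\fix}>t)\}_{t\ge 1}$ in $\calP(\calC_\ast)$; once this is in hand, existence of a QSD follows by the standard Krylov–Bogolyubov-type argument, using that along a sequence $t_n\to\infty$ the Cesàro averages $\frac1{t_n}\int_0^{t_n}\Law_\mu(u_s\mid\tau_{\fix}>s)\,ds$ have a weak limit $\pi$, that $\pi$ is supported on the closed set $\calC_\ast$ (Remark \ref{RK:B_ast}), and that the Feller property of $(P_t)$ on $\calC_b(\calC_\ast)$ from Proposition \ref{prop:Feller_FKPP} lets one pass the quasi-stationarity identity to the limit — here one must also check $\Lambda(\pi)>0$, which again follows from the lower bound $\P_\mu(\tau_{\fix}>t)\ge c_t$ propagated through the limiting measure.

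For the tightness itself, the natural approach is to exploit the regularizing effect of the SPDE: for any fixed $s_0\in(0,1]$, parabolic smoothing estimates for the mild solution (available from the Walsh/Da Prato–Zabczyk theory, as used in \cite{MR1271224}) give moment bounds on Hölder norms of $u_{s_0}$ that are uniform over all initial conditions in $\calB_\ast$, because the noise coefficient $\sqrt{\gamma u(1-u)}$ and the reaction term $\beta u(1-u)$ are bounded uniformly on $[0,1]$. Thus the \emph{unconditioned} laws $\{\Law_\mu(u_{s_0})\}$ are tight in $\calC(\mathbb{S};[0,1])$ — in fact in a compactly-embedded Hölder ball $K_\eta$. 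To convert this into tightness of the \emph{conditioned} laws at time $t$, I would use the Markov property at time $t-s_0$: for any Borel set $A$,
\begin{equation}
\P_\mu(u_t\notin K_\eta\mid\tau_{\fix}>t)\le \frac{\P_\mu(u_t\notin K_\eta,\ \tau_{\fix}>t)}{\P_\mu(\tau_{\fix}>t)}\le \frac{1}{c_t}\,\E_\mu\big[\Ind(\tau_{\fix}>t-s_0)\,\P_{u_{t-s_0}}(u_{s_0}\notin K_\eta)\big]\le \frac{1}{c_t}\sup_{f\in\calB_\ast}\P_f(u_{s_0}\notin K_\eta),
\end{equation}
and the last supremum can be made as small as desired by choosing $K_\eta$ large. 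The only subtlety is that $c_t$ depends on $t$; one fixes this by working on a bounded time interval, i.e. using the upper bound \eqref{eq:upper bound on fixation probability}-type control (or simply the monotone comparison $\P_\mu(\tau_{\fix}>t)\ge\lambda^t\cdot$const once a QSD eigenvalue is known) — or, cleanly, by first establishing tightness along the Cesàro-averaged family, for which one only needs $\frac1T\int_0^T\P_\mu(\tau_{\fix}>s)^{-1}\,ds$ to be controlled, and then deriving the $t\ge1$ tightness a posteriori from convergence to the QSD once uniqueness is available. I would organize the argument so that existence of \emph{some} QSD (hence of the eigenvalue $\lambda=\Lambda(\pi)$ and the uniform lower bound $\P_\mu(\tau_{\fix}>t)\ge\frac{\mu(h)}{\|h\|_\infty}\lambda^t$ via the right eigenfunction from Proposition \ref{prop:duality relationship for quasi-stationarity}(ii) applied to $\varphi^0$) comes first, and then the $t$-uniform lower bound on $\P_\mu(\tau_{\fix}>t)$ makes the displayed conditioning bound uniform in $t\ge1$, giving the full tightness statement.

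The main obstacle I anticipate is the uniform-in-initial-condition smoothing estimate: one needs that $\sup_{f\in\calB_\ast}\P_f(u_{s_0}\notin K_\eta)\to 0$ as $K_\eta\uparrow$, i.e. a genuinely uniform (over \emph{all} Borel initial data valued in $[0,1]$, including rough step functions) bound on, say, $\E_f\|u_{s_0}\|_{\calC^\eta}$ for some $\eta>0$. This is where the boundedness of the coefficients on $[0,1]$ and the fact that the heat kernel $p(t,x,y)$ on $\mathbb{S}$ smooths an arbitrary $[0,1]$-valued initial datum instantly are essential, and one has to be careful that the stochastic convolution term's Hölder moments are controlled by $\|\sqrt{\gamma u(1-u)}\|_\infty\le\sqrt{\gamma}/2$ independently of $f$; the deterministic reaction convolution is handled similarly using $|\beta u(1-u)|\le|\beta|/4$. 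A secondary, more bookkeeping-level obstacle is ensuring the limiting measure $\pi$ charges only $\calC_\ast$ and not the absorbing functions $\mathbf{0},\mathbf{1}$: here one uses that $K_\eta\subset\calC(\mathbb{S};[0,1])$ and, to exclude $\mathbf 0$ and $\mathbf 1$, that the conditioned laws put mass bounded away from $\{f:\int_\mathbb{S}f\in[\varepsilon,1-\varepsilon]\}$ uniformly — which follows because $u_{s_0}$ under $\P_f$ with $f\in\calB_\ast$ has a non-degenerate spatial average by the SPDE dynamics, again uniformly in $f$ after conditioning via the same Markov-property estimate. These uniform-in-$f$ controls are exactly the ingredients that distinguish the infinite-dimensional setting from the finite-dimensional diffusion case and are where the real work lies; the Krylov–Bogolyubov wrap-up is then routine given the Feller property of Proposition \ref{prop:Feller_FKPP}.
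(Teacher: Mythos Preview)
Your proposal contains a genuine gap at the existence step. The ``standard Krylov--Bogolyubov-type argument'' you invoke does not apply to quasi-stationary distributions: the conditioning map $\Theta_t(\mu)=\mu P_t/(\mu P_t 1)$ is \emph{nonlinear} in $\mu$ because of the normalisation, so Ces\`aro averages of the orbit $s\mapsto \Theta_s(\mu)$ have no reason to converge to a fixed point of $\Theta_t$. The Feller property of $(P_t)$ lets you pass linear identities like $\mu\mapsto \mu P_t f$ to weak limits, but not the nonlinear fixed-point equation $\Theta_t(\pi)=\pi$. This is precisely why the paper abandons averaging and instead applies the Schauder fixed-point theorem directly to $\Theta_{2^{-k}}$ on a carefully chosen convex set $\calK_\epsilon=\{\mu:\mu(h^0)\ge\epsilon\}$, then intersects the (nested, compact) fixed-point sets over $k$ to obtain a simultaneous fixed point for all dyadic $t$, hence all $t$ by continuity.

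Your tightness argument has the right building blocks --- the uniform-in-initial-condition regularity estimate you describe is exactly Lemma~\ref{L:uniform continuity} --- but the conditioning step as written needs the ratio $\Pm_\mu(\tau_{\fix}>t-s_0)/\Pm_\mu(\tau_{\fix}>t)$ to be bounded uniformly in $t$, not merely a lower bound on the denominator. You acknowledge this and propose to bootstrap via the eigenvalue of a QSD, but that is circular since QSD existence is what you are proving. The paper resolves this by showing (Lemma~\ref{lem:properties of fixed point equation for stochastic FKPP}) that the functional $\mu\mapsto\mu(h^0)$ is pushed upward by $\Theta_t$ whenever it is small, so the orbit $\{\Theta_t(\mu)\}_{t\ge 0}$ is trapped in some $\calK_{\epsilon(\mu)}$; on this set the survival probability over one time unit is uniformly bounded below by \eqref{E:LowerBound_tau_generalmu}, which gives exactly the uniform control of the ratio you need (and simultaneously excludes mass at $\mathbf 0,\mathbf 1$). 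The Lyapunov-type role of $h^0$ --- available for all $\beta\ge 0$ via Girsanov even though it is only an eigenfunction when $\beta=0$ --- is the organising idea you are missing.
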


Proposition \ref{prop:tightness proposition} gives the existence of a QSD for the stochastic FKPP, but not (yet) its uniqueness.
Henceforth we fix some choice of QSD, which we denote by $\pi^{\beta}$. We therefore obtain by Proposition \ref{prop:duality relationship for quasi-stationarity} the existence of a right eigenfunction $\phi^{\beta}\in \calC_b(\chi;\Rm_{>0})$ with eigenvalue $\Lambda(\phi^{\beta})=\Lambda(\pi^{\beta})>0$ for the killed $2$-type BCBM, given by \eqref{eq:formula for right efn of FKPP in terms of QSD of dual results}. We henceforth fix $\phi^{\beta}$ to be this right eigenfunction (with the fixed normalisation given by \eqref{eq:formula for right efn of FKPP in terms of QSD of dual results}). We will use this right eigenfunction to establish the following proposition.

\begin{prop}[Existence and uniqueness of QSD for BCBM for all $\beta\geq 0$]\label{prop:convergence to QSD for 2-type BCBM}
There exists a unique QSD for the killed $2$-type BCBM, which we denote by $\varphi^{\beta}\in\calP(\chi)$, for all $\beta\in \R_{\geq 0}$. Moreover we have that
\begin{equation}\label{E:EqualEigenvalues}
\Lambda(\varphi^{\beta})=\Lambda(\phi^{\beta})
\end{equation}
and, for all $\mu\in\calP(\chi)$,
\begin{equation}\label{eq:Perron-Frobenius for dual}
    \lvert \lvert \lambda_{\beta}^{-t}\mu Q^{\beta}_t(\cdot)-\mu(\phi^{\beta})\varphi^{\beta}(\cdot)\rvert\rvert_{\rm TV}\rightarrow 0\quad\text{as}\quad t\rightarrow \infty,
\end{equation}
whereby $\|\cdot\|_{\rm TV}$ is the total variation distance in $\calP(\chi)$ and $\lambda:=\Lambda(\varphi^{\beta})=\Lambda(\phi^{\beta})$.
\end{prop}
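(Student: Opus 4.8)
The plan is to establish existence, uniqueness, convergence, and the eigenvalue identity for the killed $2$-type BCBM by combining the duality of Proposition \ref{prop:killDual} with the finite-dimensional structure available when $\beta=0$, and then bootstrapping to all $\beta\geq 0$ via comparison. First I would handle the case $\beta=0$: the $2$-type CBM restricted to fixed particle numbers is a finite-dimensional diffusion killed at the first hitting time of the ``all red particles gone'' event (which, combined with coalescence, means the number of particles only decreases), so a standard Champagnat--Villemonais type criterion (condition (A) of \cite{champagnat2016exponential} / \cite{champagnat2023general}) gives existence and uniqueness of a QSD $\varphi^0$ together with exponential convergence in total variation. Actually, since the paper already records existence of $\varphi^0$ with an explicit density in Proposition \ref{prop:existence of QSD for FKPP beta=0}, the content here for $\beta=0$ is the \emph{uniqueness} and the \emph{convergence} statement; I would obtain these from the fact that, starting from any configuration, the number of particles comes down to a bounded number in finite time with probability bounded below (coalescence local-time arguments on the compact circle), reducing everything to an irreducible killed diffusion on a bounded region of $\bigcup_{n,m}\bfS^{n}\times\bfS^{m}$ with $n,m\le N$, to which finite-dimensional QSD theory applies.

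Next, for general $\beta\ge 0$, I would use the right eigenfunction $\phi^{\beta}\in\calC_b(\chi;\R_{>0})$ produced (from the already-fixed QSD $\pi^{\beta}$ of the FKPP) by Proposition \ref{prop:duality relationship for quasi-stationarity}(i), with eigenvalue $\lambda_{\beta}:=\Lambda(\phi^{\beta})=\Lambda(\pi^{\beta})>0$. The idea is the classical $h$-transform / Doob transform: define the $\phi^{\beta}$-transformed sub-Markov semigroup $\widetilde{Q}^{\beta}_t F(z):=\lambda_{\beta}^{-t}\phi^{\beta}(z)^{-1}Q^{\beta}_t(\phi^{\beta}F)(z)$, which is an honest Markov semigroup on $\chi$ because $\phi^{\beta}$ is a strictly positive bounded right eigenfunction. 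Showing that this Markov process has a unique invariant measure $\widetilde\varphi^{\beta}$ to which it converges in total variation then immediately yields \eqref{eq:Perron-Frobenius for dual} with $\varphi^{\beta}(dz)\propto \phi^{\beta}(z)^{-1}\widetilde\varphi^{\beta}(dz)$, gives uniqueness of the QSD (any QSD $\varphi$ of $Q^{\beta}$ must satisfy $\varphi(\phi^{\beta})<\infty$ and transform to an invariant measure of $\widetilde Q^{\beta}$), and forces $\Lambda(\varphi^{\beta})=\lambda_{\beta}=\Lambda(\phi^{\beta})$, which is \eqref{E:EqualEigenvalues}. The ergodicity of $\widetilde Q^{\beta}$ I would get from a Lyapunov--minorization (Harris) argument: a small set given by bounded particle numbers confined to the circle, a minorization there coming from the branching-coalescing dynamics spreading and merging particles in one unit of time, and a drift condition controlling the total number of particles (branching at rate $\beta$ per particle versus coalescence, which accelerates quadratically with crowding on the compact circle, giving a genuine downward drift for large populations) — this is where the killing/$h$-transform interact, since under $\widetilde Q^{\beta}$ the dynamics are tilted by $\phi^{\beta}$, but boundedness of $\phi^{\beta}$ above and below on sets of bounded cardinality keeps the tilt harmless there.

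The main obstacle I expect is the Lyapunov/drift estimate controlling the number of particles under the $\phi^{\beta}$-tilted dynamics when $\beta>0$: one must show that the branching-coalescing system, even after the Doob transform by $\phi^{\beta}$ (which is only known to be bounded, not to have controlled oscillation), still ``comes down from infinity'' fast enough on the compact circle, i.e. that crowded configurations coalesce faster than they branch. On $\bfS$ with $n$ particles the pairwise intersection local time gives a total coalescence rate growing superlinearly in $n$ (heuristically like $n^{2}$ once particles are forced to be close by pigeonhole), which should dominate the linear branching rate $\beta n$; making this rigorous with local-time estimates and then transferring it through the bounded multiplicative tilt $\phi^{\beta}/\lambda_{\beta}^{t}$ is the technical heart. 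A secondary subtlety is that $\chi$ is a countable union of manifolds of varying dimension, so ``total variation convergence'' and the minorization must be set up on this stratified space; I would address this by noting the particle number is nonincreasing except for branching, running the process for a unit time to land (with probability bounded below, uniformly over bounded-cardinality starting sets) in a fixed reference configuration class with a density minorization, and then invoking the standard Harris theorem. Finally, the asymptotics $\lambda_{\beta}^{-t}{\bf P}_{\nu}(\taud>t)\to \nu(\phi^{\beta})/\varphi^{\beta}(\phi^{\beta})$ of Theorem \ref{thm:right efn of dual} (not part of this proposition but closely tied) follows by testing \eqref{eq:Perron-Frobenius for dual} against the constant function $\mathbf 1$.
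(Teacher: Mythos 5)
Your outline matches the paper's proof in its essentials: both use the Doob $h$-transform of $Q^{\beta}$ by the right eigenfunction $\phi^{\beta}\in\calC_b(\chi;\Rm_{>0})$ (supplied by Proposition \ref{prop:duality relationship for quasi-stationarity}(i) from the previously-fixed FKPP QSD $\pi^{\beta}$), and both then invoke a Harris/Lyapunov--minorization argument for the resulting Markov semigroup $\overline{Q}$, with particle number $N(z)$ driving the drift, to obtain a unique stationary law and exponential mixing, from which $\varphi^{\beta}\propto\phi^{\beta}(\cdot)^{-1}\overline{\mu}$ and \eqref{E:EqualEigenvalues} follow. Two remarks on where you and the paper part ways.

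First, a genuine gap: you assert that TV-convergence of the $\overline{Q}$-process to its invariant measure ``immediately yields'' \eqref{eq:Perron-Frobenius for dual}. It does not, because the passage from $\nu\overline{Q}_t\to\overline{\mu}$ to $\lambda^{-t}\mu Q_t\to\mu(\phi^{\beta})\varphi^{\beta}$ requires dividing by $\phi^{\beta}$, and while $\phi^{\beta}$ is bounded above it is \emph{not} bounded below on $\chi$ (it tends to $0$ as $N(z)\to\infty$). A factor of $1/\phi^{\beta}$ can therefore amplify a small TV error arbitrarily. The paper fills this by decomposing $\chi$ into the compacts $K_m=\{N\le m\}$ (on which $\phi^{\beta}$ is bounded below, so TV convergence passes through) and their complements, on which one needs a separate tail bound. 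That tail bound is exactly $\sup_n\sup_{\bar x\in\bfS^n}{\bf P}_{\bar x}(N_t\ge k)\to 0$ as $k\to\infty$ (Lemma \ref{L:supx_nt}), used in a two-step estimate on $d^{\mu,m}_t:=\lambda^{-t}\mu Q_t\Ind_{K_m^c}$. Without something like this your argument does not close.

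Second, a point of method: you identify the Lyapunov/drift estimate for the number of particles as ``the technical heart,'' anticipating local-time/pigeonhole estimates and worrying about the interaction with the $\phi^{\beta}$-tilt. The paper dodges both concerns. The drift estimate is proved for the \emph{untilted} $Q_1 N$ -- namely $\sup_{\bar x\in\bfS^n}{\bf E}_{\bar x}[N_t]\le\epsilon\,n$ for large $n$ (Lemma \ref{L:supx_nt} eq.\ \eqref{supx_Expectednt}) -- and it is obtained not from local-time bounds but from the moment duality with the stochastic FKPP: taking $u_0\equiv 1-\theta$ in \eqref{WFdual} gives ${\bf E}_{\bar x}[\theta^{N_t}]\ge\P_{1-\theta}(u_t\equiv{\bf 0})$ uniformly in $n$ and $\bar x$, combined with Lemma \ref{L:extinct_at_t} and a Gronwall comparison between $\beta>0$ and $\beta=0$. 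The tilt is then handled automatically by choosing the Lyapunov function $V:=N/\phi^{\beta}$, so that $\overline{Q}_1 V=\frac{Q_1 N}{\lambda\phi^{\beta}}\le\frac12 V+C$; boundedness of $\phi^{\beta}$ above and positivity on $\{N\le n'\}$ are the only properties needed, as you correctly surmised. Finally, the separate $\beta=0$ treatment you propose at the outset is not needed here: the paper's Harris argument is uniform in $\beta\ge 0$, and the role of the $\beta=0$ CBM is to feed (via Proposition \ref{prop:existence of QSD for FKPP beta=0} and the Girsanov comparison) the tightness Proposition \ref{prop:tightness proposition} that produces the FKPP QSD $\pi^{\beta}$, and hence $\phi^{\beta}$ -- it plays no separate role inside the proof of Proposition \ref{prop:convergence to QSD for 2-type BCBM} itself.
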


By Proposition \ref{prop:convergence to QSD for 2-type BCBM} and \eqref{eq:formula for right efn of dual in terms of QSD of FKPP results}, there exists a continuous, bounded and strictly positive right eigenfunction $h^{\beta}\in \calC_b(\calB_{\ast};\Rm_{>0})$ of eigenvalue $\Lambda(h^{\beta})=\Lambda(\varphi^{\beta})>0$ for the killed stochastic FKPP, for all $\beta\in \R_{\geq 0}$. We may henceforth define
\begin{equation}\label{eq:eigenvalues all the same lambda defin}
\lambda_{\beta}:=\Lambda(h^{\beta})=\Lambda(\varphi^{\beta})=\Lambda(\phi^{\beta})=\Lambda(\pi^{\beta})>0.
\end{equation}

By Propositions \ref{prop:convergence to QSD for 2-type BCBM} and \ref{prop:killDual}, we obtain uniqueness and a characterization of the QSD for the FKPP for each $\beta\in \R_{\geq 0}$, given by the following proposition.
\begin{prop}[Uniqueness  of QSD for FKPP for all $\beta\geq 0$]\label{prop:uniqueness QSD for FKPP}
For all $\beta\geq 0$, there exists a unique QSD for the stochastic FKPP. This QSD, denoted by $\pi^{\beta}$, satisfies 
\begin{equation}\label{QSD_FKPP_char}
\pi^{\beta}(\calE^{z})=\lim_{t\to\infty}\frac{(\mu P^{\beta}_t)(\calE^{z})}{\lambda_{\beta}^{t}\mu(h^{\beta})} \qquad\text{for }z\in\chi \text{ and }\mu\in \calP(\calB_{\ast}).
\end{equation}
\end{prop}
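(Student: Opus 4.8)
The plan is to deduce uniqueness, and the identity \eqref{QSD_FKPP_char}, from three ingredients already in hand: the duality of Proposition~\ref{prop:killDual}, the Perron--Frobenius-type total-variation convergence for the dual in Proposition~\ref{prop:convergence to QSD for 2-type BCBM}, and the separation Lemma~\ref{lem:moments determine measures}. Existence of a QSD was established in Proposition~\ref{prop:tightness proposition}, so only uniqueness and the formula remain. Throughout we work with the fixed QSD $\pi^{\beta}$, its associated dual eigenfunction $\phi^{\beta}(z)=\int_{\calC_{\ast}}\calE^{z}(f)\,\pi^{\beta}(df)$, the unique dual QSD $\varphi^{\beta}$, the eigenfunction $h^{\beta}(f)=\int_{\chi}\calE^{z}(f)\,\varphi^{\beta}(dz)$, and the common eigenvalue $\lambda_{\beta}$, all introduced before the statement.

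\emph{Identifying the limit.} Fix $\mu\in\calP(\calB_{\ast})$ and $z\in\chi$. Using $\mu P^{\beta}_{t}(\calE^{z})=\mu(P^{\beta}_{t}\calE^{z})$ together with $(P^{\beta}_{t}\calE^{z})(f)={\bf E}_{z}[\calE^{Z_{t}}(f)\Ind(\taud>t)]$ from Proposition~\ref{prop:killDual}, then integrating in $f\sim\mu$ and interchanging the order of integration (every integrand lies in $[0,1]$, and $(f,z')\mapsto\calE(f,z')$ is jointly measurable), I obtain
\[
\mu P^{\beta}_{t}(\calE^{z})={\bf E}_{z}\big[\Ind(\taud>t)\,G_{\mu}(Z_{t})\big]=(\delta_{z}Q^{\beta}_{t})(G_{\mu}),\qquad G_{\mu}(z'):=\int_{\calB_{\ast}}\calE^{z'}(f)\,\mu(df),
\]
with $G_{\mu}$ a $[0,1]$-valued Borel function on $\chi$. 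Proposition~\ref{prop:convergence to QSD for 2-type BCBM} applied with initial law $\delta_{z}$ gives $\lambda_{\beta}^{-t}\,\delta_{z}Q^{\beta}_{t}\to\phi^{\beta}(z)\,\varphi^{\beta}$ in total variation; since $G_{\mu}$ is bounded, $\lambda_{\beta}^{-t}(\delta_{z}Q^{\beta}_{t})(G_{\mu})\to\phi^{\beta}(z)\,\varphi^{\beta}(G_{\mu})$, and one more interchange of integration together with \eqref{eq:formula for right efn of dual in terms of QSD of FKPP results} gives $\varphi^{\beta}(G_{\mu})=\int_{\calB_{\ast}}h^{\beta}(f)\,\mu(df)=\mu(h^{\beta})>0$, the positivity because $h^{\beta}\in\calC_{b}(\calB_{\ast};\Rm_{>0})$. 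Recalling $\phi^{\beta}(z)=\pi^{\beta}(\calE^{z})$, this yields $\lambda_{\beta}^{-t}\mu P^{\beta}_{t}(\calE^{z})/\mu(h^{\beta})\to\pi^{\beta}(\calE^{z})$, i.e. \eqref{QSD_FKPP_char} for the QSD $\pi^{\beta}$.

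\emph{Uniqueness.} Let $\tilde{\pi}$ be an arbitrary QSD of the killed stochastic FKPP; by Remark~\ref{RK:B_ast} it is supported on $\calC_{\ast}$, and $\Lambda(\tilde{\pi})=\Pm_{\tilde{\pi}}(\tau_{\fix}>1)>0$. By Proposition~\ref{prop:duality relationship for quasi-stationarity}(i), $\tilde{\phi}(z):=\int_{\calC_{\ast}}\calE^{z}(f)\,\tilde{\pi}(df)$ belongs to $\calC_{b}(\chi;\Rm_{>0})$ and is a right eigenfunction of the killed $2$-type BCBM with eigenvalue $\Lambda(\tilde{\pi})$, so $(Q^{\beta}_{t}\tilde{\phi})(z)=\Lambda(\tilde{\pi})^{t}\,\tilde{\phi}(z)$ for all $t\geq 0$, $z\in\chi$. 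On the other hand, Proposition~\ref{prop:convergence to QSD for 2-type BCBM} gives $\lambda_{\beta}^{-t}(Q^{\beta}_{t}\tilde{\phi})(z)\to\phi^{\beta}(z)\,\varphi^{\beta}(\tilde{\phi})\in(0,\infty)$. Since $\tilde{\phi}(z)>0$, the quantity $(\Lambda(\tilde{\pi})/\lambda_{\beta})^{t}\tilde{\phi}(z)$ can converge to a finite strictly positive limit only if $\Lambda(\tilde{\pi})=\lambda_{\beta}$, and then $\tilde{\phi}(z)=\phi^{\beta}(z)\,\varphi^{\beta}(\tilde{\phi})$ for all $z$, that is $\tilde{\phi}=c\,\phi^{\beta}$ with $c:=\varphi^{\beta}(\tilde{\phi})\in(0,\infty)$. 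Hence $\tilde{\pi}(\calE^{z})=c\,\pi^{\beta}(\calE^{z})=(c\,\pi^{\beta})(\calE^{z})$ for every $z\in\chi$, so $\tilde{\pi}=c\,\pi^{\beta}$ as finite measures on $\calC_{\ast}$ by Lemma~\ref{lem:moments determine measures}; comparing total masses gives $c=1$, whence $\tilde{\pi}=\pi^{\beta}$. Combined with the first part, this unique QSD satisfies \eqref{QSD_FKPP_char}.

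\emph{Main obstacle.} The Fubini interchanges are harmless (all integrands are in $[0,1]$, plus joint measurability of $\calE$), and the step from total-variation convergence of measures to convergence of integrals against bounded measurable functions is standard. The only delicate point is the eigenvalue-matching in the uniqueness argument: it is precisely the strict positivity of $\tilde{\phi}$ supplied by Proposition~\ref{prop:duality relationship for quasi-stationarity}(i) that simultaneously forces $\Lambda(\tilde{\pi})=\lambda_{\beta}$ and makes $\tilde{\phi}$ a positive multiple of $\phi^{\beta}$, after which Lemma~\ref{lem:moments determine measures} closes the argument.
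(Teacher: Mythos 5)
Your proof is correct and follows essentially the same route as the paper's: the duality identity $(\mu P^{\beta}_t)(\calE^{z})=Q^{\beta}_t\big(\mu(\calE^{\bullet})\big)(z)$ combined with the total-variation convergence of Proposition~\ref{prop:convergence to QSD for 2-type BCBM} gives \eqref{QSD_FKPP_char}, and Lemma~\ref{lem:moments determine measures} then pins down the QSD. Your uniqueness step makes the eigenvalue-matching argument explicit by working on the dual side with $\tilde{\phi}$, whereas the paper leaves this implicit (plugging $\mu=\tilde{\pi}$ into \eqref{QSD_FKPP_char} directly), but the two are equivalent.
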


\begin{figure}
    \centering
    \includegraphics[scale=0.6]{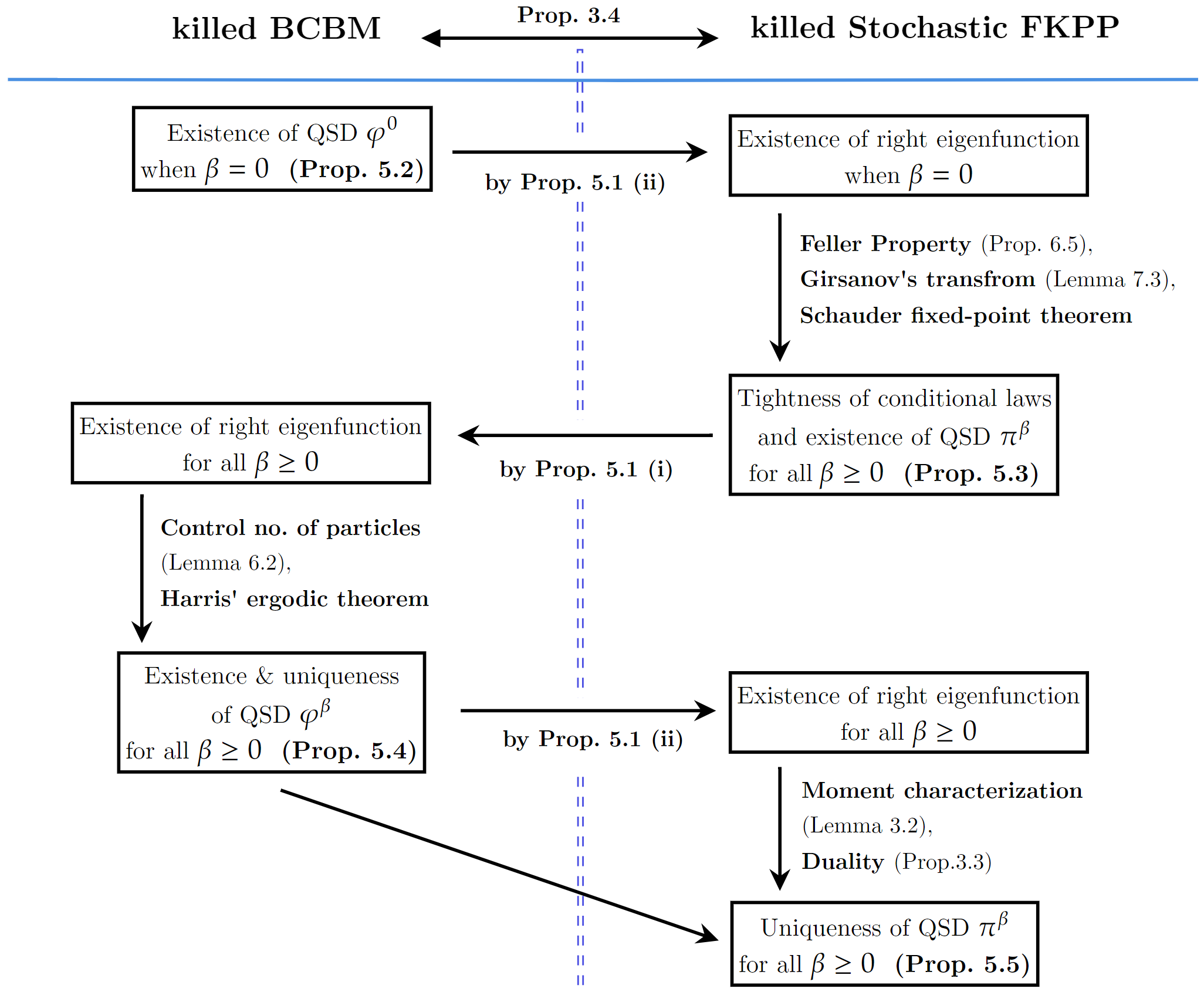}
    \caption{The key idea of our proofs exploits the duality (Proposition \ref{prop:killDual}) between the killed BCBM and the killed stochastic FKPP through
 Propositions 
\ref{prop:duality relationship for quasi-stationarity}-
\ref{prop:uniqueness QSD for FKPP}.}
    \label{fig:Idoof}
\end{figure}

For the rest of this section, we assume Propositions 
\ref{prop:duality relationship for quasi-stationarity}-
\ref{prop:uniqueness QSD for FKPP} and finish the proofs of Theorems \ref{T:main1}, \ref{thm:fixation time},
\ref{thm:conv to QSD dual}, \ref{thm:right efn of dual} and \ref{theo:duality relationship for quasi-stationarity results}. Propositions \ref{prop:duality relationship for quasi-stationarity}-
\ref{prop:uniqueness QSD for FKPP} will be proven, in turn, in Section \ref{S:Proofs}.

\medskip
\noindent
\begin{proof}[\bf Proof of Theorem \ref{T:main1}]
We fix $\beta\geq 0$. The existence and uniqueness of the QSD for the stochastic FKPP has already been established.

We now fix any sequence of times $t\rightarrow \infty$. By the tightness stated in Proposition \ref{prop:tightness proposition}, there exists a further subsequence and a subsequential limit $\widetilde{\pi}\in \calP(\calC_*)$ such that
\begin{equation}\label{E:a subseq limit}
\Law_{\mu}(u_t\lvert \tau_{\fix}>t)\ra \tilde{\pi}\quad\text{in}\quad  \calP(\calC_*) \quad\text{as}\quad t\ra\infty\quad\text{along this subsequence.}
\end{equation}

We have that $(\mu P^{\beta}_t)1=\P_{\mu}(\tau_{\rm fix}>t)$ and 
$(\mu P^{\beta}_t) (\calE^z)
= \P_{\mu}(\tau_{\rm fix}>t) \,\Law_{\mu}(u_t\lvert \tau_{\fix}>t)(\calE^{z})$ for all $z\in \chi$.
It follows that along this subsequence,
\begin{equation}\label{eq:eigenvalue over probability convergence to subsequential limit}
\frac{\lambda_{\beta}^t}{\Pm_{\mu}(\tau_{\fix}>t)}=\frac{\Law_{\mu}(u_t\lvert \tau_{\fix}>t)(\calE^{z})}{\lambda_{\beta}^{-t}(\mu P^{\beta}_t)(\calE^{z})} \;\ra\; \frac{\widetilde{\pi}(\calE^{z})}{\pi^{\beta}(\calE^{z})\mu(h^{\beta})}\qquad\text{as}\quad t\rightarrow \infty\quad\text{for all}\quad z\in \chi.
\end{equation}
In the above, we used \eqref{E:a subseq limit} and \eqref{QSD_FKPP_char} for the convergences of the numerator and the denominators respectively. These fractions are well-defined since $\pi^{\beta}(\calE^z)=\phi^{\beta}(z)>0$ for all $z\in \chi$ and $h^{\beta}(u)>0$ for all $u\in \calB_{\ast}$ by Proposition \ref{prop:duality relationship for quasi-stationarity}.

Since the left-hand side of \eqref{eq:eigenvalue over probability convergence to subsequential limit} does not depend on $z\in \chi$, it follows that either $\tilde{\pi}(\calE^z)>0$ for all $z\in \chi$ or $\tilde{\pi}(\calE^z)=0$ for all $z\in \chi$. The latter possibility would contradict $\tilde{\pi}\in \calP(\calC_{\ast})$ by Lemma \ref{lem:moments determine measures}, so we must have the former. Again using that the left-hand side of \eqref{eq:eigenvalue over probability convergence to subsequential limit} does not depend upon $z\in \chi$, and that the right-hand side belongs to $(0,\infty)$, we call the right hand side $1/\tilde{p}$. We therefore obtain  that 
\[
\lambda_{\beta}^{-t}(\mu P^{\beta}_t)1\rightarrow \tilde p\quad\text{as}\quad t\rightarrow \infty\quad\text{along this subsequence,}
\]
and that this sub-sequential limit satisfies
\[
\tilde{p}\tilde{\pi}(\calE^{z})=\mu(h^{\beta})\pi^{\beta}(\calE^{z}).
\]
It then follows from Lemma \ref{lem:moments determine measures} and the fact that $\tilde{\pi}$ and $\pi^{\beta}$ are both probability measures that 
\[
\tilde{\pi}=\pi^{\beta}\quad\text{and}\quad \tilde{p}=\mu(h).
\]

Therefore the subsequential limits $\tilde{\pi}$ and $\tilde{p}$ do not depend upon the choice of subsequence and are our desired limits, whence we conclude \eqref{E:main1} and \eqref{AbsTime_1}. This concludes the proof of Theorem \ref{T:main1}.
\end{proof}

\medskip
\noindent
\begin{proof}[\bf Proof of Theorem \ref{thm:fixation time}]

The existence of the right eigenfunction $h^{\beta}\in\calC_b(\calB_{\ast};\Rm_{> 0})$ has already been established. The convergence \eqref{AbsTime_1} was obtained in the proof of Theorem \ref{T:main1}. 

We suppose that $h'\in \calC_b(\calB_{\ast};\Rm_{> 0})$ is some other right eigenfunction of eigenvalue $\lambda'> 0$. Then it follows from \eqref{E:main1} and \eqref{AbsTime_1} that for all $u\in \calB_{\ast}$ we have
\[
\Big(\frac{\lambda'}{\lambda_{\beta}}\Big)^th'(u)=\expE_u[h'(u_t)\lvert \tau_{\fix}>t]\,\lambda_{\beta}^{-t}\,\Pm_u(\tau_{\fix}>t)\ra \frac{\pi^{\beta}(h')}{\pi^{\beta}(h^{\beta})}h^{\beta}(u) \qquad \text{as }t\to\infty.
\]
Since $u$ is arbitrary and $\pi^{\beta}(h')>0$, it follows that $\lambda'=\lambda_{\beta}$ and $h'=\frac{\pi^{\beta}(h')}{\pi^{\beta}(h^{\beta})}h^{\beta}$. The proof of the uniqueness of $h^{\beta}_{\lvert_{\calC_{\ast}}}$ in $\calC_b(\calC_{\ast};\Rm_{>0})$ is identical.

We obtain \eqref{eq:lower bound on fixation probability} from
\[
\mu(h^{\beta})\lambda_{\beta}^t=\expE_{\mu}[h^{\beta}(u_t)\lvert \tau_{\fix}>t]\,\Pm_{\mu}(\tau_{\fix}>t)\leq \lvert\lvert h^{\beta}\rvert\rvert_{\infty}\,\Pm_{\mu}(\tau_{\fix}>t).
\]
Finally, using Lemma \ref{lem:properties of fixed point equation for stochastic FKPP} (found in the proof of Proposition \ref{prop:existence of QSD for FKPP beta=0}) we obtain $\epsilon_{\beta}>0$, not dependent upon $\mu\in \calP(\calB_{\ast})$, such that 
\[
\mu(h^{\beta})\lambda_{\beta}^t=\expE_{\mu}[h^{\beta}(u_t)\lvert \tau_{\fix}>t]\,\Pm_{\mu}(\tau_{\fix}>t)\geq (\mu(h^{\beta})\wedge \epsilon_{\beta})\,\Pm_{\mu}(\tau_{\fix}>t)\quad\text{for all}\quad t\geq 0,
\]
whence we obtain \eqref{eq:upper bound on fixation probability}.
\end{proof}

\medskip
\noindent
\begin{proof}[\bf Proof of Theorem \ref{thm:conv to QSD dual}]
Theorem \ref{thm:conv to QSD dual} follows immediately from Proposition \ref{prop:convergence to QSD for 2-type BCBM}. 
\end{proof}

\medskip
\noindent
\begin{proof}[\bf Proof of Theorem \ref{thm:right efn of dual}]
Theorem \ref{thm:right efn of dual} follows, using Proposition \ref{prop:convergence to QSD for 2-type BCBM}, in the same manner as the proof of Theorem \ref{thm:fixation time}. To see that $\phi^{\beta}$ is unique in $\calB_b(\chi;\Rm_{> 0})$, we also use that the convergence in \eqref{Converge_BCBM_QSD} is total variation convergence.
\end{proof}

\medskip
\noindent
\begin{proof}[\bf Proof of Theorem \ref{theo:duality relationship for quasi-stationarity results}]
Parts 1-2 follow from
Proposition \ref{prop:duality relationship for quasi-stationarity}.
The equality \eqref{eq:duality theorem eigenvalues all the same} of eigenvalues follows from \eqref{E:EqualEigenvalues} in Proposition \ref{prop:convergence to QSD for 2-type BCBM}. 
\end{proof}

\medskip

The proofs  for  Propositions 
\ref{prop:duality relationship for quasi-stationarity}-
\ref{prop:uniqueness QSD for FKPP} will be given in
Section \ref{S:Proofs}.

\section{Proof of results}\label{S:Proofs}

We now give the proofs to our stated results in Sections 
\ref{S:Idoof} 
and \ref{S:Insights}, in that order. The proofs for Section 
\ref{S:Idoof} depend on neither the statements nor the proofs for Section \ref{S:Insights}. We begin with some basic results for the 2-type BCBM.

\subsection{Preliminaries for the BCBM}
\label{SS:PrelimBCBM}

Recall the 1-type BCBM $\bar{X}$ in \eqref{WFdual}. We first give some details about  the intersection local time $L^{(i,j)}_t$  of two particles $X^i$ and $X^j$.  Formally (\cite[P.1714]{MR1813840}), 
\begin{equation}\label{localtime_formal}
dL^{(i,j)}_t=2\alpha \,\delta_{X^i_t=X^j_t}\,dt.
\end{equation}
More precisely, by  \cite[Chapter 29]{kallenberg1997foundations}, for all $t\in\R_{\ge0}$ it is the almost sure limit
\begin{equation}\label{aslimit_localtime}
L^{i,j}_{t}=\lim_{\epsilon\downarrow 0}\frac{1}{2\epsilon}\int_0^t \Ind\big\{\Delta^{ij}_s\leq \epsilon\big\}\,d\langle \Delta^{ij}\rangle_s=\lim_{\epsilon\downarrow 0}\frac{2\alpha}{2\epsilon}\int_0^t \Ind\{\Delta^{ij}_s\leq \epsilon\}\,ds,
\end{equation}
where $\Delta^{ij}_s:={\rm d}_{\S}(X^i_s,\,X^j_s)$ is the geodesic distance between the two particles at time $s$.
Let $\mathcal{I}_t$ be the set of indexes of particles alive at time $t$. The total coalescence rate is then
\begin{equation}
\label{TotalCoaRate}
\frac{\gamma}{4\alpha} \sum_{\substack{i,j\in \mathcal{I}_t\\j\neq i}}dL^{(i,j)}_t\;=\;\frac{\gamma}{2\alpha} \sum_{\substack{{\rm unordered \;pair }\\\{i,j\} \,{\rm in } \;\mathcal{I}_t}}dL^{(i,j)}_t.
\end{equation}

\begin{remark}[A common typo]\rm
The factor $2\alpha$ in \eqref{localtime_formal}, coming from the quadratic variation of the difference process $\Delta^{ij}$, was missing in some existing literature such as \cite[P. 3483]{durrett2016genealogies} and \cite[P.139]{hobson2005duality}. 
\end{remark}

Since the pairwise coalescent rate is quadratic in the total number of particles while the branching rate is only linear, the proportion of particles that are alive at any fixed time $t$ should be bounded \textit{uniformly for all initial number} $n$ of particles. Indeed, this proportion tends to zero in expectation, as $n\to\infty$, as we will show in \eqref{supx_Expectednt} in Lemma \ref{L:supx_nt}. 


\begin{lemma}[Number of particles in BCBM]\label{L:supx_nt}
Let  $N_t=n(\bar{X}_t)$ denotes the number of particles in the system of Branching coalescing Brownian motions on $\bfS$ at time $t$. 
For any positive time $t\in (0,\infty)$,
\begin{equation}\label{supx_nt}
\sup_{n\in\mathbb{N}}\sup_{\bar{x}\in \mathbb{S}^n} {\bf P}_{\bar{x}}\left(N_t\geq k\right) \to 0 \quad \text{as}\quad k\to\infty.
\end{equation}
Furthermore, for any $\epsilon\in(0,1)$, there exists $n_{\epsilon,t}\in\mathbb{N}$ such that 
\begin{equation}\label{supx_Expectednt}
\sup_{\bar{x}\in \mathbb{S}^n} {\bf E}_{\bar{x}}\left[N_t\right] \leq \epsilon \, n \quad\text{for}\quad n\geq n_{\epsilon,t}.
\end{equation}
\end{lemma}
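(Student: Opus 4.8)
The plan is to compare the particle number process $N_t$ to a pure-death process obtained by discarding branching and using a lower bound on the pairwise coalescence rate. First I would observe that $N_t$ is a jump process that increases by $1$ at rate $\beta N_t$ (branching) and decreases by $1$ at rate equal to the total coalescence rate \eqref{TotalCoaRate}. The difficulty is that the coalescence rate is expressed via intersection local times, not as an explicit function of $N_t$, so one cannot directly write down a birth-death chain. To get around this, I would work on the time-changed scale: since each unordered pair $\{i,j\}$ accumulates intersection local time $L^{(i,j)}$ and coalesces at local-time rate $\tfrac{\gamma}{2\alpha}$, and since two independent rate-$\alpha$ Brownian motions on the circle $\bfS$ have an intersection local time whose growth rate is bounded below uniformly (the difference is a rate-$2\alpha$ Brownian motion on $\bfS=\Rm/\Zm$, a compact space, so its local time at $0$ grows at least linearly in expectation — concretely, $\expE[L^{(i,j)}_t]\geq c\,t$ for a constant $c=c(\alpha)>0$ and all $t\le 1$, say), the total coalescence intensity at any instant with $k$ particles alive dominates $\binom{k}{2}\cdot(\text{positive local-time intensity})$ in a suitable integrated sense.

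The cleanest route I would take is a \emph{moment/Gr\"onwall} argument rather than a pathwise comparison. Let $f(t):=\sup_{\bar x\in\bfS^n}\expE_{\bar x}[N_t]$. Branching contributes $+\beta\,\expE[N_t]$ to $\tfrac{d}{dt}\expE[N_t]$. For the coalescence term, I would show that
\[
\tfrac{d}{dt}\expE_{\bar x}[N_t]\;\le\;\beta\,\expE_{\bar x}[N_t]\;-\;c_1\,\expE_{\bar x}\!\big[N_t(N_t-1)\big]
\]
for a constant $c_1>0$ depending only on $\alpha,\gamma$, valid for $t$ in a fixed small interval $[0,t_0]$, by lower-bounding the expected rate at which some pair coalesces given the current configuration. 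By Jensen, $\expE[N_t(N_t-1)]\ge \expE[N_t](\expE[N_t]-1)$, so $g(t):=\expE_{\bar x}[N_t]$ satisfies the Riccati differential inequality $g'\le \beta g - c_1 g(g-1)$. Comparison with the solution of $y'=\beta y-c_1 y(y-1)$, $y(0)=n$, shows that $g(t)$ is bounded above \emph{uniformly in $n$} for each fixed $t>0$ (the logistic-type ODE has all trajectories collapsing toward the bounded equilibrium $1+\beta/c_1$), and moreover that for $t\ge t$ fixed, $\limsup_{n\to\infty} g(t)\le 1+\beta/c_1<\infty$; rescaling, $g(t)/n\to 0$, which is \eqref{supx_Expectednt} — pick $n_{\epsilon,t}$ so that the uniform bound on $g(t)$ is below $\epsilon n$. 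To get \eqref{supx_nt} I would combine this with a two-step argument: run the system for a short time $s$ to knock the expected number down to a finite value uniform in $n$ (using the $n\to\infty$ collapse), then apply Markov's inequality at time $t$ using the uniform bound $\sup_n\sup_{\bar x}\expE_{\bar x}[N_t]<\infty$ obtained from the ODE comparison, giving $\sup_n\sup_{\bar x}{\bf P}_{\bar x}(N_t\ge k)\le \frac{\sup_n\sup_{\bar x}\expE_{\bar x}[N_t]}{k}\to 0$.

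The main obstacle I anticipate is making the coalescence lower bound $\tfrac{d}{dt}\expE[N_t]\le \beta\expE[N_t]-c_1\expE[N_t(N_t-1)]$ rigorous, since the instantaneous coalescence rate is a local-time functional that can vanish at a given instant (when no two particles coincide) even though it is positive in integrated form. The fix is to integrate over a short window and use that, by the occupation-time formula and Brownian scaling on the compact circle, the \emph{expected} local time $\expE[L^{(i,j)}_{t}-L^{(i,j)}_{s}]$ between two live particles grows at a rate bounded below independently of their starting positions, so that $\expE\big[\int_s^t \tfrac{\gamma}{2\alpha}\sum_{\{i,j\}} dL^{(i,j)}_r \,\Ind(\text{both still alive})\big]\ge c_1\int_s^t \expE[N_r(N_r-1)]\,dr$ up to controlling the correlation between "pair $\{i,j\}$ is still alive" and the local-time increment (handled by noting coalescence only \emph{removes} pairs, so the indicator is decreasing and one gets a clean one-sided bound). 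Once this integrated inequality is in hand, differentiating (or applying Gr\"onwall in integrated form directly) yields the Riccati inequality, and the rest is the elementary ODE comparison sketched above.
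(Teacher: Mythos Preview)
Your approach has a genuine gap at precisely the step you flag as the main obstacle. The claim that $\expE[L^{(i,j)}_{t}-L^{(i,j)}_{s}]\ge c_1(t-s)$ uniformly over the positions of particles $i$ and $j$ is false for short windows: if the two particles are at distance $d>0$, their difference is a rate-$2\alpha$ Brownian motion on $\bfS$ starting at $d$, and its expected local time at $0$ over $[0,\delta]$ is exponentially small when $\delta\ll d^2/\alpha$. Concretely, if $n$ particles start equally spaced on $\bfS$, then for $t\in[0,c/(\alpha n^2)]$ essentially no pair has met, so ${\bf E}_{\bar x}[N_t]\approx n e^{\beta t}$ and $g'(0^+)\approx \beta n>0$, whereas your Riccati right-hand side $\beta n-c_1 n(n-1)$ is large and negative. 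The differential inequality therefore cannot hold near $t=0$ for arbitrary configurations, and the ``fix'' you propose (integrating over a short window) runs into exactly the same obstruction. One might try to rescue the argument via pigeonhole (with $n$ particles on $\bfS$, some pair is within $1/n$, so \emph{some} local time accrues fast), but turning that into a clean inequality for ${\bf E}[N_t]$ requires tracking the spatial configuration, not just the count, and the sketch does not address this.

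The paper avoids the local-time analysis entirely by exploiting the moment duality \eqref{WFdual} with the stochastic FKPP: taking $u_0\equiv 1-\theta$ gives ${\bf E}_{\bar x}[\theta^{N_t}]\ge \Pm_{1-\theta}(u_t\equiv 0)$, a bound that is uniform in $n$ and $\bar x$ and tends to $1$ as $\theta\uparrow 1$ by Lemma~\ref{L:extinct_at_t}. This yields \eqref{supx_nt} via a Markov-type inequality, and then \eqref{supx_Expectednt} for $\beta=0$ by summing the tail probabilities. The case $\beta>0$ is obtained by a Gronwall comparison of ${\bf E}[N^{(\beta)}_t]$ against ${\bf E}[N^{(0)}_t]$, using that adding branching only increases the total accumulated local time.
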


\begin{proof}[Proof of Lemma \ref{L:supx_nt}]
We prove this by following the duality argument in \cite[Section 3.1]{hobson2005duality} and strengthening it to all initial conditions $\bar{x}\in \mathbb{S}^n$ and to arbitrary $\beta\in [0,\infty)$. We fix arbitrary $\beta\geq 0$ for the time being. We firstly let $\theta\in(0,1)$ be a constant and  take $u_0 \equiv 1-\theta$ in  \eqref{WFdual} to obtain that
\begin{equation}
{\bf E}_{\bar{x}}\left[\theta^{N_t}\right]=\E_{1-\theta}\left[\prod_{i=1}^n(1-u(t,x_i))\right] \geq \P_{1-\theta}(u_t\equiv 0).
\end{equation}
The right hand side is independent of $n$ and $\bar{x}\in \mathbb{S}^n$. Hence for $k\in\Z_+$,
\begin{align}
{\bf P}_{\bar{x}}\left(N_t\geq k\right)={\bf P}_{\bar{x}}\left(1-\theta^{N_t}\geq 1-\theta^k\right)\leq \frac{1- \P_{1-\theta}(u_t\equiv 0)}{1-\theta^k}.
\end{align}
From this we have
\[
\limsup_{k\to\infty}\sup_{n\in\mathbb{N}}\sup_{\bar{x}\in \mathbb{S}^n} {\bf P}_{\bar{x}}\left(N_t\geq k\right) \leq 1- \P_{1-\theta}(u_t\equiv 0).
\]
Then \eqref{supx_nt} follows since $\P_{1-\theta}(u_t\equiv 0)\to 1$ as $\theta \uparrow 1$, by Lemma \ref{L:extinct_at_t}. 

We now fix $\beta=0$ for the time being. We let $\theta\in(0,1)$ be such that $1- \P_{1-\theta}(u_t\equiv 0)<\epsilon/2$. Then we have
\begin{align}\label{supx_Expectednt_2}
{\bf E}_{\bar{x}}\left[N_t\right]=
\sum_{k=1}^{n}{\bf P}_{\bar{x}}\left(N_t\geq k\right)
\leq  [1- \P_{1-\theta}(u_t\equiv 0)]\,\sum_{k=1}^{n}\frac{1}{1-\theta^k} \leq \frac{\epsilon}{2}\,n(1+\epsilon) \leq \epsilon \,n
\end{align}
for all $n$ large enough (depending on $\theta$), since $\frac{1}{n}\sum_{k=1}^{n}\frac{1}{1-\theta^k}\to 1$ as $n\to\infty$.
Hence  \eqref{supx_Expectednt} holds  for $\beta=0$.

We now consider the $\beta>0$ case. For $\beta\in [0,\infty)$, we add the superscript $\beta$ to write $N^{(\beta)}_t$ in place of $N_t$, and further write 
$L^{(\beta)}_t:=\sum_{i,j}L^{i.j}_t$ for the total intersection local time. We note that 
$N^{(\beta)}_t-N^{(\beta)}_0-\int_0^t\beta N^{(\beta)}_s\,ds + \gamma\,L^{(\beta)}_t$
is a martingale for $t\in \R_{\ge 0}$, so that
\begin{align*}
{\bf E}_{\bar{x}}\left[N^{(\beta)}_t\right]=&\, n+ \beta\int_0^t {\bf E}_{\bar{x}}\left[N^{(\beta)}_s\right]\,ds - \gamma\,{\bf E}_{\bar{x}}\left[L^{(\beta)}_t\right]\\
\leq&\, n+ \beta\int_0^t {\bf E}_{\bar{x}}\left[N^{(\beta)}_s\right]\,ds - \gamma\,{\bf E}_{\bar{x}}\left[L^{(0)}_t\right]\\
\leq &\, n+ \beta\int_0^t {\bf E}_{\bar{x}}\left[N^{(\beta)}_s\right]\,ds - n+ {\bf E}_{\bar{x}}\left[N^{(0)}_t\right]. 
\end{align*}
It follows that
\begin{align}\label{eq:difference between number with and without branching}
{\bf E}_{\bar{x}}\left[N^{(\beta)}_t\right]-{\bf E}_{\bar{x}}\left[N^{(0)}_t\right]\leq \beta\int_0^t \left(\expE_{\bar{x}}\left[N^{(\beta)}_s\right]-\expE_{\bar{x}}\left[N^{(0)}_s\right]\right)ds+\beta \int_0^t{\bf E}_{\bar{x}}\left[N^{(0)}_s\right]ds.
\end{align}

Since the last term $\beta \int_0^t\expE_{\bar x}[N_s^{(0)}]ds$ is non-decreasing in $t$, it follows from Gronwall's inequality 
(the version for the case of the forcing term being non-decreasing) 
that
\begin{align*}
{\bf E}_{\bar{x}}\left[N^{(\beta)}_t\right]-{\bf E}_{\bar{x}}\left[N^{(0)}_t\right]\leq \beta e^{\beta t}\int_0^t{\bf E}_{\bar{x}}\left[N^{(0)}_s\right]
\,ds.
\end{align*}

\commentout{\WF{It seems Gromwall gives a little stronger inequality
\begin{align*}
{\bf E}_{\bar{x}}\left[N^{(\beta)}_t\right]-{\bf E}_{\bar{x}}\left[N^{(0)}_t\right]\leq \beta e^{\beta t}\int_0^t{\bf E}_{\bar{x}}\left[N^{(0)}_s\right]e^{-\beta s}
\,ds,
\end{align*}
though the extra term $e^{-\beta s}$ will not help.
}}

The integral on the right is  $\int_0^t{\bf E}_{\bar{x}}\left[N^{(0)}_s\right]ds \leq nh+t{\bf E}_{\bar{x}}\left[N^{(0)}_h\right]$ whenever $0\leq h\leq t$,
because $s\mapsto {\bf E}_{\bar{x}}\left[N^{(0)}_s\right]$ is non-increasing and starts from $n$.
It then follows that
\begin{align*}
{\bf E}_{\bar{x}}\left[N^{(\beta)}_t\right]-{\bf E}_{\bar{x}}\left[N^{(0)}_t\right]
\leq  
\beta e^{\beta t}\left(nh+t{\bf E}_{\bar{x}}[N^{(0)}_h]\right).
\end{align*}

We now fix $t>0$ and fix $h=h(\epsilon,t)<t$ such that $\beta e^{\beta t}h<\frac{\epsilon}{3}$. Then 
\begin{align*}
{\bf E}_{\bar{x}}\left[N^{(\beta)}_t\right]\leq {\bf E}_{\bar{x}}\left[N^{(0)}_t\right]+\frac{\epsilon}{3}n+ \beta e^{\beta t}t{\bf E}_{\bar{x}}[N^{(0)}_h].
\end{align*}
Having already proven \eqref{supx_Expectednt} in the case of $\beta=0$, we use this to see that the first and third terms on the right are both at most $\frac{\epsilon}{3}n$ for all $n$ sufficiently large, whence we obtain \eqref{supx_Expectednt} for all $\beta>0$.

\end{proof}

\medskip

\begin{remark}[Entrance law]\rm \label{Rk:EntranceLaw}
For an infinite sequence $\underline{x}=(x_1,x_2,\ldots)\in \mathbb{S}^{\mathbb{N}}$, we denote by  ${\bf P}_{\bar{x}}$ (resp. ${\bf E}_{\bar{x}}$) the probability (resp. expectation) under which the usual $1$-type CBM $\bar{X}$ 
starts at $(X^{(i,0)}_0)=(x_i)$, and  when a coalescence events occurs for a pair $X^i$ and $X^j$ where $i<j$, then $X^j$ dies and $X^i$ survives (i.e. the \textit{lower} rank survives). 
A construction for such a ranked CBM starting with countably infinitely many particles can be found in \cite[Section 2]{tribe1995large} 
 and \cite{barnes2022coming}. In this construction, the final particle to survive must be $X^1$.
\end{remark}

\medskip

\subsection{Proofs for Section \ref{S:Idoof}}\label{SS:ProofIdoof}

The space  $\mathcal{C}(\mathbb{S};[0,1])$ with sup-norm is a Polish space (since $\mathbb{S}$ is compact), but it is not  locally compact since $C(\bfS;\Rm)$ is infinite-dimensional. Since $\mathcal{C}(\mathbb{S};[0,1])$ is a separable metric space, Prokhorov's theorem ensures that
a subset of $\calP(\mathcal{C}(\mathbb{S};[0,1]))$ is tight if and only if its closure is sequentially compact in the space $\calP(\mathcal{C}(\mathbb{S};[0,1]))$ equipped with the topology of weak convergence of measures. 

\subsubsection*{Proof of Proposition \ref{prop:duality relationship for quasi-stationarity}}

\noindent
(i). 
Assume  that $\pi\in\calP(\calC)$ is a QSD for the stochastic FKPP of eigenvalue $\lambda=\Lambda(\pi)>0$. We thereby obtain $\phi$ given by \eqref{eq:formula for right efn of FKPP in terms of QSD of dual results}.

The fact that $0\leq \phi\leq 1$ is clear by construction, so it is bounded in particular. Moreover the continuity of $\phi$ follows from the bounded convergence theorem and the fact that $z\mapsto \calE^z(f)$ is continuous and bounded between 0 and 1, for each $f\in \mathcal{C}_*$. 

We fix an arbitrary $t\in\R_{\geq 0}$. By Fubini's theorem, the duality \eqref{eq:moment duality relationship for 2-type results} 
and the first equality in \eqref{eq:eigentriple stochastic FKPP},
\begin{align}
Q_t\phi(z) 
&=\,{\bf E}_{z}\Big[\int_{\calC_{\ast}}\calE(f,Z_t)\pi(df)\Ind(\taud>t)\Big]\stackrel{{\rm Fubini}}{=}
\,\int_{\calC_{\ast}}{\bf E}_{z}\Big[\calE(f,Z_t)\Ind(\taud>t)\Big]\pi(df)\\
&\stackrel{(\ref{eq:moment duality relationship for 2-type results})}{=}
\,\int_{\calC_{\ast}}\expE_{f}\Big[\calE(u_t,z)\Ind(\tau_{\fix}>t)\Big]\pi(df) =\,\expE_{\pi}[\calE(u_t,z)\Ind(\tau_{\fix}>t)]\\
&\stackrel{(\ref{eq:eigentriple stochastic FKPP})}{=}
\,\Lambda(\pi)^t\int_{\calC_{\ast}}\calE(f,z)\pi(df)=\Lambda(\pi)^t\phi(z).
\end{align}
We therefore obtain that $\phi$ is a right eigenfunction for the killed $2$-type BCBM of eigenvalue $\Lambda(\phi)=\Lambda(\pi)$. 

It remains to prove that 
$\phi$ defined by \eqref{eq:formula for right efn of FKPP in terms of QSD of dual results} 
is strictly positive everywhere on $\chi$. We take $u\in \text{spt}(\pi)$ and $z'=(x,y)\in \bfS\times \bfS$ such that $\calE(u,z')>0$. It follows that there exists open set $ V\subseteq \bfS\times \bfS$ such that $z'\in V$ and  $\phi(z'')>0$ for all $z''\in V$. It then follows from the accessibility of $V$ - the fact that for all $z\in \chi$ there exists $t> 0$ such that $Q_t(z,V)>0$ - that for all $z\in \chi$ we have
\[
\phi(z)=(\Lambda(\pi))^{-t}Q_t\phi(z)\geq (\Lambda(\pi))^{-t}{\bf E}_z[\phi(Z_t)\Ind(Z_t\in V)]>0.
\]

\medskip

\noindent
(ii).
The proof of Part (ii) 
is identical to that of Part (i), except for the proof that $h$ is everywhere strictly positive on $\calC_{\ast}$. Given $u\in \calC_{\ast}$ we can choose a non-empty open set $V\subseteq \mathbb{S}\times \mathbb{S} \subseteq \chi$ such that $\mathcal{E}(z,V)>0$ for all $z\in V$. It then follows from the accessibility of $V$ that $\varphi(V)>0$. We therefore have that $h(u)>0$ by construction.   
\qed

\subsubsection*{Proof of Proposition \ref{prop:existence of QSD for FKPP beta=0}}
Since there is no branching in the $\beta=0$ case, the set of states corresponding to there being one red and one blue particle, is closed - i.e. $Q_t(z,\chi\setminus \mathbb{S}\times \mathbb{S})=0$ for all $z\in \mathbb{S}\times \mathbb{S}$. We need only existence of a OSD here, so
it suffices to obtain a QSD for the 2-type coalescing Brownian motion (2-type CBM) restricted to $\mathbb{S}\times \mathbb{S}$. Prior to killing, this is a process $\{Z_t=(G_t, R_t)\}_{t\in [0,\,\tau_{\partial})}$ with state space $\mathbb{S}\times \mathbb{S}$, evolving as two independent Brownian motions on $\mathbb{S}$ before the killing time $\tau_{\partial}$, with the system being killed at a rate given by $2\gamma$ times the intersection intersection local time $L^{(G,R)}$ of the two particles. 

Proposition \ref{prop:existence of QSD for FKPP beta=0} follows from the lemma below.
\begin{lemma}[QSD for CBM]\label{lem:exist of 2-part qsd}
We suppose that $\beta=0$. The  2-type CBM $(G,R)$ restricted to $\mathbb{S}\times \mathbb{S}$ has a QSD, denoted by $\varphi^{0,\rm two} \in \mathcal{P}(\mathbb{S}\times \mathbb{S})$, whose restriction off the diagonal $\Gamma$ is given by
\begin{equation}\label{eq:cts pve density 2-part qsd lem pf}
\varphi^{0,\rm two}_{\lvert_{\bfS\times \bfS\setminus\Gamma}}(dx,dy)=\rho(x,y)\,m(dx)\,m(dy),
\end{equation}
where $\rho\in \calC(\bfS\times \bfS\setminus \Gamma;\Rm_{>0})$.
\end{lemma}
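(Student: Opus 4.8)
The plan is to build the QSD by a fixed-point argument on a \emph{compact} space of probability measures, and then read off its density and regularity from the heat kernel. Since $\beta=0$ there is no branching, so, as noted before the statement, the $2$-type CBM restricted to $\mathbb{S}\times\mathbb{S}$ is simply the pair $(G_t,R_t)$ of two independent Brownian motions on $\mathbb{S}$ of rate $\alpha$, run until the killing time $\tau_{\partial}$ at which the system is killed at rate $\tfrac{\gamma}{2\alpha}$ times the intersection local time $L^{(G,R)}$ (equivalently, at a finite rate per unit of the local time of $d_{\mathbb{S}}(G,R)$ at $0$). Write $Q_t$ for its sub-Markovian transition semigroup and $\Phi_t(\mu):=\mu Q_t/\mu Q_t(\mathbb{S}\times\mathbb{S})$ for the normalized conditional law $\mu\mapsto{\bf P}_{\mu}(Z_t\in\cdot\mid\tau_{\partial}>t)$.

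First I would produce the QSD. Because $\mathbb{S}\times\mathbb{S}$ is compact, $\mathcal{P}(\mathbb{S}\times\mathbb{S})$ is a compact convex subset of a locally convex space; $\Phi_t$ is well defined on all of it since $\inf_{z\in\mathbb{S}\times\mathbb{S}}{\bf P}_z(\tau_{\partial}>t)>0$ (the killing rate is finite and $z\mapsto{\bf P}_z(\tau_{\partial}>t)$ is continuous on the compact $\mathbb{S}\times\mathbb{S}$), it is continuous by the Feller property of the killed process, and it maps $\mathcal{P}(\mathbb{S}\times\mathbb{S})$ into itself; the Schauder--Tychonoff fixed-point theorem then yields a fixed point $\varphi^{0,\mathrm{two}}$ of $\Phi_t$, i.e.\ a left eigenmeasure of $Q_t$. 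A standard argument (the killed semigroup is irreducible and compact, so the maximal left eigenmeasure is unique by Perron--Frobenius theory) shows the fixed point is independent of $t$, hence a genuine QSD, with eigenvalue $\lambda^t$ for some $\lambda\in(0,1)$. Alternatively, one may invoke the classical QSD existence theory for regular one-dimensional diffusions on a bounded interval, \cite{cattiaux2009quasi,collet2013quasi}, after the symmetry reduction mentioned below.

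Next I would establish the density and its regularity. From the eigenrelation $\varphi^{0,\mathrm{two}}=\lambda^{-t}\varphi^{0,\mathrm{two}}Q_t$ and the fact that, for $t>0$, the killed kernel $Q_t(z,\cdot)$ is dominated by the smooth transition density of two independent circle Brownian motions (killing only removes mass), is itself jointly continuous, and is \emph{strictly positive everywhere} — the local-time killing being ``soft'', the pair can visit the diagonal $\Gamma$ and survive — one obtains that $\varphi^{0,\mathrm{two}}$ has a density $\rho(z')=\lambda^{-t}\int q_t(z,z')\,\varphi^{0,\mathrm{two}}(\mathrm dz)$, where $q_t$ is the density of $Q_t(z,\cdot)$, and that $\rho$ is continuous and strictly positive on all of $\mathbb{S}\times\mathbb{S}$, in particular on $\mathbb{S}\times\mathbb{S}\setminus\Gamma$ (parabolic regularity even gives smoothness off $\Gamma$). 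For later use one also notes that the rotational and exchange symmetries of the killed dynamics let one average $\varphi^{0,\mathrm{two}}$ to assume it is a function of $d_{\mathbb{S}}(x,y)$ alone, reducing everything to the geodesic-distance process — a reflected Brownian motion on $[0,\tfrac12]$ of rate $2\alpha$, reflecting at $\tfrac12$ and elastically killed at $0$ — whose principal eigenproblem is solved explicitly in Theorem \ref{thm:explicit expressions when neutral}; but only the qualitative conclusion is needed here.

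The main obstacle is the local-time killing. It takes care to set up the killed semigroup $Q_t$ properly (Feynman--Kac against the local-time additive functional), to verify that $\Phi_t$ is genuinely \emph{continuous} on $\mathcal{P}(\mathbb{S}\times\mathbb{S})$ rather than merely measurable, and to upgrade the time-$t$ fixed point to a true QSD. A second, related point concerns the diagonal: although the pair hits $\Gamma$ almost surely (a one-dimensional set is non-polar for planar Brownian motion), the killing there occurs at a finite rate per unit local time, so the process survives such visits with positive probability and $Z_t$ carries no mass on $\Gamma$ at any fixed time; this is precisely what makes the heat-kernel domination and the strict positivity of $\rho$ valid up to $\Gamma$, and it is the reason the lemma can safely restrict attention to $\mathbb{S}\times\mathbb{S}\setminus\Gamma$.
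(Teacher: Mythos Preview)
Your proposal is correct and shares the paper's core step---applying Schauder's fixed-point theorem to $\mu\mapsto \mu Q_t/\mu Q_t1$ on the compact convex set $\mathcal{P}(\mathbb{S}\times\mathbb{S})$---but diverges from the paper in two sub-arguments. First, to upgrade a time-$t$ fixed point to a QSD for all $t$, you invoke Perron--Frobenius/Krein--Rutman uniqueness (compactness and irreducibility of $Q_t$); the paper instead notes that the fixed-point sets $\Phi_{2^{-k}}$ are nested nonempty compacts and extracts an element of their intersection, which is a fixed point for all dyadic $t$ and hence all $t$ by continuity---this avoids any spectral input and needs only the (strong) Feller property. Second, for the density you bootstrap from the eigenrelation via the killed transition density $q_t$, using its continuity and strict positivity; the paper instead shows through the martingale problem that $\varphi^{0,\mathrm{two}}_{|\mathbb{S}\times\mathbb{S}\setminus\Gamma}$ is a nonnegative weak solution of $\tfrac{\alpha}{2}\Delta\rho=-\kappa\rho$ and then applies elliptic regularity and Harnack's inequality. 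Your kernel route is more direct (and even yields continuity across $\Gamma$), but it presupposes regularity of $q_t$ under local-time killing, which you acknowledge as the main point needing care; the paper's PDE route is self-contained and, as a bonus, produces the eigenequation that is immediately reused in the explicit calculations of Section~\ref{S:Insights}.
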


\begin{proof}[Proof of Lemma \ref{lem:exist of 2-part qsd}] 
We abuse notation by writing $Q_t(z,\cdot)$ for the sub-Markovian transition kernels of the killed CBM restricted to  $\bfS\times \bfS$,  and define $Q_tf(z):=\int_{\bfS\times \bfS} f(y)\,Q_t(z,dy)$ and $\mu Q_t$ as before.

We firstly show that the semigroup $\{Q_t\}_{t\in\R_{\geq 0}}$ satisfies the strong Feller property on $\bfS\times \bfS$, by which we mean that $Q_t(\calB_b(\bfS\times \bfS))\subseteq \calC_b(\bfS\times \bfS)$ (note that we do not concern ourselves with strong continuity). 
To do this we follow the proof of \cite[Lemma 2.15]{MR3678472}. We write $Q^0_t(z,\cdot)$ for the Markovian transition kernel of a pair of Brownian motions in $\bfS\times \bfS$ without killing, which is strong Feller. Then for all $f\in \calB_b(\bfS\times \bfS)$, and $0<s<t<\infty$,
\[
\lvert Q_tf(z)-Q^0_sQ_{t-s}f(z)\rvert\leq {\bf P}_z(\tau_{\partial}>s)\,\lvert\lvert f\rvert\rvert_{\infty}\ra 0\quad\text{as}\quad s\ra 0\quad \text{uniformly in $z\in \bfS\times \bfS$.}
\]
It follows from the strong Feller property of $Q^0$ that $Q^0_sQ_{t-s}f\in \mathcal{C}(\bfS\times \bfS)$  for all $s>0$ and $f\in \calB_b(\bfS\times \bfS)$. Since the uniform limit of continuous functions is continuous,  $Q_tf\in \mathcal{C}(\bfS\times \bfS)$ and so
$(Q_t)_{t\geq 0}$ must also be strong  Feller.

We fix $t\in(0,\infty)$. We shall apply the Schauder fixed-point theorem to the map from $\calP(\bfS\times \bfS)$ to itself defined by
\begin{equation}\label{eq:fixed point equation for 2-part exis of QSD}
 \mu\mapsto \frac{\mu Q_t(\cdot)}{\mu Q_t1}=\Law_{\mu}(Z_t\lvert \tau_{\partial}>t).
\end{equation}

To do this, we first check that this map
is continuous as follows. We fix arbitrary $f\in \calC(\bfS\times \bfS)$. Then $Q_tf\in  \calC(\bfS\times \bfS)$ by the aforementioned Feller property. Hence if $\nu\to \mu$ in the weak topology, then $\langle\nu Q_t,\,f \rangle=\langle\nu ,\,Q_tf \rangle \to \langle\mu ,\,Q_tf \rangle=\langle\mu Q_t,\,f \rangle$. 
Using also that $Q_t1(z)>0$ for all $z\in \bfS\times \bfS$ and $1\in \calC(\bfS\times \bfS)$, it follows that the map \eqref{eq:fixed point equation for 2-part exis of QSD}
is well-defined and continuous (with respect to the topology of weak convergence) for all fixed $t>0$. 

Since $\calP(\bfS\times \bfS)$ is compact and convex, it follows from the Schauder fixed-point theorem that \eqref{eq:fixed point equation for 2-part exis of QSD} has a non-empty compact set of fixed points for all $t>0$, denoted as $\Phi_t$. 

Note that $\Phi_{2^{-k}}\subseteq \Phi_{2^{-\ell}}$ for all $\ell\leq k$ since $2^{-\ell}=2^{k-\ell}2^{-k}$, so that $(\Phi_{2^{-k}})_{k=1}^{\infty}$ is a descending sequence of non-empty compact sets. Therefore $\Phi:=\cap_{k=1}^{\infty}\Phi_{2^{-k}}$ is non-empty. Any element of $\Phi$ must be a fixed point of \eqref{eq:fixed point equation for 2-part exis of QSD} for all dyadic rational $t>0$, hence for all $t\in \Rm_{\geq 0}$ by continuity. Thus any element of $\Phi$ must be a QSD for the $2$-type CBM restricted to $\bfS\times \bfS$, so we have the existence part of Lemma \ref{lem:exist of 2-part qsd}.

We now take some fixed $\varphi^{0,\rm two}\in \Phi$, and define $\kappa:=-\ln\Pm_{\varphi^{0,\rm two}}(\tau_{\partial}>1)$. By considering the martingale problem associated to the $2$-type CBM restricted to $\bfS\times \bfS$, for test functions belonging to $\calC_c^{\infty}(\bfS\times \bfS\setminus \Gamma)$, we see that 
\begin{equation}\label{E:mtg_rho}
\psi(Z_t)\Ind(\tau_{\partial}>t)-\psi(Z_0)-\frac{\alpha}{2}\int_0^t\Delta\psi(Z_s)\Ind(\tau_{\partial}>s)ds
\end{equation}
is a martingale for all $\psi\in \calC_c^{\infty}(\bfS\times \bfS\setminus \Gamma)$. Taking expectation under $Z_0\sim \phi$, we see that
\[
e^{-\kappa t}\varphi^{0,\rm two}(\psi)-\varphi^{0,\rm two}(\psi)=\frac{\alpha}{2}\int_0^te^{-\kappa s}\varphi^{0,\rm two}(\Delta \psi)ds\quad \text{for all}\quad t\geq 0.
\]
Both sides are differentiable with respect to $t$. Differentiating with respect to $t$ at $t=0$, we see that $\varphi^{0,\rm two}_{\lvert_{\bfS\times \bfS\setminus \Gamma}}$ must be a non-negative weak solution of 
\begin{equation}\label{E:pde_rho}
\frac{\alpha}{2}\Delta \rho(x,y)=-\kappa \rho(x,y), \qquad (x,y)\in \bfS\times \bfS\setminus \Gamma.
\end{equation}
It follows from elliptic regularity and Harnack's inequality that $\varphi^{0,\rm two}_{\lvert_{\bfS\times \bfS\setminus \Gamma}}$ has a density belonging to $\mathcal{C}(\bfS\times\bfS\setminus \Gamma;\Rm_{>0})$.
\end{proof}

\medskip

\subsubsection*{Feller property for the stochastic FKPP and killed stochastic FKPP}

To our knowledge, the Feller property for the stochastic FKPP has not previously been established. Here, we establish the Feller property for both the stochastic FKPP and the killed stochastic FKPP (recall \eqref{fkpp_X_kill}) using duality, which are needed in the proof of Lemma \ref{L:Map_Theta1} and are of independent interest.

\begin{prop}[Feller property]\label{prop:Feller_FKPP}
For each $t\in(0,\infty)$, we have the following:
\begin{itemize}
\item[(i)] $S_tH\in  \calC_b(\calC(\mathbb{S};[0,1]))$ whenever $H\in  \calC_b(\calC(\mathbb{S};[0,1]))$; and
\item[(ii)] $P_tH\in  \calC_b(\calC_*)$ whenever $H\in  \calC_b(\calC_*)$, where $\calC_{\ast}:=\mathcal{C}(\mathbb{S};[0,1])\setminus \{{\textbf{0}},{\textbf{1}}\}$.
\end{itemize}
Here $\{S_t\}_{t\in\R_{\ge 0}}$ is the Markovian semigroup for the stochastic FKPP \eqref{fkpp_X}, and 
$\{P_t\}_{t\in\R_{\ge 0}}$ is the sub-Markovian semigroup defined in \eqref{eq:submarktranssemigroupFKPP}.
\end{prop}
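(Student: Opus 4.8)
The plan is to deduce both parts from the moment dualities together with tightness at the level of laws on the path space: although the semigroups are not strongly continuous, the one‑time‑marginal maps $f\mapsto\Law_f(u_t)$ and $f\mapsto\Law_f(u_t\in\cdot\,,\tau_{\fix}>t)$ are weakly continuous, and weak continuity is exactly the Feller property for $\calC_b$.

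\emph{Part (i).} Fix $t>0$ and $f_k\to f$ in $\calC(\mathbb{S};[0,1])$. First I would show $\{\Law_{f_k}(u_t)\}_k$ is tight in $\calP(\calC(\mathbb{S};[0,1]))$: since $\mathbb{S}$ is compact and all solutions take values in $[0,1]$, by Arzel\`a--Ascoli this reduces to a uniform modulus‑of‑continuity bound, which follows from the Kolmogorov--Chentsov estimate $\expE_g[|u_t(x)-u_t(y)|^p]\le C_{t,p}\,{\rm d}_{\mathbb{S}}(x,y)^{\theta p}$ for suitable $\theta\in(0,\tfrac12)$ and large $p$, obtained from the mild formulation \eqref{E:MildSol_FKPP} and standard heat‑kernel estimates; crucially $C_{t,p}$ can be taken independent of $g\in\calB(\mathbb{S};[0,1])$ because the coefficients $\beta u_s(1-u_s)$ and $\gamma u_s(1-u_s)$ are uniformly bounded. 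Next, if $\nu$ is any subsequential weak limit, then for every multiset $\bar x$ the duality \eqref{WFdual} together with the bounded convergence theorem gives $\nu(D(\cdot\,;\bar x))=\lim_k\expE_{f_k}[D(u_t;\bar x)]=\lim_k{\bf E}_{\bar x}[D(f_k;\bar X_t)]={\bf E}_{\bar x}[D(f;\bar X_t)]=\expE_f[D(u_t;\bar x)]$, using that $\bar X_t$ has finitely many points and does not depend on the initial datum, so $f\mapsto D(f;\bar X_t(\omega))$ is continuous for each $\omega$. Since the numbers $\{\nu(D(\cdot\,;\bar x))\}_{\bar x}$ determine all joint moments of the $[0,1]$‑valued evaluations $g\mapsto(g(x_1),\dots,g(x_n))$, hence the finite‑dimensional distributions, hence $\nu$, we conclude $\nu=\Law_f(u_t)$. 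Thus $\Law_{f_k}(u_t)\to\Law_f(u_t)$ weakly, so $S_tH(f_k)=\expE_{f_k}[H(u_t)]\to\expE_f[H(u_t)]=S_tH(f)$ for all $H\in\calC_b(\calC(\mathbb{S};[0,1]))$, and $|S_tH|\le\|H\|_\infty$.

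\emph{Part (ii).} Fix $t>0$ and $f_k\to f$ in $\calC_{\ast}$, and set $\mu_k:=\Law_{f_k}(u_t\in\cdot\,,\tau_{\fix}>t)$, a finite measure carried by $\calC_{\ast}$ (Remark \ref{RK:B_ast}) and dominated by $\Law_{f_k}(u_t)$, hence tight by part (i); pass to a subsequence along which $\mu_k\to\mu$ weakly as finite measures on $\calC(\mathbb{S};[0,1])$. Using the killed duality of Proposition \ref{prop:killDual} and bounded convergence on the BCBM side, $f\mapsto P_t\calE^{z}(f)={\bf E}_{z}[\calE^{Z_t}(f)\,\Ind(\taud>t)]$ is continuous for every $z\in\chi$; and each $\calE^z$ is bounded and continuous on $\calC(\mathbb{S};[0,1])$ and vanishes on $\{\textbf{0},\textbf{1}\}$. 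Hence, along the subsequence, $\mu|_{\calC_{\ast}}(\calE^z)=\mu(\calE^z)=\lim_k\mu_k(\calE^z)=\lim_kP_t\calE^z(f_k)=P_t\calE^z(f)=\mu_\ast(\calE^z)$ for all $z$, where $\mu_\ast:=\Law_f(u_t\in\cdot\,,\tau_{\fix}>t)$, so $\mu|_{\calC_{\ast}}=\mu_\ast$ by Lemma \ref{lem:moments determine measures}. It remains to rule out escape of mass, i.e.\ $\mu(\{\textbf{0},\textbf{1}\})=0$; granting this, $\mu=\mu_\ast$, and since the bounded Borel extension $\bar H:=H\,\Ind(\cdot\in\calC_{\ast})$ has its discontinuity set inside $\{\textbf{0},\textbf{1}\}$, which is $\mu$‑null, the mapping theorem for weak convergence yields $P_tH(f_k)=\mu_k(\bar H)\to\mu(\bar H)=\mu_\ast(H)=P_tH(f)$; as the subsequence was arbitrary this gives $P_tH\in\calC_b(\calC_{\ast})$.

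\emph{The main obstacle: continuity of the survival probability.} Ruling out escape of mass is equivalent to $\lim_k\Pm_{f_k}(\tau_{\fix}>t)=\Pm_f(\tau_{\fix}>t)$, and this is the hard point: soft portmanteau applied to the closed singletons $\{\textbf{0}\}$, $\{\textbf{1}\}$ only gives $\liminf_k\Pm_{f_k}(\tau_{\fix}>t)\ge\Pm_f(\tau_{\fix}>t)$, which is consistent with escape rather than contradicting it. I would instead show directly that $f\mapsto\Pm_f(\tau_{\fix}>t)=1-\Pm_f(u_t\equiv\textbf{0})-\Pm_f(u_t\equiv\textbf{1})$ is continuous on $\calC(\mathbb{S};[0,1])$; by the symmetry $u\leftrightarrow 1-u$ it suffices to handle $f\mapsto\Pm_f(u_t\equiv\textbf{0})$. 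Fix a dense sequence $(q_i)_{i\ge1}$ in $\mathbb{S}$; continuity of $u_t$ gives $\prod_{i=1}^n(1-u_t(q_i))\downarrow\Ind(u_t\equiv\textbf{0})$, so by monotone convergence and \eqref{WFdual}, $\Pm_f(u_t\equiv\textbf{0})=\lim_n{\bf E}_{(q_1,\dots,q_n)}[D(f;\bar X_t)]$. Running the $1$‑type CBM with the ranked coupling of Remark \ref{Rk:EntranceLaw} (lower index survives each coalescence), the configuration $\bar X_t^{(n)}$ started from $(q_1,\dots,q_n)$ is nondecreasing in $n$—adding the highest‑ranked particle never kills an earlier one and leaves the coalescence dynamics among the earlier particles unchanged—so it increases to a limit configuration $\bar X_t^{(\infty)}$, which by the uniform bound \eqref{supx_nt} of Lemma \ref{L:supx_nt} (coming down from infinity) has finitely many points almost surely. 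Hence $D(f;\bar X_t^{(n)})$ is eventually constant in $n$ and $\Pm_f(u_t\equiv\textbf{0})={\bf E}[D(f;\bar X_t^{(\infty)})]$, which is continuous in $f$ by bounded convergence since it is a product of finitely many factors $1-f(\cdot)$. This closes the argument.
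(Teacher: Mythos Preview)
Your argument is correct and takes a genuinely different route from the paper. The paper avoids tightness altogether: it shows directly via duality that $f\mapsto S_tD(\cdot;\bar x)(f)$ and $f\mapsto P_t\calE^z(f)$ are continuous, observes that the linear spans of $\{D(\cdot;\bar x)\}$ (plus constants) and $\{\calE^z\}$ (plus constants) form point-separating subalgebras of $\calC_b$, and then invokes the Stone--Weierstrass theorem for Tychonoff spaces to get density in the compact-open topology; for a convergent sequence $f_k\to f$, the set $K=\{f_k\}\cup\{f\}$ is compact, so any $H\in\calC_b$ can be uniformly approximated on $K$ by elements of the algebra, and a triangle-inequality argument finishes. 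The only subtlety in part~(ii) is checking that the algebra $\mathcal{A}'$ generated by $\{\calE^z\}$ separates points of $\calC_\ast$, which the paper does by a short combinatorial argument.

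Your approach---tightness plus identification of subsequential limits via moments---is the more standard probabilistic template and is perfectly valid here. It does, however, require strictly more input for part~(ii): you need continuity of $f\mapsto\Pm_f(\tau_{\fix}>t)$, and your proof of this uses coming down from infinity for the dual particle system (Lemma~\ref{L:supx_nt}). The paper's Stone--Weierstrass argument needs no such ingredient, so it is self-contained at the level of \eqref{WFdual} and \eqref{eq:moment duality relationship for 2-type results} alone. On the other hand, your route gives the intermediate statement $\Law_{f_k}(u_t)\to\Law_f(u_t)$ weakly, which is of independent interest.

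One minor imprecision: in your escape-of-mass step you invoke ``the $1$-type CBM with the ranked coupling of Remark~\ref{Rk:EntranceLaw}'', but for $\beta>0$ the dual in \eqref{WFdual} is the \emph{branching} coalescing system, and Remark~\ref{Rk:EntranceLaw} concerns only CBM. The monotone coupling you need (the system from $(q_1,\dots,q_n)$ is contained in that from $(q_1,\dots,q_{n+1})$) is exactly what the $2$-type construction of Definition~\ref{Def:2BCBM} provides: colour $q_{n+1}$ red and the rest green, so greens evolve autonomously. With this correction the argument goes through for all $\beta\ge 0$, and Lemma~\ref{L:supx_nt} (which is stated for BCBM) supplies the finiteness of $\bar X_t^{(\infty)}$.
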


\begin{proof}[Proof of Proposition \ref{prop:Feller_FKPP}]
We shall use the duality for each of these two processes (the stochastic FKPP and killed stochastic FKPP), and a generalization of the Stone-Weierstrass theorem for Tychonoff spaces, \cite[Section 44B, P.469-478]{willard2012general} and \cite[Exercise R(b) on P.245]{kelley2017general}). This provides the following.
\begin{thm}[Stone-Weierstrauss theorem for Tychonoff spaces]
Let $X$ be a Tychonoff space and $\mathcal{A}$ a unital sub-algebra of $\calC_b(X;\R)$ which separates points of $X$. Then $\mathcal{A}$ is dense in $\calC_b(X;\R)$ in the compact-open topology.
\end{thm}
Note that both $\calC(\mathbb{S};[0,1])$ and its subspace $\calC_*$ are Tychonoff spaces, since $\calC(\mathbb{S};[0,1])$ is a metric space induced by the sup-norm.

\smallskip

\noindent
{\bf (i). }
We write $\mathbb{X}:=\cup_{n=1}^{\infty}(\mathbb{S}^n/\sim)$. We fix arbitrary $\bar{x} \in \mathbb{X}$. We observe that
\begin{equation}\label{E:cts_on_CS01}
F:f\mapsto \E_{f}[D(u_t,\bar{x})]
\stackrel{(\ref{WFdual})}{=}
{\bf E}_{\bar{x}}[D(f,\bar{X}_t)] \quad\text{is a bounded, continuous map on }\calC(\mathbb{S};[0,1]),
\end{equation}
meaning that $F\in \calC_b(\calC(\bfS;[0,1]))$. This is an easy consequence of the dominated convergence theorem. We now define
\[
\mathcal{A}:=\left\{\Big(\calC(\bfS;[0,1])\ni f\mapsto a_0+\sum_{k=1}^{m}a_kD(f,\,\bar{x}^k)\Big):m\in \mathbb{N},(a_k)_{k=0}^{m}\in\R^{m+1},(\bar{x}^k)_{k=1}^{m}\in \mathbb{X}^{m} \right\}.
\]
Then $\mathcal{A}$ is a subalgebra of $\calC_b(\calC(\bfS;[0,1]))$
which separates points and contains the constant functions. To check these, note that  $\mathcal{A}$ is a subalgebra
which contains the constant functions, by construction. For any $f\neq g\in \calC(\mathbb{S};[0,1])$, there exists $x\in \mathbb{S}$ such that $f(x)\neq g(x)$. Then $D(\cdot, x)\in \mathcal{A}$ satisfies 
 $D(f, x)\neq D(g, x)$. Hence $\mathcal{A}$ separates points. It therefore follows from the Stone-Weierstrass theorem for Tychonoff spaces that $\mathcal{A}$ is dense in $\calC_b(\calC(\bfS;[0,1]))$ in the compact-open topology. In particular, for any $F\in \calC_b(\calC(\bfS;[0,1]))$ there exists a sequence $(F_m)_{m\in \mathbb{N}}$ in $\mathcal{A}$ such that $F_m\ra F$ uniformly on compact subsets of $\calC(\bfS;[0,1])$.

We now consider arbitrary $F\in \calC_b(\calC(\bfS;[0,1]))$, $t>0$ and $f_n\ra f$ in $\calC(\bfS;[0,1])$. We define $K:=\{f_n:n\in \mathbb{N}\}\cup \{f\}$, which we observe is a compact subset of $\calC(\bfS;[0,1])$. We take an aforedescribed sequence $(F_m)_{m\in \mathbb{N}}$ in $\mathcal{A}$ such that $F_m\ra F$ uniformly on compact subsets of $\calC(\bfS;[0,1])$. Then we have that 
\begin{equation}\label{eq:convergence in open-compact topology}
\sup_{f'\in K}\lvert F_m(f')-F(f')\rvert\ra 0\quad \text{as}\quad m\ra \infty.
\end{equation}
On the other hand,
\begin{align*}
\lvert S_tF(f_n)-S_tF(f)\rvert&\leq \lvert S_tF(f_n)-S_tF_m(f_n)\rvert +\lvert S_tF_m(f_n)-S_tF_m(f)\rvert+\lvert S_tF_m(f)-S_tF(f)\rvert
\\ &\leq 2\sup_{f'\in K}\lvert F_m(f')-F(f')\rvert +\lvert S_tF_m(f_n)-S_tF_m(f)\rvert.
\end{align*}
It then follows from \eqref{E:cts_on_CS01} that
\[
\limsup_{n\ra\infty}\lvert S_tF(f_n)-S_tF(f)\rvert\leq 2\sup_{f'\in K}\lvert F_m(f')-F(f')\rvert.
\]
Since $m$ is arbitrary, it follows from \eqref{eq:convergence in open-compact topology} that $S_tF(f_n)\ra S_tF(f)$ as $n\ra\infty$. Boundedness of $S_tH$ is trivial. The claim $(i)$ is therefore proven.

{\bf (ii). }
The claim $(ii)$ is proven identically to that of $(i)$, replacing $\mathcal{A}$ with the subalgebra
\[
\mathcal{A}'=\left\{\Big(\calC_{\ast}\ni f\mapsto a_0+\sum_{k=1}^{m}a_k\calE(f,\,z^k)\Big):m\in \mathbb{N},(a_k)_{k=0}^{m}\in\R^{m+1},(z^k)_{k=1}^{m}\in \chi \right\}
\]
of $\calC_b(\calC_{\ast})$. The only additional difficulty is to see that $\mathcal{A}'$ separates points, which is no longer trivial. 

To establish that $\mathcal{A}'$ separates points, we take $f\neq g$ in $\calC_{\ast}$. We assume, for the sake of contradiction, that whenever $f(x)\notin \{0,1\}$ or $g(x)\notin \{0,1\}$, then $f(x)=g(x)$. Then $\{x\in \bfS:f(x)=g(x)\}$, $\{x\in \bfS:f(x)=0,g(x)=1\}$ and $\{x\in \bfS:f(x)=1,g(x)=0\}$ are closed disjoint subsets of $\bfS$ with union $\bfS$, at least two of which must be non-empty. This is a contradiction. 

It follows that there exists $x\in \bfS$ such that $f(x)\neq g(x)$ and either $f(x)\notin \{0,1\}$ or $g(x)\notin \{0,1\}$. We define $z_1:=((x),(x)),z_2:=((x,x),(x))\in \chi$. If $\calE(f,z_1)=\calE(g,z_1)$ then $f(x)(1-f(x))=g(x)(1-g(x))$, implying that $g(x)=1-f(x)$ (since $g(x)\neq f(x)$). If also $\calE(f,z_2)=\calE(g,z_2)$, then $f(x)(1-f(x))^2=g(x)(1-g(x))^2=f(x)(1-f(x)(1-g(x))$, which is a contradiction. It follows that either $\calE(f,z_1)\neq \calE(g,z_1)$ or $\calE(f,z_2)\neq \calE(g,z_2)$, implying that $\mathcal{A}'$ separates points.
\end{proof}

\subsubsection*{Proof of Proposition \ref{prop:tightness proposition}}

Similarly to the map \eqref{eq:fixed point equation for 2-part exis of QSD}, for $t>0$ we define  the map $\Theta_t=\Theta^{\beta}_t:\,\calP(\calC_*)\to \calP(\calC_*)$ by
\begin{equation}\label{eq:automorphism theta map}
\Theta_t: \mu \mapsto \frac{\mu P_t(\cdot)}{\mu P_t1}=\Law_{\mu}(u_t\lvert \tau_{\fix}>t).
\end{equation}
Here we are using \eqref{E:LowerBound_tau_generalmu}, which ensures that $\mu P_t1>0$ for all $\mu\in\calP(\calC_{\ast})$.

By definition, a QSD of the stochastic FKPP is a fixed point of $\Theta^{\beta}_t$ for all $t\geq 0$.
We shall apply the {\it Schauder fixed-point theorem} to the mapping
$\Theta^{\beta}_t$ 
on the subset  
\begin{equation}\label{Def:calK_epsilon}
\calK_{\epsilon}:=\{\mu\in\calP(\calC_*):\mu(h^0)\geq \epsilon\}
\end{equation}
of the  topological vector space $\mathcal{M}(\calC_{*})$ (the space of finite signed Borel measures on $\calC_{*}$) equipped with the weak topology, where $\epsilon>0$, and $h^0$ is the right eigenfunction for the stochastic FKPP with $\beta=0$ constructed using Proposition \ref{prop:duality relationship for quasi-stationarity}(ii) and Proposition \ref{prop:existence of QSD for FKPP beta=0}. 

To be able to apply the Schauder fixed-point theorem and establish  Proposition \ref{prop:tightness proposition}, we need the following lemmas.

\begin{lemma}\label{L:Map_Theta2}
There exists $\epsilon_0>0$ such that $\calK_{\epsilon}$ defined in \eqref{Def:calK_epsilon} is non-empty for all $\epsilon\in(0,\,\epsilon_0)$.
For any $\epsilon\in(0,1)$, $\calK_{\epsilon}$ is a closed, convex subset of $\calP(\calC_{*})$. 
\end{lemma}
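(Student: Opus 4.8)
The plan is to check the three assertions in turn; each reduces to a one-line observation once one recalls that, by Proposition~\ref{prop:existence of QSD for FKPP beta=0} together with Proposition~\ref{prop:duality relationship for quasi-stationarity}(ii), the function $h^0$ belongs to $\calC_b(\calB_\ast;\Rm_{>0})$, hence its restriction to $\calC_\ast$ belongs to $\calC_b(\calC_\ast;\Rm_{>0})$; in particular $h^0$ is bounded, continuous and everywhere strictly positive on $\calC_\ast$.

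For non-emptiness I would set $\epsilon_0:=\sup_{f\in\calC_\ast}h^0(f)$, which lies in $(0,\infty)$ because $h^0$ is bounded, strictly positive, and $\calC_\ast\neq\emptyset$ (the constant function with value $\tfrac12$ lies in $\calC_\ast$). Then for each $\epsilon\in(0,\epsilon_0)$ one may choose $f_\epsilon\in\calC_\ast$ with $h^0(f_\epsilon)>\epsilon$, and the Dirac mass $\delta_{f_\epsilon}\in\calP(\calC_\ast)$ satisfies $\delta_{f_\epsilon}(h^0)=h^0(f_\epsilon)>\epsilon$, so $\delta_{f_\epsilon}\in\calK_\epsilon$ and $\calK_\epsilon\neq\emptyset$. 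Convexity is immediate from the linearity of $\mu\mapsto\mu(h^0)$: for $\mu_0,\mu_1\in\calK_\epsilon$ and $s\in[0,1]$, the measure $s\mu_1+(1-s)\mu_0$ is again a probability measure on $\calC_\ast$ and satisfies $(s\mu_1+(1-s)\mu_0)(h^0)=s\mu_1(h^0)+(1-s)\mu_0(h^0)\geq\epsilon$. Finally, for closedness I would use that $h^0|_{\calC_\ast}\in\calC_b(\calC_\ast)$, so that the map $\calP(\calC_\ast)\ni\mu\mapsto\mu(h^0)\in\Rm$ is continuous for the topology of weak convergence of measures (this is part of the very definition of weak convergence on the metric space $\calC_\ast$); hence $\calK_\epsilon$, being the preimage of the closed set $[\epsilon,\infty)$ under this continuous map, is closed in $\calP(\calC_\ast)$.

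There is no genuine obstacle in this lemma. The one point worth keeping in mind is that, in contrast with the compact space $\bfS\times\bfS$ appearing in Lemma~\ref{lem:exist of 2-part qsd}, the space $\calP(\calC_\ast)$ is not compact, which is exactly why one needs Lemma~\ref{L:Map_Theta2}: it isolates a closed convex subset $\calK_\epsilon$ of $\calP(\calC_\ast)$ which (together with the subsequent lemmas, establishing invariance under $\Theta^\beta_t$, continuity of $\Theta^\beta_t$, and the requisite compactness) will allow Schauder's fixed-point theorem to be applied and thereby produce a QSD for the stochastic FKPP.
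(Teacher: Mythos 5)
Your proof is correct and follows essentially the same approach as the paper: both use that $h^0\in\calC_b(\calC_\ast;\Rm_{>0})$ to get non-emptiness via Dirac masses, linearity of $\mu\mapsto\mu(h^0)$ for convexity, and weak-topology continuity of $\mu\mapsto\mu(h^0)$ for closedness. The only cosmetic difference is that the paper re-derives the strict positivity of $h^0$ from the explicit formula $h^0(f)=\int_{\bfS\times\bfS}(1-f(x))f(y)\rho^0(x,y)\,m(dx)m(dy)$ and the positivity of $\rho^0$, while you simply cite Proposition~\ref{prop:duality relationship for quasi-stationarity}(ii), which already asserts $h^0\in\calC_b(\calB_\ast;\Rm_{>0})$ — both are valid at this point in the argument.
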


\begin{proof}[Proof of Lemma \ref{L:Map_Theta2}]
Clearly, $\calK_{\epsilon}$ is a closed, convex subset of  $\calP(\calC_*)$ for all $\epsilon>0$. This set is non-empty for all sufficiently small $\epsilon>0$,
since it is increasing as $\epsilon\downarrow 0$ and $h^0(f)>0$ for all $f\in\calC_*$. The latter 
follows  from the positivity of the density $\rho^0$ in Proposition \ref{prop:existence of QSD for FKPP beta=0}:
\begin{align}
h^0(f) =  \int_{\mathbb{S}\times {\mathbb{S}}} \mathcal{E}(f,(x,y))\,\varphi^0(dxdy)
=\int_{\mathbb{S}\times {\mathbb{S}}} (1-f(x))\,f(y)\,\rho^0(x,y)\,m(dx) m(dy)>0.\label{Def:muh0}
\end{align}
Hence $\mu(h^0)=\,\int_{\calC_*} h^0(f)\,\mu(df)>0$.

\end{proof}

\begin{lemma}\label{lem:properties of fixed point equation for stochastic FKPP}
The following hold for any fixed $\beta\in \R_{\geq 0}$:
\begin{itemize}
\item[(i)] There exists $c_{\beta}^0>0$ such that $\inf_{t\in(0,1]}(\Theta^{\beta}_t(\mu))(h^0)\geq c_{\beta}^0\mu(h^0)$ for all $\mu\in\calP(\calC_*)$. In particular,
$\Theta^{\beta}_t(\calK_{\epsilon})\subset \calK_{c_{\beta}^0\epsilon}$ for all $t\in(0,1]$ and $\epsilon\in \Rm_{>0}$. 
\item[(ii)] For all $t\in(0,1]$, there exists $\epsilon_{t}^{\beta}>0$ such that  $(\Theta^{\beta}_t(\mu))(h^0)\geq 2\mu(h^0)\wedge 2\epsilon_t^{\beta}$ for all $\mu\in\calP(\calC_*)$.
\end{itemize}
\end{lemma}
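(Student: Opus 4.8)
The plan is to derive both parts by comparing the $\beta$-dynamics to the $\beta=0$ dynamics through the Girsanov estimate of Lemma~\ref{L:Girsanov} — the instance $F\equiv 1$ of which is \eqref{E:LowerBound_tau_generalmu} — and by exploiting that $h^0$ is an exact right eigenfunction of $(P^0_t)_{t\geq 0}$ with eigenvalue $\lambda_0\in(0,1)$. Throughout write $(\Theta^{\beta}_t\mu)(h^0)=\mu P^{\beta}_t(h^0)/\mu P^{\beta}_t(1)=\expE^{\beta}_{\mu}[h^0(u_t)\mid\tau_{\fix}>t]$, which is well defined on $\calP(\calC_*)$ by \eqref{E:LowerBound_tau_generalmu} and $h^0>0$ on $\calC_*$. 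I shall use Lemma~\ref{L:Girsanov} in the form: for each $t>0$ and $\beta\geq 0$ there are constants $0<c^{\beta}_t\leq C^{\beta}_t<\infty$, with $\inf_{s\in(0,1]}c^{\beta}_s>0$, such that $c^{\beta}_t\,\expE^{0}_{\mu}[F\,\Ind(\tau_{\fix}>t)]\leq \expE^{\beta}_{\mu}[F\,\Ind(\tau_{\fix}>t)]\leq C^{\beta}_t\,\expE^{0}_{\mu}[F\,\Ind(\tau_{\fix}>t)]$ for every bounded $\mathcal F_{[0,t]}$-measurable $F\geq 0$ (the general $F$ being reduced to the event-wise comparison by the layer-cake identity $h^0=\int_0^{\|h^0\|_{\infty}}\Ind\{h^0>\ell\}\,d\ell$).

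Part (i) is then immediate: $\mu P^{\beta}_t(1)=\Pm^{\beta}_{\mu}(\tau_{\fix}>t)\leq 1$, while the lower Girsanov bound applied to $F=h^0(u_t)$ together with $P^0_th^0=\lambda_0^th^0$ gives $\mu P^{\beta}_t(h^0)=\expE^{\beta}_{\mu}[h^0(u_t)\Ind(\tau_{\fix}>t)]\geq c^{\beta}_t\,\expE^{0}_{\mu}[h^0(u_t)\Ind(\tau_{\fix}>t)]=c^{\beta}_t\lambda_0^t\,\mu(h^0)$. Hence for $t\in(0,1]$, using $\lambda_0^t\geq\lambda_0$, $(\Theta^{\beta}_t\mu)(h^0)\geq c^{\beta}_t\lambda_0^t\,\mu(h^0)\geq c^0_{\beta}\,\mu(h^0)$ with $c^0_{\beta}:=\lambda_0\inf_{s\in(0,1]}c^{\beta}_s>0$; the inclusion $\Theta^{\beta}_t(\calK_{\epsilon})\subset\calK_{c^0_{\beta}\epsilon}$ follows from the definition of $\calK_{\epsilon}$ in \eqref{Def:calK_epsilon}.

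For part (ii) the plan is to prove the stronger, measure-uniform bound: for each fixed $t\in(0,1]$ there is $\delta^{\beta}_t>0$ with $(\Theta^{\beta}_t\mu)(h^0)\geq\delta^{\beta}_t$ for all $\mu\in\calP(\calC_*)$; then $\epsilon^{\beta}_t:=\tfrac12\delta^{\beta}_t$ yields $(\Theta^{\beta}_t\mu)(h^0)\geq 2\epsilon^{\beta}_t\geq 2\mu(h^0)\wedge 2\epsilon^{\beta}_t$ at once. By the two-sided Girsanov bounds $(\Theta^{\beta}_t\mu)(h^0)\geq(c^{\beta}_t/C^{\beta}_t)(\Theta^{0}_t\mu)(h^0)$, so it suffices to treat $\beta=0$, where $(\Theta^{0}_t\mu)(h^0)=\lambda_0^t\mu(h^0)/\Pm^0_{\mu}(\tau_{\fix}>t)$; by linearity in $\mu$ this reduces to finding, for each $t>0$, a constant $C_t<\infty$ with
\begin{equation}\label{E:plancrux}
\Pm^0_{f}(\tau_{\fix}>t)\ \leq\ C_t\,h^0(f)\qquad\text{for all }f\in\calC_*.
\end{equation}
I would obtain \eqref{E:plancrux} by estimating both sides in terms of $\min(\|f\|_{L^1(m)},\|1-f\|_{L^1(m)})$. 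For the left side: since $\textbf{0},\textbf{1}$ are absorbing, $\{\tau_{\fix}>t\}=\{u_t\neq\textbf{0}\}\cap\{u_t\neq\textbf{1}\}$, so $\Pm^0_f(\tau_{\fix}>t)\leq\min(\Pm^0_f(u_t\neq\textbf{0}),\Pm^0_f(u_t\neq\textbf{1}))$; by the moment duality \eqref{WFdual} and the dense-start entrance law of Remark~\ref{Rk:EntranceLaw}, $\Pm^0_f(u_t=\textbf{0})={\bf E}_{\underline x}[\prod_{i}(1-f(Y^i_t))]$ and $\Pm^0_f(u_t=\textbf{1})={\bf E}_{\underline x}[\prod_{i}f(Y^i_t)]$ for any $\underline x\in\Sigma_{\bfS}$, whence $\Pm^0_f(u_t\neq\textbf{0})\leq{\bf E}_{\underline x}[\sum_i f(Y^i_t)]\leq\|\varrho_t\|_{\infty}\|f\|_{L^1(m)}$, where $\varrho_t$ is the intensity (density) of the occupied locations at time $t$ of the coalescing Brownian motion started from a dense configuration, and symmetrically for $\textbf{1}$. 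For the right side: Proposition~\ref{prop:existence of QSD for FKPP beta=0} gives $h^0(f)=\int_{\bfS\times\bfS}(1-f(x))f(y)\rho^0(x,y)\,m(dx)\,m(dy)$ with $\rho^0$ continuous and strictly positive off the diagonal; restricting the integral to $\{d_{\bfS}(x,y)>\tfrac14\}$ (where $\rho^0$ is bounded below), using $h^0(f)=h^0(1-f)$, and a weak-$*$ compactness argument on $\{f:\ \|f\|_{L^1(m)}\text{ bounded away from }0\text{ and }1\}$, one gets $h^0(f)\geq c\,\min(\|f\|_{L^1(m)},\|1-f\|_{L^1(m)})$. Combining gives \eqref{E:plancrux} with $C_t=\|\varrho_t\|_{\infty}/c$.

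The main obstacle is the finiteness of $\|\varrho_t\|_{\infty}$, i.e. a quantitative ``coming down from infinity'' statement controlling not merely the number of particles of the coalescing Brownian motion at a fixed time $t>0$ (as in Lemma~\ref{L:supx_nt}) but the density of their positions. I would obtain this either from the entrance-law constructions cited in Remark~\ref{Rk:EntranceLaw} (\cite{tribe1995large,barnes2022coming}), or directly by comparing, on a short time interval, the coalescing system to the underlying independent Brownian motions and using that the pairwise coalescence rate is quadratic in the local density; the passage back to $\beta>0$ requires only the two-sided Girsanov comparison above, so no further control of the branching system is needed.
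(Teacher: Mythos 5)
Your Part (i) coincides with the paper's proof: both rest on the two-sided Girsanov comparison for $t\in(0,1]$, the eigenrelation $P^0_t h^0=\lambda_0^t h^0$, and the trivial bound $\mu P^\beta_t(1)\le 1$; your layer-cake remark is just a spelled-out version of the paper's measure inequality $\Theta^\beta_t(\mu)\ge k_\beta\Theta^0_t(\mu)$ integrated against $h^0$. For Part (ii), however, you take a genuinely different and strictly stronger route. The paper is content with a \emph{qualitative} statement: combining the bound $\mu(h^0)\ge c_0(\epsilon)\bigl[\int_{\calC_*}(\int f)(1-\int f)\,\mu(df)-\text{Vol}(B(\Gamma,\epsilon))\bigr]$ with Lemma~\ref{L:extinct_at_t}, it deduces only that $\Pm_\mu(\tau^0_{\fix}>t)\to 0$ as $\mu(h^0)\to 0$ (with no rate), which already suffices, via \eqref{Fbetat_lowerBound}, to make $(\Theta^\beta_t\mu)(h^0)\ge 2\mu(h^0)$ once $\mu(h^0)$ is small. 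You instead aim at the \emph{quantitative} inequality $\Pm^0_f(\tau_{\fix}>t)\le C_t\,h^0(f)$, which gives a uniform-in-$\mu$ lower bound $(\Theta^\beta_t\mu)(h^0)\ge\delta^\beta_t>0$ — an intrinsic-ultracontractivity-flavoured estimate the paper never needs and never proves. The price is the extra ingredient you rightly flag, namely that the occupation intensity $\varrho_t$ of the CBM started from a dense configuration has bounded density; this is not in the paper, but it is provable: run the CBM up to time $s<t$ to get a finite configuration with $\expE[N_s]<\infty$ (coming down from infinity, cf.\ Lemma~\ref{L:supx_nt}), dominate the surviving positions at time $t$ by those of $N_s$ \emph{independent} Brownian motions launched from the same places, and bound the resulting intensity by $\expE[N_s]\cdot\sup_{x,y}p(t-s,x,y)$. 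Note, though, that your alternative phrasing of ``comparing to independent BMs on a short interval'' cannot work on its own — the independent system started densely has infinite intensity, so the coming-down-from-infinity step is essential, not optional. A minor simplification: the weak-$*$ compactness argument you invoke for $h^0(f)\gtrsim\min(\|f\|_{L^1},\|1-f\|_{L^1})$ is unnecessary; restricting $\rho^0\ge c_0>0$ to $\{d_{\bfS}(x,y)>\delta_0\}$ with $\delta_0<1/4$ and using $\int_{\{d>\delta_0\}}(1-f)\,dx\ge 1/2-2\delta_0$ whenever $\|f\|_{L^1}\le 1/2$ gives the bound directly. Net: your Part (ii) is correct in spirit and buys a stronger conclusion, but at the cost of a CBM intensity estimate that the paper's soft, mass-based compactness argument avoids entirely.
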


\begin{proof}[Proof of Lemma \ref{lem:properties of fixed point equation for stochastic FKPP}]
{\bf (i).}  We take $\beta=0$ for the time being. For $\mu\in\calP(\calC_*)$, since $h^0$ is a right eigenfunction for the stochastic FKPP, 
\begin{equation}\label{E:h0_righteigen}
\Theta^{0}_t(\mu)(h^0)=\frac{\lambda_0^t\mu(h^0)}{\Pm_{\mu}(\tau^0_{\fix}>t)},
\end{equation}
where $\lambda_0:=\Lambda(h^0)$ is the eigenvalue of $h^0$. This is also the equality in \eqref{E:LowerBound_tau_generalmu}.

By Girsanov's transform (Lemma \ref{L:Girsanov}), there exists for all $\beta \in\R_{\geq 0}$ some constant $k_{\beta}\in (0,\infty)$ (dependent only upon $\beta$) such that
\[
\Theta^{\beta}_t(\mu)\geq k_{\beta}\Theta_t^0(\mu)\quad\text{for all}\quad \mu\in\calP(\calC_*),\quad 0< t\leq 1.
\]
It therefore follows that
\begin{equation}\label{Fbetat_lowerBound}
\Theta^{\beta}_t(\mu)(h^0) \geq   k_{\beta}\Theta_t^0(\mu)(h^0)
=k_{\beta}\frac{\lambda_0^t\mu(h^0)}{\Pm_{\mu}(\tau^0_{\fix}>t)}
\quad\text{for all}\quad \mu\in\calP(\calC_*),\quad 0< t\leq 1.
\end{equation}
The first statement in Lemma \ref{lem:properties of fixed point equation for stochastic FKPP} then immediately follows by taking $c^0_{\beta}=k_{\beta}\,\lambda_0$, since $\lambda_0\in (0,1]$. 

\smallskip
\noindent
{\bf (ii).}  We now prove that for all $0<t\leq 1$,
\begin{equation}\label{eq:fixation prob goes to 0 as right efn does}
    \Pm_{\mu}(\tau_{\fix}^0>t)\ra 0\quad\text{as}\quad \mu(h^0)\ra 0.
\end{equation}
We recall from Proposition \ref{prop:existence of QSD for FKPP beta=0} that $\varphi^0_{\lvert \bfS\times\bfS\setminus \Gamma}$ has a continuous and strictly positive density $\rho$, so that $c_0(\epsilon):= \inf_{z\in \bfS\times \bfS\setminus B(\Gamma,\epsilon)} \rho(z)>0$ for all $\epsilon>0$. We also note that $\bar F:=\sup_{\substack{f\in \calC_{\ast}\\z\in \bfS\times \bfS}}\calE(f,z)<\infty$ (in fact $\leq 1$) and $\int_{\mathbb{S}\times {\mathbb{S}}} \calE(f,(x,y))\,dxdy= \left(\int_{\mathbb{S}}f\,dx \right)\,\left(1-\int_{\mathbb{S}}f\,dx\right)$. Hence, by
 \eqref{eq:formula for right efn of dual in terms of QSD of FKPP results} and \eqref{Def:muh0},
\begin{align}
\mu(h^0)&= \int_{\calC_*} \int_{\mathbb{S}\times {\mathbb{S}}} \calE(f,(x,y))\,\varphi^0(dxdy)\,\mu(df)\\
&\geq  c_0(\epsilon)
\int_{\calC_*} \int_{\mathbb{S}\times {\mathbb{S}}} \calE(f,(x,y))\,dxdy\,\mu(df)\\
&-\int_{\calC_*} \int_{B(\Gamma,\epsilon)} \calE(f,(x,y))[c_0(\epsilon)
-\rho(x,y)]\Ind(\rho(x,y)< c_0(\epsilon))\,dxdy\,\mu(df)\\
&\geq  c_0(\epsilon)
\Big[\int_{\calC_*} \int_{\mathbb{S}\times {\mathbb{S}}} \calE(f,(x,y))\,dxdy\,\mu(df)-\text{Vol}(B(\Gamma,\epsilon)) \bar F\Big]. \label{muh0_lowerbound}
\end{align}

Since $\text{Vol}(B(\Gamma,\epsilon))\ra 0$ as $\epsilon \ra 0$, and $\int_{\mathbb{S}\times {\mathbb{S}}} \calE(f,(x,y))\,dxdy= \left(\int_{\mathbb{S}}f\,dx \right)\,\left(1-\int_{\mathbb{S}}f\,dx\right)$, it follows that
\[
\int_{\calC_*} \left(\int_{\mathbb{S}}f\,dx \right)\,\left(1-\int_{\mathbb{S}}f\,dx\right)\,\mu(df) \ra 0\quad\text{as}\quad \mu(h^0)\ra 0.
\]
From this and from Lemma \ref{L:extinct_at_t}, we have \eqref{eq:fixation prob goes to 0 as right efn does} for all $0<t\leq 1$.

It follows from \eqref{Fbetat_lowerBound} and \eqref{eq:fixation prob goes to 0 as right efn does} that for all $0<t\leq 1$ and $\beta\in \R_{\geq 0}$ there exists $\epsilon_t^{\beta}>0$ such that $(\Theta_t(\mu))(h^0)\geq 2\mu(h^0)$ whenever $\mu(h^0)\leq \epsilon_t^{\beta}$. It therefore follows from Part $(i)$ that $(\Theta_t(\mu))(h^0)\geq 2\mu(h^0)\wedge c^0_{\beta}\epsilon_t^{\beta}$ for all $\mu\in\calP(\calC_{\ast})$. Reducing $\epsilon_t^{\beta}$, we obtain Part $(ii)$.
\end{proof}

We have that the closure of $\calK_{\epsilon}$ in $\calP(\calC(\bfS;[0,1]))$ is given by $\cl({\calP(\calC(\bfS;[0,1]))})=\{\mu\in \calP(\calC(\bfS;[0,1])):\mu(h^0)\geq \epsilon\}$. This is not a subset of $\calP(\calC_{\ast})$, but is a subset of 
\[
\widetilde{\calP}:=\{\mu\in \calP(\calC(\bfS;[0,1])):\mu(\calC_{\ast})>0\}. 
\]
That is, $\mu\in\cl({\calK_{\epsilon}})$ need not apply all of its mass to $\calC_{\ast}$, but it must apply some mass to $\calC_{\ast}$. We consider $\widetilde{\calP}$ to be a subset of $\calP(\calC(\bfS;[0,1]))$, equipping it with the subspace topology. We observe that the map $\Theta^{\beta}_t$ defined in \eqref{eq:automorphism theta map} is well-defined as a map $\widetilde{\calP}\ra \calP(\calC_{\ast})$.
\begin{lemma}\label{L:Map_Theta1}
The maps $\Theta^{\beta}_t:\,\calP(\calC_*)\to \calP(\calC_*)$ and $\Theta^{\beta}_t:\tilde{\calP}\to \calP(\calC_*)$ are continuous for each $t\in(0,\infty)$. Moreover, $\Theta^{\beta}_{t+s}=\Theta^{\beta}_{t}\circ\Theta^{\beta}_{s}$ for $s,t\in(0,\infty)$, where $\circ$ is composition.
\end{lemma}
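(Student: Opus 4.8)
\textbf{Proof plan for Lemma \ref{L:Map_Theta1}.}
The plan is to treat the semigroup identity and the two continuity statements separately. The semigroup identity is purely formal, coming from the Chapman--Kolmogorov relation $P_sP_t=P_{s+t}$ for the sub-Markovian kernels $(P_t)_{t\geq 0}$ of the killed stochastic FKPP; the two continuity statements both reduce to the Feller property of Proposition \ref{prop:Feller_FKPP}, the second of them requiring in addition that the fixation probability at time $t$ decays to $0$ as the initial profile approaches one of the two absorbing states.

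For the semigroup identity, fix $\mu$ in $\calP(\calC_*)$, or more generally in $\tilde{\calP}$, and put $\nu:=\Theta^{\beta}_s(\mu)=(\mu P_s 1)^{-1}\,\mu P_s$. This is well defined: $P_s(\textbf{0},\cdot)=P_s(\textbf{1},\cdot)=0$ since $\tau_{\fix}=0$ under $\Pm_{\textbf{0}}$ and $\Pm_{\textbf{1}}$, so $\mu P_s1=\int_{\calC_*}\Pm_f(\tau_{\fix}>s)\,\mu(df)$, which is strictly positive because $\Pm_f(\tau_{\fix}>s)>0$ for every $f\in\calC_*$ (Remark \ref{RK:B_ast}) and $\mu(\calC_*)>0$; moreover $\nu=\Law_{\mu}(u_s\mid\tau_{\fix}>s)$ is a probability measure carried by $\calC_*$, again by Remark \ref{RK:B_ast}, so $\nu$ lies in the domain of $\Theta^{\beta}_t$. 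Using $P_sP_t=P_{s+t}$ one gets $\nu P_t=(\mu P_s1)^{-1}\,\mu P_{s+t}$, and hence $\Theta^{\beta}_t(\nu)=(\nu P_t1)^{-1}\,\nu P_t=(\mu P_{s+t}1)^{-1}\,\mu P_{s+t}=\Theta^{\beta}_{s+t}(\mu)$, where $\mu P_{s+t}1>0$ by the argument just given.

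For continuity on $\calP(\calC_*)$, recall that (since $\calC_*$ is a separable metric space) the weak topology on $\calP(\calC_*)$ is the initial topology generated by the maps $\nu\mapsto\nu(H)$, $H\in\calC_b(\calC_*)$, so a map into $\calP(\calC_*)$ is continuous iff each $\mu\mapsto\Theta^{\beta}_t(\mu)(H)=\mu(P_tH)/\mu(P_t1)$ is continuous. By Proposition \ref{prop:Feller_FKPP}(ii), $P_tH$ and $P_t1$ belong to $\calC_b(\calC_*)$, so the numerator and denominator are weakly continuous in $\mu$; since $\mu(P_t1)=\Pm_{\mu}(\tau_{\fix}>t)>0$ for every $\mu\in\calP(\calC_*)$, the quotient is continuous. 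For continuity on $\tilde{\calP}$, endowed with the subspace topology from $\calP(\calC(\bfS;[0,1]))$, the same scheme works once one knows that the function $P_tH$, extended to all of $\calC(\bfS;[0,1])$ by declaring it $0$ at $\textbf{0}$ and at $\textbf{1}$ (consistent with $P_t(\textbf{0},\cdot)=P_t(\textbf{1},\cdot)=0$), lies in $\calC_b(\calC(\bfS;[0,1]))$. As $\calC_*$ is open in $\calC(\bfS;[0,1])$ with complement $\{\textbf{0},\textbf{1}\}$ and $P_tH\in\calC_b(\calC_*)$ already, the only point to check is continuity of this extension at $\textbf{0}$ and $\textbf{1}$; since $|P_tH(f)|\leq\|H\|_\infty\,\Pm_f(\tau_{\fix}>t)$, it suffices that $\Pm_f(\tau_{\fix}>t)\to 0$ as $f\to\textbf{0}$ or $f\to\textbf{1}$ uniformly. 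This is the content of Lemma \ref{L:extinct_at_t} (a uniform limit $f\to\textbf{0}$ forces $\int_{\bfS}f\,dx\to 0$, and $f\to\textbf{1}$ forces $\int_{\bfS}f\,dx\to 1$); alternatively, for the approach to $\textbf{0}$ one may argue directly from the moment duality \eqref{WFdual}: with $\bar{x}^{(n)}$ a sequence of multisets becoming dense in $\bfS$, $\Pm_f(u_t\equiv\textbf{0})=\lim_n\expE_f[D(u_t,\bar{x}^{(n)})]=\lim_n{\bf E}_{\bar{x}^{(n)}}[D(f,\bar X_t)]\geq\lim_n{\bf E}_{\bar{x}^{(n)}}[(1-\|f\|_\infty)^{N_t}]$, and the right-hand side tends to $1$ as $\|f\|_\infty\to 0$ uniformly in $n$ by the uniform tail bound on $N_t$ in Lemma \ref{L:supx_nt}, while the approach to $\textbf{1}$ follows by the reflection $u\mapsto 1-u$ (which sends the stochastic FKPP with parameter $\beta$ to the one with parameter $-\beta$ and preserves $\tau_{\fix}$). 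Granting this, $\mu\mapsto\mu(P_tH)=\int_{\calC(\bfS;[0,1])}(P_tH)\,d\mu$ and $\mu\mapsto\mu(P_t1)$ are continuous on $\calP(\calC(\bfS;[0,1]))$, hence on $\tilde{\calP}$; and $\mu(P_t1)=\int_{\calC_*}\Pm_f(\tau_{\fix}>t)\,\mu(df)>0$ for $\mu\in\tilde{\calP}$ since $\mu(\calC_*)>0$, with $\Theta^{\beta}_t(\mu)$ then a probability measure on $\calC_*$, so the quotient is continuous on $\tilde{\calP}$.

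The main obstacle is this last ingredient: the continuity of the fixation probability $f\mapsto\Pm_f(\tau_{\fix}>t)$ at the two absorbing states, i.e. that the ``defect'' of the killed semigroup near $\textbf{0}$ and $\textbf{1}$ vanishes continuously. Everything else is a formal manipulation of normalized sub-Markovian kernels together with the already-established Feller property.
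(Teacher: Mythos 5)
Your proof is correct and follows essentially the same route as the paper's: the semigroup identity comes from Chapman--Kolmogorov for $(P_t)_{t\ge 0}$, continuity on $\calP(\calC_*)$ comes from Proposition \ref{prop:Feller_FKPP}(ii) together with positivity of the denominator $\mu(P_t1)$, and continuity on $\tilde{\calP}$ comes from extending $P_tH$ by zero to $\textbf{0},\textbf{1}$ and invoking Lemma \ref{L:extinct_at_t} to show the extension is in $\calC_b(\calC(\bfS;[0,1]))$. You simply supply more of the low-level verification than the paper does (e.g.\ explicitly checking that $\Theta^{\beta}_s(\mu)$ lands in $\calP(\calC_*)$ before applying $\Theta^{\beta}_t$, and the alternative self-contained duality argument via Lemma \ref{L:supx_nt} for the vanishing of $f\mapsto\Pm_f(\tau_{\fix}>t)$ at the absorbing states), but the decomposition and the key ingredients are identical.
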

\begin{proof}[Proof of Lemma \ref{L:Map_Theta1}]
By definition, for any $H\in \calC_b(\calC_*)$,
\[
\left(\Theta^{\beta}_t(\mu)\right)(H)=\E_{\mu}[H(u_t)\,|\,\tau_{\fix}>t]=\frac{\mu(P_tH)}{\mu(P_t1)}.
\]
The continuity of $\Theta^{\beta}_t:\,\calP(\calC_*)\to \calP(\calC_*)$ follows from (i) the Feller property of the killed stochastic FKPP established in Proposition \ref{prop:Feller_FKPP} (note that $1\in \calC_b(\calC_*)$) and (ii)  the fact $\mu(P_t1)>0$ for all $\mu\in \calP(\calC_*)$.

We have from Lemma \ref{L:extinct_at_t} that $\Pm_u(\tau_{\fix}>t)\ra 0$ as $u\ra {\bf{0}},{\bf{1}}$. It follows that $P_tH\in \calC_b(\calC(\bfS;[0,1]))$ for $H\in \calC_b(\calC_{\ast})$, with $P_tH$ vanishing on $\{{\bf{0}},{\bf{1}}\}$. The proof that $\Theta^{\beta}_t:\tilde{\calP}\to \calP(\calC_*)$ is continuous is then identical to the above proof that $\Theta^{\beta}_t:\calP(\calC_*)\to \calP(\calC_*)$ is continuous.

The fact that $\Theta^{\beta}_{t+s}=\Theta^{\beta}_{t}\circ\Theta^{\beta}_{s}$ follows from  the semigroup property of $(P_t)_{t\geq 0}$.
\end{proof}

\begin{lemma}\label{lem:compactness lemma}
We fix any $\beta\in \R_{\geq 0}$, $t\in(0,1]$ and $\epsilon>0$. Then $\Theta^{\beta}_t(\calK_{\epsilon})$ is relatively compact in  both  $\calP(\calC(\mathbb{S};[0,1]))$ and in $\calP(\calC_{\ast})$.
\end{lemma}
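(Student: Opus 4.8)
The plan is to show that the family $\mathcal{T}:=\{\Theta^{\beta}_t(\mu):\mu\in \calK_{\epsilon}\}$ is tight in $\calP(\calC(\mathbb{S};[0,1]))$ and, in addition, that none of its mass can escape to the absorbing functions $\textbf{0},\textbf{1}$; relative compactness in both $\calP(\calC(\mathbb{S};[0,1]))$ and $\calP(\calC_{\ast})$ then follows from Prokhorov's theorem, since both spaces are Polish ($\calC_{\ast}$ being an open subset of the Polish space $\calC(\mathbb{S};[0,1])$). Write $\delta_{\partial}(f):=\min(\|f\|_{\infty},\|\textbf{1}-f\|_{\infty})$ for the uniform distance of $f\in\calC(\mathbb{S};[0,1])$ to $\{\textbf{0},\textbf{1}\}$; it is continuous and vanishes exactly on $\{\textbf{0},\textbf{1}\}$, and a set $K\subseteq\calC(\mathbb{S};[0,1])$ is a compact subset of $\calC_{\ast}$ precisely when $K$ is compact and $\inf_{K}\delta_{\partial}>0$. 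For Step 1, I would use the mild formulation \eqref{E:MildSol_FKPP} together with the fact that the drift $\beta u(1-u)$ and the noise coefficient $\sqrt{\gamma\,u(1-u)}$ are bounded uniformly on $[0,1]$: the heat‑smoothed initial term is Lipschitz in space with constant depending only on $t$, while a Burkholder--Davis--Gundy estimate combined with the classical bound $\int_0^t\!\int_{\mathbb{S}}|p(s,x,y)-p(s,x',y)|^2\,m(dy)\,ds\le C_t\,{\rm d}_{\mathbb{S}}(x,x')$ gives, for $p>2$, $\E_{u_0}[|u_t(x)-u_t(x')|^{p}]\le C_{p,t}\,{\rm d}_{\mathbb{S}}(x,x')^{p/2}$ with $C_{p,t}$ \emph{independent of} $u_0\in\calB(\mathbb{S};[0,1])$. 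By the Kolmogorov--Chentsov criterion this produces, for every $\eta>0$, a compact (equicontinuous) set $\widetilde{K}_{\eta}\subseteq\calC(\mathbb{S};[0,1])$ with $\Pm_{\nu}(u_t\in\widetilde{K}_{\eta})\ge 1-\eta$ for all $\nu\in\calP(\calB(\mathbb{S};[0,1]))$. Since \eqref{E:LowerBound_tau_generalmu} gives $\Pm_{\mu}(\tau_{\fix}>t)\ge c_3\mu(h^0)\ge c_3\epsilon$ for $\mu\in\calK_{\epsilon}$, with $c_3=C^{\beta}_t\lambda_0^t/\|h^0\|_{\infty}>0$, we get $\Theta^{\beta}_t(\mu)(\widetilde{K}_{\eta})=\Pm_{\mu}(u_t\in\widetilde{K}_{\eta}\mid\tau_{\fix}>t)\ge 1-\eta/(c_3\epsilon)$ uniformly over $\mu\in\calK_{\epsilon}$, which is the tightness, hence relative compactness, of $\mathcal{T}$ in $\calP(\calC(\mathbb{S};[0,1]))$.

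For Step 2 I must show that for every $\eta>0$ there is $c>0$ with $\sup_{\mu\in\calK_{\epsilon}}\Theta^{\beta}_t(\mu)(\{\delta_{\partial}<c\})\le\eta$. I would introduce continuous cutoffs $\psi_c(f):=\min\big(1,(2-c^{-1}\delta_{\partial}(f))_{+}\big)\in\calC_b(\calC(\mathbb{S};[0,1]);[0,1])$, so that $\Ind_{\{\delta_{\partial}<c\}}\le\psi_c$, $\psi_c$ is non‑increasing in $c$, and $\psi_c\downarrow\Ind_{\{\textbf{0},\textbf{1}\}}$ pointwise as $c\downarrow0$. Using the identity $\Theta^{\beta}_t=\Theta^{\beta}_{t/2}\circ\Theta^{\beta}_{t/2}$ from Lemma \ref{L:Map_Theta1} and the Markov property at time $t/2$, with $\nu:=\Theta^{\beta}_{t/2}(\mu)$ one has
\[
\Theta^{\beta}_t(\mu)(\psi_c)=\frac{\nu\big(P^{\beta}_{t/2}\psi_c\big)}{\nu\big(P^{\beta}_{t/2}1\big)}.
\]
By Proposition \ref{prop:Feller_FKPP} and Lemma \ref{L:extinct_at_t}, $P^{\beta}_{t/2}\psi_c\in\calC_b(\calC(\mathbb{S};[0,1]))$ and vanishes at $\textbf{0},\textbf{1}$; moreover it is non‑increasing in $c$ and $P^{\beta}_{t/2}\psi_c(f)\downarrow\Pm_f(u_{t/2}\in\{\textbf{0},\textbf{1}\},\,\tau_{\fix}>t/2)=0$, since $u_{t/2}\in\{\textbf{0},\textbf{1}\}$ forces $\tau_{\fix}\le t/2$. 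Hence, by Dini's theorem, $\sup_{\widetilde{K}}P^{\beta}_{t/2}\psi_c\to0$ as $c\to0$ on every compact $\widetilde{K}\subseteq\calC(\mathbb{S};[0,1])$. Since $t/2\in(0,1]$, Lemma \ref{lem:properties of fixed point equation for stochastic FKPP}(i) gives $\nu(h^0)\ge c^{0}_{\beta}\mu(h^0)\ge c^{0}_{\beta}\epsilon$, so $\nu\in\calK_{c^{0}_{\beta}\epsilon}$, and by \eqref{E:LowerBound_tau_generalmu} the denominator $\nu(P^{\beta}_{t/2}1)=\Pm_{\nu}(\tau_{\fix}>t/2)$ is bounded below by a constant $\delta'=C^{\beta}_{t/2}\lambda_0^{t/2}c^{0}_{\beta}\epsilon/\|h^0\|_{\infty}>0$ uniform in $\mu\in\calK_{\epsilon}$. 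Applying Step 1 at time $t/2$, the family $\{\nu=\Theta^{\beta}_{t/2}(\mu):\mu\in\calK_{\epsilon}\}$ is tight in $\calP(\calC(\mathbb{S};[0,1]))$, so given $\eta$ I pick a compact $\widetilde{K}$ with $\sup_{\mu}\nu(\widetilde{K}^{c})\le\delta'\eta/2$ and then $c$ small enough that $\sup_{\widetilde{K}}P^{\beta}_{t/2}\psi_c\le\delta'\eta/2$, which yields $\Theta^{\beta}_t(\mu)(\{\delta_{\partial}<c\})\le\Theta^{\beta}_t(\mu)(\psi_c)\le\eta$ for all $\mu\in\calK_{\epsilon}$.

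To finish, for a given $\eta>0$ take $\widetilde{K}_{\eta/2}$ from Step 1 and $c$ from Step 2, so that $K:=\widetilde{K}_{\eta/2}\cap\{\delta_{\partial}\ge c\}$ is a compact subset of $\calC_{\ast}$ with $\inf_{\mu\in\calK_{\epsilon}}\Theta^{\beta}_t(\mu)(K)\ge1-\eta$; Prokhorov's theorem then gives relative compactness of $\mathcal{T}$ in $\calP(\calC_{\ast})$, while $\widetilde{K}_{\eta}$ alone handles $\calP(\calC(\mathbb{S};[0,1]))$. I expect the routine part to be Step 1 (the uniform‑in‑$u_0$ space regularity of the mild solution at a fixed positive time). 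The genuine obstacle is Step 2: a priori nothing in Step 1 prevents the conditioned laws $\Theta^{\beta}_t(\mu)$ from concentrating near $\textbf{0},\textbf{1}$ — exactly where $h^0$ and $\Pm_{\cdot}(\tau_{\fix}>t)$ both degenerate, which makes a direct estimate circular. The device that makes Step 2 work is the half‑step decomposition $\Theta^{\beta}_t=\Theta^{\beta}_{t/2}\circ\Theta^{\beta}_{t/2}$, which trades the delicate event at time $t$ for the null event $\{u_{t/2}\in\{\textbf{0},\textbf{1}\},\,\tau_{\fix}>t/2\}$ seen through the Feller kernel $P^{\beta}_{t/2}$, reducing the required uniformity to Dini's theorem on the already‑tight family produced by Step 1, with the denominator controlled by Lemma \ref{lem:properties of fixed point equation for stochastic FKPP}(i) and \eqref{E:LowerBound_tau_generalmu}.
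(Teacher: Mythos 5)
Your proof is correct, and its skeleton is the same as the paper's: establish tightness of $\Theta^{\beta}_t(\calK_{\epsilon})$ in $\calP(\calC(\mathbb{S};[0,1]))$ via the uniform-in-$u_0$ modulus-of-continuity estimate (your Kolmogorov--Chentsov argument is precisely the paper's Lemma \ref{L:Moment_hatu} and Lemma \ref{L:uniform continuity}) together with the lower bound \eqref{E:LowerBound_tau_generalmu} on the survival probability, and then use the half-step identity $\Theta^{\beta}_t=\Theta^{\beta}_{t/2}\circ\Theta^{\beta}_{t/2}$ to push the conditioned mass away from $\{\textbf{0},\textbf{1}\}$. Where the two arguments differ is only in the packaging of the second step. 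The paper observes that $\cl\bigl(\Theta^{\beta}_{t/2}(\calK_{\epsilon})\bigr)$ is a compact subset of $\widetilde{\calP}$ (because it sits inside $\calK_{c^0_{\beta}\epsilon}$) and then applies the continuity of $\Theta^{\beta}_{t/2}:\widetilde{\calP}\to\calP(\calC_{\ast})$ from Lemma \ref{L:Map_Theta1}, obtaining compactness of the image in $\calP(\calC_{\ast})$ in one stroke; you instead unpack the exact content of that continuity lemma by hand, introducing the cutoffs $\psi_c$, using the Feller property together with Lemma \ref{L:extinct_at_t} to show $P^{\beta}_{t/2}\psi_c$ is continuous and vanishes at $\{\textbf{0},\textbf{1}\}$, and invoking Dini's theorem on the compact set supplied by Step 1. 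The two versions rely on identical ingredients; the paper's is shorter because it has already isolated Lemma \ref{L:Map_Theta1}, whereas yours is more self-contained and makes the quantitative mechanism (why the conditioned law cannot accumulate near the absorbing states) visibly explicit, which is a reasonable trade-off.
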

\begin{proof}[Proof of Lemma \ref{lem:compactness lemma}]
We firstly show that $\Theta^{\beta}_t(\calK_{\epsilon})$ is tight in $\calP(\calC(\mathbb{S};[0,1]))$. That is,  for all $\zeta>0$, there exists a compact subset $\Gamma_{\zeta}\subset \calC(\mathbb{S};[0,1])$ such that
\begin{align}\label{TightInC}
\inf_{\mu\in\calP(\calC_*):\,\mu(h^0)\geq \epsilon}\P_{\mu}\left(u^{\beta}_t\in \Gamma_{\zeta}\,\big|\,\tau^{\beta}_{\fix}>t\right) \,>&\,1-\zeta.
\end{align}
Since $\calC(\mathbb{S};[0,1])$ is a Polish space under the  uniform topology and since $\|u_t\|_{\infty}\leq 1$, it suffices (see \cite[Theorem 7.3]{billingsley2013convergence}) to show that for any $\eta,\eta'>0$, there exists $\delta\in(0,1)$ such that 
\begin{align}\label{E:Tight_condition_1}
\sup_{\mu\in\calP(\calC_*):\,\mu(h^0)\geq \epsilon}\P_{\mu}\left( 
\omega(u^{\beta}_t;\delta)\,\geq \eta\,\big|\,\tau^{\beta}_{\fix}>t\right) \,\leq &\,\eta',
\end{align}
whereby $\omega(f;\delta):= \sup_{x,y\in\mathbb{S}:\,|x-y|\leq \delta}|f(x)-f(y)|$. In Lemma \ref{L:uniform continuity}, we show that \eqref{E:Tight_condition_1} holds  if we get rid of the conditioning. From this, \eqref{E:Tight_condition_1} itself holds because
\begin{equation}\label{E:LowerBound_tau}
\inf_{\mu\in\calP(\calC_*):\,\mu(h^0)\geq \epsilon}\P_{\mu}\left( 
\tau^{\beta}_{\fix}>t\right)
\stackrel{ (\ref{E:LowerBound_tau_generalmu})}{>}
0.
\end{equation}
We have now established that $\Theta^{\beta}_t(\calK_{\epsilon})$ is tight in $\calP(\calC(\bfS;[0,1]))$. 

It follows that the closure of $\Theta^{\beta}_t(\calK_{\epsilon})$ in $\calP(\calC(\bfS;[0,1]))$, $\cl({\Theta^{\beta}_t(\calK_{\epsilon})})$, is a compact subset of $\calP(\calC(\bfS;[0,1]))$. Since also  $\Theta^{\beta}_t(\calK_{\epsilon})\subseteq \calK_{c^{\beta}_0\epsilon}$ and $\cl({\calK_{c^{\beta}_0\epsilon}})\subseteq \tilde{\calP}$, $\cl({\Theta^{\beta}_t(\calK_{\epsilon})})$ is a compact subset of $\tilde{\calP}$. We established in Lemma \ref{L:Map_Theta1} that $\Theta^{\beta}_t:\tilde{\calP}\ra \calP(\calC_{\ast})$ is continuous. Thus $\Theta^{\beta}_t(\cl({\Theta^{\beta}_t(\calK_{\epsilon})}))$ is a compact subset of $\calP(\calC_{\ast})$, hence $\Theta^{\beta}_{2t}(\calK_{\epsilon})\subseteq \Theta^{\beta}_t(\cl({\Theta^{\beta}_t(\calK_{\epsilon})}))$ is a relatively compact subset of $\calP(\calC_{\ast})$. Since $t>0$ is arbitrary, we are done.
\end{proof}

\smallskip

We now return to the proof of Proposition \ref{prop:tightness proposition}.
We fix $t=2^{-k}>0$ for the time being, for arbitrary $k\in \mathbb{N}$. By Lemmas \ref{L:Map_Theta2} and \ref{lem:properties of fixed point equation for stochastic FKPP}(i),
if $0<\epsilon<\epsilon_t^{\beta} \wedge \epsilon_0$, then $\Theta^{\beta}_t(\calK_{\epsilon})\subset \calK_{\epsilon}$ with $\calK_{\epsilon}$ a non-empty, closed, convex subset of $\calP(\calC_{\ast})$. We henceforth fix such an $\epsilon$. By Lemma \ref{lem:compactness lemma},  $\Theta^{\beta}_t(\calK_{\epsilon })$ is  relatively compact in $\calK_{\epsilon }$.
Hence, using Lemma \ref{L:Map_Theta1}, it follows from the Schauder fixed-point theorem that $\Pi^{\beta}_k$ is non-empty and compact for all $k\geq 0$ large enough, where $\Pi^{\beta}_k$ is defined  to be the set of fixed points of the map
\[
\Theta^{\beta}_{2^{-k}}:\calK_{\epsilon }\rightarrow \calK_{\epsilon }.
\]
Since $\epsilon<\epsilon_t^{\beta}$, if $\mu\notin \calK_{\epsilon}$ then $(\Theta^{\beta}_t(\mu))(h^0)\geq 2\mu(h^0)$ by Part $(ii)$ of Lemma \ref{lem:properties of fixed point equation for stochastic FKPP}, so that $\Theta^{\beta}_t(\mu)\neq \mu$. Therefore $\Pi^{\beta}_k$ is the set of fixed points of the map
\[
\Theta^{\beta}_{2^{-k}}:\calP(\calC_{\ast})\rightarrow \calP(\calC_{\ast}),
\]
so in particular does not depend upon the choice of $\epsilon<\epsilon_t^{\beta}\wedge \epsilon_0$.

It therefore follows that $(\Pi^{\beta}_k)_{k\geq 0}$ is a descending sequence of non-empty compact sets, so have non-empty intersection, which we define to be $\Pi^{\beta}:=\cap_k\Pi^{\beta}_k$. 

We now fix an arbitrary $\pi^{\beta}\in \Pi^{\beta}$. It follows that $\Theta^{\beta}_t(\pi^{\beta})=\pi^{\beta}$ for all dyadic rational $t\geq 0$, hence for every $t\in \Rm_{\geq 0}$ by continuity of the map $t\mapsto \Theta^{\beta}_t(\pi^{\beta})$ from $\R_{\geq 0}$ to $\calP(\calC_{*})$. We have therefore established the existence of a QSD, $\pi^{\beta}$, for the stochastic FKPP, for all $\beta\in \R_{\geq 0}$.

We now fix an arbitrary $\mu\in\calP(\calC_*)$, and show that $\{\Theta_t(\mu)\}_{t\geq 1}$  is tight in $\calP(\calC_*)$. Using Part (ii) of Lemma \ref{lem:properties of fixed point equation for stochastic FKPP}, we observe by induction that either $\Theta_n(\mu)(h^0)\geq 2^n\mu(h^0)$ for all $n\in \mathbb{N}$, or there exists $n=n(\mu)<\infty$ such that $\Theta_n(\mu)(h^0)\in \calK_{\epsilon^{\beta}_1}$. Since $h^0$ is bounded, we cannot have $\Theta_n(\mu)(h^0)\ra \infty$ as $n\ra\infty$, so there must exist $n=n(\mu)$ such that $\Theta_n(\mu)\in \calK_{\epsilon^{\beta}_1}$. It follows from Lemma \ref{lem:properties of fixed point equation for stochastic FKPP} that there exists $\epsilon=\epsilon(\mu)>0$ such that $\Theta_t(\mu)\in \calK_{\epsilon}$ for all $t\geq n(\mu)$. It also follows from Lemma \ref{lem:properties of fixed point equation for stochastic FKPP} that, reducing $\epsilon(\mu)>0$ if necessary, we have $\Theta_t(\mu)\in \calK_{\epsilon}$ for all $t\leq n(\mu)$. Therefore there exists $\epsilon=\epsilon(\mu)>0$ such that $\Theta_t(\mu)\in \calK_{\epsilon}$ for all $t\geq 0$. Therefore $\Theta_t(\mu)\in \Theta_1(\calK_{\epsilon})$ for all $t\geq 1$, which is a precompact subset of $\calP(\calC_{\ast})$ by Lemma \ref{lem:compactness lemma}.

\qed

\subsubsection*{Proof of Proposition \ref{prop:convergence to QSD for 2-type BCBM}}

We fix an arbitrary $\beta\in\R_{\ge 0}$ and a right eigenfunction $\phi=\phi^{\beta}\in \calC_b(\chi;\Rm_{>0})$ of the killed $2$-type BCBM, and we let $\lambda:=\Lambda(\phi)>0$ be the corresponding eigenvalue. Then $Q_t\phi=\lambda^t\phi$ for all $t\in\R_{\ge 0}$. Note that this eigenpair $(\phi, \lambda)$ exists by
Propositions \ref{prop:duality relationship for quasi-stationarity} and \ref{prop:tightness proposition}.

Following  Doob's transformation, we define the following Markovian transition kernel $\overline{Q}$ on $\chi$, 
\begin{equation}
\overline{Q}_t(z,dz'):=\frac{\phi(z')}{\phi(z)}\,\lambda^{-t}Q_t(z,dz'),\quad t\in \R_{\ge 0},\;z\in \chi,
\end{equation}
and also the corresponding semigroup operators $\overline{Q}_tf=\lambda^{-t}\frac{Q_t(\phi f)}{\phi}=\frac{Q_t(\phi f)}{Q_t(\phi)}$ for all $f\in \mathcal{B}_b(\chi)$. In particular, $\overline{Q}_t 1=1$ for all $t\in\R_{\ge 0}$. The probabilistic interpretation is that $\{\overline{Q}_t\}_{t\ge 0}$ is the transition kernel of the $2$-type BCBM  conditioned on never being killed  \cite{doob1957conditional, chetrite2015nonequilibrium}. In the literature this conditioned process is typically referred to as the ``$Q$-process'', see for instance \cite{champagnat2016exponential}. We will apply Harris' ergodic theorem  to the discrete-time chain$(\overline{Q}_n)_{n\in \Zm_{\geq 0}}$. We organize our proof into three steps below.

{\bf Step 1: Existence and uniqueness of the QSD for the killed 2-type BCBM. }
We suppose for the time being that Assumptions 1-2 of \cite[Theorem 1.2]{Hairer2011} hold for the discrete-time Markov chain $(\overline{Q}_n)_{n\in \Zm_{\geq 0}}$ (we will verify them at the end of this proof). Then 
there exists a unique stationary distribution for $\overline{Q}_1$, which we denote by
$\overline{\mu}\in \calP(\chi)$. For any other $t>0$, since 
\[
\mu \overline{Q}_t\overline{Q}_1=\mu \overline{Q}_1\overline{Q}_t=\mu \overline{Q}_t, 
\]
we see that $\mu \overline{Q}_t$ must also be a stationary distribution for $\overline{Q}_1$. By the uniqueness of stationary distributions for $\overline{Q}_1$, it follows that $\mu\overline{Q}_t=\mu$, so that $\mu$ is a stationary distribution for $\overline{Q}_t)_{t\geq 0}$ (in fact, the unique one).

We now write $t=n+h$ for $0\leq h<1$ and an integer $n$. Then for any initial condition $\nu$ we can write
\[
\nu \overline{Q}_t-\mu = \nu \overline{Q}_n(\overline{Q}_h-1)+\nu \overline{Q}_n - \mu = (\nu \overline{Q}_n-\mu)(\overline{Q}_h-1) + (\nu \overline{Q}_n-\mu).
\]
It therefore follows from \cite[Theorem 1.2]{Hairer2011} that
\begin{equation}\label{eq:convergence to stationary for Q process}
\lvert\lvert \nu \bar Q_t-\mu\rvert\rvert\leq 3\lvert\lvert \nu \bar Q_{\lfloor t\rfloor}-\mu\rvert\rvert\ra 0\quad\text{as}\quad t\ra\infty.
\end{equation}

We now define $\varphi\in \mathcal{P}(\chi)$ to be 
\begin{equation}\label{Def:varphi}
\varphi(dz)=
\frac{1}{C_R}\frac{\overline{\mu}(dz)}{\phi(z)}, \quad \text{where}\quad C_R=\int \frac{1}{\phi(z)}\overline{\mu}(dz).    
\end{equation}
Then  $\varphi$ is a QSD for the 2-type BCBM because, for all $f\in \mathcal{B}_b(\chi)$ and $t\in\R_{\ge 0}$,
\begin{align}
\varphi Q_t (\phi f)=  \varphi (Q_t(\phi f))=& \frac{1}{C_R}\int \frac{Q_t(\phi f)}{\phi}\,\overline{\mu}(dz)\\
=& \frac{\lambda^t}{C_R} \int  \overline{Q}_t f(z)\,\overline{\mu}(dz) = \frac{\lambda^t}{C_R}\int f(z)\,\overline{\mu}(dz) \qquad \text{as }\overline{\mu} \text{ is a stationary distribution.}\\
=&\lambda^t\,\varphi(\phi f).
\end{align}
Conversely, if $\widehat{\varphi}\in \mathcal{P}(\chi)$ is a QSD of the 2-type BCBM, then
$\frac{\phi(z)\,\widehat{\varphi}(dz)}{\widehat{\varphi}(\phi)}\in \calP(\chi)$ is a stationary distribution for $(\overline{Q}_t)_{t\in \R_{\ge 0}}$ because for all $f\in \calB_b(\chi)$ we have
\[\langle \, \phi\,\widehat{\varphi},\overline{Q}_tf\rangle = \lambda^{-t}\int_{\chi}\frac{Q_t(\phi f)}{\phi}\,\phi\,d\widehat{\varphi} = \lambda^{-t}\widehat{\varphi} \left( Q_t(\phi f)\right)=\lambda^{-t} \left(\widehat{\varphi} Q_t\right)(\phi f)=\lambda^{-t}\lambda^{t}\widehat{\varphi} (\phi f)= \langle \phi\,\widehat{\varphi},\, f\rangle\]
for all $t \in \R_{\ge 0}$, where we used the fact that $\widehat{\varphi}$ is a QSD in the penultimate equality. Hence by the uniqueness of $\overline{\mu}$ (\cite[Theorem 1.2]{Hairer2011}) we have $\overline{\mu} = \frac{\phi(z)\,\widehat{\varphi}(dz)}{\widehat{\varphi}(\phi)}$. This implies that $\widehat{\varphi}=\varphi\in\mathcal{P}(\chi)$. Furthermore, from the above we see that  $\Lambda(\varphi)=\Lambda(\phi)=\lambda$. Hence \eqref{E:EqualEigenvalues} holds.

\smallskip

\noindent
{\bf Step 2: Convergence \eqref{eq:Perron-Frobenius for dual} for each $\beta\in\R_{\ge 0}$. }  
Step 1 above and \eqref{eq:convergence to stationary for Q process} imply that  $\frac{\phi\,d\varphi}{\varphi(\phi)}\in \calP(\chi)$ is the unique stationary distribution for $(\overline{Q}_t)_{t\in\R_{\ge 0}}$, and
\[
\left\| \nu\overline{Q}_t(\cdot)-\frac{\phi\,d\varphi}{\varphi(\phi)} \right\|_{\TV}\ra 0\quad \text{as}\quad t\ra\infty \qquad \text{for all}\qquad \nu\in \calP(\chi).
\]
Since $\phi$ is bounded and everywhere strictly positive, for any $\mu\in \calP(\chi)$ we may define the probability measure 
\[
\nu(dz):=\frac{\phi(z)\mu(dz)}{\mu(\phi)}.
\]
It follows that 
\[
\nu \overline{Q}_t(dz')=\lambda^{-t}\frac{\phi(z')}{\mu(\phi)}\mu Q_t(dz').
\]

To establish \eqref{eq:Perron-Frobenius for dual}, we now define the compact sets $K_m:=\{z\in\chi:\,N(z)\leq m\}=\cup_{k=2}^m \chi_k$ for $1\leq m<\infty$, where $\chi_k:=\{z\in\chi:\,N(z)=k\}$. Since $\phi$ is bounded away from $0$ on compacts, it follows that for all $\mu\in \calP(\chi)$ we have
\begin{equation}\label{eq:total variation convergence on compacts for BCBM}
\lvert\lvert \lambda^{-t}\mu Q_t(\cdot)_{\lvert_{K_m}}-\frac{\mu(\phi)}{\varphi(\phi)}\varphi (\cdot)_{\lvert_{K_m}}\rvert\rvert_{\TV}\ra 0\quad \text{as}\quad t\ra\infty,\quad\text{for every}\quad m<\infty.
\end{equation}
For $\mu\in \calP(\chi)$, $0\leq t<\infty$ and $m<\infty$ we define
\[
d^{\mu,m}_t:=\lambda^{-t}\mu Q_t\Ind_{K_m^c}.
\]
It follows from Lemma \ref{L:supx_nt} that, for all $\epsilon>0$, there exists $m\in \mathbb{N}$ such that for all $\mu\in \chi$,
\[
d^{\mu,m}_{t+1}\leq \frac{\epsilon}{\lambda}\lambda^{-t}\mu Q_t\Ind_{\chi}\leq \frac{\epsilon}{\lambda}\big[\lambda^{-t}\mu Q_t\Ind_{K_m}+d^{\mu,m}_{t}\big].
\]
It then follows from \eqref{eq:total variation convergence on compacts for BCBM} that
\[
\limsup_{m\ra\infty}\limsup_{t\ra\infty} d_t^{\mu,m}=0.
\]
Combining this with \eqref{eq:total variation convergence on compacts for BCBM}, we have established \eqref{eq:Perron-Frobenius for dual}. 

\smallskip

\noindent
{\bf Step 3: Checking the assumptions of Harris' ergodic theorem. }Finally, we check Assumptions 1-2 of \cite[Theorem 1.2]{Hairer2011} for the discrete-time Markov chain $(\overline{Q}_{n})_{n\in \Z_{\ge 0}}$, for $h\in (0,\infty)$ is arbitrarily fixed, to complete the proof. Without loss of generality, we let $h=1$.
For $z\in \chi$ we define $N(z)$ to be the total number of particles in the configuration $z$, that is
\[
N(z):=n+m\quad\text{whereby}\quad z=((x_1,\ldots,x_n),(y_1,\ldots,y_m))\in \chi.
\]
From Gronwall's inequality applied to \eqref{eq:difference between number with and without branching}, we see that $\sup_{\bar{x}\in \mathbb{S}^n}{\bf E}_{\bar{x}}\left[N^{(\beta)}_t\right]\leq n(1+\beta e^{\beta t})$ for all $n\in\mathbb{N}$ and $t\in\R_{\ge 0}$, where $N^{(\beta)}_t=N(Z_t)$ is the number of particles of the 2-type BCBM at time $t$. Hence, taking $\epsilon=\frac{\lambda}{2}$ in \eqref{supx_Expectednt} in Lemma \ref{L:supx_nt}, there exists a constant $n' =n'(\lambda)$ such that
\[
Q_1N(z) \leq {\bf E}_{z}[N(Z_1)] \leq  \frac{\lambda}{2}N(z) +  n'(1+\beta e^{\beta})\,\Ind_{\{N(z)\leq n'\}} \quad \text{for}\quad z\in\chi.
\]
Since $\phi=\phi^{\beta}$ is strictly positive and continuous, We can define the Lyapunov function $V$ and the constant $C<\infty$ by 
\begin{equation}\label{Def:V and C}
V:=\frac{N}{\phi},\quad C:= \frac{n'(1+\beta e^{\beta})}{\lambda}\,\frac{1}{\inf\{\phi(z):\,N(z)\leq n'\}}
\end{equation}
which, by the above inequality, satisfy
\begin{align}
\overline{Q}_1V(z) =\frac{Q_1N (z)}{\lambda\,\phi(z)} \leq \frac{1}{2}V(z) + C.
\end{align}
Therefore $\overline{Q}_1$ satisfies \cite[Assumption 1]{Hairer2011}. 


Next, we  check that $\overline{Q}_1$ also satisfies the Dobrushin condition, \cite[Assumption 2]{Hairer2011}. That is, we check that
there exist a constant $a_1\in(0,1)$ and $\nu\in \calP(\chi)$ such that
\begin{equation}\label{A:assumption2inHM11}
\inf_{\{z\in\chi:\,V(z)\leq C_1\}}\overline{Q}_1(z,\cdot)\geq a_1\,\nu (\cdot)
\end{equation}
for some $C_1> 4C$, where $C$ is the constant in \eqref{Def:V and C}. 

For this, we note that $\{z\in\chi:\,V(z)\leq C_1\}\subset K_M:=\cup_{k\leq M}\chi_k$, where $M$ is any integer greater than $C_1 \|\phi\|_{\infty}$ and $\chi_k:=\{z\in\chi:\,N(z)=k\}$. It is clear that $\inf_{z\in K_M}\overline{Q}_{\frac{1}{2}}(z,\chi_2)>0$. Moreover, it follows from the parabolic Harnack inequality and the fact that $\phi$ is bounded and bounded away from $0$ that there exists $c_0>0$ and an open set $U\subseteq \chi_2$  such that $\overline{Q}_{\frac{1}{2}}(z,dz')\geq c_0\Leb_{\lvert_U}(dz')$ for all $z\in \chi_2$, where $\Leb_{\lvert_U}$ is Lebesgue measure restricted to $U$. It follows that
\[
\overline{Q}_1(z,dz')\geq \inf_{z''\in K_M}\overline{Q}_{\frac{1}{2}}(z'',\chi_2)c_0\Leb_{\lvert_U}(dz')\quad\text{for all}\quad z\in K_M,
\]
whence we obtain \eqref{A:assumption2inHM11} and thus $\overline{Q}_1$ satisfies \cite[Assumption 2]{Hairer2011}. 

The proof of Proposition \ref{prop:convergence to QSD for 2-type BCBM} is complete.
\qed

\subsubsection*{Proof of Proposition \ref{prop:uniqueness QSD for FKPP}}

We fix arbitrary $\mu\in\calP(\calC_*)$. Using the simple fact $(\mu P_t)(\calE^{z})=\mu(P_t \calE^z)$, we obtain
 \begin{align}
(\mu P_t)(\calE^{z})
=\mu(P_t \calE^z)
=&\int_{\calC_*} (P_t\calE^{z})(f)\,\mu(df)
\stackrel{ (\ref{eq:moment duality relationship for 2-type results})}{=}
 \,\int_{\calC_*}\left(Q_t\calE^{\bullet}(f)\right)(z)\,\mu(df) \notag\\
=&\,\int_{\chi} \mu(\calE^{w})\,Q_t(z, dw) \,=\,Q_t\left(\mu(\calE^{\bullet})\right)(z). \label{E:moment duality_2}
 \end{align}

It therefore follows 
from \eqref{E:moment duality_2} and \eqref{eq:Perron-Frobenius for dual} 
that for all $z\in \chi$, as $t\to\infty$,
\begin{align}\label{eq:convergence of moments of FKPP ic mu}
\lambda^{-t}(\mu P^{\beta}_t)(\calE^{z}) 
\stackrel{(\ref{E:moment duality_2})}{=}
\lambda^{-t}Q^{\beta}_t\left(\mu(\calE^{\bullet})\right)(z)
\stackrel{(\ref{eq:Perron-Frobenius for dual})}{\rightarrow}
 \phi^{\beta}(z)\,\varphi^{\beta}\left(\mu(\calE^{\bullet})\right)\,=\,\pi^{\beta}(\calE^{z})\,\mu(h^{\beta}) \in (0,\infty),
\end{align}
where the last equality follows from the facts $\phi^{\beta}(z)=\pi^{\beta}(\calE^{z})$ and $\varphi^{\beta}\left(\mu(\calE^{\bullet})\right)=\mu(h^{\beta})$ according to \eqref{eq:formula for right efn of FKPP in terms of QSD of dual results} and 
\eqref{eq:formula for right efn of dual in terms of QSD of FKPP results} respectively.
Since both $\phi^{\beta}$ and $h^{\beta}$ are everywhere strictly positive (by Proposition \ref{prop:duality relationship for quasi-stationarity}), we see that the limit in \eqref{eq:convergence of moments of FKPP ic mu} must be strictly positive for any $z\in \chi$ and $\beta\in\R_{\geq 0}$. 

From \eqref{eq:convergence of moments of FKPP ic mu} and Lemma \ref{lem:moments determine measures},
we see that the QSD for the stochastic FKPP is uniquely determined by \eqref{QSD_FKPP_char} for each $\beta\geq 0$.

\qed

\subsection{Proofs for Section \ref{S:Insights}}\label{S:InsightsProofs}

We keep in mind that in all the proofs for Section \ref{S:Insights}, $\beta=0$ whilst $\alpha,\gamma\in \Rm_{>0}$ are fixed and arbitrary.

For the proof below we identify $\mathbb{S}\simeq \Rm/\Zm$ and define the operations $\pm$ in the standard manner.

\subsubsection*{Proof of Theorem \ref{thm:explicit expressions when neutral}}
We have established in Propositions \ref{prop:existence of QSD for FKPP beta=0} and \ref{prop:convergence to QSD for 2-type BCBM} that the  $2$-type CBM has a unique QSD $\varphi^0$ which is supported on $\bfS\times \bfS$, and that $\varphi^0_{\lvert_{\bfS\times \bfS\setminus \Gamma}}$ has a $\calC(\bfS\times \bfS\setminus \Gamma;\Rm_{>0})$ density on $\bfS\times \bfS\setminus \Gamma$. In fact, in the proof of Proposition \ref{prop:existence of QSD for FKPP beta=0} we establish more than this, we establish that $\varphi^0_{\lvert_{\bfS\times \bfS\setminus \Gamma}}$ has a $C^{\infty}(\bfS\times \bfS\setminus \Gamma;\Rm_{>0})$ density which is a classical solution of $\frac{\alpha}{2}\Delta\varphi^0=\kappa_0\varphi^0$ (abusing notation by writing $\varphi^0$ for its density), whereby $\kappa_0:=-\ln(\lambda_0)$. Moreover since the transition kernel of $(G_t,R_t)_{t<\tau_{\partial}}$ is dominated by that of Brownian motion on $\bfS\times \bfS$ without killing, it follows that $\varphi^0$ has a bounded density everywhere, so $\varphi$ has a density on $\bfS\times \bfS$ belonging to $\calB_b(\bfS\times \bfS)\cap \calC_b(\bfS\times \bfS\setminus \Gamma)$. Since $\varphi^0(\Gamma)=0$, we may define $\varphi^0$ to be a constant $c$ on $\Gamma$. We shall choose suitable $c$ below, and we denote this density as $\rho^0$.

Next, we observe that  there exists a function $f:\bfS\ra \Rm$ such that $\rho^0(x,y)=f(y-x)$ for $x,y\in \bfS$.
This is because $\varphi^0$ is the unique QSD, and $\rho^0$ is invariant under reflection about the diagonal $\Gamma$ and under shifting along $\Gamma$ (by symmetry), in the sense that
\begin{equation}\label{eq:symmetries of QSD}
\rho^0(x,y)=\rho^0(y,x)\quad \text{and}\qquad \rho^0(x+z,y+z)=\rho^0(x,y) \quad \text{for all}\quad x,y,z\in \bfS.
\end{equation}
This function $f$ is
continuous on $\bfS\setminus \{0\}$, equal to $c$ at $0$, and satisfies $f(x)=f(1-x)$ for $x\in \bfS\setminus \{0\} \simeq(0,\,1)$.

It remains to find this function $f$.
Recall that $\rho^0$ is a classical solution on $\bfS\times \bfS\setminus \Gamma$ of $\frac{\alpha}{2}\Delta\rho^0=-\kappa_0 \rho$, where $\kappa_0:=-\ln(\lambda_0)$. It follows that $f$ is a non-negative solution of $\Delta f=-\frac{\kappa_0}{\alpha} f$ on $\bfS\setminus \{0\}$. Upon solving this equation, we obtain
\[
f(u)=A\sin(\pi(a+(1-2a)u)),\quad u\in \bfS\setminus \{0\}
\]
for some constant $a\in [0,\frac{1}{2}]$, where $A$ is the normalising constant
\[
A=\frac{\pi (1-2a)}{2\cos(\pi a)},
\]
so that $\int_0^1f(u)du=1$ (as QSD must have mass $1$).
Therefore, we choose $c:=A\sin(\pi a)$, so that  both $\rho^0$ and $f$ are  continuous everywhere it their respective domains.
It remains to find the constant $a$.


Observe that the process $(G_t-R_t)_{0\leq t<\tau_{\partial}}$ is a rate $2\alpha$ Brownian motion $B_t$ on the circle $\mathbb{S}\simeq [0,1)$, 
killed at rate $\frac{\gamma }{2\alpha}dL^{0}_t$ 
(i.e. at rate $\frac{\gamma}{2\alpha}$ according to the local time $L^0_t$ at $0\in\mathbb{S}$). Then for all test functions $g\in \calC_b(\bfS)\cap C^{\infty}(\bfS\setminus \{0\})$ such that $g_+(0)=-g_-(0)$, we have that
\[
\begin{split}
g(B_t)\Ind(\tau_{\partial}>t)-g(B_0)-\int_0^t\alpha \Delta g(B_s)\Ind(\tau_{\partial}>s)ds
+\int_0^t\Ind(\tau_{\partial}>s)\Big[\frac{\gamma}{2\alpha}g(0)-g'(0_+)\Big] dL^0_s
\end{split}
\]
is a martingale. By specifying also that $g'(0_+)=\frac{\gamma}{2\alpha}g(0)>0$ and taking $(G_0,R_0)\sim \varphi_0$ (so that $B_0\sim f$), we obtain that for all $t\geq 0$ and all such test functions $g$,
\[
e^{-\kappa_0 t}\langle f,g\rangle -\langle f,g\rangle =\int_0^te^{-\kappa_0s}\alpha \langle f,\Delta g\rangle ds.
\]
It follows that $\alpha \langle \Delta f,g\rangle=-\kappa_0\langle f,g\rangle=\alpha \langle f,\Delta g\rangle$ for all such test functions $g$. It follows from integration by parts that for all such test functions $g$ we have that
\[
g(0)[f(1_-)-f(0_+)]=f(0)[g'(1_-)-g'(0_+)]=\frac{-\gamma}{\alpha} g(0)f(0).
\]
Therefore $f'(0_+)-f'(1_-)=\frac{\gamma}{\alpha} f(0)$. Therefore we have that $a$ satisfies
\[
2(1-2a)\pi \cos(\pi a)=\frac{\gamma}{\alpha} \sin(\pi a).
\]
We now take $\theta_{\ast}:=(\frac{1}{2}-a)\pi$. It then follows that 
\[
f(u)=A\cos (2  \theta_{\ast}(\frac{1}{2}-u)), \quad A=\frac{ \theta_{\ast}}{\sin(\theta_{\ast})},\quad \theta_{\ast}\tan(\theta_{\ast})=\frac{\gamma}{4\alpha}.
\]

We have now established \eqref{eq:QSD density for 2-particle CBM} and \eqref{eq:principal eigenvalue no selection}. 

Since $\bfS\times \bfS$ is a closed communication class, $\phi^0_{\lvert_{\bfS\times \bfS}}$ is a right eigenfunction for the killed $2$-type CBM restricted to the state space $\bfS\times \bfS$. It follows that $\phi^0_{\lvert_{\bfS\times \bfS}}$ is a solution to $\frac{1}{2}\alpha\Delta y=-\kappa_0 y$ on $\bfS \times \bfS\setminus \Gamma$. Moreover $\phi^0_{\lvert_{\bfS\times \bfS}}$ must have the same symmetries as $\rho^0$ in \eqref{eq:symmetries of QSD}. It follows that $\phi^0_{\lvert_{\bfS\times \bfS}}=c\rho^0$ for some scaling constant $c>0$. We therefore obtain \eqref{eq:expression for right e-fn of CBM}.

By Lemma \ref{lem:early stopping reduces spectral radius},  the process
$\lambda^{-t}\phi^0(Z_t)\Ind(\tau_{\partial}>t)$ 
is a martingale, and ${\bf E}_{z}\left[e^{\alpha \theta_{\ast}^2\tau^{Z}}\right]<\infty$, for all $z\in\chi$. By the optional stopping theorem, 
\begin{equation}\label{E:phiz_proof}
\phi^{0}(z)={\bf E}_{z}\left[\phi^{0}(Z_{\tau^Z})e^{4\alpha \theta_{\ast}^2\tau^Z} \right]=M_{\ast}\cos(\theta_{\ast})\,{\bf E}_{z}\left[e^{4\alpha \theta_{\ast}^2\tau^{Z}}\right], \quad z\in \chi,
\end{equation}
where the second equality in  \eqref{E:phiz_proof} follows from the observation that $\phi^{0}((x,x))=M_{\ast}\cos(\theta_{\ast})$ for all $x\in \bfS$, by \eqref{eq:expression for right e-fn of CBM}. Hence \eqref{E:phiz} holds.

\qed

\commentout{
\newpage
Let $(P_t)_{t\in \R_{\ge 0}}$ be the semigroup of the latter killed process. 
For any $g\in \calB_b(\mathbb{S})$, the function  $u(t,x):=P_tg(x)=\E_x[g(B_t)\,e^{-\gamma\,L^0_t}]$ is a weak solution to the following heat equation with Robin boundary condition (cf. \cite{chen2015functional}):
\begin{equation}
\begin{cases}
  \partial_t u(t,x)&=\alpha\,\partial_{xx}^2u(t,x) \qquad \text{for}\quad x\in \mathbb{S}\setminus \{0\} \simeq(0,\,1),\,t\in(0,\infty)\\
    \partial_xu(t,0+)-\partial_xu(t,1-)&=\;\frac{2\gamma}{\alpha} \,u(t,0). 
\end{cases}.
\end{equation}
Note that  $f$ is the density of a QSD of the process $(G_t-R_t)_{0\leq t<\tau_{\partial}}$, which implies that $e^{-\frac{\kappa_0}{\alpha} t}f$ is a solution to the above Robin boundary problem \WF{Explain why $e^{-\frac{\kappa_0}{\alpha} t}f$ is a solution? Mainly why  $\frac{\kappa_0}{\alpha}$ is the correct exponent.} .
Therefore,
\[
\frac{2\gamma}{\alpha} f(0)=f'(0_+)-f'(1_-)
\]
from which we obtain that the constant $a$ satisfies
\[
\frac{\gamma}{\alpha} \sin (\pi a)=(1-2a)\pi\cos(\pi a).
\]
We see that there are no solutions when $a\in \{0,\frac{1}{2}\}$. Substituting $\theta_{\ast}=\frac{\pi}{2}-a\pi$, we see that $\rho^0$ is given by \eqref{eq:QSD density for 2-particle CBM}.

Moreover we have on $\bfS\setminus \{0\}$ that $\frac{\kappa_0}{\alpha} f=-\Delta f=(1-2a)^2\pi^2 f=4\theta_{\ast}^2 f$, so that we have \eqref{eq:principal eigenvalue no selection}.

\qed


\qed
}

\subsubsection*{Proof of Theorem \ref{T:M*}}

Our goal is  to show that $M_{\ast}$ in \eqref{eq:expression for right e-fn of CBM} is given by \eqref{eq:Mast constant}.

We take an arbitrary dense sequence in $\bfS$, $\underline{x}=(x_1,\ldots)\in \bfS^{\mathbb{N}}$, and the right eigenfunction $\phi^0$ given by 
\eqref{eq:formula for right efn of FKPP in terms of QSD of dual results}. 
Let $z^{(n)}=(x_1,(x_2,\ldots,x_n))$.  As $n\ra\infty$,
\begin{equation}\label{eq:phi0(zn) limit}
\phi^0(z^{(n)})=\expE_{u\sim \pi^0}\left[(1-u(x_1))\Big(1-\prod_{j=2}^n(1-u(x_j))\Big)\right]\ra \expE_{u\sim \pi^0}[1-u(x_1)]=\frac{1}{2},
\end{equation}
where we used the fact that $\expE_{u\sim \pi^0}[u(x)]=\frac{1}{2}$ for all $x\in \bfS$ by  symmetry. 

Consider a 2-type CBM $Z$
with initial condition $z^{(n)}$, and recall that  $\tau^{Z}$ is the first time that $Z^{(n)}$ consists of one green and one red particle, both at the same position. 
Using the fact that $Z$ can only have one green particle at all times, we  observe the following. Suppose that
we remove the color designation from the particle system,
obtaining a $1$-type coalescing Brownian motion up to the time $\tau^{(n)}$, the first time when there are only $2$ particles, and both are at the same position. Then
$\tau^Z$ and $\tau^{(n)}$ have the same law.
It therefore follows from \eqref{E:phiz} that for all $n\geq 2$, 
\begin{equation}\label{E:phizn}
\phi^{0}(z^{(n)})=M_{\ast}\cos(\theta_{\ast})\,{\bf E}_{z^{(n)}}[e^{4\alpha \theta_{\ast}^2\tau^{(n)}}].
\end{equation}

By \eqref{eq:phi0(zn) limit} and \eqref{E:phizn}, we obtain that the limit $\lim_{n\ra\infty}{\bf E}_{z^{(n)}}[e^{4\alpha \theta_{\ast}^2\tau^{(n)}}]$ exists in $\R$ and satisfies
\begin{equation}\label{denseS}
\frac{1}{2}=M_{\ast}\cos(\theta_{\ast})\lim_{n\ra\infty}{\bf E}_{z^{(n)}}[e^{4\alpha \theta_{\ast}^2\tau^{(n)}}].
\end{equation}
We therefore obtain \eqref{eq:Mast constant}. Since $M_{\ast}\cos(\theta_{\ast})$ cannot depend upon the choice of $\underline{x}$, it follows that the value of $\lim_{n\ra\infty}{\bf E}_{z^{(n)}}[e^{\alpha \theta_{\ast}^2\tau^{(n)}}]$ lies in $[1,\infty)$ and does not depend upon the choice of $\underline{x}\in \Sigma_{\mathbb{S}}$. 

\qed

\subsubsection*{Proof of Theorem \ref{thm:neutral QSD doesn't fixate on closed set}}

The proof of Theorem \ref{thm:neutral QSD doesn't fixate on closed set} follows in the same manner as that of
Theorem \ref{T:M*}. We fix the non-empty closed set $F\subset \bfS$ and an element $\underline{x}=(x_1,\ldots)\in \Sigma_F$. 
As before we  define $z^{(n)}:=(x_1,(x_2,\ldots,x_n))$ for all $n\geq 2$. Then for all $u\in \calC_{\ast}$,
\begin{equation}\label{Conv_product1}
\prod_{j=2}^n(1-u(x_j))\ra 
\begin{cases}
    1,\quad u\equiv 0\text{ on $F$}\\
    0,\quad\text{otherwise}
\end{cases}\quad\text{as}\quad n\ra\infty.
\end{equation}
The above convergence to $0$ follows because $\sum_{j=2}^{n}u(x_j)$ diverges due to the fact that any point in $F$ is an accumulation point in $\underline{x}$ by our definition of $\Sigma_F$.

Therefore, as $n\ra\infty$, as in \eqref{eq:phi0(zn) limit} we have
\begin{equation}\label{Conv_product2}
\phi^0(z^{(n)})=\expE_{u\sim \pi^0}\left[(1-u(x_1))\Big(1-\prod_{j=2}^n(1-u(x_j))\Big)\right]\ra \expE_{u\sim \pi^0}\big[(1-u(x_1))\Ind(\{u\equiv 0\text{ on $F$}\}^c)\big],
\end{equation}
where  $\phi^0$ is the right eigenfunction given by \eqref{eq:formula for right efn of FKPP in terms of QSD of dual results} as before.

By the above convergence and 
the fact that $\expE_{u\sim \pi^0}[1-u(x_1)]=\frac{1}{2}$ for all $x_1\in \bfS$, we have
\begin{equation}\label{eq:prob of QSD being 0 on F}
\Pm_{u\sim \pi^0}(u\equiv 0\text{ on $F$})=\expE_{u\sim \pi^0}[(1-u(x_1))\Ind(u\equiv 0\text{ on $F$})]=\frac{1}{2}-\lim_{n\ra\infty}\phi^{0}(z^{(n)}),
\end{equation}
where the first equality follows since $x_1\in F$.

As in \eqref{E:phizn},  
$\phi^{0}(z^{(n)})=M_{\ast}\cos(\theta_{\ast})\,{\bf E}_{z^{(n)}}[e^{4\alpha\theta_{\ast}^2\tau^{(n)}_1}]$ 
for all $n\geq 2$. Since the left-hand side of \eqref{eq:prob of QSD being 0 on F} does not depend upon the choice of $\underline{x}\in \Sigma_F$, it follows that the limit 
$\lim_{n\ra\infty}{\bf E}_{z^{(n)}}[e^{4\alpha\theta_{\ast}^2\tau^{(n)}_1}]$ exists and does not depend upon the choice of $\underline{x}\in \Sigma_F$. It belongs to $[1,\infty]$  by definition, and it must be finite due to \eqref{eq:prob of QSD being 0 on F}.

We have established that ${\bf E}_{F}\left[e^{4\alpha\theta_{\ast}^2\tau_1}\right]$ is well-defined and takes values in $\Rm_{\geq 1}$ and satisfies
\begin{equation}\label{denseF}
\Pm_{u\sim \pi^0}(u\equiv 0\text{ on $F$})=\frac{1}{2}-M_{\ast}\cos(\theta_{\ast})\,{\bf E}_{F}\left[e^{4\alpha\theta_{\ast}^2\tau_1}\right]
\end{equation}
for all closed set $F\subset \mathbb{S}$,
which  generalizes \eqref{denseS}.

We recall that $M_{\ast}=\frac{1}{2\cos(\theta_{\ast})\,{\bf E}_{\bfS}\left[e^{4\alpha\theta_{\ast}^2\tau_1}\right]}$. It follows from \eqref{denseF} that \eqref{eq:prob neutral QSD zero on F} holds.
Since $\Pm_{u\sim \pi^0}(u\equiv 0\text{ on $F$})=\Pm_{u\sim \pi^0}(u\equiv 1\text{ on $F$})$ by symmetry, we have established the equality in \eqref{eq:prob neutral QSD not fixed on F}. The value in \eqref{eq:prob neutral QSD not fixed on F} lies in the interval $(0,1)$ for the following reason. 

\smallskip

\begin{remark}\rm\label{rmk:proof of Lemma first invocation}
At this point we would like to invoke Lemma \ref{L:Laplace_tau1}. We will, therefore, not make use of any of the rest of the following proof when we come to prove Lemma \ref{L:Laplace_tau1}.
\end{remark}

\smallskip

We have that ${\bf E}_{F}\left[e^{4\alpha\theta_{\ast}^2\tau_1}\right]\in[1,\infty)$ for all non-empty closed $F$ by Lemma \ref{L:Laplace_tau1}, so that by \eqref{ineq:EF strictly increasing in F} we have
\begin{equation}
    \Pm_{u\sim \pi^0}(u\text{ is fixed on $F$})\in (0,1)\qquad\text{for all closed }\qquad \emptyset\neq F\subsetneq \bfS.
\end{equation}

Finally, for all closed subsets $\emptyset \neq F\subsetneq F'\subseteq \mathbb{S}$, it holds that
\begin{align}
&\Pm(\text{$u$ is fixed on $F$ but not on $F'$})\\
=&\,\Pm(\text{$u$ is fixed on $F$})-\Pm(\text{$u$ is fixed on $F'$})\\
=&\,\Pm(\text{$u$ is not fixed on $F'$})-\Pm(\text{$u$ is not fixed on $F$}).
\end{align}

It therefore follows from  \eqref{ineq:EF strictly increasing in F} that \eqref{eq:probabliity of fixation on set but not on larger set strictly positive} holds. The proof of Theorem \ref{thm:neutral QSD doesn't fixate on closed set} is complete.

\qed
\subsubsection*{Proof of Lemma \ref{L:Laplace_tau1}}
Lemma \ref{L:Laplace_tau1} was employed in the proof of Theorem \ref{thm:neutral QSD doesn't fixate on closed set}, immediately after Remark \ref{rmk:proof of Lemma first invocation}. Consequentially, while we shall employ elements from the proof of Theorem \ref{thm:neutral QSD doesn't fixate on closed set} prior to Remark \ref{rmk:proof of Lemma first invocation}, we must be careful to avoid using any elements thereafter.

The fact that  ${\bf E}_{F}\left[e^{4\alpha\theta_{\ast}^2\tau_1}\right]={\bf E}_{\underline{x}}\left[e^{4\alpha\theta_{\ast}^2\tau_1}\right]\in [1,\infty)$ is well-defined and is the same for all
$\underline{x}\in \Sigma_F$
follows directly from the proof of Theorem \ref{thm:neutral QSD doesn't fixate on closed set}, as mentioned in the paragraph immediately after \eqref{eq:prob of QSD being 0 on F}. This number is strictly larger than 1 since $\tau_1>0$  almost surely even if the CBM starts with only three particles. 

By \eqref{denseF}, we have the monotonicity   
\begin{equation}\label{ineq:EF increasing in F}
{\bf E}_{F}\left[e^{4\alpha\theta_{\ast}^2\tau_1}\right]\leq {\bf E}_{F'}\left[e^{4\alpha\theta_{\ast}^2\tau_1}\right] \qquad \text{whenever }F\subset F'\subset \bfS.
\end{equation}
It remains to show the strict inequality \eqref{ineq:EF strictly increasing in F}. By the monotonicity \eqref{ineq:EF increasing in F}, 
it suffices to show this strict inequality when
$F'=F\cup \{w\}$ for some $w\notin F$. 
We establish this for the rest of this proof, by using the fact that $1$-type coalescing Brownian motion (CBM) comes down from infinity  \cite{hobson2005duality,barnes2022coming}.

We take $\underline{x}=(x_1,\ldots)\in \Sigma_F$ and $\underline{y}=(y_1,\ldots)\in \Sigma_{F'}$, the latter defined by
\[
y_k=\begin{cases}
    x_{\frac{k}{2}},\quad &k\text{ even}\\
    w,\quad &k\text{ odd}
\end{cases}.
\]
We define $\bar X_t$ and $\bar Y_t$ to be two $1$-type CBMs with entrance laws $\underline{x}$ and $\underline{y}$ respectively, as described in Remark \ref{Rk:EntranceLaw}, possibly in two probability spaces. Each of these systems of $1$-type CBM comes down from infinity  \cite{hobson2005duality,barnes2022coming} and thus has state space
\begin{equation}
\Xi:=\cup_{n\geq 1}\bfS^n/\sim
\end{equation}
at all positive times, where $\sim$ is the equivalence relationship on $\bfS^n$ such that
$\underline{x}=(x_1,\ldots,x_n)\sim \underline{y}=(y_1,\ldots,y_n)$ if $\underline{x}$ can be obtained from $\underline{y}$ by permuting the coordinates. 


We define on $\Xi$ the partial order $\underline{x}=(x_1,\ldots,x_n)\leq \underline{y}=(y_1,\ldots,y_m)$ if the multi-set $[x_1,\ldots,x_n]$ is a subset (as a multi-set) of $[y_1,\ldots,y_m]$ (recall that multi-sets count multiplicities, with the notion of subset defined accordingly). 
For two probability measures $\mu_1,\mu_2\in \calP(E)$, we say that $\mu_2$ stochastically dominates $\mu_1$, written $\mu_1\leq_{\st}\mu_2$, if 
there exists a $\Xi\times \Xi$-valued random variable $(A,B)$ (defined on some probability space) such that $A\sim \mu_1$, $B\sim \mu_2$ and $A\leq B$ almost surely. If, in addition, there is a positive probability that $A\neq B$ (equivalently, if also $\mu_1\neq \mu_2$), then we say that $\mu_2$ strictly stochastically dominates $\mu_1$ and write
 $\mu_1<_{\st}\mu_2$. We shall employ the following simple lemma.
 \begin{lemma}\label{lem:stochastic dominance preserved in limits}
     Suppose that $(\mu^{(n)}_1)_{n\in \mathbb{N}}$ and $(\mu^{(n)}_2)_{n\in \mathbb{N}}$ are two sequences in $\calP(\Xi)$  that  converge in $\calP(\Xi)$ to  $\mu_1$ and $\mu_2$ respectively, and  $\mu_1^{(n)}\leq_{\st}\mu_2^{(n)}$ for all $n\in \mathbb{N}$. Then $\mu_1\leq_{\st}\mu_2$.
 \end{lemma}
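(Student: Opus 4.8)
\textbf{Proof proposal for Lemma \ref{lem:stochastic dominance preserved in limits}.}

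The plan is to use the standard characterisation of stochastic dominance by an increasing coupling, and then pass to the limit using tightness of the coupled family together with the fact that the order relation $\leq$ on $\Xi$ is closed. First I would recall the Strassen-type fact that, on the Polish space $\Xi$ equipped with the partial order $\leq$ defined above, $\mu_1^{(n)}\leq_{\st}\mu_2^{(n)}$ is equivalent to the existence of a coupling $\nu^{(n)}\in\calP(\Xi\times\Xi)$ with marginals $\mu_1^{(n)}$ and $\mu_2^{(n)}$ supported on the set $\{(a,b):a\leq b\}$. (Since the order here is ``submultiset'', this closed-order version of Strassen's theorem applies verbatim; one need only check that $\{(a,b)\in\Xi\times\Xi:a\leq b\}$ is closed, which is immediate because membership as a submultiset of points on $\bfS$ is preserved under limits of configurations with a bounded total number of particles, and on each cardinality-stratum $\Xi$ is a finite disjoint union of quotients of compact sets.)

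Next I would establish tightness of $(\nu^{(n)})_{n\in\mathbb{N}}$ in $\calP(\Xi\times\Xi)$. Since $\mu_1^{(n)}\to\mu_1$ and $\mu_2^{(n)}\to\mu_2$ in $\calP(\Xi)$, both marginal families are tight, and a product of the respective compact sets gives a compact set in $\Xi\times\Xi$ capturing all but $\epsilon$ of the mass of every $\nu^{(n)}$; hence $(\nu^{(n)})$ is tight. By Prokhorov's theorem we may pass to a subsequence along which $\nu^{(n)}\to\nu$ weakly for some $\nu\in\calP(\Xi\times\Xi)$. The marginals of $\nu$ are the weak limits of the marginals of $\nu^{(n)}$, so they are $\mu_1$ and $\mu_2$ respectively. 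Moreover, since each $\nu^{(n)}$ is supported on the closed set $\{(a,b):a\leq b\}$ and weak limits preserve the property of being supported on a fixed closed set (by the portmanteau theorem, $\nu(\{a\leq b\})\geq\limsup_n\nu^{(n)}(\{a\leq b\})=1$), we conclude that $\nu$ is also supported on $\{(a,b):a\leq b\}$. Realising $\nu$ as the law of a pair $(A,B)$ then gives $A\sim\mu_1$, $B\sim\mu_2$ and $A\leq B$ almost surely, which is precisely $\mu_1\leq_{\st}\mu_2$.

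The only mild subtlety — and the step I would be most careful about — is the verification that the partial order $\leq$ on $\Xi=\cup_{n\geq 1}\bfS^n/\sim$ is a closed subset of $\Xi\times\Xi$, because $\Xi$ is a countable disjoint union of strata of differing dimension and one must rule out ``mass escaping to a different stratum''. This is handled by noting that convergence in $\Xi$ forces eventual constancy of the total particle count (the strata are relatively clopen), so a convergent sequence $(a^{(k)},b^{(k)})\to(a,b)$ with $a^{(k)}\leq b^{(k)}$ eventually has $a^{(k)},a$ in a common stratum $\bfS^p/\sim$ and $b^{(k)},b$ in a common stratum $\bfS^q/\sim$ with $p\leq q$; then $a^{(k)}\leq b^{(k)}$ says the multiset of $a^{(k)}$ embeds in that of $b^{(k)}$, and taking $k\to\infty$ in the corresponding (finitely many) coordinate identifications yields $a\leq b$. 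With that in hand the argument above is routine soft analysis, so I would present it concisely rather than belabouring it.
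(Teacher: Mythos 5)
Your proof is correct and takes essentially the same approach as the paper's: define the closed order set $F=\{(a,b):a\leq b\}$, take the couplings guaranteed to be supported on $F$, extract a subsequential weak limit via marginal tightness and Prokhorov, and note that the limit coupling still lives on $F$. The only superfluous step is the appeal to a Strassen-type theorem — the paper \emph{defines} $\leq_{\st}$ directly by the existence of a monotone coupling, so that equivalence is not needed; your extra care in checking that $F$ is closed (via clopen strata of $\Xi$) is a reasonable expansion of what the paper merely asserts.
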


 \begin{proof}[Proof of Lemma \ref{lem:stochastic dominance preserved in limits}]
We define $F:=\{(x,y)\in \Xi\times \Xi:x\leq y\}$, which we observe is a closed subset of $\Xi\times \Xi$, so is a complete and separable metric space. Then for each $n\in\mathbb{N}$ there exists a coupling of $\mu^{(n)}_1$ and $\mu^{(n)}_2$, $(A^{(n)},B^{(n)})$, supported on $F$. Since we have convergence in distribution of each marginal, $\{\Law((A^{(n)},B^{(n)})):\,n\in\mathbb{N}\}$ is tight. Taking a subsequential limit (using Prokhorov's theorem) we obtain a coupling $(A,B)$ of $\mu_1$ and $\mu_2$ supported on $F$.   
 \end{proof}
 
For $n\in \mathbb{N}$ we write $\bar X^{(n)}_t$ and ${\bar Y}^{(n)}_t$ for coalescing Brownian motions started from $(x_1,\ldots,x_n)$ and $(y_1,\ldots,y_{2n})$ respectively. Since $\underline{x}\leq \underline{y}$, by permuting the elements of $\underline{y}$ so that $\underline{y}=(x_1,\ldots,x_n,w,\ldots,w)$ and declaring that particle $i$ kills particle $j$ for $i<j$ (as in Remark \ref{Rk:EntranceLaw}), we see that $\bar X^{(n)}_t\leq \bar Y^{(n)}_t$ almost surely
for all $n\in\mathbb{N}$ and $t>0$, so that $\Law(\bar X^{(n)}_t)\leq_{\st} \Law(\bar Y^{(n)}_t)$ for all $n\in\mathbb{N}$ and $t>0$. Then since $\bar X^{(n)}_t$ (respectively $\bar Y^{(n)}_t$) converges in distribution to $\bar X_t$ (respectively $\bar Y_t$) for fixed $t>0$, it follows from Lemma \ref{lem:stochastic dominance preserved in limits} that $\Law(\bar X_t)\leq_{\st}\Law(\bar Y_t)$.

\commentout{
 
 We let $\bar X^{(n)}$ (respectively $\bar X$
 
coupling for two random variables $A$ and $B$, we say $B$ stochastically dominates $A$ and write
 $A\leq_{\st}B$ if
$\P(B\leq x)\leq \P(A\leq x)$ for all $x\in \Xi$.
If, furthermore, $\P(B\leq x) <\P(A\leq x)$ for some $x\in \Xi$, we say that $B$ strictly stochastically dominates $A$ and write
 $A<_{\st}B$.


By construction, we see that $\bar X_t\leq_{\st}X_t'$ for all $t>0$. }

Clearly if $\bar X_t\eqd \bar Y_t$ for some $t>0$, then $\bar X_s\eqd \bar Y_s$ for all $s\in (t,\infty)$, where $\eqd$ denotes equality in distribution. We claim that if $\Law(\bar X_t)<_{\st} \Law(\bar Y_t)$ for some $t>0$, then $\Law(\bar X_s)<_{\st} \Law(\bar Y_s)$ for all $s\in (t,\infty)$. To see this, take a coupling $(A^{(t)},B^{(t)})$ of $\Law(\bar X_t)$ and $\Law(\bar Y_t)$ such that $A^{(t)}\leq \bar B^{(t)}$ almost surely. We can permute the elements of $B^{(t)}$ as in the previous paragraph, as $B^{(t)}\in \Xi$ has finitely many particles (by coming down from infinity). We initiate coalescing Brownian motions $(\bar X^{(t)}_s)_{s\geq t}$ and $(\bar Y^{(t)}_s)_{s\geq t}$ from time $t$ and initial conditions $A^{(t)}$ and $B^{(t)}$ respectively, coupled as in the previous paragraph. Then for $s> t$, $\bar X^{(t)}_s\leq \bar Y^{(t)}_s$ by construction. Moreover, by assumption there is a positive probability that $A ^{(t)}\neq  B^{(t)}$, in which case $B^{(t)}$ contains at least one extra particle compared to $A^{(t)}$. There is a positive probability that this extra particle then survives up to time $s$, so there is a positive probability that $\bar X^{(t)}_s\neq \bar Y^{(t)}_s$. We have established the claim.


There are therefore two possibilities:
\begin{enumerate}
    \item $\Law(\bar X_t)<_{\st}\Law(\bar Y_t)$ for all $t>0$;\label{enum:possibility strict stochastic domination for all times}
    \item $\bar X_t\eqd \bar Y_t$ for all $t>0$.\label{enum:possibility equal in law for all times}
\end{enumerate}
We suppose for contradiction that possibility \ref{enum:possibility equal in law for all times} holds. Then by moment duality \eqref{eq:moment duality relationship for 2-type results} it would follow that
\[
\expE_{u_0}\left[\prod_{i=1}^{\infty}(1-u_t(x_i))\right]=\expE_{u_0}\left[\prod_{i=1}^{\infty}(1-u_t(y_i))\right],
\]
for any initial condition $u_0$ for the stochastic FKPP. This is equivalent to
\[
\P_{u_0}\left(F\cap {\rm supt}(u_t) =\emptyset,\;w\notin  {\rm supt}(u_t) \right)= \P_{u_0}\left(F\cap {\rm supt}(u_t) =\emptyset\right),
\]
for all $t>0$ and all $u_0\in \calC_*$. This can be rewritten as
\begin{equation}\label{ContradictionE}
\Pm_{u_0}\left(w\in {\rm supt}(u_t)\subseteq F^c\right)=0,
\end{equation}
for all $t>0$ and all $u_0\in \calC_*$. We see that this is false because
the support
${\rm supt}(u_t)$ and ${\rm supt}(1-u_t)$ are stochastically continuous in $t$, by 
the argument in \cite[Proposition 3.2]{tribe1995large}. More precisely,
by first choosing $u_0\in \calC_*$ to approximate the indicator function of a small neighborhood $V_w$ of $w$ (for example, when
 $u_0=1$ in a connected open neighborhood $V_w\subset \bfS\setminus F$ 
and $F\cap {\rm supt}(u_0) =\emptyset$), and then choosing $t>0$ small enough, the probability $Pm_{u_0}\left(w\in {\rm supt}(u_t)\subseteq F^c\right)$ is strictly positive.

It follows that we cannot have possibility \ref{enum:possibility equal in law for all times}, hence possibility \ref{enum:possibility strict stochastic domination for all times} holds (i.e. $\Law(\bar X_t)<_{\st}\Law(\bar Y_t)$ for all $t>0$). It follows from this (by coupling and using coming down from infinity) that $\Law_{F'}(\tau_1)>_{\st}\Law_F(\tau_1)$, so that we have the strict inequality \eqref{ineq:EF strictly increasing in F}.
\qed

\commentout{
#Hence ${\bf P}_{(\underline{x},\underline{w})}(E)<1$ and 
the strict inequality \eqref{ineq:EF strictly increasing in F} must hold.

\newpage

For this we consider the  $2$-type CBM $Z=(G,R)$ starting with countably infinitely many particles, where the set of initial locations of the red particles are disjoint from the closure of that for the green particles.  
Fix  $\underline{x}=(x_1,\ldots)\in \Sigma_F$, and  $\underline{w}=(w_1,\ldots)\in\Sigma_{\{w\}}$. Then $w_j=w$ for all $j\in \mathbb{N}$ and $\underline{x}\cup \underline{w}\in \Sigma_{F'}$. Let ${\bf P}_{(\underline{x},\underline{w})}$ be the  probability measure under which $Z=(G,R)$ is a  $2$-type coalescing Brownian motion  with initial state $(\underline{x},\underline{w})$. 

This probability measure exists:
a construction of such a $2$-type CBM $Z=(G,R)$ can be obtained by adapting that in  \cite{barnes2022coming} mentioned in Remark \ref{Rk:EntranceLaw}. More precisely,
when two particles of the same color coalesce, the lower rank survives; and when two particles with labels $(i, {\rm green})$ and $(j, {\rm red})$ coalesce, the green particle $(i, {\rm green})$ survives. 

To see that this construction is valid (i.e. it gives rise to a well-defined process $Z=(G,R)$), we fix $0<\epsilon< \frac{d(F,w)}{3}$. The set of green particles, as a 1-type CBM itself, is well-defined and comes down from infinity by \cite{barnes2022coming}.
This 1-type CBM is described in \cite[eqns./(1.14)-(1.15)]{barnes2022coming}. From this description and a standard hitting time argument (for instance, the proof of Theorem 2.7 in \cite{bass1991some}), we obtain that the set of green particles takes a positive amount of time to hit $(B(F,\epsilon))^c$, almost surely. We denote this stopping time by $\tilde{\tau_G}$. Similarly, we define the red particles as a 1-type CBM up to the time $\tilde{\tau}_G\wedge \tilde{\tau}_R$, whereby $\tilde{\tau}_R$ is the first time any of the red particles hits $(B(w,\epsilon))^c$. Therefore the 2-type BCBM $(G,R)$ is well-defined up to the time $\tau_G\wedge \tau_R$, which is almost surely strictly positive. Since there are only finitely many particles at this time, almost surely, the 2-type BCBM $(G,R)$ is well-defined for all time.

the set of green particles, as a  and takes a positive amount of time to hit a small neighborhood of $w$ that is of a positive distance from the closure of the green particles.

Let $\tau^Z_1$ be 
the first time that the process $Z=(G,R)$ consists of exactly two particles both at the same position. Then $\tau^Z_{1}$ (under  ${\bf P}_{(\underline{x},\underline{w})}$) is equal in distribution to $\tau_1$ under 
${\bf P}_{F'}$ (by construction). Let $\tau^G_1$ be 
the first time that $G$ consists of exactly two particles both at the same position (ignoring the red particles). Then, since the green particles are not affected by the red particles, $\tau^G_{1}$ (under  ${\bf P}_{(\underline{x},\underline{w})}$) is equal in distribution to $\tau_1$ under 
${\bf P}_{F}$. Furthermore, $0< \tau^G_{1} \leq \tau^Z_{1}$ almost surely under ${\bf P}_{(\underline{x},\underline{w})}$ because a green particle cannot be killed by a red particle (this inequality also implies \eqref{ineq:EF increasing in F}).

We proceed by contradiction. Suppose the strict inequality \eqref{ineq:EF strictly increasing in F} does {\it not} hold. Then 
\begin{equation}\label{ContradictionE0}
\tau^G_{1} =\tau^Z_{1} \qquad \text{almost surely under }{\bf P}_{(\underline{x},\underline{w})}.
\end{equation}
This implies that all the red particles are killed instantly, almost surely. To see this, suppose that there is a positive probability of there being red particles alive after some finite positive time. Then since the coalescing Brownian motion comes down from infinity  \cite{hobson2005duality,barnes2022coming} 
, it follows that there would then be both a finite positive number of both green and red particles after some finite time. There would then be a positive probability that 
$\tau^G_{1}$ occurs before all the red particles are killed and so
$\tau^G_{1}< \tau^Z_{1}$ with positive probability. 

Let $E$ be the event that ``all the red particles are killed instantly", i.e. $E=\{R_t=\emptyset \text{ for all }t>0\}$. Suppose ${\bf P}_{(\underline{x},\underline{w})}(E)=1$. Then
\begin{equation}\label{ContradictionE}
{\bf E}_{(\underline{x},\underline{w})}\left[\prod_i^{|G_t|}\big(1-u_0(G^i_t)\big)\,\prod_j^{|R_t|}\big(1-u_0(R^j_t)\big)\right]= {\bf E}_{(\underline{x},\underline{w})}\left[\prod_i^{|G_t|}\big(1-u_0(G^i_t)\big)\right]
\end{equation}
for all $t>0$ and all $u_0\in \calC_*$. By duality \eqref{eq:moment duality relationship for 2-type results}, this is equivalent to
\begin{equation}\label{ContradictionE}
\P_{u_0}\left(F\cap {\rm supt}(u_t) =\emptyset,\;w\notin  {\rm supt}(u_t) \right)= \P_{u_0}\left(F\cap {\rm supt}(u_t) =\emptyset\right)
\end{equation}
for all $t>0$ and all $u_0\in \calC_*$. This is false because the left hand side is strictly less than the right hand side if $u_0\in \calC_*$ is chosen to approximate the indicator of a small neighborhood $V_w$ of $w$ (for example, when
 $u_0=1$ in a connected open neighborhood $V_w\subset \bfS\setminus F$ 
and $F\cap {\rm supt}(u_0) =\emptyset$), for $t>0$ chosen to be small enough. The latter holds since the supports
${\rm supt}(u_t)$ and ${\rm supt}(1-u_t)$ are stochastically continuous in $t$, by 
the argument in \cite[Proposition 3.2]{tribe1995large}. Hence ${\bf P}_{(\underline{x},\underline{w})}(E)<1$ and 
the strict inequality \eqref{ineq:EF strictly increasing in F} must hold.

\qed}

\subsubsection*{Proof of Corollary \ref{cor:martingale corollary}}

Let $\phi^0$ be the right eigenfunction defined in \eqref{eq:formula for right efn of FKPP in terms of QSD of dual results}. The killed $2$-type CBM is $(Z_t)_{0\leq t<\tau_{\partial}}$. Then the process
$\Lambda(\phi^0)^{-t}\phi^0(Z_t)\Ind(\tau_{\partial}>t)$
is a martingale, by Lemma \ref{lem:early stopping reduces spectral radius}. We recall from Theorem \ref{thm:explicit expressions when neutral} that $\Lambda(\phi^0)=e^{-4\alpha\theta_{\ast}^2}$.
Let $\overline{\phi^0}$ be the extension of $\phi^0$  that vanishes on $\bfS\times \{\emptyset\}$. Then $\phi^0(Z_t)=0$ for $t\geq \tau_{\partial}$ and
\[
e^{4\alpha\theta_{\ast}^2}\,\overline{\phi^0}(Z_t)
\]
is a martingale with respect to the natural filtration of $Z$ (with no killing) for all time.

The subset $\bfS\times(\cup_{m\geq 1}\bfS^m/\sim)\subset \chi$, corresponding to there only being one green particle, is closed for coalescing Brownian motion. For $n\geq 2$ and $z=(x_1,(x_2,\ldots,x_n))\in \bfS\times(\cup_{m\geq 1}\bfS^m/\sim)$ we have that
\begin{equation}\label{phi0formula}
\phi^0(z)=\expE_{u\sim \pi^0}\left[(1-u(x_1))-\prod_{i=1}^n(1-u(x_i)) \right]=\frac{1}{2}-\expE_{u\sim \pi^0}\left[\prod_{i=1}^n(1-u(x_i))\right].
\end{equation}
We note that the expression on the right vanishes if we take $z=(x_1,\emptyset)$ (i.e. $n=1$). Hence the extended function $\overline{\phi^0}(z)$ is equal to the right hand side of \eqref{phi0formula}. 


Given that $Z_t=(X^1_t,(X^2_t,\ldots,X^{N_t}_t))$, we can remove the colour designation and consider the usual 1-type coalescing Brownian motion $\vec{X}_t:=(X^1_t,\ldots,X^{N_t}_t)$. It follows that the process
\[
e^{4\alpha \theta_{\ast}^2t}\left(\frac{1}{2}-{\bf E}_{u\sim \pi^0}\left[\prod_{i=1}^{N_t}(1-u(X_t^i))\right]\right)\quad\text{is a martingale.}
\]
\qed

\appendix

\section{Appendix}

\subsection{Basic facts about the stochastic FKPP}\label{appendix section:mild solutions}

Consider the stochastic PDE
\begin{equation}\label{SPDE}
   	\partial_t u(t,x)      \,= \frac{\alpha}{2}\Delta u +b(u)+ \sigma(u)\,\dot{W}, \qquad (t,x)\in(0,\infty)\times \mathbb{S},
\end{equation}
where $b$ and $\sigma$ are $\R$-valued Borel measurable functions on $\R$, and
$\dot{W}$ is the space-time Gaussian white noise on $\R_{\geq 0}\times \mathbb{S}$ defined by specifying that $\{W(f):\,f\in L^2(\R_{\geq 0}\times \mathbb{S})\}$ is a Gaussian family with mean zero and covariance 
\[
\E[W(f)\,W(g)]=\int_{0}^{\infty}\int_{\mathbb{S}}f(s,x)\,g(s,x)\,m(dx)\,ds,
\]
where $m(dx)$ denotes the Lebesgue measure on $\mathbb{S}$.

We adopt Walsh's theory \cite{MR876085} to regard the stochastic FKPP equation \eqref{fkpp_X} as a shorthand for an integral equation \eqref{E:MildSol_u} below. 
A process  $u=(u_t)_{t\geq 0}$ taking values in $\mathcal{B}(\mathbb{S};\,\R)$, defined on some probability space $(\Omega,\,\mathcal{F},\,\P)$, 
is said to be a  \textbf{mild solution} to equation \eqref{fkpp_X} with initial condition $u_0$ if there is a space-time white noise $\dot{W}$ such that  $u$ is adapted to the filtration  generated by $\dot{W}$ and that, $\P$-almost surely, $u$ satisfies the integral equation 
\begin{align}\label{E:MildSol_u}
u_t(x)= \int_{\mathbb{S}} p( t,x,y)\,u_0(y)\,m(dy) &+ \int_0^t\int_{\mathbb{S}}p(t-s,x,z)\,b(u_s(z))\,m(dz)\,ds   \notag\\
&+ \int_{\mathbb{S}\times [0,t]}p(t-s,x,z)\,
\sigma\big(u_s(z)\big)\,dW(z,s)
\end{align}
for all $t\in\R_{\ge 0}$, 
where 
$p(t,x,z)=p^{\mathbb{S},\alpha}(t,x,z)$ is the transition density
 of a  Brownian motion $B$  on $\mathbb{S}$ with variance $\alpha$ and
with respect to the 1-dimensional Lebesgue measure $m(dz)$. 

Assume the coefficients $b$ and $\sigma$  are such that there exists a mild solution to \eqref{fkpp_X},  we show in Lemma \ref{L:rescale}  how to pass from an SPDE on the circle
$\mathbb{S}$ to that on another circle $\mathbb{S}_{\ell}=\R/\ell\Z$ with  circumference $\ell$, for arbitrary $\ell\in(0,\infty)$.
\begin{lemma}[Rescaling  on a circle]\label{L:rescale}
Let $u=(u_t)_{t\in \R_{\geq 0}}$ be a mild solution to \eqref{SPDE} and $c,\ell\in(0,\infty)$ be constants. Define
 $v(t,\tilde x):=u\left(\frac{c^2\,t}{\ell^2},\,\frac{\tilde x}{\ell}\right)$ for $(t,\tilde x)\in \R_{\geq 0}\times S_{\ell}$. Then $v$ is a mild solution to 
\begin{equation}\label{SPDE_ell}
   	\partial_t v(t, \tilde x)      \,= \frac{\alpha\,c^2}{2}\Delta v +\frac{c^2}{\ell^2}\,b(v)+ \frac{c}{\sqrt{\ell}}\sigma(v)\,\dot{W}_{\mathbb{S}_\ell}, \qquad (t, \tilde x)\in(0,\infty)\times \mathbb{S}_{\ell}
\end{equation}
with initial condition $v_0(\tilde x)=u_0(\frac{\tilde x}{\ell})$,
where  $\dot{W}_{\mathbb{S}_\ell}$ is the space-time Gaussian white noise on $\R_{\geq 0}\times \mathbb{S}_{\ell}$. 
\end{lemma}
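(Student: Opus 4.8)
The plan is to verify the defining integral equation \eqref{E:MildSol_u} for $v$ directly by a change of variables in the integral equation satisfied by $u$, together with a scaling identity for the heat kernel on the circle and a scaling identity for space-time white noise. First I would record the heat-kernel scaling: since $p=p^{\bfS,\alpha}$ is the transition density (with respect to Lebesgue measure on $\bfS$) of a Brownian motion of variance $\alpha$ on $\bfS=\R/\Z$, one checks from the series representation $p(t,x,y)=\tfrac{1}{\sqrt{2\pi\alpha t}}\sum_{k\in\Z}e^{-(y-x+k)^2/(2\alpha t)}$ that the rescaled density $p^{\bfS_\ell,\,\alpha c^2}$ on $\bfS_\ell=\R/\ell\Z$ satisfies
\begin{equation}\label{eq:kernel-scaling}
p^{\bfS_\ell,\alpha c^2}(t,\tilde x,\tilde y)=\tfrac1\ell\, p^{\bfS,\alpha}\!\Big(\tfrac{c^2 t}{\ell^2},\,\tfrac{\tilde x}{\ell},\,\tfrac{\tilde y}{\ell}\Big),\qquad (t,\tilde x,\tilde y)\in\R_{\ge0}\times\bfS_\ell\times\bfS_\ell.
\end{equation}
This is the analytic heart of the bookkeeping: it says that diffusively rescaling space by $1/\ell$ and time by $c^2/\ell^2$ turns a variance-$\alpha$ kernel on $\bfS$ into a variance-$\alpha c^2$ kernel on $\bfS_\ell$, up to the Jacobian factor $1/\ell$ coming from the change of the reference measure.

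Next I would treat the noise. Define $\dot W_{\bfS_\ell}$ by declaring, for $g\in L^2(\R_{\ge0}\times\bfS_\ell)$,
\begin{equation}\label{eq:noise-scaling}
W_{\bfS_\ell}(g):=\frac{\ell}{c}\,\sqrt{\ell}\;W\!\Big(g\big(\tfrac{\ell^2 s}{c^2},\ell y\big)\Big),
\end{equation}
or, more transparently, by specifying the correspondence of stochastic integrals: for a predictable integrand $\Psi(s,\tilde x)$ on $\R_{\ge0}\times\bfS_\ell$,
\begin{equation}\label{eq:stoch-int-scaling}
\int_{\bfS_\ell\times[0,t]}\Psi(s,\tilde x)\,dW_{\bfS_\ell}(\tilde x,s)
=\frac{c}{\sqrt\ell}\int_{\bfS\times[0,\,c^2 t/\ell^2]}\Psi\!\Big(\tfrac{\ell^2 r}{c^2},\,\ell y\Big)\,dW(y,r),
\end{equation}
after the substitution $s=\ell^2 r/c^2$, $\tilde x=\ell y$. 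One checks that the right-hand side of \eqref{eq:stoch-int-scaling} defines a worthy martingale measure on $\R_{\ge0}\times\bfS_\ell$ whose covariance is Lebesgue measure on $\bfS_\ell$ times $ds$, i.e. that $\dot W_{\bfS_\ell}$ is indeed space-time white noise on $\R_{\ge0}\times\bfS_\ell$; this is a direct covariance computation using the Jacobians $ds=(\ell^2/c^2)dr$ and $d\tilde x=\ell\,dy$ and the prefactor $(c/\sqrt\ell)^2=c^2/\ell$. I would also note $v$ is adapted to the filtration generated by $\dot W_{\bfS_\ell}$ since that filtration coincides (up to the time reparametrisation) with the one generated by $\dot W$.

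Then I would substitute into \eqref{E:MildSol_u}. Write \eqref{E:MildSol_u} for $u$ at the point $\big(\tfrac{c^2 t}{\ell^2},\tfrac{\tilde x}{\ell}\big)$, and change variables $y=\tilde y/\ell$ (so $m(dy)=\ell^{-1}m(d\tilde y)$) in the first two integrals and $r=\ell^2 s/c^2$, $z=\tilde z/\ell$ in the drift and noise integrals. For the heat-flow term, \eqref{eq:kernel-scaling} converts $p^{\bfS,\alpha}(\tfrac{c^2 t}{\ell^2},\tfrac{\tilde x}{\ell},\tfrac{\tilde y}{\ell})\,\ell^{-1}m(d\tilde y)$ into $p^{\bfS_\ell,\alpha c^2}(t,\tilde x,\tilde y)\,m(d\tilde y)$ acting on $v_0(\tilde y)=u_0(\tilde y/\ell)$. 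For the drift term, the substitutions produce a Jacobian $ds=(c^2/\ell^2)dr$ which is exactly the coefficient $c^2/\ell^2$ in front of $b(v)$ in \eqref{SPDE_ell}, while $b(u_s(z))=b(v_r(\tilde z))$. For the stochastic term, \eqref{eq:stoch-int-scaling} (applied with $\Psi(r,\tilde z)=p^{\bfS_\ell,\alpha c^2}(t-r,\tilde x,\tilde z)\,\sigma(v_r(\tilde z))$, and using \eqref{eq:kernel-scaling} once more to match the kernels) yields precisely the factor $c/\sqrt\ell$ appearing in front of $\sigma(v)\dot W_{\bfS_\ell}$. Collecting the three pieces gives exactly \eqref{E:MildSol_u} for $v$ with kernel $p^{\bfS_\ell,\alpha c^2}$, drift coefficient $c^2/\ell^2$ and noise coefficient $c/\sqrt\ell$; by Walsh's theory this is the assertion that $v$ is a mild solution of \eqref{SPDE_ell} with initial data $v_0(\tilde x)=u_0(\tilde x/\ell)$.

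The main obstacle is not any single computation — each is a routine Jacobian count — but making the stochastic-integral change of variables \eqref{eq:stoch-int-scaling} rigorous: one must check that the time-rescaled, space-rescaled white noise $\dot W_{\bfS_\ell}$ is genuinely a space-time white noise (equivalently, that the associated martingale measure is worthy with the correct dominating measure), that the reparametrised filtration is admissible, and that the stochastic integral commutes with this change of variables (which follows by first verifying it for simple integrands and passing to the limit via the Walsh isometry). Once the noise-scaling lemma \eqref{eq:stoch-int-scaling} is in hand, the rest is substitution. I would also remark that the hypotheses on $b,\sigma$ are used only to guarantee existence of a mild solution on $\bfS$; the lemma itself is purely a pathwise/identity statement transporting one mild solution to another, so no additional regularity is needed.
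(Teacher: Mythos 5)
Your overall plan — verify the mild-solution integral identity for $v$ by a direct change of variables in the equation for $u$, using a heat-kernel scaling identity plus a white-noise scaling identity — is exactly the route the paper takes, and your kernel identity is correct. However, your two formulations of the noise scaling are mutually inconsistent, and the second one is wrong. The first formula defines $W_{\mathbb{S}_\ell}(g)$ with prefactor $\tfrac{\ell}{c}\sqrt{\ell}=\ell^{3/2}/c$, which is correct: the covariance computation
\begin{equation}
\frac{\ell^3}{c^2}\int_0^\infty\!\int_{\mathbb{S}} g\!\left(\tfrac{\ell^2 s}{c^2},\ell y\right)^2 dy\, ds
=\frac{\ell^3}{c^2}\cdot\frac{c^2}{\ell^3}\int_0^\infty\!\int_{\mathbb{S}_\ell} g^2\, d\tilde y\, d\tilde s
=\int_0^\infty\!\int_{\mathbb{S}_\ell} g^2\, d\tilde y\, d\tilde s
\end{equation}
gives exactly the white-noise covariance on $\mathbb{R}_{\ge 0}\times\mathbb{S}_\ell$. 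But your ``more transparent'' stochastic-integral version uses prefactor $c/\sqrt{\ell}$, which is not the reciprocal or restatement of the first: the variance of its right-hand side is
\begin{equation}
\frac{c^2}{\ell}\int_0^{c^2 t/\ell^2}\!\int_{\mathbb{S}} \Psi\!\left(\tfrac{\ell^2 r}{c^2},\ell y\right)^2 dy\, dr
=\frac{c^2}{\ell}\cdot\frac{c^2}{\ell^3}\int_0^t\!\int_{\mathbb{S}_\ell}\Psi^2\, d\tilde x\, ds
=\frac{c^4}{\ell^4}\int_0^t\!\int_{\mathbb{S}_\ell}\Psi^2\, d\tilde x\, ds,
\end{equation}
which is the white-noise covariance only if $c=\ell$. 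So the ``direct covariance computation'' you claim verifies this formula does not; the prefactor there must also be $\ell^{3/2}/c$, matching your first formula. With that correction, the Jacobian $\ell$ from the kernel identity combines with $c/\ell^{3/2}$ (the reciprocal of the corrected prefactor) to give the $c/\sqrt{\ell}$ in front of $\sigma(v)\dot W_{\mathbb{S}_\ell}$, so your final answer is right — but taking your second formula at face value would produce $\ell^{3/2}/c$ instead. This is a local fix, not a wrong approach. For comparison, the paper avoids the issue by splitting the scaling into two one-parameter steps, a pure spatial scaling of the noise (factor $1/\sqrt{\ell}$, their (cv3)) followed by a pure temporal scaling (factor $\sqrt{a}$ with $a=\ell^2/c^2$, their (cv4)), which makes the exponent bookkeeping easier to audit than the combined substitution you attempt.
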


\begin{proof}[Proof of Lemma \ref{L:rescale}]
Identifying $\mathbb{S}_{\ell}$ with the interval $[0,\ell)$, the transition density for the Brownian motion on $\mathbb{S}_{\ell}$ with variance $\alpha$ is explicitly given by
\begin{equation}\label{densityBM_S}
p^{\mathbb{S}_{\ell},\alpha}(t,x,y)=\frac{1}{\sqrt{2\pi \alpha t}} \sum_{k\in\Z} e^{\frac{-(y-x+ k \ell)^2}{2\alpha t}}=p^{\mathbb{S}_{\ell},1}(\alpha t,x,y), \qquad x,y\in [0,\ell)\simeq \mathbb{S}_{\ell}.
\end{equation}

Note that $\mathbb{S}=\mathbb{S}_1$ and $p=p^{\mathbb{S},\alpha}$ in \eqref{E:MildSol_u}. Observe that we have the relations for all $\ell\in(0,\infty)$:
\begin{align}
 p^{\mathbb{S},1}(t,x,y)\,=&\,
\ell\,p^{\mathbb{S}_{\ell},1}(\ell^2 t,\ell x,\ell y), \qquad x,y\in [0,1)\simeq \mathbb{S} \label{cv1}\\
\int_{\mathbb{S}}\phi(y)\,m(dy)\,=&\, \frac{1}{\ell}\int_{\mathbb{S}_{\ell}}\phi(\frac{\tilde y}{\ell})\,m(d\tilde{y}) \label{cv2}\\
\int_0^t\int_{\mathbb{S}}\psi(y,s)\,W_{\mathbb{S}}(dy,ds)\,\eqd\,&\frac{1}{\sqrt{\ell}}\int_0^t\int_{\mathbb{S}_{\ell}}\psi(\frac{\tilde{y}}{\ell},s)\,W_{\mathbb{S}_\ell}(d\tilde{y},ds) \label{cv3}\\
\frac{W_{\mathbb{S}_\ell}(d\tilde{y},a ds)}{\sqrt{a}}\,\eqd\,&W_{\mathbb{S}_\ell}(d\tilde{y}, ds), \qquad a\in (0,\infty) \label{cv4}
\end{align}

The rest of the proof follows from  change of variables, see for instance \cite[Section 4.1]{mueller2021speed}. To give some detail, we let  $p(t,x,z)=p^{\mathbb{S},\alpha}(t,x,z)$. Since $u$ is a mild solution, by
\eqref{cv1} we have
\begin{align}
v_t(\tilde x)=&\, \int_{\mathbb{S}} p( \frac{c^2 t}{\ell^2},\frac{\tilde x}{\ell},y)\,u_0(y)\,m(dy) + \int_0^{\frac{c^2 t}{\ell^2}}\int_{\mathbb{S}}p(\frac{c^2 t}{\ell^2}-s,\frac{\tilde x}{\ell},z)\,b(u_s(z))\,m(dz)\,ds   \notag\\
&\qquad + \int_{\mathbb{S}\times [0,\frac{c^2 t}{\ell^2}]}p(\frac{c^2 t}{\ell^2}-s,\frac{\tilde x}{\ell},z)\,
\sigma\big(u_s(z)\big)\,dW(z,s)\\
=&\,\int_{\mathbb{S}} \ell \,p^{\mathbb{S}_{\ell},\alpha}(c^2 t,\tilde x,\,\ell y)\,u_0(y)\,m(dy) + \int_0^{\frac{c^2 t}{\ell^2}}\int_{\mathbb{S}}\ell\, p^{\mathbb{S}_{\ell},\alpha}(c^2 t-\ell^2 s,\tilde x, \ell z)\,b(u_s(z))\,m(dz)\,ds   \notag\\
&\qquad + \int_{\mathbb{S}\times [0,\frac{c^2 t}{\ell^2}]}\ell\,p^{\mathbb{S}_{\ell},\alpha}(c^2 t -\ell^2 s,\tilde x, \ell z)\,
\sigma\big(u_s(z)\big)\,dW(z,s). \label{E:Mild_v}
\end{align}
The first term in \eqref{E:Mild_v} is, by \eqref{cv2}, $\int_{\mathbb{S_{\ell}}} \,p^{\mathbb{S}_{\ell},\alpha c^2}(r,\tilde x,\,\ell y)\,v_0(\tilde y)\,m(d \tilde y)$. The second  term in \eqref{E:Mild_v} is equal to
$\frac{c^2}{\ell^2}\int_0^{t}\int_{\mathbb{S}_{\ell}}p^{\mathbb{S}_{\ell},\alpha c^2}(t-r,\tilde x, \tilde z)\,b(v_s(\tilde z))\,m(d \tilde z)\,dr$,
by \eqref{cv2} and the change of variable $s=\frac{c^2}{\ell^2} r$.
The third  term in \eqref{E:Mild_v} is, by \eqref{cv3} and then \eqref{cv4},
\[
\frac{1}{\sqrt{\ell}}\int_{\mathbb{S}_{\ell}\times [0,\frac{c^2 t}{\ell^2}]}\ell\,p^{\mathbb{S}_{\ell},\alpha}(c^2 t-\ell^2 s,\tilde x, \tilde z)
\sigma\big(u_s(\frac{\tilde z}{\ell})\big)W_{\mathbb{S}_\ell}(d\tilde z,ds) \eqd \frac{c}{\sqrt{\ell}}
\int_{\mathbb{S}_{\ell}\times [0,t]}p^{\mathbb{S}_{\ell},\alpha c^2}(t-r,\tilde x, \tilde z)
\sigma\big(v_r(\tilde z)\big)W_{\mathbb{S}_\ell}(d\tilde z,dr).
\]
The proof is complete.
\end{proof}

\begin{remark}\rm \label{Rk:wellposedFKPP}
The stochastic FKPP \eqref{fkpp_X} corresponds to $b(u)=\beta\,u(1-u)$ and $\sigma(u)=\sqrt{\gamma\,u(1-u)}$. In this case there exists a  mild solution that is unique in law, for any initial condition $u_0\in \calB(\bfS;[0,1])$; see \cite{MR1271224} and \cite[Remark 1]{hobson2005duality}.

\end{remark}

We now restrict our attention to the stochastic FKPP, so fix $b(u):=\beta u(1-u)$ and $\sigma(u):=\sqrt{\gamma u(1-u)}$ for the time being, for fixed and arbitrary constants $\beta\in \Rm$ and $\gamma\in \Rm_{>0}$ (recall that $\alpha\in \Rm_{>0}$ is also fixed and arbitrary).

\begin{lemma}[Girsanov's transform for FKPP]\label{L:Girsanov}
Let $\P^{\beta}_{t}$ be the measure induced on the
canonical path space 
up to time $t$ by the stochastic FKPP \eqref{fkpp_X} with initial condition $u_0\in \mathcal{B}(\mathbb{S};[0,1])$ and selection coefficient $\beta$ (where $\alpha$ and $\gamma$ are fixed positive numbers).
For all $t\in \R_{\geq 0}$ and for any event $A \in \calF_t$,
\[
\exp{ \left\{-\left(\frac{\beta }{\gamma}+\frac{\beta^2}{8\gamma}t\right) \right\}}\,\P^{0}_{t}(A)\;\leq\; \P^{\beta}_{t}( A) \;\leq\;  \exp{ \left\{\frac{\beta }{\gamma}\right\}}\,\P^{0}_{t}(A).
\]
\end{lemma}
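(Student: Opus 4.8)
\textbf{Proof proposal for Lemma \ref{L:Girsanov}.} The plan is to realize the stochastic FKPP with selection coefficient $\beta$ as a Girsanov change of measure applied to the neutral ($\beta = 0$) equation, and then to bound the Radon--Nikodym derivative above and below uniformly in the initial condition and in the configuration of the path.

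First I would set up the two measures on a common space. Write $\P^0_t$ for the law of the neutral stochastic FKPP on the canonical path space $C([0,t]\times \mathbb{S};[0,1])$ (or the appropriate Walsh-solution path space) up to time $t$, driven by a white noise $\dot W$. The mild solution also solves the martingale problem, so for test functions $\varphi$ the process $M^{\varphi}_t := \langle u_t, \varphi\rangle - \langle u_0,\varphi\rangle - \int_0^t \langle u_s, \tfrac{\alpha}{2}\Delta\varphi\rangle\,ds$ is a continuous martingale with quadratic variation $\int_0^t \gamma\langle u_s(1-u_s), \varphi^2\rangle\,ds$; equivalently, the noise may be represented so that $dW$ is the driving orthogonal martingale measure with intensity $\gamma u_s(1-u_s)\,m(dy)\,ds$ after factoring out $\sqrt{\gamma u_s(1-u_s)}$. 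The selection term $\beta u(1-u)\,dt$ is precisely a drift of the form $\sqrt{\gamma u(1-u)}\cdot \theta(u_s)\,dt$ with $\theta(u_s(y)) = \frac{\beta}{\sqrt\gamma}\sqrt{u_s(y)(1-u_s(y))}$, which is the shape required to apply the Girsanov/Cameron--Martin theorem for martingale measures (as in Dawson's or Walsh's framework; cf. the treatment of Girsanov transforms for SPDEs with multiplicative noise). Thus $\P^\beta_t \ll \P^0_t$ with
\begin{equation}\label{eq:girsanov-RN}
\frac{d\P^\beta_t}{d\P^0_t}\Big|_{\calF_t} = \exp\left\{ \int_0^t\!\!\int_{\mathbb{S}} \frac{\beta}{\sqrt\gamma}\sqrt{u_s(y)(1-u_s(y))}\;\widetilde W(dy,ds) - \frac{\beta^2}{2\gamma}\int_0^t\!\!\int_{\mathbb{S}} u_s(y)(1-u_s(y))\,m(dy)\,ds\right\},
\end{equation}
where $\widetilde W$ is the (neutral-law) driving noise. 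I would then rewrite the stochastic integral in \eqref{eq:girsanov-RN} in terms of the total mass: since $b\equiv 0$ in the neutral equation, taking $\varphi \equiv 1$ in the mild/martingale formulation gives that $\bar u_t := \int_{\mathbb{S}} u_t(y)\,m(dy)$ satisfies $d\bar u_t = \int_{\mathbb{S}}\sqrt{\gamma u_s(y)(1-u_s(y))}\,\widetilde W(dy,ds)$, a continuous martingale with $d\langle \bar u\rangle_t = \gamma\int_{\mathbb{S}} u_s(1-u_s)\,m(dy)\,ds$. Hence $\int_0^t\!\int_{\mathbb{S}}\tfrac{\beta}{\sqrt\gamma}\sqrt{u_s(1-u_s)}\,\widetilde W(dy,ds) = \tfrac{\beta}{\gamma}(\bar u_t - \bar u_0)$, and the quadratic-variation term equals $\tfrac{\beta^2}{2\gamma^2}\langle \bar u\rangle_t$. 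Therefore
\begin{equation}\label{eq:girsanov-mass}
\frac{d\P^\beta_t}{d\P^0_t}\Big|_{\calF_t} = \exp\left\{ \frac{\beta}{\gamma}(\bar u_t - \bar u_0) - \frac{\beta^2}{2\gamma^2}\langle \bar u\rangle_t\right\}.
\end{equation}

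Next I would bound \eqref{eq:girsanov-mass} deterministically. Since $0\le u_s(y)\le 1$ we have $0\le \bar u_s\le 1$, so $\bar u_t - \bar u_0 \in [-1,1]$ and $\tfrac{\beta}{\gamma}(\bar u_t - \bar u_0)\le \tfrac{|\beta|}{\gamma}$; together with $\langle\bar u\rangle_t\ge 0$ this gives the upper bound $\frac{d\P^\beta_t}{d\P^0_t}\le \exp\{|\beta|/\gamma\}$, hence $\P^\beta_t(A)\le \exp\{\beta/\gamma\}\,\P^0_t(A)$ after noting the statement's constant is $\exp\{\beta/\gamma\}$ (one may take $|\beta|$ or observe the inequality as stated holds since $e^{\beta/\gamma}\ge e^{-|\beta|/\gamma}$ only when $\beta\ge 0$; I would simply record it with $|\beta|$, or invoke the $\beta\ge 0$ reduction already made in Section \ref{S:duality}). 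For the lower bound, use $\bar u_t - \bar u_0 \ge -1$ and $\langle \bar u\rangle_t = \gamma\int_0^t\!\int_{\mathbb{S}} u_s(1-u_s)\,m(dy)\,ds \le \gamma\int_0^t \tfrac14\,ds = \tfrac{\gamma t}{4}$ (since $u(1-u)\le \tfrac14$ and $m(\mathbb{S})=1$), giving $\frac{d\P^\beta_t}{d\P^0_t}\ge \exp\{-\tfrac{\beta}{\gamma} - \tfrac{\beta^2 t}{8\gamma}\}$, which is exactly the claimed lower constant $\exp\{-(\tfrac{\beta}{\gamma} + \tfrac{\beta^2}{8\gamma}t)\}$. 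Integrating these pointwise bounds on the density over any $A\in\calF_t$ yields the two inequalities.

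The main obstacle I anticipate is the rigorous justification of the Girsanov formula \eqref{eq:girsanov-RN} in the SPDE setting: one must verify that the Novikov-type condition holds so that the exponential in \eqref{eq:girsanov-mass} is a genuine martingale (not merely a supermartingale) and that the change of measure does produce the mild solution of the $\beta$-equation with the correct law. Here the explicit reduction to the bounded one-dimensional martingale $\bar u$ with $\langle\bar u\rangle_t\le \gamma t/4$ is what saves the day: the exponential is bounded above by $e^{|\beta|/\gamma}$, so it is trivially uniformly integrable, giving the martingale property immediately and sidestepping Novikov entirely. Identifying the transformed law with $\P^\beta_t$ is then a matter of checking that under the new measure the driving noise acquires the drift $\tfrac{\beta}{\sqrt\gamma}\sqrt{u(1-u)}$, so that $u$ solves the mild equation with $b(u)=\beta u(1-u)$; combined with the weak uniqueness recalled in Remark \ref{Rk:wellposedFKPP}, this pins down the law. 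I would cite Walsh \cite{MR876085} and the standard Girsanov theorem for orthogonal martingale measures for these two points.
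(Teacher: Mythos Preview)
Your proposal is correct and follows essentially the same route as the paper: write the Radon--Nikodym derivative via Girsanov for the SPDE, rewrite the stochastic integral as $\tfrac{\beta}{\gamma}(\bar u_t-\bar u_0)$ using the neutral mild equation integrated against the constant test function, and bound the resulting expression using $\bar u_t-\bar u_0\in[-1,1]$ and $u(1-u)\le \tfrac14$. The paper cites \cite[Theorem IV.1.6]{perkins2002part} (following \cite{mueller2021speed}) for the Girsanov step rather than arguing uniform integrability directly, but your observation that the density is bounded by $e^{|\beta|/\gamma}$ (hence trivially a martingale) is a clean way to handle that point.
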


\begin{proof}[Proof of Lemma \ref{L:Girsanov}]
We follow \cite[Section 2.2]{mueller2021speed} to use a version the Girsanov theorem for stochastic PDE in \cite[Theorem IV.1.6]{perkins2002part}. Namely,  $\P^{\beta}_{t}$ is absolutely continuous with respect to $\P^{0}_{t}$ and
\begin{align*}
{\frac{d\P^{\beta}_{t}}{d\P^{0}_{t}}}{\bigg\lvert_{\calF_t}}=&\,
\exp{\left\{\int_0^t\int_{\mathbb{S}}\frac{\beta u_s(1-u_s)}{\sqrt{\gamma u_s(1-u_s)}} W(dy,ds) -\frac{1}{2}\int_0^t\int_{\mathbb{S}}\frac{\beta^2}{\gamma}u_s(1-u_s)\,dy\,ds\right\}}\\
=&\,    
\exp{\left\{ \frac{\beta}{\gamma}\int_{\mathbb{S}}[u_t(y)-u_0(y)]\,dy -\frac{1}{2}\int_0^t\int_{\mathbb{S}}\frac{\beta^2}{\gamma}u_s(1-u_s)\,dy\,ds\right\}},
\end{align*}
where $u$ solves the stochastic FKPP with $\beta=0$; i.e., $\partial_t u= \frac{\alpha}{2}\Delta u + \sqrt{\gamma u(1-u)}\,\dot{W}$. The last display
is bounded 
between
$\exp{ \left\{-\left(\frac{\beta }{\gamma}+\frac{\beta^2}{8\gamma}t\right) \right\}}$ and $\exp{ \left\{\frac{\beta }{\gamma}\right\}}$ almost surely under $\P^{\beta}_{t}$ for all $\beta\in[0,\infty)$, because  $0\leq u(s,x)\leq 1$ for all $(s,x)\in[0,t]\times\mathbb{S}$ almost surely under $\P^{0}_{t}$.
\end{proof}

\begin{lemma}[Initial mass]\label{L:extinct_at_t}
We consider a sequence of initial conditions $(f_n)_{n\in \mathbb{N}}\subset \calC_{\ast}$. Then for any $t>0$ we have:
\begin{itemize}
    \item[(i)] if $\int_{\mathbb{S}}f_n(x)\,dx\to 0$ then $\P_{f_n}(u_t \equiv {\bf 0} ) \to 1$,
\item[(ii)] if $\int_{\mathbb{S}}f_n(x)\,dx\to 1$ then $\P_{f_n}(u_t \equiv {\bf 1} ) \to 1$, and 
\item[(iii)] if $\left(\int_{\mathbb{S}}f_n(x)\,dx\right)\left(1-\int_{\mathbb{S}}f_n(x)\,dx\right)\to 0$ then $\P_{f_n}(u_t \in \{{\bf 0},\,{\bf 1}\} ) \to 1$.
\end{itemize}
\end{lemma}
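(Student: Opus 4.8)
The plan is to reduce all three parts, for every $\beta\in\mathbb R$, to the single case $\beta=0$, and then to establish a quantitative $L^1$-type estimate in that case via the moment duality \eqref{WFdual} with coalescing Brownian motion (CBM). For the reduction I would use Girsanov's transform (Lemma \ref{L:Girsanov}): for $\beta\ge 0$ and any $A\in\calF_t$ one has $\P^{\beta}_{f}(A)\le e^{\beta/\gamma}\,\P^{0}_{f}(A)$, so, applied to the $\calF_t$-events $\{u_t\not\equiv\mathbf{0}\}$, $\{u_t\not\equiv\mathbf{1}\}$ and $\{u_t\notin\{\mathbf{0},\mathbf{1}\}\}$, parts (i)--(iii) for $\beta\ge 0$ follow from the same statements for $\beta=0$. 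For $\beta<0$ I would pass to $v:=\mathbf{1}-u$, which solves the stochastic FKPP with selection coefficient $-\beta>0$ and initial datum $\mathbf{1}-f_n\in\calC_{\ast}$ (as noted at the start of Section \ref{S:duality}); since this interchanges $\{u_t\equiv\mathbf{0}\}$ with $\{v_t\equiv\mathbf{1}\}$, turns $\int_{\mathbb S}f_n(x)\,dx\to 0$ into $\int_{\mathbb S}(1-f_n(x))\,dx\to 1$, and leaves part (iii) invariant, the case $\beta<0$ reduces to the case of positive selection and hence, by Girsanov, to $\beta=0$. From here on $\beta=0$.

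The core of the argument is the estimate: there is $C_t=C_t(\alpha,\gamma)\in(0,\infty)$ with
\[
\P_g(u_t\not\equiv\mathbf{0})\le C_t\int_{\mathbb S}g(x)\,dx\qquad\text{for all }g\in\calC_{\ast},\ t>0,
\]
together with its mirror image $\P_g(u_t\not\equiv\mathbf{1})\le C_t\int_{\mathbb S}(1-g(x))\,dx$ (which follows by the same $v=\mathbf{1}-u$ symmetry, as $\beta=0$ is preserved). To prove the first estimate I would fix a countable dense set $(x_i)_{i\ge1}\subseteq\mathbb S$; since $u_t\in\calC(\mathbb S;[0,1])$ almost surely for $t>0$, continuity forces $\prod_{i\ge1}(1-u_t(x_i))=\Ind_{\{u_t\equiv\mathbf{0}\}}$ a.s. (if $u_t\not\equiv\mathbf{0}$ then $\{u_t>\varepsilon\}$ is a non-empty open set for some $\varepsilon>0$, hence contains infinitely many of the $x_i$ and the product vanishes). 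Hence, by monotone convergence and then \eqref{WFdual} with $\beta=0$ (so that the dual is a CBM $\bar X$ started from $(x_1,\dots,x_k)$), and using $\prod_i(1-a_i)\ge 1-\sum_i a_i$,
\[
\P_g(u_t\equiv\mathbf{0})=\lim_{k\to\infty}\mathbf E_{(x_1,\dots,x_k)}\Big[\prod_{i\in\mathcal I_t}(1-g(X^i_t))\Big]\ \ge\ 1-\limsup_{k\to\infty}\mathbf E_{(x_1,\dots,x_k)}\Big[\sum_{i\in\mathcal I_t}g(X^i_t)\Big].
\]
It then remains to bound the last expectation \emph{uniformly in $k$}. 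Here I would split at time $t/2$: conditionally on $\calF_{t/2}$, couple the CBM on $[t/2,t]$ with $N_{t/2}$ independent Brownian motions issued from the time-$t/2$ particle positions, so that the CBM particle positions at time $t$ form a subset of the free Brownian positions; since $g\ge 0$ and $p(t/2,\cdot,\cdot)$ is bounded on the compact $\mathbb S\times\mathbb S$, this gives $\mathbf E[\sum_{i\in\mathcal I_t}g(X^i_t)\mid\calF_{t/2}]\le N_{t/2}\,\|p(t/2,\cdot,\cdot)\|_\infty\int_{\mathbb S}g(x)\,dx$. Taking expectations and using that $K_{t/2}:=\sup_{n}\sup_{\bar x\in\mathbb S^n}\mathbf E_{\bar x}[N_{t/2}]<\infty$ --- the quantitative ``coming down from infinity'' of $1$-type CBM from \cite{hobson2005duality,barnes2022coming}, which is external and so avoids circularity with Lemma \ref{L:supx_nt} (whose proof uses the present lemma) --- yields $\mathbf E_{(x_1,\dots,x_k)}[\sum_{i\in\mathcal I_t}g(X^i_t)]\le\|p(t/2,\cdot,\cdot)\|_\infty K_{t/2}\int_{\mathbb S}g(x)\,dx$ for every $k$, and letting $k\to\infty$ gives the estimate with $C_t:=\|p(t/2,\cdot,\cdot)\|_\infty K_{t/2}$.

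Finally I would read off the lemma: (i) is $\P_{f_n}(u_t\not\equiv\mathbf{0})\le C_t\int_{\mathbb S}f_n(x)\,dx\to 0$; (ii) is $\P_{f_n}(u_t\not\equiv\mathbf{1})\le C_t\int_{\mathbb S}(1-f_n(x))\,dx\to 0$; and (iii) follows from $\P_{f_n}(u_t\notin\{\mathbf{0},\mathbf{1}\})\le\min\{\P_{f_n}(u_t\not\equiv\mathbf{0}),\,\P_{f_n}(u_t\not\equiv\mathbf{1})\}$ together with $\min\{a,1-a\}\le\sqrt{a(1-a)}$ applied to $a=\int_{\mathbb S}f_n(x)\,dx$. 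The main obstacle is the uniform-in-$k$ bound above: the naive coupling from time $0$ fails because it contributes a factor $k$, so the splitting at $t/2$ combined with the uniform ``coming down from infinity'' bound $K_{t/2}<\infty$ is essential, and one must invoke the latter only through the external references to keep the logic non-circular.
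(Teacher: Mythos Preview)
Your proof is correct and takes a genuinely different route from the paper's in the core $\beta=0$ step. Both begin by reducing to $\beta=0$ via Girsanov (Lemma \ref{L:Girsanov}) and invoking duality with coalescing Brownian motion. The paper then works directly with the entrance law from a dense set of initial particles and the identity $\P_{u_0}(u_t\equiv\mathbf{0})={\bf E}_{\infty}\big[\prod_{i=1}^{n_t}(1-u_0(X^i_t))\big]$ from \cite{hobson2005duality}; it conditions on $\{n_t=k\}$ together with the time of the last coalescence before $t$, uses the parabolic Harnack inequality to obtain a bounded conditional density for the surviving particle positions, and concludes by bounded convergence. Your argument is instead quantitative: combining $\prod(1-a_i)\ge 1-\sum a_i$, the coupling of CBM with free Brownian motions on $[t/2,t]$, and the uniform bound $K_{t/2}=\sup_{n,\bar x}{\bf E}_{\bar x}[N_{t/2}]<\infty$, you obtain the linear estimate $\P_g(u_t\not\equiv\mathbf{0})\le C_t\int_{\mathbb S}g$. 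Your route avoids the Harnack machinery and delivers a sharper, mass-linear bound, at the cost of requiring the stronger input $\sup_{n,\bar x}{\bf E}_{\bar x}[N_t]<\infty$ rather than merely $n_t<\infty$ a.s.\ as in the paper; you are right that this stronger input must be sourced externally (e.g., via \cite{hobson2005duality,barnes2022coming} together with the monotone coupling under the maximal entrance law) to avoid circularity with Lemma \ref{L:supx_nt}.
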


\begin{proof}[Proof of Lemma \ref{L:extinct_at_t}]
We first prove (i). 
By Girsanov's transform (Lemma \ref{L:Girsanov}), it suffices to  prove this for the case $\beta=0$.
Fix $u_0\in \mathcal{B}(\mathbb{S};\,[0,1])$ and let $u$ be a mild solution to the stochastic FKPP with initial condition $u_0$. 
Below we give a proof using duality and the property of ``coming down from infinity" of the coalescing Brownian motion on the circle \cite{hobson2005duality, barnes2022coming}.

We fix arbitrary $t>0$. For each $n\in\mathbb{N}$, we let $\{X^{(n),i}_0\}$ be a Poisson point process on $\mathbb{S}$ with intensity $n\cdot m(dx)$, independent of $u_0$. By superposition, we can couple $\{X^{(n),i}_0\}$ for all $n\in\mathbb{N}$ such that $\{X^{(n),i}_0\} \subset \{X^{(n+1),i}_0\}$. Then  the set of points $X^{\infty}:=\cup_{n\in\mathbb{N}}\{X^{(n),i}_0\}$ is countable and dense in $\mathbb{S}$ almost surely. 
By the duality in \cite[eqn.(10)]{hobson2005duality},
\begin{equation}\label{E:extinct_at_t}
\P_{u_0}(u_t \equiv {\bf 0} )={\bf E}_{\infty}\left[ \prod_{i=1}^{n_t}(1-u_0(X^i_t)) \right] ,
\end{equation}
where  ${\bf E}_{\infty}$ is the expectation of the system of coalescing Brownian motions with  initial locations $X^{\infty}$,  
averaging over the randomness of both the initial location $X^{\infty}$ and the system of coalescing Brownian motions at time $t$. 


By \cite{hobson2005duality}, the number $n_t$ of particles alive at time $t$ is finite almost surely under ${\bf P}_{\infty}$. In the follwing, we define $n_0:=+\infty$, so that $n_0>k$ for all $k<\infty$.
The right of \eqref{E:extinct_at_t} is therefore equal to
\begin{align}
\sum_{k=1}^{\infty}\sum_{\ell=1}^{\infty} {\bf E}_{\infty}\left[ \prod_{i=1}^k(1-u_0(X^i_t))\,\Big|\,n_t=n_{(1-2^{-\ell})t}=k\right]\,{\bf P}_{\infty}(n_t=k,n_{(1-2^{-\ell})t}=k,n_{(1-2^{-\ell+1})t}>k),
\end{align}
which we claim tends to 1
as $\int_{\mathbb{S}}u_0(x)\,m(dx)\to 0$.

Indeed, ${\bf P}_{\infty}\left((X^1_t,\ldots,X^k_t)\in \cdot \,|\,n_t=n_{(1-2^{-\ell})t}=k \right)$
has a bounded density on $\mathbb{S}^k$ with respect to the Lebesque measure by the parabolic Harnack inequality. It therefore follows that 
\[
{\bf E}_{\infty}\left[ \prod_{i=1}^k(1-u_0(X^i_t))\,\Big|\,n_t=n_{(1-2^{-\ell})t}=k\right]\ra 1 \qquad \text{as }\int_{\mathbb{S}}u_0(x)\,m(dx)\to 0,
\]
whence our claim follows by the bounded convergence theorem.

The proof of  (i) is complete. The proof of (ii) follows by considering $v:=1-u$ and applying (i). Claim (iii) follows from (i) and (ii).

\end{proof}

The  $p$-moment estimate for space and time increments in Lemma \ref{L:Moment_hatu} below is known (see, for instance, \cite[Lemma 4]{fan2017stochastic}). It
implies, via \cite[Theorem 1.1]{MR876085} and by taking $p$ large enough, that the unique solution $u$ to \eqref{fkpp_X} is H\"older continuous with exponent $<1/2$ in space and exponent $<1/4$ in time.

	\begin{lemma}
		\label{L:Moment_hatu}
		For any $0<T_1\leq T_2<\infty$ and $p\in[2,\infty)$, there exists a constant  $C=C_{T_1,T_2,p}\in(0,\infty)$ not depending on the initial condition $u_0$ such that
		\begin{align}
			\label{E:Moment_hatu}
			&\E_{u_0}\left[\,|u(t_1,x_1)- u(t_2,x_2)|^p \,\right] \leq C\,\Big(|t_1-t_2|^{p/4}+|{\rm dist}(x_1,x_2)|^{p/2} \Big) 
		\end{align}
		for all $t_1, t_2\in[T_1,T_2]$, $x_1,\,x_2\in \mathbb{S}$ and $u_0\in \mathcal{B}(\mathbb{S};[0,1])$. 
	\end{lemma}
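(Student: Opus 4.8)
\textbf{Proof proposal for Lemma \ref{L:Moment_hatu}.}

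The plan is to derive the estimate \eqref{E:Moment_hatu} directly from the mild formulation \eqref{E:MildSol_FKPP} by splitting the solution into its three constituent pieces and bounding the increments of each. Write $u_t(x) = I_0(t,x) + I_{\rm drift}(t,x) + I_{\rm noise}(t,x)$, where $I_0$ is the semigroup term $\int_{\mathbb S} p(t,x,y)u_0(y)\,m(dy)$, $I_{\rm drift}$ is the deterministic Duhamel term involving $\beta u_s(1-u_s)$, and $I_{\rm noise}$ is the stochastic convolution involving $\sqrt{\gamma u_s(1-u_s)}$. Since $0\le u_s\le 1$ almost surely, both $|\beta u_s(1-u_s)|$ and $\gamma u_s(1-u_s)$ are bounded by deterministic constants depending only on $\beta,\gamma$; this uniform boundedness of the coefficients along the trajectory is precisely what makes the constant $C$ independent of $u_0$, and is the key structural observation.

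First I would handle $I_0$ and $I_{\rm drift}$, which require no probability: using standard Gaussian heat-kernel estimates on the circle $\mathbb S$ (comparing $p^{\mathbb S,\alpha}$ with the Euclidean heat kernel via the series representation, or just invoking that $p$ is the transition density of Brownian motion of variance $\alpha$), one gets $|I_0(t_1,x_1)-I_0(t_2,x_2)| \le C(|t_1-t_2|^{1/2-\epsilon} + {\rm dist}(x_1,x_2)^{1-\epsilon})$ for $t_i\in[T_1,T_2]$ with $T_1>0$ — in fact, for $u_0$ bounded and $t$ bounded away from $0$ the function $I_0$ is smooth, so these increments are even Lipschitz, and certainly dominated by the right-hand side of \eqref{E:Moment_hatu} after raising to the $p$-th power. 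The drift term $I_{\rm drift}$ is treated the same way: its integrand is uniformly bounded by $|\beta|/4$, so $|I_{\rm drift}(t_1,x_1)-I_{\rm drift}(t_2,x_2)|$ is controlled by $\frac{|\beta|}{4}\int\!\!\int |p(t_1-s,x_1,z)-p(t_2-s,x_2,z)|\,dz\,ds$ plus an endpoint term $\frac{|\beta|}{4}|t_1-t_2|$, and the standard $L^1$-continuity estimates for the heat kernel in time and space give a bound of order $|t_1-t_2|^{1/2}+{\rm dist}(x_1,x_2)$, comfortably within the claimed exponents.

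The substantive step is the stochastic convolution $I_{\rm noise}$. Here I would use the Burkholder–Davis–Gundy inequality for Walsh-type stochastic integrals (see \cite{MR876085}): for $p\ge 2$,
\[
\E\big[|I_{\rm noise}(t_1,x_1)-I_{\rm noise}(t_2,x_2)|^p\big] \le C_p\, \E\Big[\Big(\int_0^{t_1\vee t_2}\!\!\int_{\mathbb S} \big|\Delta p(s;t_i,x_i)\big|^2\, \gamma\, u_s(1-u_s)\,m(dz)\,ds\Big)^{p/2}\Big],
\]
where $\Delta p(s;t_i,x_i) := p(t_1-s,x_1,z)\Ind(s<t_1) - p(t_2-s,x_2,z)\Ind(s<t_2)$. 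Bounding $\gamma u_s(1-u_s)\le \gamma/4$ reduces this to the purely deterministic quantity $\big(\int_0^{t_1\vee t_2}\!\int_{\mathbb S}|\Delta p|^2\,dz\,ds\big)^{p/2}$, and the classical kernel computations — $\int_0^{t}\!\int_{\mathbb S}|p(t-s,x_1,z)-p(t-s,x_2,z)|^2\,dz\,ds = O({\rm dist}(x_1,x_2))$ and $\int_0^{t_1}\!\int_{\mathbb S}|p(t_1-s,x,z)-p(t_2-s,x,z)|^2\,dz\,ds + \int_{t_1}^{t_2}\!\int_{\mathbb S}p(t_2-s,x,z)^2\,dz\,ds = O(|t_1-t_2|^{1/2})$, valid for $t_i$ bounded away from $0$ — yield the bound $C(|t_1-t_2|^{1/4}+{\rm dist}(x_1,x_2)^{1/2})^{p}$. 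Combining the three pieces with the elementary inequality $(a+b+c)^p\le 3^{p-1}(a^p+b^p+c^p)$ gives \eqref{E:Moment_hatu}.

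I expect the main (really the only) obstacle to be bookkeeping the heat-kernel increment estimates on the circle uniformly for $t_1,t_2\in[T_1,T_2]$ with $T_1>0$ — in particular checking that the $O(|t_1-t_2|^{1/2})$ and $O({\rm dist}(x_1,x_2))$ bounds for the $L^2$ norms of kernel differences hold with a constant depending only on $T_1,T_2$ (not on $u_0$, which is automatic since $u_0$ has disappeared after using $0\le u\le 1$). These are standard and can be lifted from the literature on the stochastic heat equation (e.g. \cite{fan2017stochastic}); since the statement itself says the estimate is known, I would in practice cite \cite[Lemma 4]{fan2017stochastic} and merely indicate the splitting above, noting that the uniformity in $u_0$ comes from the a priori bound $0\le u_s\le 1$ which caps both $|b(u_s)|$ and $\sigma(u_s)^2$ by constants depending only on $\beta$ and $\gamma$.
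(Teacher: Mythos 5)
Your proposal is correct. The paper itself does not prove Lemma \ref{L:Moment_hatu} but defers to \cite[Lemma 4]{fan2017stochastic}, which you also note at the end; your sketch --- the three-way split of the mild solution, the a priori bound $0\le u_s\le 1$ that makes $b(u_s)$ and $\sigma(u_s)^2$ uniformly bounded by constants depending only on $\beta,\gamma$ (which is exactly why $C$ is independent of $u_0$), the Burkholder--Davis--Gundy bound for the Walsh stochastic convolution, and the standard $L^2$-in-time/space heat-kernel increment estimates on the circle --- is precisely the standard argument that such a reference contains, and the heat-kernel bounds you invoke hold uniformly on $[T_1,T_2]$ as required.
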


With Lemma \ref{L:Moment_hatu},  we obtain 
the following continuity result that is uniform over all initial conditions $u_0\in \mathcal{B}(\mathbb{S};[0,1])$.
\begin{lemma}\label{L:uniform continuity}
For any $t\in(0,\infty)$ and $\eta,\,\eta'\in(0,1)$, there exists $\delta=\delta(t,\eta,\eta')\in(0,1)$ such that
\begin{equation}\label{E:uniform continuity}
    \sup_{u_0\in \mathcal{B}(\mathbb{S};[0,1])}\P_{u_0}\left(\omega(u_t;\delta)\;\geq\, \eta \right)\leq \eta', \qquad \text{where}\quad \omega(f;\delta):= \sup_{x,y\in\mathbb{S}:\,|x-y|\leq \delta}|f(x)-f(y)|.
\end{equation}
That is, $\omega(u_t;\delta)\to 0$ in probability under $\P_{u_0}$, uniformly over all initial conditions $u_0\in \mathcal{B}(\mathbb{S};[0,1])$, as $\delta\to 0$. In particular,  $\{\P_{f}(u_t\in\cdot):\,f\in \mathcal{B}(\mathbb{S};[0,1])\}$ is tight in $\calP(\mathcal{C}(\mathbb{S};[0,1]))$.
\end{lemma}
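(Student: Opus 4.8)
The plan is to bootstrap the pointwise moment estimate of Lemma~\ref{L:Moment_hatu} into a bound on a Hölder seminorm of $u_t$ that is \emph{uniform in the initial condition}, and then convert this into the claimed tail bound on the spatial modulus of continuity via Markov's inequality. Fix $t\in(0,\infty)$. Applying Lemma~\ref{L:Moment_hatu} with $T_1=T_2=t$, so that only the spatial increment survives, we obtain for every $p\in[2,\infty)$ a constant $C_{t,p}:=C_{t,t,p}\in(0,\infty)$ which does \emph{not} depend on $u_0$ and satisfies
\[
\E_{u_0}\big[\,|u(t,x)-u(t,y)|^p\,\big]\le C_{t,p}\,\big|{\rm dist}(x,y)\big|^{p/2}\qquad\text{for all }x,y\in\mathbb{S},\ u_0\in\mathcal{B}(\mathbb{S};[0,1]).
\]
The fact that $C_{t,p}$ is independent of $u_0$ is the feature that makes the whole argument run, and it is inherited directly from Lemma~\ref{L:Moment_hatu}.

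Next, fix $p>2$ and $\gamma\in\big(0,\tfrac12-\tfrac1p\big)$. Since $\mathbb{S}$ is a compact one-dimensional manifold, the quantitative Kolmogorov--Chentsov continuity theorem (equivalently, the Garsia--Rodemich--Rumsey inequality) applied to the random field $x\mapsto u(t,x)$ produces a constant $K=K(p,\gamma,C_{t,p})\in(0,\infty)$, again independent of $u_0$, such that
\[
\E_{u_0}\Big[\,[u_t]_{C^\gamma}^p\,\Big]\le K,\qquad\text{where}\quad [f]_{C^\gamma}:=\sup_{x\ne y}\frac{|f(x)-f(y)|}{\big|{\rm dist}(x,y)\big|^\gamma}.
\]
The only content here is that the output constant $K$ depends on the field solely through the moment constant $C_{t,p}$, hence is uniform over $u_0\in\mathcal{B}(\mathbb{S};[0,1])$; one may alternatively run the dyadic chaining by hand and read off the same dependence. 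For $\delta\in(0,1)$, the constraint ${\rm dist}(x,y)\le|x-y|\le\delta$ entering $\omega(\cdot;\delta)$ gives the deterministic bound $\omega(u_t;\delta)\le[u_t]_{C^\gamma}\,\delta^\gamma$, so by Markov's inequality
\[
\sup_{u_0\in\mathcal{B}(\mathbb{S};[0,1])}\P_{u_0}\big(\omega(u_t;\delta)\ge\eta\big)\le\sup_{u_0}\P_{u_0}\big([u_t]_{C^\gamma}\ge\eta\,\delta^{-\gamma}\big)\le K\,\eta^{-p}\,\delta^{\gamma p}.
\]
Given $\eta,\eta'\in(0,1)$, it then suffices to choose $\delta=\delta(t,\eta,\eta')\in(0,1)$ small enough that $K\eta^{-p}\delta^{\gamma p}\le\eta'$, which proves \eqref{E:uniform continuity}.

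For the final assertion, note that $\|u_t\|_\infty\le1$ almost surely, so $\{\P_f(u_t\in\cdot):f\in\mathcal{B}(\mathbb{S};[0,1])\}$ is automatically uniformly bounded in $\mathcal{C}(\mathbb{S};[0,1])$; together with the equicontinuity estimate \eqref{E:uniform continuity}, the standard tightness criterion in $\mathcal{C}(\mathbb{S};[0,1])$ (the circle analogue of \cite[Theorem~7.3]{billingsley2013convergence}, exactly as invoked in the proof of Lemma~\ref{lem:compactness lemma}) yields tightness in $\calP(\mathcal{C}(\mathbb{S};[0,1]))$. I expect no genuine analytic obstacle beyond Lemma~\ref{L:Moment_hatu} itself; the one point requiring care is purely bookkeeping, namely verifying that the Kolmogorov/GRR constant $K$ depends on $u_0$ only through $C_{t,p}$, so that the estimate is genuinely uniform over all $u_0\in\mathcal{B}(\mathbb{S};[0,1])$.
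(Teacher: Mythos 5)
Your proof is correct and is essentially the same argument as the paper's: both start from the moment estimate of Lemma~\ref{L:Moment_hatu}, observe that the constant is uniform in $u_0$, upgrade that to a uniform bound on a Hölder-type modulus via dyadic chaining, and finish with Markov's inequality and the standard tightness criterion. The only presentational difference is that the paper carries out the dyadic chaining explicitly (introducing the quantity $S_{p,\lambda}(m)$ and choosing $p>2(\lambda+1)$) so that the $u_0$-independence of all constants is visible by inspection, whereas you black-box that step into the quantitative Kolmogorov--Chentsov/GRR theorem and flag constant-tracking as the point requiring care; since that theorem is proved by exactly the chaining the paper writes out, the two are interchangeable and you have correctly identified the single thing one must verify.
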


\begin{proof}[Proof of Lemma \ref{L:uniform continuity}]
We fix $t>0$ throughout the proof.
Note that $\omega(u_t;\delta)\to 0$ almost surely under $\P_{u_0}$ for each $u_0\in \mathcal{B}(\mathbb{S};[0,1])$ as $\delta\to 0$, because 
Lemma \ref{L:Moment_hatu} implies, via \cite[Theorem 1.1]{MR876085}, that any mild solution $u$ to \eqref{fkpp_X} is continuous (i.e. an element of $\mathcal{C}((0,\infty)\times \mathbb{S},[0,1])$) almost surely. 

We identify $\mathbb{S}$ with the unit interval $[0,1)$.
For $n\in\mathbb{N}$, we let
$D_n:=\{k 2^{-n}:\,k=0,1,\cdots,2^n-1\}\subset \mathbb{S}$
be the $n$-th dyadic partition of $\mathbb{S}$, and let $\mathbb{D}=\cup_{n=1}^{\infty}D_n$ be the set of dyadic rationals in $\mathbb{S}$. 
Since $u_t\in \mathcal{C}(\mathbb{S},[0,1])$, it is enough to show that for all  $\eta>0$, 
\begin{equation}\label{E:uniform continuity2}
   \P_{u_0}\left(\sup_{x,y\in \mathbb{D}:\,|x-y|\leq 2^{1-m}}|u_t(x)-u_t(y)|\;\geq\, \eta \right)\to 0 \qquad\text{as }m\to\infty
\end{equation}
uniformly in $u_0\in \mathcal{B}(\mathbb{S};[0,1])$. In the following, $\lambda>0$ and $p>2$ are constants to be determined. For $x,y\in \mathbb{D}$ with $|x-y|\leq 2^{1-m}$, 
\begin{align}
|u_t(x)-u_t(y)|\leq&\, \sum_{n:\,n\geq m}\max_{x\in D_n}|u_t(x+2^{-n})-u_t(x)|\\
\leq &\,\sum_{n:\,n\geq m}\left(\frac{S_{p,\lambda}(m)}{2^{n\lambda}}\right)^{1/p} \,=\,S_{p,\lambda}(m)^{\frac{1}{p}}\,\frac{2^{\frac{-m\lambda}{p} }}{1-2^{\frac{-\lambda}{p} }}\label{E:uniform continuity3},
\end{align}
where 
\begin{align}
S_{p,\lambda}(m):=&\,\sum_{n:\,n\geq m} 2^{n\lambda} \sum_{x\in D_n} |u_t(x+2^{-n})-u_t(x)|^p \\
\geq& \,2^{n\lambda} \max_{x\in D_n} |u_t(x+2^{-n})-u_t(x)|^p  \quad\text{ for each }n\geq m.
\end{align}

By Lemma \ref{L:uniform continuity} and \eqref{E:uniform continuity3}, the probability on the left-hand side of \eqref{E:uniform continuity2} is bounded above by
\begin{align}\label{E:uniform continuity4}
   \P_{u_0}\left(S_{p,\lambda}(m)\;\geq\, \eta^p \,(1-2^{\frac{-\lambda}{p} })^p\,2^{m\lambda}\right) \leq&\, \frac{\E_{u_0}[S_{p,\lambda}(m)]}{\eta^p \,(1-2^{\frac{-\lambda}{p} })^p\,2^{m\lambda}}\\
   = &\,C'\,2^{-m\lambda}\,\sum_{n:\,n\geq m} 2^{n\lambda} \sum_{x\in D_n} \E_{u_0}[|u_t(x+2^{-n})-u_t(x)|^p]\\
   \leq&\,C'\,C_{t,p}\,2^{-m\lambda}\,\sum_{n:\,n\geq m} 2^{n\lambda} \sum_{x\in D_n} (2^{-n})^{\frac{p}{2}}\\
   \leq&\,C'\,C_{t,p}\,2^{-m\lambda}\,\sum_{n:\,n\geq m} 2^{n(\lambda+1-\frac{p}{2})}
\end{align}
where we define $C':=\left[\eta^p \,(1-2^{\frac{-\lambda}{p} })^p\right]^{-1}$. The right-hand side of the above does not depend upon $u_0$ and converges to 0 as $m\to\infty$, provided that $\lambda+1-\frac{p}{2}<0$ (i.e. $p>2(\lambda+1)$) and $\lambda>0$. The proof is complete.
\end{proof}
 The following lemma \ref{L:ODE} makes precise the statement that \eqref{fkpp_X} is a spatial version of the 1-dimensional Wright-Fisher diffusion \eqref{1dWF}. We recall that $b(u):=\beta u(1-u)$ and $\sigma(u):=\sqrt{\gamma u(1-u)}$. 
 
\begin{lemma}[Reduction to the well-mixed case]\label{L:ODE}
We let $u$ be a mild solution to \eqref{fkpp_X}. Then for all $t\in(0,\infty)$,  as $\alpha\to\infty$, $u_t$ converges to a constant function on $\mathbb{S}$ whose values $V(t)$ (for time $t\in\R_{\geq 0}$) are a weak solution to the stochastic ODE 
$$dV(t)=b(V(t))\,dt +\sigma( V(t))\,dB_t,$$ 
where $B$ is the standard Brownian motion. Precisely, for all $0<T'<T<\infty$, as $\alpha\to\infty$,
\begin{equation}\label{E:ODE1}
\sup_{t\in[T',\,T]}\sup_{x\in \mathbb{S}}\left|u_t(x)-\int_{\mathbb{S}}u_t(y)\,m(dy)\right|\,\longrightarrow \,0  \quad\text{in }L^2(\P),
\end{equation}
and
\begin{equation}\label{E:ODE2}
\left(\int_{\mathbb{S}}u_t(y)\,m(dy) \right)_{t\in[0,T]}\,\longrightarrow \,\left(V(t) \right)_{t\in[0,T]}\quad\text{in distribution in }\mathcal{C}([0,T]). 
\end{equation}
\end{lemma}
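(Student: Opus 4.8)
\textbf{Proof proposal for Lemma \ref{L:ODE}.}

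The plan is to first establish the spatial homogenisation \eqref{E:ODE1}, then use it to identify the limiting dynamics \eqref{E:ODE2} by passing to the limit in the martingale problem. For the homogenisation, I would work directly with the mild formulation \eqref{E:MildSol_FKPP}. Write $\bar u_t := \int_{\mathbb S} u_t(y)\,m(dy)$ and decompose $u_t(x) - \bar u_t$ into the three contributions coming from the heat-flow of the initial datum, the drift term, and the stochastic integral. For the first term, use that $\int_{\mathbb S} p(t,x,y)\,u_0(y)\,m(dy) - \int_{\mathbb S} u_0(y)\,m(dy) = \int_{\mathbb S}\big(p(t,x,y)-1\big)u_0(y)\,m(dy)$, and that $\sup_{x,y}|p(t,x,y)-1|\to 0$ as $\alpha\to\infty$ (uniformly for $t\ge T'$), since the heat kernel on the circle with variance $\alpha$ converges to the constant density $1$; here one needs $\bar u_t \ne \int u_0$ in general, so it is cleaner to compare $u_t(x)$ to its own spatial mean $\bar u_t$, writing $u_t(x)-\bar u_t = \int_{\mathbb S}(p(t-s,x,z)-1)[\ldots]$ separately for each of the three terms after noting $\int_{\mathbb S}(p(t-s,x,z)-1)\,m(dx)=0$. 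For the drift term one uses $|\,\beta u_s(1-u_s)\,|\le |\beta|/4$ together with $\int_0^t \sup_{x,z}|p(t-s,x,z)-1|\,ds\to 0$. For the stochastic-integral term, apply the Itô isometry: the $L^2(\P)$ norm squared of $\int_{\mathbb S\times[0,t]}(p(t-s,x,z)-1)\sqrt{\gamma u_s(1-u_s)}\,dW$ is bounded by $\tfrac{\gamma}{4}\int_0^t\int_{\mathbb S}(p(t-s,x,z)-1)^2\,m(dz)\,ds$, and $\int_{\mathbb S}(p(r,x,z)-1)^2\,m(dz) = \|p(r,x,\cdot)\|_{L^2}^2 - 1 = p(2r,x,x)-1$ by the Chapman–Kolmogorov identity, which tends to $0$ as $\alpha\to\infty$ and is integrable in $r$ near $0$ on $[T',T]$ (indeed $p(2r,x,x)-1\le C_\alpha$ with $C_\alpha\to 0$). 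To upgrade pointwise-in-$(t,x)$ $L^2$ convergence to the uniform statement \eqref{E:ODE1}, combine these estimates with the moment bound of Lemma \ref{L:Moment_hatu} and a Kolmogorov–Chentsov / chaining argument on $[T',T]\times\mathbb S$, noting that the constant in Lemma \ref{L:Moment_hatu} does not depend on $u_0$; one gets a modulus-of-continuity control uniform in $\alpha$ large, so a standard $\epsilon$-net argument converts the pointwise $L^2 \to 0$ into the supremum statement.

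For \eqref{E:ODE2}, I would show tightness of the laws of $(\bar u_t)_{t\in[0,T]}$ in $\mathcal C([0,T])$ and identify every subsequential limit as the law of the Wright–Fisher SDE $dV = b(V)\,dt + \sigma(V)\,dB$. Tightness follows from an Aldous-type criterion: integrating the mild equation (or the martingale problem) against the constant test function $1$ kills the Laplacian term, giving
\[
\bar u_t = \bar u_0 + \int_0^t \beta\!\int_{\mathbb S} u_s(1-u_s)\,m(dy)\,ds + M_t,
\]
where $M_t$ is a continuous martingale with $\langle M\rangle_t = \gamma\int_0^t\int_{\mathbb S} u_s(1-u_s)\,m(dy)\,ds$; the drift has bounded derivative in $t$ and the bracket is Lipschitz in $t$, yielding tightness. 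For identification, fix $f\in C^2(\mathbb R)$ and apply Itô to $f(\bar u_t)$: the generator term is $\beta\big(\int u_s(1-u_s)\big)f'(\bar u_s) + \tfrac{\gamma}{2}\big(\int u_s(1-u_s)\big)f''(\bar u_s)$. By \eqref{E:ODE1}, $\int_{\mathbb S} u_s(1-u_s)\,m(dy) - \bar u_s(1-\bar u_s) \to 0$ uniformly on $[T',T]$ in $L^2$ (and one handles $s<T'$ by a separate short-time estimate, or by noting the integrand is bounded by $1/4$ so contributes $O(T')$), so any subsequential limit $V$ satisfies
\[
f(V_t) - f(V_0) - \int_0^t \Big[\beta V_s(1-V_s)f'(V_s) + \tfrac{\gamma}{2}V_s(1-V_s)f''(V_s)\Big]\,ds
\]
is a martingale, i.e. $V$ solves the martingale problem for the Wright–Fisher generator. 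Since that martingale problem is well-posed on $[0,1]$, \eqref{E:ODE2} follows.

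The main obstacle I anticipate is the interchange of the $\alpha\to\infty$ limit with the time integral near $s=0$ in the drift and stochastic-integral terms: the kernel difference $p(r,x,z)-1$ is not small uniformly for $r\downarrow 0$, so one must exploit integrability — $\int_0^{T}(p(2r,x,x)-1)\,dr$ and $\int_0^T\sup_{x,z}|p(r,x,z)-1|\,dr$ are both finite and tend to $0$ as $\alpha\to\infty$ by dominated convergence, using the explicit series $p(r,x,z)=\sum_{k}\tfrac{1}{\sqrt{2\pi\alpha r}}e^{-(z-x+k)^2/2\alpha r}$ and its Poisson-summation / Fourier form $p(r,x,z) = \sum_{n\in\Z} e^{2\pi i n(z-x)} e^{-2\pi^2 n^2 \alpha r}$, from which $p(2r,x,x)-1 = \sum_{n\ne 0} e^{-4\pi^2 n^2\alpha r}$ is manifestly decreasing in $\alpha$ and integrable in $r$. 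A secondary technical point is making the uniform-in-$u_0$ chaining argument for \eqref{E:ODE1} rigorous while keeping the $\alpha$-dependence explicit; this is routine given Lemma \ref{L:Moment_hatu} but needs care, since we want the supremum over $[T',T]\times\mathbb S$ inside the probability, not outside.
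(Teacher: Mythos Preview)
Your proposal is correct and follows the same overall architecture as the paper: exploit the uniform convergence $p(t,x,y)\to 1$ in the mild formulation to get \eqref{E:ODE1}, then prove tightness plus identification of subsequential limits for \eqref{E:ODE2}. The tactical choices differ in a couple of places worth noting. For \eqref{E:ODE1} the paper bounds $\sup_x|u_t(x)-\bar u_t|$ by the oscillation $u_t(x^*)-u_t(x_*)$ and controls the latter directly from \eqref{E:MildSol_u} via BDG, whereas you decompose $u_t(x)-\bar u_t$ term by term and upgrade pointwise $L^2$ convergence to the supremum via chaining and Lemma~\ref{L:Moment_hatu}; both work, though you should check (it is true, via the Fourier representation you already wrote down) that the constant in Lemma~\ref{L:Moment_hatu} can be taken uniform in $\alpha\geq 1$, since that lemma as stated only asserts independence from $u_0$. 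For \eqref{E:ODE2} the paper obtains tightness from the Kolmogorov moment criterion using Lemma~\ref{L:Moment_hatu} with $p>4$, and identifies the limit by integrating the mild equation in $x$ and recognising $B_t:=\int_{\mathbb S\times[0,t]}dW(z,s)$ as a standard Brownian motion; you instead go through the martingale problem for $\bar u_t$ and invoke well-posedness of the Wright--Fisher martingale problem. Your route is slightly more robust (it does not require identifying the driving noise explicitly), while the paper's is shorter once one has the mild formulation already written down.
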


\begin{proof}[Proof of Lemma \ref{L:ODE}]
For all $t\in(0,\infty)$, we have $u_t\in \mathcal{C}(\mathbb{S};\,[0,1])$.
Since $\mathbb{S}$ is compact, $p(t,x,y)\to 1$ uniformly on $[T',T]\times \mathbb{S}\times \mathbb{S}$ for all $0<T'<T<\infty$.
Using this local uniform convergence, boundedness of $b$ and $\sigma$,  \eqref{E:MildSol_u} and  the Burkholder-Davis-Gundy inequality (see, for instance \cite[Theorem 26.12]{kallenberg1997foundations}), one can show that 
$$\sup_{t\in[T',T]}|u_t(x^*)-u_t(x_*)|\to 0  \quad\text{in }L^2(\P),$$ 
where $x^*$ and $x_*$ are (random and time dependent) points on $\mathbb{S}$ such that $u_t(x^*)=\sup_{x\in \mathbb{S}}u_t(x)$ and $u_t(x_*)=\inf_{x\in \mathbb{S}}u_t(x)$.
The latter convergence implies \eqref{E:ODE1}.

The second convergence \eqref{E:ODE2} will follow from tightness of the left hand side in $\mathcal{C}([0,T])$ and convergence in finite-dimensional distributions. This tightness follows from the moment estimate \eqref{E:Moment_hatu}: for all $p\geq 2$,
\begin{align}
\E_{u_0}\left[\Big|\int_{\mathbb{S}}u_{t_1}(y)\,m(dy)-\int_{\mathbb{S}}u_{t_2}(y)\,m(dy) \Big|^p\right]  
\leq&\,\E_{u_0}\left[\int_{\mathbb{S}}|u_{t_1}(y)-u_{t_2}(y)|^p\,m(dy)\right]
\leq \, C\,|t_1-t_2|^{p/4}.
\end{align}
Tightness in $\mathcal{C}([0,T])$ follows if we take $p>4$.

Using \eqref{E:ODE1}, the continuity of $b$ and $\sigma$, and the aforementioned uniform convergence $p(t,x,y)\to 1$ on $[T',T]\times \mathbb{S}\times \mathbb{S}$ for all $0<T'<T<\infty$, we see that as $\alpha\to\infty$, the last term in \eqref{E:MildSol_u} satisfies
\[
\int_{\mathbb{S}\times [0,t]}p(t-s,x,z)\,
\sigma\big(u_s(z)\big)\,dW(z,s) - \int_{\mathbb{S}\times [0,t]}
\sigma\Big(\int_{\mathbb{S}}u_s(y)\,m(dy)\Big)\,dW(z,s)\to 0 \quad\text{in }L^2(\P).
\]
On the other hand, if we let $B_t:=\int_{\mathbb{S}\times [0,t]} \,dW(z,s)$ for $t\in\R_{\geq 0}$, then
\[
\left(\int_{\mathbb{S}\times [0,t]}
\sigma\Big(\int_{\mathbb{S}}u_s(y)\,m(dy)\Big)\,dW(z,s)\right)_{t\in\R_{\geq 0}}\eqd 
\left(\int_{0}^t
\sigma\Big(\int_{\mathbb{S}}u_s(y)\,m(dy)\Big)\,dB_s\right)_{t\in\R_{\geq 0}}.
\]
Convergence of $\int_{\mathbb{S}}u_t(y)\,m(dy)$ to $V(t)$ in distribution in $\R$,  for a fixed $t$,  now follows by integrating both sides of \eqref{E:MildSol_u} with respect to $x\in \mathbb{S}$ under $m(dx)$. Convergence of finite dimensional distributions can be shown similarly, by using the Markov property of $(u_t)_{t\in\R_{\geq 0}}$.
We have proved the second convergence \eqref{E:ODE2}.
\end{proof}

\subsection{Proof of Lemma \ref{lem:moments determine measures}}

Denote by $\mathcal{M}_{+}(E)$ the space of finite non-negative measures on a set $E$, equipped with the weak topology.
Suppose $\mu_1,\mu_2\in\mathcal{M}_{+}(\calC_*)$ are such that 
\begin{equation}\label{A:moments determine measures}
\int_{\calC_*}\Big[\prod_{i=1}^n(1-f(x_i))\Big]\Big[1-\prod_{j=1}^m(1-f(y_j))\Big]\mu_1(df)=\int_{\calC_*}\Big[\prod_{i=1}^n(1-f(x_i))\Big]\Big[1-\prod_{j=1}^m(1-f(y_j))\Big]\mu_2(df)
\end{equation}
for all $\{x_i\}_{i=1}^n\in \mathbb{S}^n/\sim$ and $\{y_j\}_{j=1}^m\in \mathbb{S}^m/\sim$,  for all $n,m\geq 1$.  
Let $(y_j)_{j=1}^{\infty}$ be dense subset $\bfS$. Then for every $f$ that is not equal to 0 almost everywhere on $\mathbb{S}$, we have
$1-\prod_{j=1}^m(1-f(y_j))\ra 1$ as $m\to\infty$. 
Hence, by the dominated convergence theorem,
\[
\int_{\calC_*}\Big[\prod_{i=1}^n(1-f(x_i))\Big]\mu_1(df)=\int_{\calC_*}\Big[\prod_{i=1}^n(1-f(x_i))\Big]\mu_2(df).
\]
This implies that, for any $k\in \mathbb{N}$ and any vector $(w_i)_{i=1}^k\in \mathbb{S}^k$, all cross moments of
$\{f(w_i)\}_{i=1}^k$ under $\mu_1(df)$ are the same as those under $\mu_2(df)$. Since the Carleman's condition is satisfied for the variables $\{f(w_i)\}_{i=1}^k$ since $f$ is bounded, the restrictions $\mu_1|_{(w_i)_{i=1}^k}$ and $\mu_2|_{(w_i)_{i=1}^k}$ are the same element of $\mathcal{M}_{+}([0,1]^k)$.

We have shown that any finite dimensional projection of $\mu_1$ is equal to that of $\mu_2$. Hence  $\mu_1=\mu_2$ in $\mathcal{M}_{+}(\calC_*)$ by Dynkin's $\pi-\lambda$ theorem, since the collection of cylindrical subsets of $\calC_*$ is a $\pi$ system generating the Borel $\sigma$-algebra on $\calC_{\ast}$.
\qed

\subsection{Early stopping reduces spectral radius}

Lemma \ref{lem:early stopping reduces spectral radius}  implies that if we kill a process at an earlier time, then the spectral radius (the eigenvalue of the semigroup of the killed process) will be smaller.  It is used in the proof of Theorem \ref{T:M*}.

Let $(X_t)_{t\in\R_{\ge 0}}$ be a  Markov process taking values in  a topological space $E$ and  $\tau_{\partial}$ be a stopping time with respect to the natural filtration $\{\mathcal{F}^X_t\}_{t\in \R_{\ge 0}}$ of $X$. Consider
the killed (or absorbed) process $(\widetilde{X}_t)_{0\leq t<\tau_{\partial}}$, defined by $\widetilde{X}_t=X_t$ for $t< \tau_{\partial}$ and  $\widetilde{X}_t$ being a separate (cemetery) state for $t\geq \tau_{\partial}$.
\begin{lemma}\label{lem:early stopping reduces spectral radius}
We suppose that the sub-Markovian kernel of the killed process has a bounded, non-negative right eigenfunction $f\in \calB_b(E; \R_{\ge 0})$ and a corresponding eigenvalue over time $1$, $\lambda\in (0,1]$. 
Then the process $(M_t)_{t\in \R_{\ge 0}}$ defined by 
\[
M_t:=\lambda^{-t}\,f(X_t)\Ind(\tau_{\partial}>t)
\]
is a martingale for all initial conditions $x\in E$. 
Furthermore, suppose that there exists a stopping time $\tau'$ with respect to $\{\mathcal{F}^X_t\}_{t\in \R_{\ge 0}}$, and positive constant $c'>0$, such that $\tau'<\tau_{\partial}$ and $\inf_{t\in[0,\tau']}f(X_t)\geq c'$, almost surely under $\Pm_{\mu}$. Then
\begin{equation}\label{eq:finite power to stopping time for lemma}
    \expE_{\mu}[\lambda^{-\tau'}]<\infty.
\end{equation}
\end{lemma}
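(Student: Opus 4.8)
The plan is to deduce both assertions from the eigenfunction relation, obtaining the martingale property directly and then \eqref{eq:finite power to stopping time for lemma} by applying optional stopping to this martingale at the bounded stopping times $\tau'\wedge n$ and letting $n\to\infty$. Write $(\widetilde{P}_t)_{t\in\R_{\geq 0}}$ for the sub-Markovian transition semigroup of the killed process $(\widetilde{X}_t)_{0\le t<\tau_{\partial}}$; the hypothesis says $\widetilde{P}_1 f=\lambda f$ with $\lambda\in(0,1]$, hence $\widetilde{P}_s f=\lambda^s f$ for all $s\in\R_{\geq 0}$. Using the Markov property of $X$ at time $t$, together with the fact that the killed dynamics after time $t$ and the position $X_{t+s}$ depend on $\mathcal{F}^X_t$ only through $X_t$ and $\Ind(\tau_{\partial}>t)$, one gets for all $s,t\in\R_{\geq 0}$ and $x\in E$
\[
\expE_x\!\big[f(X_{t+s})\Ind(\tau_{\partial}>t+s)\,\big|\,\mathcal{F}^X_t\big]=\Ind(\tau_{\partial}>t)\,(\widetilde{P}_s f)(X_t)=\lambda^s\, f(X_t)\Ind(\tau_{\partial}>t),
\]
and multiplying through by $\lambda^{-(t+s)}$ gives $\expE_x[M_{t+s}\mid\mathcal{F}^X_t]=M_t$. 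Since $f\ge 0$ and $\lambda>0$ the process $M$ is non-negative, and $0\le M_t\le\lambda^{-t}\|f\|_{\infty}$ so every $M_t$ is integrable; integrating over $x\sim\mu$ shows $(M_t)$ is also an $\mathcal{F}^X$-martingale under $\Pm_{\mu}$.

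For the second claim I would fix $n\in\mathbb{N}$ and apply the optional stopping theorem at the bounded $\mathcal{F}^X$-stopping time $\tau'\wedge n$, which gives $\expE_{\mu}[M_{\tau'\wedge n}]=\expE_{\mu}[M_0]\le\mu(f)\le\|f\|_{\infty}$. On the event $\{\tau'<\tau_{\partial}\}$ (which has full probability) one has $\tau'\wedge n\le\tau'<\tau_{\partial}$, so $\Ind(\tau_{\partial}>\tau'\wedge n)=1$ a.s.; moreover $\tau'\wedge n\in[0,\tau']$, so the hypothesis $\inf_{t\in[0,\tau']}f(X_t)\ge c'$ yields $f(X_{\tau'\wedge n})\ge c'$ a.s. Hence $M_{\tau'\wedge n}\ge c'\,\lambda^{-(\tau'\wedge n)}$ a.s., and combining this with the optional stopping identity gives $\expE_{\mu}[\lambda^{-(\tau'\wedge n)}]\le\|f\|_{\infty}/c'$ for every $n$. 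Since $\lambda\in(0,1]$, the function $x\mapsto\lambda^{-x}$ is non-decreasing on $\R_{\geq 0}$, so $\lambda^{-(\tau'\wedge n)}\uparrow\lambda^{-\tau'}$ as $n\to\infty$; the monotone convergence theorem then gives $\expE_{\mu}[\lambda^{-\tau'}]\le\|f\|_{\infty}/c'<\infty$, which is \eqref{eq:finite power to stopping time for lemma}. (When $\lambda=1$ the statement is trivial; when $\lambda<1$ this bound in particular forces $\tau'<\infty$ $\Pm_{\mu}$-a.s.)

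The only point that needs a little care — and the closest thing to an obstacle — is the displayed conditional expectation identity, i.e.\ the fact that $\tau_{\partial}$ is a terminal/path functional so that the killed process is itself Markov with semigroup $(\widetilde{P}_t)$; this is built into the definition of the killed process, and for the process to which the lemma is applied in the proof of Theorem \ref{T:M*} (the killed $2$-type CBM, with $\tau_{\partial}=\inf\{t:R_t=\emptyset\}$) it is immediate, as are the remaining hypotheses of the lemma. Beyond that, and the routine requirement that $X$ have c\`adl\`ag paths so that $M$ is c\`adl\`ag and optional stopping applies, every step is standard.
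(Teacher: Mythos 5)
Your proof is correct, and it takes a genuinely cleaner route for the second claim than the paper does. The martingale part is the same: a one-line computation from the Markov property and the eigenfunction relation $Q_t f=\lambda^t f$. For the integrability of $\lambda^{-\tau'}$, the paper's proof is more intricate: it fixes a step size $T$ chosen so that $\lambda^{-T}\geq\|f\|_\infty/c'$, builds an auxiliary sequence $K_n$ on the time grid $\{nT\}$ that is non-decreasing precisely because of that choice of $T$, applies optional stopping at $nT\wedge\tau'$ to bound $\expE[K_n]$, and then runs monotone convergence on $K_n$ and unwinds. You bypass all of that by applying optional stopping directly at $\tau'\wedge n$, using the a.s.\ lower bound $f(X_{\tau'\wedge n})\geq c'$ to get $\expE_\mu[\lambda^{-(\tau'\wedge n)}]\leq\|f\|_\infty/c'$, and letting $n\to\infty$ via monotone convergence on the trivially monotone $\lambda^{-(\tau'\wedge n)}$. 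Both arguments rest on the same two tools (optional stopping at bounded stopping times and monotone convergence), and both implicitly use a c\`adl\`ag modification of $M$ so that optional stopping applies, as you note; yours simply avoids the artificial $K_n$ construction and the bookkeeping around $\lceil\tau'/T\rceil T$. Your remark that the bound forces $\tau'<\infty$ a.s.\ when $\lambda<1$ is a nice sanity check the paper doesn't spell out.
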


\begin{proof}[Proof of Lemma \ref{lem:early stopping reduces spectral radius}]
The fact that $(M_t)_{t\in \R_{\ge 0}}$ is a martingale follows from the Markov property of $X$ and the definition of right eigenfunction $f$, as follows. Let $(Q_t)_{t\in\R_{\ge 0}}$ be the sub-Markovian transition semigroup associated to $(X_t)_{0\leq t<\tau_{\partial}}$ and $0\leq s\leq t$. Then
$$\E_x[M_t\,|\,\mathcal{F}^X_s]=\lambda^{-t}\,Q_{t-s}f(X_s)\,\Ind(\tau_{\partial}>s) = 
\lambda^{-t}\,\lambda^{t-s}f(X_s)\,\Ind(\tau_{\partial}>s)=M_s.
$$

If $\lambda=1$ then \eqref{eq:finite power to stopping time for lemma} obviously holds, so we may assume that $\lambda\in (0,1)$. We may therefore take $T<\infty$ such that $\lambda^{-T}\geq \frac{\lvert\lvert f\rvert\rvert_{\infty}}{c'}$. We take $X_0\sim \mu$ and define the following discrete-time process
\[
K_n:=\begin{cases}
    f(X_0),\quad &n=0\\
    \lambda^{-nT}f(X_{nT}),\quad &n\geq 1,\; nT<\tau'\\
    \lambda^{-\lceil \frac{\tau'}{T}\rceil T}f(X_{\tau'}),\quad  & nT\geq \tau'
\end{cases}
\]
We observe that $(K_n)_{n\geq 0}$ is a non-decreasing sequence of non-negative random variables, so that $\lim_{n\ra \infty}K_n$ exists in $(0,\infty]$ and $\expE[\lim_{n\ra \infty}K_n]=\lim_{n\ra\infty}\expE[K_n]$, by the monotone convergence theorem. Furthermore,
\[
\expE[K_n]\leq \lambda^{-T}\expE[\lambda^{-(nT\wedge \tau')}f(X_{nT\wedge \tau'})]\leq \lambda^{-T}\expE[f(X_0)]\quad\text{for all}\quad n\geq 0,
\]
and 
\[
\lambda^{-\tau'}f(X_{\tau'})\leq \lambda^{-T}\lim_{n\ra\infty}K_n.
\]
It follows that $c'\expE[\lambda^{-\tau'}]\leq \expE[\lambda^{-\tau'}f(X_{\tau'})]\leq \lambda^{-2T}\expE[f(X_0)]<\infty$, so that $\lambda^{-\tau'}$ is integrable.
\end{proof}

\subsection{QSD for the $1$-dimensional Wright-Fisher diffusion}\label{SS:1-dim}


In this subsection,  we will demonstrate how to prove convergence to a QSD for the classical Wright-Fisher diffusion using standard spectral methods, the most common technique for studying convergence to a QSD.
The reader may observe that these arguments are applicable to a general finite dimensional diffusion, but would typically fail for stochastic PDEs; see Remark \ref{Rk:ChallengeInfDim}.
More precisely, we will establish existence and uniqueness (and more) of the QSD for the 
classical Wright Fisher equation \eqref{1dWF}  
\begin{equation*}
dX_t=\beta X_t(1-X_t)dt+\sqrt{\gamma\,X_t(1-X_t)}dB_t,\quad t\in (0,\infty),
\end{equation*}
that is the 1-dimensional analogue of the SPDE \eqref{fkpp_X}. Most of these results are known in the literature, but we make precise the statements and give a self-contained proof. We also characterize the small noise weak$-*$ limit of the QSD at the end of this section.
The notation in this subsection is self-contained and should not be confused with thise used in other parts of the paper.

This diffusion process $X$ has state space $[0,1]$ and two absorbing states $\{0,1\}$. The absorption time (or fixation time) is again denoted by $\tau_{\fix}=\inf\{t>0:X_t\in \{0,1\}\}$. Since $\{0,1\}$ is a cemetery set, we may consider this to be an absorbed Markov process, which we denote as $(X_t)_{0\leq t<\tau_{\partial}}$. We then define the associated sub-Markovian transition kernel $(P_t)_{t\in\R_{\ge 0}}$ by
\[
P_tf(x)=\expE_x[f(X_t)\Ind(\tau_{\fix}>t],\quad f\in \calB_b((0,1)).
\]
In the following lemma, we establish that $\{P_t\}_{t\in\R_{\ge 0}}$ is a strongly continuous semigroup on  $\calC_0:=\calC_0((0,1))=\{f\in \calC((0,1)):\,f(0+)=f(1-)=0\}$, the  space of real-valued continuous functions on $(0,1)$ that vanish at infinity. We will employ at times the following identification of  spaces, 
\begin{equation}\label{E:identification_C0}
\calC_0 
\simeq \{f\in \calC([0,1]):\,f(0)=f(1)=0\}.
\end{equation}

\begin{lemma}[Compactness of killed semigroup]\label{L:C0Compact}
The killed Wright-Fisher diffusion
$(X_t)_{0\leq t<\tau_{\partial}}$ is a  $\calC_0$-Feller process. Furthermore,
for all $t>0$,  $P_t:\,\calC_0\ra \calC_0$ is compact with spectral radius $r(P_t)\in (0,1)$.
\end{lemma}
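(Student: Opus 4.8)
The plan is to prove the three assertions in turn: the $\calC_0$-Feller property, compactness of each $P_t$ ($t>0$), and the location of the spectral radius. First I would establish that $(X_t)_{0\le t<\tau_\partial}$ is a $\calC_0$-Feller process. By standard diffusion theory (Feller's boundary classification, or \cite{ethier1986markov}), the Wright–Fisher generator $L^{\rm WF}f = \frac{\gamma}{2}x(1-x)f'' + \beta x(1-x)f'$ with both boundaries $\{0,1\}$ absorbing generates a strongly continuous Feller semigroup on the space $\{f\in\calC([0,1]):L^{\rm WF}f\in\calC([0,1])\}$; the killed semigroup $P_t$ is obtained by restricting to functions vanishing at $\{0,1\}$, which is an invariant subspace since $\{0,1\}$ is absorbing. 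Concretely, I would verify: (i) $P_t$ maps $\calC_0$ into $\calC_0$ — continuity of $x\mapsto P_tf(x)$ follows from continuity of the diffusion in its initial condition together with dominated convergence, and $P_tf(0+)=P_tf(1-)=0$ because the process started near $0$ (resp.\ $1$) is absorbed quickly with high probability and $\|P_tf\|_\infty\le\|f\|_\infty$, so $P_tf(x)\to 0$ as $x\to 0,1$; (ii) strong continuity, $\|P_tf-f\|_\infty\to 0$ as $t\downarrow 0$ for $f\in\calC_0$, which follows from pathwise continuity of $X$, boundedness, and the fact that absorption at $\{0,1\}$ cannot happen instantaneously from an interior point (so the killing indicator $\Ind(\tau_{\rm fix}>t)\to 1$ uniformly on compact subsets of $(0,1)$, and near the boundary both $f$ and $P_tf$ are small).

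Next I would prove compactness of $P_t:\calC_0\to\calC_0$ for each fixed $t>0$. The cleanest route is via the transition density. Because $\gamma x(1-x)>0$ on $(0,1)$, the operator $L^{\rm WF}$ is uniformly elliptic on every compact subinterval $[\epsilon,1-\epsilon]$, so the killed process has a jointly continuous sub-probability transition density $p^{\rm kill}(t,x,y)$ on $(0,\infty)\times(0,1)\times(0,1)$ with respect to Lebesgue measure (parabolic regularity theory for degenerate-at-the-boundary diffusions; see e.g.\ \cite{epstein2013wright} or a direct Green's-function/eigenfunction-expansion argument). I would then show $P_t$ factors through $P_{t/2}$ applied twice and use that $p^{\rm kill}(t/2,\cdot,\cdot)$, together with the uniform smallness of $P_s\Ind$ near the boundary, gives an Arzelà–Ascoli-type equicontinuity estimate: the family $\{P_tf : \|f\|_\infty\le 1\}$ is uniformly bounded, vanishes uniformly at $\{0,1\}$, and is equicontinuous on $[0,1]$ (equicontinuity in the interior from interior parabolic estimates on $p^{\rm kill}$, and near the boundary because the functions are uniformly small there). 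By the version of Arzelà–Ascoli for $\calC_0\simeq\{f\in\calC([0,1]):f(0)=f(1)=0\}$ this family is relatively compact, so $P_t$ is a compact operator. Alternatively, and perhaps more slickly, one can invoke that $P_t$ is an integral operator whose kernel lies in $L^2((0,1)\times(0,1))$ (Hilbert–Schmidt on $L^2$ with the speed-measure weighting), hence compact on $L^2$, and then bootstrap to compactness on $\calC_0$ using the Feller property and the smoothing $P_t = P_{t/3}P_{t/3}P_{t/3}$; I would present whichever is shorter in the final writeup, probably the density/Arzelà–Ascoli version since it is self-contained.

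Finally, the spectral radius. Since $P_t$ is compact, positive, and irreducible on $\calC_0$ (irreducibility because the diffusion, being uniformly elliptic on interior compacts, can reach any open subset of $(0,1)$ in any positive time with positive probability), the Krein–Rutman theorem applies: $r(P_t)>0$ is an eigenvalue with a strictly positive eigenfunction. For $0<r(P_1)$: this is immediate from Krein–Rutman once we know $P_1$ is not the zero operator, i.e.\ $\Pm_x(\tau_{\rm fix}>1)>0$ for some $x$, which holds because $\tau_{\rm fix}$ is a.s.\ strictly positive from interior starting points (indeed $\expE_x[\tau_{\rm fix}]<\infty$ but $\tau_{\rm fix}>0$ a.s.). For $r(P_1)<1$: since $\{0,1\}$ is accessible and absorption is certain, $\Pm_x(\tau_{\rm fix}>t)\to 0$ as $t\to\infty$ for every $x\in(0,1)$; combined with the eigenfunction relation $P_t\eta = r(P_t)^t\eta$ for the (strictly positive, continuous on $[\epsilon,1-\epsilon]$) Krein–Rutman eigenfunction $\eta$ — using the semigroup property to see $r(P_t)=r(P_1)^t$ — one gets $r(P_1)^t\eta(x) = \expE_x[\eta(X_t)\Ind(\tau_{\rm fix}>t)]\le\|\eta\|_\infty\Pm_x(\tau_{\rm fix}>t)\to 0$, forcing $r(P_1)<1$. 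The main obstacle I anticipate is the first route's technical core: rigorously producing the continuous (or $L^2$) transition density of the killed degenerate diffusion and the attendant equicontinuity estimate uniformly up to the absorbing boundary — this is where the Wright–Fisher degeneracy at $\{0,1\}$ bites, and it is precisely the step that has no analogue in the infinite-dimensional FKPP setting (cf.\ Remark \ref{Rk:ChallengeInfDim}), which is the whole point of including this appendix.
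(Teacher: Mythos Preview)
Your proposal is correct and follows the same broad architecture as the paper: obtain a continuous interior transition density, deduce equicontinuity of $\{P_tf:\|f\|_\infty\le 1\}$, conclude compactness via Arzel\`a--Ascoli on $\calC_0\simeq\{f\in\calC([0,1]):f(0)=f(1)=0\}$, then locate the spectral radius. Two points of difference are worth noting. First, for equicontinuity the paper does not split into an interior region (parabolic estimates) and a boundary region (uniform smallness). Instead it observes directly that the map $x\mapsto P_t(x,\cdot)$, extended by the zero measure at $x\in\{0,1\}$, is continuous from $[0,1]$ into $(\calP((0,1)),\|\cdot\|_{\TV})$ --- this follows from continuity of the density on interior compacts, the uniform tightness $\sup_x P_t(x,(0,1)\setminus[\epsilon,1-\epsilon])\to 0$, and the boundary regularity $\Pm_x(\tau_{\fix}>t)\to 0$ as $x\to\{0,1\}$. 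The single estimate $|P_tf(x)-P_tf(y)|\le\|P_t(x,\cdot)-P_t(y,\cdot)\|_{\TV}\|f\|_\infty$ then gives equicontinuity in one stroke. Second, for the spectral radius the paper avoids Krein--Rutman entirely and works with the Gelfand formula $r(P_t)=\lim_n(\sup_x\Pm_x(\tau_{\fix}>nt))^{1/n}$: the upper bound comes from the \emph{uniform} estimate $\sup_{x\in(0,1)}\Pm_x(\tau_{\fix}>1)<1$, and the lower bound from $\inf_{x\in(1/3,2/3)}\Pm_x(X_t\in(1/3,2/3))>0$. Your Krein--Rutman route for $r(P_1)<1$ is fine (pointwise extinction suffices once you have the positive eigenfunction), but it front-loads a tool the paper defers to the subsequent proposition.
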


\begin{proof}[Proof of Lemma \ref{L:C0Compact}]
First, we note that the killed process $(X_t)_{0\leq t<\tau_{\fix}}$ has a transition density $p(t,x,y)$ which is continuous on $(t,x,y)\in (0,\infty)\times (0,1)\times (0,1)$ 
(see Kunita and Ichihara \cite[Theorem 3]{Ichihara1974}). Note that this is a consequence of hypoellipticity, and is true for degenerate parabolic diffusions satisfying parabolic H\"ormander conditions by H\"ormander's theorem. Note also that we have not established boundedness of the transition density for any fixed $t>0$. It follows that for all $t,\epsilon>0$,
\[
P_t:(0,1)\ni x\mapsto P_t(x,\cdot)_{\lvert_{[\epsilon,1-\epsilon]}}\in (\calP([\epsilon,1-\epsilon]),\lvert\lvert \cdot\rvert\rvert_{\TV}))\subset (\calP((0,1)),\lvert\lvert \cdot\rvert\rvert_{\TV}))\quad \text{is continuous.}
\]
For all $0<s<\infty$, 
we have  $\Pm_x(\tau_{\fix}>s)\ra 0$ as $x\ra \{0,1\}$ since both boundary points are regular by Feller’s classification \cite[P.329]{durrett2008probability}, from which we can conclude that \begin{enumerate}
\item $P_t(x,\cdot)\ra 0$ in total variation as $x\ra \{0,1\}$;
\item $\sup_{x\in (0,1)}P_t(x,(0,1)\setminus [\epsilon,1-\epsilon])\ra 0$ as $\epsilon \ra 0$.
\end{enumerate}
It therefore follows that, for all $t>0$ fixed,
\[x\mapsto \begin{cases}
P_t(x,\cdot),\quad x\in (0,1)\\
0,\quad \qquad x\in \{0,1\}
\end{cases}\in (\calP((0,1)),\lvert\lvert \cdot\rvert\rvert_{\TV})
\]
is a  continuous map from $[0,1]$ to $(\calP((0,1)),\lvert\lvert \cdot\rvert\rvert_{\TV})$.
For all $x,y\in [0,1]$ and  $f\in\calB_b([0,1])$,
\begin{align}
\lvert P_tf(x)-P_tf(y)\rvert\leq \lvert\lvert P_t(x,\cdot)-P_t(y,\cdot)\rvert\rvert_{\TV}\lvert\,\lvert f\rvert\rvert_{\infty}.
\end{align}
By a standard argument as in \cite[Corollary 4.8 in Chapter II]{bass1994probabilistic},  we have $P_t(\calC_b((0,1)))\subset \calC_0$ and
\begin{equation}
\{P_tf:\, f\in \calC_0((0,1)) \mbox{ with }\|f\|_{\infty}\leq 1\} \quad\text{ is equicontinuous.} 
\end{equation}
Therefore, 
$P_t:\,\calC_0\ra \calC_0$ is compact for all $t>0$ by the Arzela-Ascoli theorem, using the identification 
\eqref{E:identification_C0}.

\medskip

It follows from the spectral radius formula that the following limit exists and gives the spectral radius,
\[
r(P_t)=\lim_{n\ra \infty}\lvert\lvert P_{nt}\rvert\rvert_{C_0\ra C_0}^{\frac{1}{n}}
=\lim_{n\ra \infty}\left(\sup_{x\in (0,1)}P_{nt}1(x) \right)^{\frac{1}{n}}=\lim_{n\ra \infty}\left(\sup_{x\in (0,1)}\Pm_x(\tau_{\fix}>nt)\right)^{\frac{1}{n}}.
\]
Since $\inf_{x\in (\frac{1}{3},\frac{2}{3})}\Pm_x(X_t\in (\frac{1}{3},\frac{2}{3}))>0$ and $\sup_{x\in(0,1)}\Pm_x(\tau_{\fix}> 1)<1$, it follows that $r(P_t)\in (0,1)$. Hence $P_t$ is compact for each $t>0$.
\end{proof}

\begin{remark}\rm \label{Rk:ChallengeInfDim}
The proof of this lemma relies on the Arzela-Ascoli theorem on $\calC([0,1])$ (note that $[0,1]$ is compact) and \eqref{E:identification_C0}.
Such an approach is \textit{\it not applicable} in an infinite dimensional setting like the stochastic FKPP, for instance because $\mathcal{C}(\mathbb{S};[0,1])$ is not compact (in fact not even locally compact), since
the unit ball in an infinite-dimensional normed space is never compact.
 \end{remark}
\begin{remark}
    It follows from Lemma \ref{L:C0Compact} that the killed Wright-Fisher diffusion, $(X_t)_{0\leq t<\tau_{\fix}}$, has an infinitesimal generator defined on a dense subspace of $\calC_0$, which we denote by $L^{\text{WF}}$. We will therefore refer to the eigenvalue of a right eigenfunction and of a QSD to be the eigenvalue with respect to the infinitesimal generator, in contrast to the stochastic FKPP case where we had to use eigenvalue to denote the eigenvalue with respect to $P_1$.
\end{remark}

\begin{prop}[Eigenfunction and time to absorption]\label{prop:WF eigen}
Fix an arbitrary $\beta\in \Rm$ and $\gamma\in(0,\infty)$ in \eqref{1dWF} . 
\begin{enumerate}
\item The  generator $L^{\text{WF}}$ on $\calC_0$ has a non-empty pure point spectrum $\sigma(L^{\text{WF}})$ which is finite on $\{z\in\mathbb{C}:\,\text{Re}(z)>-c\}$ for all $c>0$. Define $\kappa_0:=-\sup\{\text{Re}(z):z\in \sigma(L^{\text{WF}})\}$ and $\kappa_1:=-\sup\{\text{Re}(z)<-\kappa_0:z\in \sigma(L^{\text{WF}})\}$. Then $0<\kappa_0<\kappa_1\leq \infty$.
\item   $-\kappa_0$ is the unique eigenvalue of the generator belonging to the boundary spectrum $\{z\in \sigma(L^{\text{WF}}):\text{Re}(z)=-\kappa_0\}$. Also, $L^{\text{WF}}$ has a unique (up to constant multiple) eigenfunction for the eigenvalue $-\kappa_0$. This eigenfunction belongs to $\calC_0((0,1);\Rm_{>0})$, and is denoted as $h$.
\item   For all $0<r<\kappa_1-\kappa_0$, there exists $C=C(r)<\infty$ such that
\begin{equation}\label{eq:WF convergence of fixation prob}
    \lvert e^{\kappa_0 t}\Pm_{\mu}(\tau_{\fix}>t)-\mu(h)\rvert\leq Ce^{-rt}\quad\text{for all}\quad t\in(0,\infty),\quad \mu\in \calP((0,1)).
\end{equation}
\end{enumerate}
\end{prop}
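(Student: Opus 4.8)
The plan is to exploit the compactness of $P_t:\calC_0\to\calC_0$ established in Lemma \ref{L:C0Compact} and apply standard spectral theory of strongly continuous semigroups of compact operators, following the Perron--Frobenius circle of ideas. First I would record the semigroup structure: since $P_t$ is compact for every $t>0$ and strongly continuous on $\calC_0$, its generator $L^{\text{WF}}$ has compact resolvent, hence a pure point spectrum $\sigma(L^{\text{WF}})$ with no finite accumulation point, and for every $c>0$ the set $\sigma(L^{\text{WF}})\cap\{\operatorname{Re}z>-c\}$ is finite. The spectral mapping theorem for eventually compact semigroups gives $\sigma(P_t)\setminus\{0\}=e^{t\sigma(L^{\text{WF}})}$, so $r(P_t)=e^{-\kappa_0 t}$ with $\kappa_0:=-\sup\{\operatorname{Re}z:z\in\sigma(L^{\text{WF}})\}$; Lemma \ref{L:C0Compact} already tells us $r(P_t)\in(0,1)$, so $\kappa_0\in(0,\infty)$, which gives Part 1 once I verify $\sigma$ is non-empty (it is, since $P_t$ is compact with nonzero spectral radius, so it has a nonzero eigenvalue).

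For Part 2, the main input is the Krein--Rutman theorem. The operator $P_t$ is a positive compact operator on the Banach lattice $\calC_0$; I would verify strong positivity (irreducibility) using the strictly positive continuous transition density $p(t,x,y)>0$ on $(0,1)^2$ from \cite{Ichihara1974}, which shows $P_tf\in\calC_0((0,1);\R_{>0})$ whenever $f\geq 0$, $f\not\equiv 0$. The strong form of the Krein--Rutman theorem then yields that the spectral radius $r(P_t)=e^{-\kappa_0 t}$ is an algebraically simple eigenvalue with a strictly positive eigenfunction $h\in\calC_0((0,1);\R_{>0})$, and that it is the \emph{only} eigenvalue of modulus $r(P_t)$. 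Since this holds for every $t>0$ with the same eigenfunction (because $P_sP_t=P_tP_s$ forces the one-dimensional eigenspaces to coincide), $h$ is an eigenfunction of the generator with eigenvalue $-\kappa_0$, it is the unique element of the boundary spectrum, and the eigenfunction is unique up to scalar. Then $\kappa_1:=-\sup\{\operatorname{Re}z<-\kappa_0:z\in\sigma(L^{\text{WF}})\}$ satisfies $\kappa_1>\kappa_0$ by the spectral gap coming from algebraic simplicity and finiteness of the spectrum in any right half-plane; $\kappa_1=\infty$ is allowed if the spectrum is a singleton.

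For Part 3 I would use the spectral decomposition associated with the isolated boundary eigenvalue. Write $P_t=e^{-\kappa_0 t}\Pi + R_t$ where $\Pi$ is the (rank-one) spectral projection onto $\operatorname{span}(h)$ and $R_t$ acts on the complementary $\Pi$-invariant closed subspace; by the spectral gap and the spectral mapping theorem, for any $r\in(0,\kappa_1-\kappa_0)$ there is $M=M(r)<\infty$ with $\|R_t\|_{\calC_0\to\calC_0}\leq M e^{-(\kappa_0+r)t}$ for all $t\geq 0$ (using strong continuity to control $t\in[0,1]$ and the growth bound on the complementary semigroup for large $t$). Applying this to $f\equiv 1$ — which, via the identification \eqref{E:identification_C0}, lies in $\calC_b((0,1))$ but I should instead apply $P_1$ first: $P_1 1\in\calC_0$ by the Feller property in Lemma \ref{L:C0Compact}, and then $\Pi(P_1 1)=\langle\ell, P_1 1\rangle h$ where $\ell$ is the positive left eigenfunctional dual to $h$, normalized so the QSD is $\pi=\ell/\ell(1)$. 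Then for $t\geq 1$,
\[
e^{\kappa_0 t}P_t 1(x) = e^{\kappa_0(t-1)}e^{\kappa_0}\big(e^{-\kappa_0(t-1)}\langle\ell,P_1 1\rangle h(x) + R_{t-1}(P_1 1)(x)\big),
\]
so $e^{\kappa_0 t}P_t1(x)\to c\,h(x)$ with error $O(e^{-r(t-1)})$ uniformly in $x$; integrating against $\mu$ and fixing the normalization of $h$ so that $c=1$ gives $|e^{\kappa_0 t}\Pm_\mu(\tau_{\text{fix}}>t)-\mu(h)|\leq Ce^{-rt}$, first for $t\geq 1$ and then for all $t>0$ after enlarging $C$ using boundedness of $e^{\kappa_0 t}\Pm_\mu(\tau_{\text{fix}}>t)$ and $\mu(h)$ on $[0,1]$. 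The normalization issue and the identification of $c$ with $1$ (equivalently, that $\pi(h)=1$ under the chosen normalization) is a bookkeeping point; the genuine obstacle is the uniform-in-$x$ control of the remainder near the boundary, i.e.\ making sure $R_t$ is estimated in the $\calC_0$ operator norm rather than pointwise — this is exactly what the $\calC_0$-Feller property and compactness in Lemma \ref{L:C0Compact} are for, so the hard analytic work has effectively been front-loaded there. I would remark that the whole argument rests on compactness of $P_t$ on a space of continuous functions over a compact state space and hence, as noted in Remark \ref{Rk:ChallengeInfDim}, does not transfer to the stochastic FKPP.
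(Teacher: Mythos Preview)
Your approach is essentially the same as the paper's: compactness of $P_t$ from Lemma \ref{L:C0Compact} plus irreducibility/positivity to invoke the Perron--Frobenius/Krein--Rutman structure, then the spectral gap to control the remainder. The paper packages this via the results in \cite{Arendt1986} and, for Part 3, sidesteps your $P_1 1$ workaround by passing to the adjoint: from $\lVert e^{\kappa_0 t}P_t - h\otimes\pi\rVert_{\calC_0\to\calC_0}\leq Ce^{-rt}$ one obtains $\lVert e^{\kappa_0 t}\mu P_t - \mu(h)\pi\rVert_{\TV}\leq Ce^{-rt}$, and reading off the total mass yields \eqref{eq:WF convergence of fixation prob} directly with the natural normalisation $\pi\in\calP((0,1))$.
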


\begin{proof}[Proof of Proposition \ref{prop:WF eigen}]
Given a strongly continuous, eventually compact semigroup on a Banach space, \cite[Theorem 2.1, p.209]{Arendt1986} characterises the asymptotics of the semigroup in terms of the spectrum of its infinitesimal generator. Lemma \ref{L:C0Compact} ensures that  $(P_t)_{t\geq 0}$ is eventually (in fact immediately) compact.
Our strategy is to  characterise the spectrum of its infinitesimal generator, apply \cite[Theorem 2.1]{Arendt1986} to characterise the asymptotics of $(P_t)_{t\geq 0}$, then conclude \eqref{eq:WF convergence of fixation prob} by taking the adjoint of this characterisation.

We first show that $(P_t)_{t\geq 0}$ is irreducible as a positive semigroup on the Banach lattice $(\calC_0((0,1)),\leq, \lvert\lvert \cdot\rvert\rvert_{\infty})$; see \cite[p.234-235]{Arendt1986} or
\cite[p.3]{Grobler1995} for the definition of a Banach lattice, and
\cite[Definition 3, p.306]{Arendt1986} for the definition of irreducibility.
We have from \cite[Definition 3 (iii), p.306]{Arendt1986} and the Riesz-Markov-Kakutani representation theorem that $(P_t)_{t\geq 0}$ is irreducible (in the sense of \cite[Definition 3, p.306]{Arendt1986}) if and only if for every $\mu\in \calP((0,1))$ and $f\in \calC_0((0,1);\Rm_{\geq 0})\setminus \{{\bf 0}\}$. 
We have $\mu(P_tf)>0$ for some $t=t(\mu,f)>0$. This is clearly the case as $\mu(P_tf)=\E_{\mu}[f(X_t)\Ind(\tau_{\fix}>t)]>0$ for all $t>0$, by the irreducibility of $(X_t)_{0\leq t<\tau_{\fix}}$. 
We now use the above to characterise the spectrum of $L^{\text{WF}}$. From the spectral mapping theorems \cite[(6.4), p.85 and Corollary 6.7 (i), p.87-88]{Arendt1986}, and the spectral theorem for compact operators, $L^{\text{WF}}$ has a non-empty pure point spectrum with all eigenvalues having strictly negative real part, and only finitely many eigenvalues with real part at least $-\kappa'$, for all $\kappa'\in \Rm$. 
This implies that $0<\kappa_0<\kappa_1\leq \infty$.

Then \cite[Theorem 3.8]{Arendt1986} implies that $-\kappa_0$ is the unique eigenvalue with real part $-\kappa_0$. Moreover $-\kappa_0\in \sigma(L^{\text{WF}})$ is an algebraically simple pole of the resolvent by \cite[Theorem 3.12, p.315]{Arendt1986}, hence $-\kappa_0$ has geometric multiplicity $1$ by \cite[(3.4), p.73]{Arendt1986}. The residue at $-\kappa_0$ is then given by $\pi\otimes h$ for some $\pi\in \calP((0,1))$ and $h\in \calC_0((0,1);\Rm_{>0})$ by \cite[Proposition 3.5, p.310]{Arendt1986}. We may 
identify $h\in \calC_0((0,1);\Rm_{>0})$ as the unique eigenfunction of eigenvalue $-\kappa_0$.

We now apply the above with \cite[Theorem 2.1, p.209]{Arendt1986} to conclude that for all $0<r<\kappa_1-\kappa_0$ there exists $C=C(r)<\infty$ such that
\[
\lvert\lvert e^{\kappa_0 t}P_t-h\otimes \pi\rvert\rvert_{\calC_0\to\calC_0}\leq Ce^{-rt},\quad 0\leq t<\infty.
\]

Taking the adjoint of the above under the Riesz representation theorem, we obtain that
\begin{equation}\label{E:Adjoint of convergence}
\lvert\lvert e^{\kappa_0 t}\mu P_t-\mu(h)\pi\rvert\rvert_{\TV}\leq Ce^{-rt},\quad 0\leq t<\infty,\quad \mu\in\calP((0,1)).
\end{equation}
We immediately conclude \eqref{eq:WF convergence of fixation prob}. 
\end{proof}

\begin{prop}[Convergence to the unique QSD]\label{prop:WF diffusion convergence to QSD}
For any $\beta\in \Rm$ and $\gamma\in(0,\infty)$, the classical Wright-Fisher diffusion \eqref{1dWF}  has a unique QSD, which we denote as $\pi$. Furthermore for all $0<r<\kappa_1-\kappa_0$ there exists a fixed constant $C=C(r)<\infty$, and a time $T=T(r,\mu)<\infty$ that is continuously dependent upon $\mu\in \calP((0,1))$, such that
\begin{equation}\label{eq:WF convergence to QSD}
\lvert\lvert \Law_{\mu}(X_t\lvert \tau_{\fix}>t)-\pi\rvert\rvert_{\rm TV}\leq \frac{Ce^{-rt}}{\mu(h)}\quad\text{for all}\quad t\in [T,\infty),\quad \mu\in \calP((0,1)).
\end{equation}
\end{prop}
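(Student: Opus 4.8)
The plan is to deduce all three assertions directly from the spectral analysis already performed in Proposition \ref{prop:WF eigen} and its proof. The inputs I would use are: (a) $-\kappa_0$ is an algebraically simple, isolated eigenvalue of $L^{\text{WF}}$ whose associated Riesz spectral projection is the rank-one operator $Q=\pi\otimes h$, i.e. $Qf=\pi(f)\,h$, with $h\in\calC_0((0,1);\Rm_{>0})$ and $\pi\in\calP((0,1))$; and (b) the adjoint estimate \eqref{E:Adjoint of convergence}, namely $\lvert\lvert e^{\kappa_0 t}\mu P_t-\mu(h)\pi\rvert\rvert_{\TV}\leq Ce^{-rt}$ for all $t\geq 0$ and $\mu\in\calP((0,1))$, valid for every $0<r<\kappa_1-\kappa_0$. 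Everything below is a formal consequence of (a)--(b).

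First I would check that $\pi$ is a QSD with eigenvalue $e^{-\kappa_0}\in(0,1)$. Since $Q$ commutes with $P_t$ and $P_t$ acts as multiplication by $e^{-\kappa_0 t}$ on its one-dimensional range, evaluating $QP_t=e^{-\kappa_0 t}Q$ on an arbitrary $f\in\calC_0((0,1))$ gives $(\pi P_t)(f)\,h=e^{-\kappa_0 t}\pi(f)\,h$, hence $\pi P_t=e^{-\kappa_0 t}\pi$ as measures (the dual of $\calC_0((0,1))$ is the space of finite signed measures on $(0,1)$ by the Riesz representation theorem, so this pairing is non-degenerate). Consequently $\Law_\pi(X_t\lvert\tau_{\fix}>t)=\pi P_t/(\pi P_t 1)=\pi$ for all $t$, so $\pi$ is a QSD. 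For uniqueness, let $\pi'\in\calP((0,1))$ be any QSD, say $\pi' P_t=(\lambda')^t\pi'$ with $\lambda'>0$. Applying \eqref{E:Adjoint of convergence} with $\mu=\pi'$ gives $e^{\kappa_0 t}(\lambda')^t\pi'\to\pi'(h)\pi$ in total variation as $t\to\infty$; taking total masses forces $e^{\kappa_0 t}(\lambda')^t\to\pi'(h)\in(0,\infty)$, whence $\lambda'=e^{-\kappa_0}$, and then $\pi'=\lim_t e^{\kappa_0 t}(\lambda')^t\pi'/\pi'(h)=\pi$. Here $\pi'(h)>0$ because $h>0$ on $(0,1)$ and $\pi'\in\calP((0,1))$.

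It remains to prove the rate estimate \eqref{eq:WF convergence to QSD}. For $t>0$ one has $\Pm_\mu(\tau_{\fix}>t)>0$, and the algebraic identity
\[
\Law_\mu(X_t\lvert\tau_{\fix}>t)-\pi
=\frac{e^{\kappa_0 t}\mu P_t-\mu(h)\pi}{e^{\kappa_0 t}\Pm_\mu(\tau_{\fix}>t)}
+\mu(h)\Big(\frac{1}{e^{\kappa_0 t}\Pm_\mu(\tau_{\fix}>t)}-\frac{1}{\mu(h)}\Big)\pi
\]
holds. Taking total variation norms, the first term is bounded by $Ce^{-rt}/\big(e^{\kappa_0 t}\Pm_\mu(\tau_{\fix}>t)\big)$ via \eqref{E:Adjoint of convergence}, and the second by $\lvert\mu(h)-e^{\kappa_0 t}\Pm_\mu(\tau_{\fix}>t)\rvert/\big(e^{\kappa_0 t}\Pm_\mu(\tau_{\fix}>t)\big)\leq Ce^{-rt}/\big(e^{\kappa_0 t}\Pm_\mu(\tau_{\fix}>t)\big)$ via \eqref{eq:WF convergence of fixation prob}. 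By \eqref{eq:WF convergence of fixation prob} again, $e^{\kappa_0 t}\Pm_\mu(\tau_{\fix}>t)\geq\mu(h)-Ce^{-rt}\geq\tfrac12\mu(h)$ once $t\geq T(r,\mu):=\max\!\big(0,\tfrac1r\log(2C(r)/\mu(h))\big)$. Since $h\in\calC_0((0,1))$ is bounded and continuous, $\mu\mapsto\mu(h)$ is weakly continuous and strictly positive on $\calP((0,1))$, so $T(r,\mu)$ is finite and continuous in $\mu$. For $t\geq T(r,\mu)$ we therefore obtain $\lvert\lvert\Law_\mu(X_t\lvert\tau_{\fix}>t)-\pi\rvert\rvert_{\TV}\leq 4C(r)e^{-rt}/\mu(h)$, which is \eqref{eq:WF convergence to QSD} after renaming $C(r)$.

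The only point requiring real care is the bookkeeping in the last paragraph: one must choose the threshold time $T(r,\mu)$ so as to control the conditional normalisation $e^{\kappa_0 t}\Pm_\mu(\tau_{\fix}>t)$ from below for all $t\geq T(r,\mu)$, and then verify that this $T$ can be taken continuous in $\mu$ (which rests on $h$ being a bounded continuous function, i.e. on the identification \eqref{E:identification_C0}). Everything else is an immediate consequence of the spectral gap established in Proposition \ref{prop:WF eigen}.
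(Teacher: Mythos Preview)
Your proof is correct and follows essentially the same approach as the paper: both derive \eqref{eq:WF convergence to QSD} by algebraic manipulation of the adjoint estimate \eqref{E:Adjoint of convergence}, and both obtain uniqueness from the fact that $\pi$ is the unique quasi-limiting distribution. You have simply spelled out explicitly the decomposition and the choice of $T(r,\mu)$ that the paper leaves implicit in the phrase ``algebraic manipulation''.
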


\begin{proof}[Proof of Proposition \ref{prop:WF diffusion convergence to QSD}]
Let $\pi$ and $h$ be given by the proof of Proposition \ref{prop:WF eigen} part 2.

We  obtain \eqref{eq:WF convergence to QSD} by algebraic manipulation of \eqref{E:Adjoint of convergence}. Since $\pi\in\calP((0,1))$ is the unique quasi-limiting distribution, it must be a quasi-stationary distribution, and be unique.
\end{proof}

{\bf Properties of the QSD and spectrum of $L^{\text{WF}}$. }
The QSD $\pi$ is a left eigenmeasure of $L^{\text{WF}}$ \cite[Proposition 4, P.349]{meleard2012quasi} with eigenvalue $-\kappa_0$, hence on the interior $(0,1)$ it is a classical solution of
\begin{align}\label{ODEpi_neutralWF}
\frac{\gamma}{2}\,[x(1-x)\pi(x)]'' - [\beta x(1-x)\pi(x)]' +\kappa_0\, \pi(x) =&\,0.
\end{align}
The right eigenfunctions $h$ are solutions of $L^{\text{WF}}h(x)=-\kappa h(x)$ for $x\in(0,1)$ with Dirichlet boundary condition $h(0)=h(1)=0$. Solving this provides for the eigenvalues.

For equation \eqref{1dWF}, 
an explicit expression for the transition density for the killed Wright-Fisher diffusion 
$(X_t)_{0\leq t<\tau_{\partial}}$
was provided in  \cite[eqn.(8)]{kimura1954stochastic} and in \cite[eqn. (1.6)-(1.7)]{mano2009duality}. 
Note that $\gamma$ in \eqref{1dWF} is $1/(2N)$ in Kimura \cite[eqn.(4)]{kimura1954stochastic}, and $\beta$ in \eqref{1dWF} is $s=c/N$ in \cite{kimura1954stochastic}. The eigenvalues $0<\kappa_0<\kappa_1<\cdots$ of the generator satisfy
$\kappa_k=\frac{1}{4N}\left(c^2-C_{1,k}\right)$
where $C_{1,k}$ are the separation constants \cite{stratton1941elliptic} and $c=\frac{\beta}{2\gamma}$. In the weak selection regime, the principle eigenvalue $\kappa_0>0$ satisfies the asymptotic
\[
\kappa_0 
=\gamma\Big(1+\frac{\beta^2}{10\gamma^2}-\frac{\beta^4}{7000\gamma^4}+\ldots\Big) ,\qquad \text{as}\qquad  c=\frac{\beta}{2\gamma}\to 0.
\]

In the other extreme, namely $\frac{\beta}{2\gamma}\to\infty$, it is straightforward to establish the following small-noise limit of the QSD (when $\beta\in \R_{\ge 0}$ is fixed).
\begin{prop}\label{prop:weak*wellmixed}
As  $\gamma\to 0$, the principle eigenvalue $\kappa\to 0$ and  $\pi$ converges weakly to $\pi_*\in\mathcal{P}([0,1])$
that satisfies
\begin{equation}
\pi_*=
\begin{cases}
\delta_1 \quad &{\rm for }\; \beta>0\\
{\rm Unif}((0,1)) \quad &{\rm for }\; \beta=0\\
\end{cases}
\end{equation}  
\end{prop}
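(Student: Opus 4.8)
\textbf{Proof proposal for Proposition \ref{prop:weak*wellmixed}.}

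The plan is to analyse the two cases separately, exploiting the structure of the stationary ODE \eqref{ODEpi_neutralWF} together with a probabilistic comparison argument, and then to identify the weak limit via moments. First I would observe that $\kappa_0\to 0$ as $\gamma\to 0$: since $\beta\in\R_{\ge 0}$ is fixed, for $\beta=0$ we already know $\kappa_0=\gamma\to 0$ by Proposition \ref{prop:WF eigen} combined with the explicit neutral spectrum, while for $\beta>0$ one uses that the drift alone (with $\gamma=0$) pushes the process deterministically to $1$ in finite time from any $x\in(0,1)$, so $\Pm_x(\tau_{\fix}>t)$ stays bounded away from $0$ only on a time scale that blows up as $\gamma\downarrow 0$; more precisely, a coupling/Gronwall estimate on $\E_x[\log(X_t/(1-X_t))]$ shows $\E_x[\tau_{\fix}]$ remains $O(1)$ in a suitable sense and hence $\kappa_0=-\ln\Pm_\pi(\tau_{\fix}>1)\to 0$. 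Granting $\kappa_0\to 0$, the family $\{\pi=\pi_\gamma\}$ is tight in $\calP([0,1])$ by compactness of $[0,1]$, so it suffices to identify all subsequential weak limits.

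For the neutral case $\beta=0$ this is immediate and does not even require a limit: by the explicit computation recalled in the introduction (see \cite{seneta1966quasi}), the QSD of the neutral Wright--Fisher diffusion $dX_t=\sqrt{\gamma X_t(1-X_t)}\,dB_t$ is $\mathrm{Unif}((0,1))$ \emph{for every} $\gamma>0$, since rescaling time by $\gamma$ does not change the QSD (the generator is only multiplied by a constant, so the left eigenmeasure is unchanged and only the eigenvalue scales). Hence $\pi_\gamma=\mathrm{Unif}((0,1))$ for all $\gamma>0$ and trivially $\pi_*=\mathrm{Unif}((0,1))$. I would state this explicitly and move on.

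For $\beta>0$ the content is genuine. Here I would argue that mass escapes to the boundary point $1$. Take the test function $g(x)=1-x$ and use that $\pi_\gamma$ is a left eigenmeasure with eigenvalue $-\kappa_0$: applying the generator $L^{\mathrm{WF}}$ and integrating, one gets $-\kappa_0\,\pi_\gamma(1-x)=\pi_\gamma\big(L^{\mathrm{WF}}(1-x)\big)=\pi_\gamma\big(-\beta x(1-x)\big)$, i.e.
\begin{equation}
\kappa_0\,\E_{\pi_\gamma}[1-X]=\beta\,\E_{\pi_\gamma}[X(1-X)].
\end{equation}
Since $\kappa_0\to 0$ and the left side is bounded, $\E_{\pi_\gamma}[X(1-X)]\to 0$, so any weak limit $\pi_*$ is supported on $\{0,1\}$. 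To rule out mass at $0$, use the monotonicity/coupling of the Wright--Fisher diffusion in its drift and initial condition together with the fact that, under the positive drift, the process started at any $x>0$ hits $1$ before $0$ with probability tending to $1$ as $\gamma\to 0$ (a scale-function computation: the scale density is $\exp(-\tfrac{2\beta}{\gamma}x)$, so the probability of hitting $0$ before $1$ from $x$ is $O(e^{-2\beta x/\gamma}/(\text{normalisation}))\to 0$). Combined with the characterisation of the QSD as the quasi-limiting distribution (Proposition \ref{prop:WF diffusion convergence to QSD}), this forces $\pi_*(\{0\})=0$, hence $\pi_*=\delta_1$.

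The main obstacle is making rigorous, for $\beta>0$, the two quantitative inputs: (i) $\kappa_0\to 0$ as $\gamma\downarrow 0$, and (ii) the vanishing of the probability of absorption at $0$ (rather than $1$) uniformly in a way compatible with the quasi-stationary conditioning. Both are intuitively clear from the deterministic limit $\dot x=\beta x(1-x)$, but (i) in particular requires care because $\kappa_0$ is defined spectrally; I would handle it by sandwiching $\Pm_{\pi_\gamma}(\tau_{\fix}>1)$ between explicit quantities using the scale function and speed measure, or alternatively by noting $\kappa_0\le -\ln\inf_{x\in(\varepsilon,1-\varepsilon)}\Pm_x(\tau_{\fix}>1)$ and showing this infimum $\to 1$ as $\gamma\to 0$ by dominated convergence against the deterministic flow. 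Everything else is routine tightness plus moment identification.
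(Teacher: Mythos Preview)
The paper does not actually supply a proof of this proposition; it is stated immediately after the remark that it is ``straightforward to establish,'' and no argument follows. So there is nothing in the paper to compare against, and your attempt stands on its own. Your handling of $\beta=0$ is fine: the neutral QSD is $\mathrm{Unif}((0,1))$ for every $\gamma>0$ by the time-rescaling you describe, so that limit is trivial.

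For $\beta>0$, however, the argument has genuine gaps, all centred on the step $\kappa_0\to 0$. A minor point first: $g(x)=1-x\notin\calC_0((0,1))$, and if you run the martingale for $g$ carefully and split on the absorption point you pick up an extra nonnegative boundary term $r_0$ (the absorption rate at $0$ under $\pi_\gamma$), so your displayed identity should read $\kappa_0\,\E_{\pi_\gamma}[1-X]=\beta\,\E_{\pi_\gamma}[X(1-X)]+r_0$. More seriously, each heuristic you give for $\kappa_0\to 0$ is unsound. The logistic flow $\dot x=\beta x(1-x)$ does \emph{not} reach $1$ in finite time, contrary to what you write. The bound $\kappa_0\le -\ln\inf_{x\in(\varepsilon,1-\varepsilon)}\Pm_x(\tau_{\fix}>1)$ is unjustified: a lower bound on the spectral radius needs a set $K$ with $\inf_{x\in K}\Pm_x(X_1\in K)$ bounded below, and for $\beta>0$ the drift carries every interval bounded away from $1$ toward the absorbing endpoint $1$. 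Worse, the claim $\kappa_0\to 0$ itself appears to be incorrect: linearising near $x=1$ gives $Y=1-X$ an approximate subcritical Feller diffusion $dY=-\beta Y\,dt+\sqrt{\gamma Y}\,dB$, whose Yaglom decay rate is $\beta$ for every $\gamma>0$; consistently, the exact identity obtained from the legitimate test function $x(1-x)\in\calC_0$, namely $\kappa_0-\gamma=\beta\,\E_{\pi_\gamma}[X(1-X)(2X-1)]\big/\E_{\pi_\gamma}[X(1-X)]$, forces $\kappa_0\to\beta$ whenever $\pi_\gamma\to\delta_1$. So your route to $\E_{\pi_\gamma}[X(1-X)]\to 0$ via ``$\kappa_0\to 0$'' cannot succeed as written. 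The weak limit $\pi_\gamma\to\delta_1$ is still the right target and your scale-function computation against mass at $0$ is useful, but you need an independent mechanism---for instance a direct singular-perturbation analysis of the stationary ODE \eqref{ODEpi_neutralWF}, or a large-deviations estimate on $\pi_\gamma$ away from $1$---to force concentration at the boundary.
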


Such a small noise  limit  for the support of the QSD of the Wright Fisher diffusion \eqref{1dWF} 
highlights the contrast between the neutral case and the case with selection. Similar asymptotic results for the support of QSD can be found in \cite{ramanan1999quasi} and \cite[Section 3.3]{faure2014quasi}.

\section*{Acknowledgements}
Research supported by National Science Foundation grant DMS-2152103 and Office of Naval Research grant N00014-20-1-2411 to W.-T. Fan. The work of OT was partially supported both by grant 200020 196999 from the Swiss National
Foundation and by the EPSRC MathRad programme
grant EP/W026899/. We thank Daniel Rickert for assistance in producing the figures. 

\bibliographystyle{alpha}
\bibliography{QuasiSPDE}
\end{document}